\newtheorem{thm}{Theorem}[section]
 \newtheorem{cor}[thm]{Corollary}
 \newtheorem{lem}[thm]{Lemma}
 \newtheorem{claim}[thm]{Claim}
 \newtheorem{prop}[thm]{Proposition}
 \theoremstyle{definition}
 \theoremstyle{remark}
 \newtheorem{rem}[thm]{Remark}
 \numberwithin{equation}{section}
\def\be#1 {\begin{equation} \label{#1}}
\newcommand{\ee}{\end{equation}}
\def\Z{{\mathbb Z}}
\def\R{{\mathbb R}}
\def\e{{\varepsilon}}
\def\g{{\gamma}}
\def\b{{\beta}}
\def\d{{\delta}}
\def\sign{ \mbox{sign} }
\def\K{{\mathcal{K}}}
\def\+R{+_{_{ \!\! \R}}}
\def\what{\widehat}
\def\wt{\widetilde}
\def\bar{\overline}
\def\eps{\epsilon}
\def\veps{\varepsilon}
\def\S{Schr\"{o}dinger }
\DeclareMathAlphabet{\mathpzc}{OT1}{pzc}{m}{it}
\numberwithin{equation}{section}
\newcommand{\pushright}[1]{\ifmeasuring@#1\else\omit\hfill$\displaystyle#1$\fi\ignorespaces}
\newcommand{\pushleft}[1]{\ifmeasuring@#1\else\omit$\displaystyle#1$\hfill\fi\ignorespaces}
\begin{document}

\author{Pierre Germain}
\address{Pierre Germain, Courant Institute of Mathematical Sciences, 251 Mercer Street, New York 10012-1185 NY, USA}
\email{pgermain@cims.nyu.edu}

\author{Fabio Pusateri}
\address{Fabio Pusateri,  Department of Mathematics, Princeton University, Washington Road, Princeton 08540 NJ, USA}
\email{fabiop@math.princeton.edu}

\author{Fr\'ed\'eric Rousset}
\address{Fr\'ed\'eric Rousset, Laboratoire de Math\'ematiques d'Orsay (UMR 8628),
Universit\'e Paris-Sud, 91405 Orsay Cedex France et Institut Universitaire de France}
  \email{frederic.rousset@math.u-psud.fr}

\title{The Nonlinear Schr\"odinger equation with a potential}

\subjclass[2000]{Primary 35Q55 ; 35B34}

\date{\today}

\keywords{Nonlinear Schr\"odinger Equation, Distorted Fourier Transform, Scattering Theory, Modified Scattering}

\begin{abstract}
We consider the cubic nonlinear Schr\"odinger equation with a potential in one space dimension.
Under the assumptions that the potential is generic, sufficiently 
localized, with no bound states,
we obtain the long-time asymptotic behavior of small solutions.
In particular, we prove that, as time goes to infinity, solutions exhibit
nonlinear phase corrections that depend on the scattering matrix associated to the potential.
The proof of our result is based on the use of the distorted Fourier transform - the so-called Weyl-Kodaira-Titchmarsh theory - 
a precise understanding of the ``nonlinear spectral measure'' associated to the equation,
and nonlinear stationary phase arguments and multilinear estimates in this distorted setting.
\end{abstract}

\maketitle

\setcounter{tocdepth}{1}

\begin{quote}
\tableofcontents
\end{quote}

\bigskip
\section{Introduction}

\medskip
\subsection{The equation}
Our aim in this paper is to describe the large time behavior of small solutions of the Cauchy problem for
the one dimensional  cubic nonlinear Schr\"odinger equation with an external potential:
\begin{equation}
\tag{NLS} \label{NLS}
i \partial_t u - \partial_x^2 u + V u = |u|^2 u, 
\end{equation}
where the space and time variables $(t,x) \in \mathbb{R} \times \mathbb{R}$ and $u = u(t,x) \in \mathbb{C}$.
This equation derives formally from the Hamiltonian
\begin{align}
\label{NLSVHam}
H = \frac{1}{2}\int_{\R} {|\partial_x u|}^2 \, dx + \frac{1}{2}\int_{\R} V {|u|}^2 \, dx - \frac{1}{4}\int_{\R} {|u|}^4 \, dx,
\end{align}
and also conserves the total mass
$$
M = \int_{\mathbb{R}} |u|^2\,dx.
$$
We will work under fairly mild assumptions on the potential, namely
\begin{align}
\label{Vass1}
V \in W^{2,1}(\R), \qquad |x|^\gamma V \in L^1(\R), \quad \gamma > 6.
\end{align}
Under this localization assumption, it is well known that the spectrum of $L_{V}=-\partial_{x}^2 + V$ as a self-adjoint operator
on  $L^2(\mathbb{R})$ with domain $H^2(\mathbb{R})$ is made of  $[0+ \infty)$ and a finite number of $L^2$ eigenvalues (bound states).
Moreover, on  $(0,+ \infty)$ the spectrum is purely absolutely continuous (actually it suffices that $V \in L^1$, see for example
 \cite{Reed-Simon} for these classical results).

Our main spectral assumption on $L_{V}$ will be
\begin{align}
\label{Vass2}
L_{V} \quad \mbox{has no bound states}, \qquad V \quad \mbox{is generic}.
\end{align}
The precise  formulation  of the assumption that $V$ is generic is given in  Remark \ref{rem1} after Theorem \ref{maintheo} below;
such assumption can be expressed in terms of properties of the scattering matrix associated to $V$, and is equivalent
to the usual assumption that $0$ is not a resonance.

We are going to consider the Cauchy problem for \eqref{NLS} with initial data $u_0$ small in a suitable weighted Sobolev space,
and study the global properties and asymptotic behavior of solutions.
Since we deal with small solutions, the sign in front of the nonlinearity is irrelevant for our main result to hold.
Our main motivation for studying this problem is the question of {\it asymptotic stability} for special solutions of
nonlinear dispersive and hyperbolic equations, such as solitons, traveling waves, kinks...
Indeed, nonlinear equations with external  potentials arise as the linearization of the full
nonlinear problems around these special solutions, and \eqref{NLS} is a prototypical model for
nonlinear equations under the influence of an external potential.

Our approach will be based on the use of the {\it distorted Fourier transform}
- the so-called Weyl-Kodaira-Titchmarsh theory - which will allow us to extend some
Fourier analytical techniques which have been succesfully employed in recent years
to study small solutions of nonlinear equations without potentials, see for example \cite{KP,GMS2,IoPu2}.
Our hope is that the framework developed in the present article will prove useful
to study open questions concerning the stability of (topological) solitons,
and other special solutions for nonlinear evolution equations.

\medskip
\subsection{Previous results}
Before discussing some recent works on one dimensional problems with potentials 
we briefly consider the one dimensional NLS equation in the case of zero potential
\begin{equation}
\tag{NLS0} \label{NLS0}
i \partial_t u - \partial_x^2 u = |u|^2 u.
\end{equation}
We will call this the {\it flat/unperturbed} NLS in contrast to the {\it distorted/perturbed} equation \eqref{NLS}.

It is well-known that the Cauchy problem for \eqref{NLS0} is globally well-posed in $L^2$. 
Moreover, solutions to the Cauchy problem associated to \eqref{NLS0} with
initial data $u|_{t=0} \in H^1 \cap L^2(x^2 dx)$ (bounded energy and variance) exhibit {\it modified scattering} as time goes to infinity.
More precisely, solutions decay at the same rate as linear solutions but they differ from
linear solutions by a logarithmic phase correction.
Using complete integrability this was proven in the seminal work of Deift and Zhou \cite{DZNLS}.
Without making use of complete integrability (and in the case of similar but non-integrable versions of \eqref{NLS0})
and restricting the analysis to small solutions,
proofs of this fact were given by Hayashi and Naumkin \cite{HN},
Lindblad and Soffer \cite{LinSof}, Kato and Pusateri \cite{KP}, and Ifrim and Tataru \cite{ITNLS}.
Similar results for the nonlinear Klein-Gordon equation have been obtained by Delort \cite{DelortKG1d}, covering also the case
of quasilinear quadratic nonlinearities, and Lindblad and Soffer \cite{LinSofKG}.
A similar asymptotic behavior occurs for solutions of many
other dispersive and hyperbolic equations, such as for example the modified KdV equation \cite{HNKdV,mKdV},
fractional Schr\"odinger equations \cite{IoPu1}, and water waves \cite{IoPu2,ADa,IoPu4,ITg}.

Notice that solutions scatter (without phase correction) if one replaces the cubic nonlinearity in \eqref{NLS0} by a higher power.
In \cite{CGV} Cuccagna, Georgiev and Visciglia considered the subcritical problem with external potential 
$i \partial_t u - \partial_x^2 u + V u = |u|^p u$, with $2 < p < 4$.
and were able to prove linear decay and (regular) scattering in $L^2$ for small initial data with bounded energy and variance.
The key in this work is a commutator estimate involving a distorted version of the vector field $J = x - 2it\partial_x$.
Successful commutation with this distorted vectorfield guarantees the boundedness of its action on solutions, and gives
the decay which is necessary to close the argument.
Recently, Delort published a result \cite{DelortNLSV} for the critical case of \eqref{NLS} in the case of odd solutions and even potentials.
Cuccagna-Georgiev-Visciglia also announced a similar result \cite{CGVann}.
We will comment below on the relevance of considering odd solutions and how this is related to enhanced decay properties,
cancellations and asymptotics.

After completing the present work, we learned of the paper \cite{IPNaumkin}, which proves a result similar to the main theorem below. The very elegant method is an extension to the distorted setting of the factorization method of Hayashi and Naumkin. The conditions on $u_0$ and $V$ are weaker than ours, and probably close to minimal. However, the method which we propose here is very robust and flexible: it would be straightforward to extend it to the cubic nonlinear Klein-Gordon equation; or to consider a nonlinearity of the type $a(x) |u|^2 u$, where $a(x) \to 1$ as $|x| \to \infty$. A more delicate adaptation should allow to treat a quadratic Klein-Gordon equation, by first applying a normal form transform, followed by the analysis performed in this paper.

\medskip
\subsection{Motivation}
As already pointed out, one of our main motivations for studying \eqref{NLS}
is the question of asymptotic stability for special solutions of nonlinear dispersive and hyperbolic equations.
Studies on the existence and stability of solitons, traveling waves,
and other types of special solutions are numerous and span an extensive body of literature.
Given the impossibility of being exhaustive we  refer the reader to the seminal papers by Weinstein \cite{Wein86},
Pego and Weinstein \cite{PW1}, Soffer and Weinstein \cite{SofWeinA, SofWein2},
and the more recent expository articles \cite{Sof06,Tao09} and references therein.

The classical approach to asymptotic stability of, say, solitons, is to split the solution into a modulated soliton,
plus a remainder which is called the radiation.
The modulated soliton lives in a finite dimensional space which mirrors the symmetries of the equation.
As for the radiation, it solves an equation whose linear part is given by an equation involving a potential (related to the soliton).
One then tries to establish dispersive estimates for the linear part - involving the potential -
\cite{GolSch,Sch1,Sof06} and leverage these to control the nonlinear terms, so to obtain decay of the radiation
and therefore asymptotic stability.
This approach is in general easier to implement in higher dimension, due to better decay properties: see for instance
\cite{Pillet-Wayne,Cuccagna}.

When the decay of the radiation is weak, an important difficulty in this program is to understand the coupling between the radiation
 and the modulation parameters. For equations that enjoy a separation property between
  the speeds of linear dispersive waves and solitary waves, such as the Korteweg de Vries equation, this coupling
  is weak and can be handled through monotonicity formulas.
  Asymptotic stability results then follow in the sense that perturbations decay on one side of the  wave
  \cite{PW1,Martel-Merle}, see also \cite{Bethuel} for recent results on solitary waves of the Gross-Pitaevskii equations. Recently, in \cite{mKdV}, we could prove the full  asymptotic stability of solitons
  - that is a description of the asymptotic behavior 
  of the perturbation on the other side of the wave -
  for the mKdV equation, by combining these techniques with the ones used to prove modified scattering for small data.
  For equations like Klein-Gordon or Schr\"odinger, the coupling between the radiation and the modulation
  parameters is stronger and it is usually controlled after  normal form transforms in the system coupling
  the modulation parameters and the radiation via the ``Fermi Golden rule''
  \cite{SofWein2,Cuccagna,Buslaev-Perelman,Bambusi-Cuccagna}.

Note that very interesting virial type arguments have been developed  recently  for the $\phi^4$ model \cite{KowMarMun}.
   Nevertheless in the one-dimensional case, we are not aware of situations where the full asymptotic stability
   of solitons has been shown for a nonlinearity which is critical for the dispersion
    (in the sense that small solutions do not scatter linearly) outside the use of complete integrability,
   see for example \cite{CP} on cubic NLS, or when there is separation between the soliton and the radiation, see our work \cite{mKdV}.

\medskip
\subsection{Main result}

Our main result, stated below, gives, for any initial data in a weighted Sobolev space (in particular for any function in the Schwartz class)
that solutions of the perturbed equation \eqref{NLS}
decay globally-in-time at the same rate as solutions of the linear equation $i\partial_t u - \partial_{xx} u = 0$.
Furthermore, as time approaches infinity, they approach, up to a logarithmic phase correction, solutions of the linear problem.

\begin{thm}\label{maintheo}
Consider the nonlinear Schr\"odinger equation \eqref{NLS} with a  potential $V$ satisfying
\begin{align}
\label{mtV}
V \in W^{2,1},
\qquad V \in L^1_\gamma, \quad \gamma > 6,
\qquad V \quad \mbox{has no bound states},
\end{align}
and $V$ is generic in the sense of Remark \ref{rem1} below.
The following hold true:

\setlength{\leftmargini}{1.5em}
\begin{itemize}

\bigskip
\item ({\it Global existence and decay}).
There exists $\bar{\e}>0$ small enough such that for all $\e_0\leq\bar{\e}$ and $u_0$ satisfying
\begin{align}
\label{mtdata}
{\| u_0 \|}_{H^3} + {\| x u_0 \|}_{L^2} = \e_0,
\end{align}
the equation \eqref{NLS} with initial data $u(t=0) = u_0$ admits a unique global solution satisfying
\begin{align}
\label{mty}
\sup_{t\in \R}{\| u(t)\|}_{L^\infty_x} \lesssim \e_0 (1+|t|)^{-1/2}.
\end{align}

\bigskip
\item ({\it Global bounds}).
Define the profile of $u$ by
\begin{align}
\label{prof0}
f(t,x) := e^{-it (-\partial_{x}^2 + V)} u(t,x), \qquad \wt{f}(t,k) := e^{-itk^2} \wt{u}(t,k),
\end{align}
where, for any $g \in L^2$, $\wt{g} = \wt{\mathcal{F}}g$ denotes the distorted Fourier transform of $g$ (see \eqref{distF}).
Let $p_0=1/100$, $\alpha \in(0,1/4)$, then the global solution of \eqref{NLS} with data \eqref{mtdata} satisfies
\begin{align}
\label{mtbounds}
(1+|t|)^{-p_0}{\big\| (1+|k|)^3 \wt{f}(t) \big\|}_{L^2} + {\big\| \wt{f}(t) \big\|}_{L^\infty}
  + (1+|t|)^{-1/4 + \alpha}{\big\| \partial_k \wt{f}(t) \big\|}_{L^2}
\lesssim \e_0.
\end{align}

\bigskip
\item ({\it Asymptotic behavior as $t \to + \infty$}). There exists $W_{+\infty} \in L^\infty$ such that
\begin{align}
\label{mtas+}
\left| \widetilde{f}(t,k) \exp \left( \frac{i}{2\sqrt{2\pi}} \int_0^t |\widetilde{f}(s,k)|^2 \frac{ds}{s+1} \right)
  - W_{+\infty}(k) \right| \lesssim (1+t)^{-\rho/2}, \qquad \mbox{for} \quad t>0,
\end{align}
for $0<\rho<\alpha/10$.

\bigskip
\item ({\it Asymptotic behavior as $t \to -\infty$}).
Let $S = S(k)$ be the scattering matrix associated to $V$, see \eqref{scatmat}, and let
\begin{align}
\label{mtas1}
\qquad Z(t,k) := \big(\wt{f}(t,k), \wt{f}(t,-k)\big), \qquad k>0.
\end{align}
Define the self-adjoint matrices
\begin{align}
\label{mtas2}
\begin{split}
& \mathcal{S}_0(t,k) := \frac{1}{2\sqrt{2\pi}}\mathrm{diag} \big( |\wt{f}(t,k)|^2, |\wt{f}(t,-k)|^2 \big),
\\
& \mathcal{S}_1(t,k) := \frac{1}{2\sqrt{2\pi}} S^{-1}(k) \mathrm{diag} \big( |(SZ(t,k))_1|^2, |(SZ(t,k))_2|^2 \big) S(k),
\end{split}
\end{align}
and let
\begin{align}
\label{mtas2.5}
\mathcal{S}(t,k) :=
\mathbf{1}(k\leq |t|^{-\rho})\mathcal{S}_0(t,k) +
  \mathbf{1}(k\geq |t|^{-\rho}) \dfrac{1}{2}\Big[\mathcal{S}_0(t,k) + \mathcal{S}_1(t,k)\Big],
\end{align}
for $0<\rho<\alpha/10$.

Then, if we denote
\begin{align}
\label{mtas3}
W (t,k):= \exp\Big(i\int_0^t \mathcal{S}(t,k) \, \frac{ds}{s+1} \Big) Z(t,k), \qquad |W(t,k)| = |Z(t,k)|,
\end{align}
there exists $W_{-\infty} \in L^\infty$ such that
\begin{align}
\label{mtas4}
\big| W(t,k) - W_{-\infty}(k) | \lesssim (1+|t|)^{-\rho/2}, \qquad \mbox{for} \quad t<0.
\end{align}


\end{itemize}

\end{thm}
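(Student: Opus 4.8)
The plan is to run a bootstrap argument entirely on the distorted Fourier side. First I would set up the Weyl--Kodaira--Titchmarsh framework: fix the generalized eigenfunctions $\psi(x,k)$ of $L_{V}$, the distorted Fourier transform $\F$, the spectral density, and record the asymptotics of the Jost solutions in terms of the transmission and reflection coefficients $T,R$ (hence of the scattering matrix $S$, see \eqref{scatmat}); under \eqref{mtV} and the genericity of Remark~\ref{rem1} these are classical and, crucially, well behaved as $k\to0$, where $T(0)=0$ and $R(0)=-1$. Applying $\F$ to \eqref{NLS}, rewriting it as $i\partial_t u=-L_{V}u+|u|^2u$ and passing to the profile \eqref{prof0}, one obtains
\begin{align*}
\partial_t\wt f(t,k)=-i\iiint e^{it\varphi(k,k_1,k_2,k_3)}\,\mu(k,k_1,k_2,k_3)\,\wt f(t,k_1)\,\overline{\wt f(t,k_2)}\,\wt f(t,k_3)\,dk_1\,dk_2\,dk_3,
\end{align*}
with $\varphi=-k^2+k_1^2-k_2^2+k_3^2$ and $\mu(k,k_1,k_2,k_3)=\int\overline{\psi(x,k)}\psi(x,k_1)\overline{\psi(x,k_2)}\psi(x,k_3)\,dx$ (times the density factor): the ``nonlinear spectral measure''. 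The first genuine task is a structural lemma for $\mu$: inserting the Jost asymptotics plus remainders, one expands $\mu$ into a sum of terms $\delta(k\pm k_1\pm k_2\pm k_3)$ with smooth coefficients built from $T,R$, plus principal-value kernels on the same hyperplanes, plus a genuinely integrable and $k$-differentiable remainder. The localization $\gamma>6$ and $V\in W^{2,1}$ buy the decay and regularity of these pieces, and genericity controls their low-frequency behavior.

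Next I would close the bootstrap for the norm underlying \eqref{mtbounds},
\begin{align*}
\|u\|_X:=\sup_{t}\Big[(1+|t|)^{-p_0}\big\|(1+|k|)^3\wt f(t)\big\|_{L^2}+\big\|\wt f(t)\big\|_{L^\infty}+(1+|t|)^{-1/4+\alpha}\big\|\partial_k\wt f(t)\big\|_{L^2}\Big],
\end{align*}
assuming $\|u\|_X\le\e_1$ on a maximal existence interval and proving $\|u\|_X\lesssim\e_0+\e_1^3$, which then closes by continuity once local well-posedness for data with $u_0\in H^3$, $xu_0\in L^2$ is in place (standard, after checking that $x$ and $\partial_k$ are interchanged, up to lower order, through $\F$). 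The weighted $L^2$ bound follows from an energy-type estimate whose worst contributions are absorbed using the pointwise decay of $u$, which itself comes from a \emph{linear dispersive estimate in the distorted setting}, $\|e^{itL_{V}}g\|_{L^\infty}\lesssim|t|^{-1/2}\|\wt g\|_{L^\infty}+|t|^{-3/4}\|\partial_k\wt g\|_{L^2}$; this also yields \eqref{mty}, and the small loss $t^{p_0}$ is what one pays in this step. The $L^\infty$ bound on $\wt f$ and the $\partial_k\wt f$ bound are the heart of the matter and both rest on the stationary-phase analysis below; the $\partial_k$ estimate is the most delicate, since a derivative falling on $e^{it\varphi}$ costs a factor $t$, which is precisely the source of the $t^{1/4}$-type growth, forces $\alpha<1/4$, and propagates into $\rho$.

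For the asymptotics I would isolate the space--time resonant part of the nonlinearity. On the flat hyperplane $k=k_1-k_2+k_3$ one has $\varphi=2(k-k_3)(k_1-k)$, which vanishes only at $k_1=k$ (then $k=k_3$) or $k_3=k$ (then $k=k_1$); a two-dimensional stationary phase extracts from the corresponding $\delta$-term the self-interaction $\propto|\wt f(t,k)|^2\wt f(t,k)/t$, while the other $\delta$'s carry non-stationary phases and are removed by one integration by parts in $t$ (paying $\|\partial_k\wt f\|_{L^2}$), the principal-value and regular pieces being lower order. For $t\to+\infty$ this yields, in $L^\infty_k$, the ODE $\partial_t\wt f(t,k)=-\tfrac{i}{2\sqrt{2\pi}}\,(1+t)^{-1}|\wt f(t,k)|^2\wt f(t,k)+O\big((1+t)^{-1-\rho}\big)$; since the leading term preserves $|\wt f|$, the modulus converges and integrating the phase gives \eqref{mtas+}. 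For $t\to-\infty$ the difference is geometric: the wave packet at frequency $k$ concentrates near $x\approx 2kt$, which for $k>0$ now sits at $x\to-\infty$, where $\psi(\cdot,k)$ carries both an incident and an $R(k)$-reflected component, so the resonant interaction couples $k$ and $-k$; carried out with the Jost asymptotics this produces a self-adjoint, slowly varying $2\times2$ matrix ODE, and the integrating factor of \eqref{mtas3} (after freezing $\mathcal{S}$ as written there, cf.~\eqref{mtas2}--\eqref{mtas2.5}) turns it into $W(t,k)=W_{-\infty}(k)+O\big((1+|t|)^{-\rho/2}\big)$, i.e.~\eqref{mtas4}; the splitting at $k\sim|t|^{-\rho}$ separates the regime where the reflected packet is still resolved from the one where it is not, matching the degeneration $T(0)=0$, while self-adjointness of $\mathcal{S}$ gives $|W|=|Z|$ and a uniformly bounded flow.

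I expect the main obstacle to be exactly the control of the nonlinear spectral measure $\mu$ and of the non-resonant remainders: the Jost functions are only as regular and localized as $V$ allows, their $k\to0$ behavior is delicate because $0$ is marginally not a resonance, and one must integrate by parts in frequency against kernels carrying $\delta$ and principal-value pieces while keeping those singular terms under control — this is where $\gamma>6$, the exponent $p_0$ and the gap $\alpha<1/4$ are all spent, and where the $\partial_k\wt f$ estimate becomes genuinely technical.
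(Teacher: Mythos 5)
Your architecture does track the paper's: distorted Fourier transform, decomposition of the nonlinear spectral measure into $\delta$-measures on the hyperplanes $\beta k+\gamma\ell+\delta m+\epsilon n=0$ plus principal-value kernels plus a regular remainder, a bootstrap in the same norm, the distorted dispersive estimate, and a stationary-phase reduction to an asymptotic ODE. But two of your key assertions are false, and each one blocks a part of the theorem.

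First, the claims that only the flat hyperplane $k=k_1-k_2+k_3$ is resonant and that ``the principal-value and regular pieces are lower order'' are both wrong. Every hyperplane $\beta k-\gamma\ell+\delta m-\epsilon n=0$ contains the fully coherent point $\ell,m,n=\pm k$ (space- and time-resonant), so already the $\delta$-part of $\mu$ produces at order $1/t$ not only $|\widetilde f(k)|^2\widetilde f(k)$ but all the terms $\overline{T(k)}\,\mathcal N^+[f](k)$, $\overline{R_-(k)}\,\mathcal N^-[f](k)$, etc., built from the scattering matrix (this is \eqref{I+0as10}, \eqref{I-0as10}). The p.v.\ kernels contribute at exactly the same order: their stationary-phase evaluation is the function $h(t,\pm\sqrt{|t|}k)$ of Lemma \ref{AsLem2}, and it is only the sum of the $\delta$- and p.v.-contributions — the coefficient $b(t,y)$ of \eqref{secas3}, for which $b(t,-y)\to 0$ as $y\to+\infty$ when $t>0$ — that cancels the scattering-matrix terms as $t\to+\infty$ and leaves the scalar phase of \eqref{mtas+}; for $t<0$ the same combination produces the matrix $\tfrac12[\mathcal S_0+\mathcal S_1]$ of \eqref{mtas2.5}. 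Discarding the p.v.\ part while keeping only the flat $\delta$ happens to reproduce the diagonal structure for $t>0$ by two compensating errors, but with the wrong constant (the $\delta$ alone yields $\tfrac{1}{4\sqrt{2\pi}}$, not $\tfrac{1}{2\sqrt{2\pi}}$) and with no mechanism left to derive \eqref{mtas2}--\eqref{mtas2.5} for $t<0$: your wave-packet heuristic for negative times is not implemented by anything in your scheme.

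Second, the weighted bound is not ``an energy-type estimate absorbed by pointwise decay.'' When $\partial_k$ falls on the phase inside the p.v.\ contribution, one gets a factor $t$ against a trilinear expression whose $L^2$ size is $t^{-1}\e_1^3$, i.e.\ an $O(\e_1^3)$ integrand whose time integral grows like $t$ — far beyond the allowed $t^{1/4-\alpha}$. The paper closes this only through two non-generic structural facts: the exact cancellation $\mathcal N_V\equiv 0$, coming from the oddness in \eqref{goodsym} paired with the evenness of $\widehat\phi$ in the decomposition \eqref{decphi-}, and the vanishing at zero frequency of the $\mu_L$-coefficients $a^\pm_{\beta\gamma\delta\epsilon}$ (a consequence of $T(0)=0$, $R_\pm(0)=-1$), exploited through a dyadic case analysis with integrations by parts in $a,b,q$ and in time. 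Your outline identifies neither mechanism, so the $\partial_k\widetilde f$ estimate — which you correctly single out as the heart of the matter — does not close as described.
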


\bigskip
Before describing in more details some of the main ideas in the proof, let us make some comments:

\medskip
\setlength{\leftmargini}{1.5em}
\begin{enumerate}

\item \label{rem1} {\it Genericity of the potential}.
 We assume that $V$ is generic  in the following sense:
\begin{align}
\label{mtr1}
\int_\R V(x) \, m(x) dx \neq 0
\end{align}
where $m$ is the unique solution of $(-\partial_{x}^2 + V)m = 0$ which is bounded for $x \gg 1$.
In particular one can see that  \eqref{mtr1} is equivalent to the fact that the
 transmission coefficient (see Section \ref{secspth}  below for the definitions of $T$ and $R_\pm$) satisfies  $T(0) = 0$, $T'(0) \neq 0$
(and hence the reflection coefficients $R_\pm(0) = -1$),
see \eqref{TRformula}-\eqref{TRsmallk0}. This is also equivalent to the fact that $0$ is not a resonance.
Indeed the fact that $0$ is not a resonance is usually formulated  in dimension $1$ (see \cite{GolSch} for example)  in terms of $W(0) \neq 0$
where  $W(k)= [f_{+}(k), f_{-}(k)]$ is the Wronskian between the two Jost functions (see section \ref{spth1} for the definition).
Since $W(k)= 2ik/T(k)$ (see \cite{DeiTru} p. 144) and $W$ is continuous, our assumption is equivalent to $W(0) \neq 0$.

Note, see Lemma \ref{Lem0} below, that under this generic assumption, for any $f \in L^1$, one has $\wt{f}(0) = 0$,
where $\wt{f}$ is the distorted Fourier transform of $f$. See again Section \ref{secspth} and the definitions \eqref{psixk}-\eqref{distF}.
Note that if $\widetilde{f}(0) = 0$, according to the asymptotic formulas \eqref{aslin+}-\eqref{aslin-} below,
one would get additional decay in time for $u(x,t)$ when $|x|\ll t$, provided $\widetilde{f}$ is sufficiently smooth.
This type of improved decay has been observed, for example, in \cite{Sch1}.
While we do not directly make use of this additional time decay in physical space,
we do rely on the improved behavior of some of the nonlinear interactions when the input frequencies are small.


\medskip
\item \label{rem2} {\it Assumptions on the data and the special case of odd solutions}.
Notice that we do not put any additional restriction on our initial data besides standard regularity
and spatial decay. In particular we do not require the data to be odd and the potential to be even as in \cite{CGVann,DelortNLSV}.

It is interesting to note that the expressions in \eqref{mtas2}-\eqref{mtas3}
involve explicitly the scattering matrix $S$ associated to the potential $V$, see \eqref{scatmat}.
It turns out that this is not the case if one assumes that $V$ is even and the initial data is odd.
Indeed, under these additional assumptions, $\wt{f}$ is  odd, the reflection coefficients coincide, that is, $R_+ = R_-$,
and the expression in \eqref{mtas2} simplifies
to $\mathcal{S} = \mathcal{S}_0$ for all $t$.

\medskip
\item \label{rem3} {\it About the modified asymptotics: physical space}.
From \eqref{mtas+} and a slight refinement of Proposition \ref{propdisp},
one can also derive a statement about nonlinear asymptotics in physical space.
More precisely one can show that, under our global bounds, see \eqref{bootstrap},
\begin{align}
\label{aslin+}
u(t,x) & = \frac{e^{ix^2/4t}}{\sqrt{-2it}} \, \widetilde{f}\Big(t,-\frac{x}{2t}\Big) + O(|t|^{-1/2+\alpha}), \qquad t \gg 1,
\end{align}
while, for $t \ll -1$, denoting $k_0:= -x/2t$, we have
\begin{align}
\label{aslin-}
\begin{split}
\begin{array}{ll}
u(t,x) = \dfrac{e^{ix^2/4t}}{\sqrt{-2it}} \Big[ T(k_0)\widetilde{f}(k_0)
  + R_+(k_0)\widetilde{f}(-k_0) \Big] + O(|t|^{-1/2+\alpha}),  & \qquad x>0,
\\
\\
u(t,x) = \dfrac{e^{ix^2/4t}}{\sqrt{-2it}} \Big[ T(-k_0)\widetilde{f}(k_0)
  + R_-(-k_0) \widetilde{f} (-k_0) \Big] + O(|t|^{-1/2+\alpha}), & \qquad x<0.
\end{array}
\end{split}
\end{align}
Notice how the scattering matrix \eqref{scatmat} associated to the potential also appears explicitly here.

Combining \eqref{aslin+}-\eqref{aslin-} with \eqref{mtas+} it is then possible to obtain the following asymptotic expression:
\begin{align}
\label{asreal}
u(t,x) & = \frac{e^{ix^2/4t}}{\sqrt{-2it}} \exp\Big(\frac{i}{2\sqrt{2\pi}}
    \Big|W_{+\infty}\Big(-\frac{x}{2t}\Big)\Big|^2 \log t \Big) W_{+\infty}\Big(-\frac{x}{2t}\Big)
    + O(|t|^{-1/2+\alpha/2}), 
\end{align}
for $t\geq 1$.

As $t\rightarrow-\infty$, the expression in the distorted Fourier space is more complicated and involves
the scattering matrix $S$, see \eqref{mtas2.5}-\eqref{mtas4}.
In particular, it is interesting to notice how the expression for the modified profile at frequency $k$
involves both the frequencies $k$ and $-k$.

\medskip
\item {\it Reversing time.} Though \eqref{NLS} is symmetrical by reversing time (and taking the complex conjugate of $u$),
the phase correction for $t \to -\infty$ is much more complicated than it is for $t \to \infty$.
This follows from our choice of the distorted Fourier transform $\widetilde{\mathcal{F}}$
(defined in \eqref{distF}), which is sometimes denoted $\mathcal{F}_+$, and can be defined through the wave operator $W_{+}$ by
$$
W_+ = s - \underset{t\to \infty}{\operatorname{lim}} e^{it(-\partial_x^2 + V)} e^{it \partial_x^2} = \mathcal{F}_+^{-1} \widehat{\mathcal{F}}
$$
(where $\widehat{\mathcal{F}}$ is the flat, classical Fourier transform).
Flipping the $+$ signs in this definition, one obtains another distorted Fourier transform, $\mathcal{F}_-$, defined by
$$
W_- = s - \underset{t\to -\infty}{\operatorname{lim}} e^{it(-\partial_x^2 + V)} e^{it \partial_x^2} = \mathcal{F}_-^{-1} \widehat{\mathcal{F}}.
$$
This second distorted Fourier transform is better adapted to analyzing negative times, and would give simple asymptotics as $t \to - \infty$.

\medskip
\item \label{rem4} {\it The bootstrap space.} The bulk of our analysis is performed in the distorted Fourier space,
and the nonlinear evolution stays small in the space
\begin{align}
\label{mtr4}
(1+|t|)^{-p_0}{\big\| (1+|k|)^3 \wt{f}(t) \big\|}_{L^2} + {\big\| \wt{f}(t) \big\|}_{L^\infty}
  + (1+|t|)^{-1/4 + \alpha}{\big\| \partial_k \wt{f}(t) \big\|}_{L^2},
\end{align}
for some $\alpha \in (0,1/4)$. 
The motivation for choosing the above space is that it guarantees the desired sharp decay of $(1+|t|)^{-1/2}$, see the linear
estimates in Proposition \ref{propdisp}.

\medskip
\item \label{rem5} {\it Vector fields methods.} There is a substantial difference in the way we perform weighted estimates
using the distorted Fourier transform, and alternative approaches based on the vector fields method, 
such as Donninger and Krieger \cite{DoKri} and Cuccagna, Georgiev and Visciglia \cite{CGV}. 

These approaches are based on using $L^2$ norms weighted by vectorfields to deduce decay for a
general function $u$, and then estimating vectorfields of the full nonlinear solution.
In our approach, we look at a true linear solutions of the perturbed equation, establish a decay estimate
- in this case involving $\widetilde{f}$ and $\partial_k \widetilde{f}$ - and then estimate the relevant quantities in the nonlinear problem.


\end{enumerate}

\medskip
\subsection{Ideas of the proof}
Our approach will be based on the use of the {\it distorted Fourier transform}
(the Weyl-Kodaira-Titchmarsh theory), which will allow us to extend many recent successful
Fourier analytical techniques used to study small solutions of nonlinear equations without potentials.
In the setting of the distorted Fourier transform, we then follow the basic idea of the
{\it space-time resonance method} by filtering the solution by the linear group,
and viewing the (nonlinear) Duhamel term as an oscillatory integral: see \cite{G,GMS1,GMS2}
for higher-dimensional instances, and \cite{KP} for \eqref{NLS0}, which provides in many respects a blueprint for the present paper.
A first attempt to extend the space-time resonance method to a perturbed case can be found in \cite{GHW}.

\subsubsection{The equation on the profile in distorted Fourier space}
We refer to section~\ref{secspth} for a more detailed presentation of the distorted Fourier transform,
and admit for the moment the existence of generalized eigenfunctions $\psi(x,k)$ such that
$$
\forall k \in \mathbb{R}, \qquad (- \partial_x^2 + V ) \psi(x,k) = k^2 \psi(x,k),
$$
and that the familiar formulas relating the Fourier transform and its inverse in dimension $d=1$ hold if one replaces
$e^{ikx}$ by $\psi(k,x)$:
$$
\widetilde{f}(k) = \int_\mathbb{R} \overline{\psi(x,k)} f(x)\,dx \qquad \mbox{and} \qquad f(x) = \int_\mathbb{R} {\psi(x,k)} \widetilde{f}(k) \,dk.
$$
Defining then the profile $f$ by
$$
f = e^{-it(-\partial_{xx}+V)} u \quad \mbox{or equivalently} \quad \widetilde{f}(t,k) = e^{-itk^2} \widetilde{u}(t,k),
$$
it is easy to check that it satisfies the equation
\begin{equation*}
\partial_t \widetilde{f}(t,k) = -i\iiint e^{it (-k^2 + \ell^2 - m^2 + n^2)} \widetilde{f}(t,\ell) \overline{\widetilde{f}(t,m)} \widetilde{f}(t,n)
\mu(k,\ell,m,n) \,d\ell \, dm \, dn,
\end{equation*}
hence
\begin{align}
\label{coccinelle}
\partial_t \widetilde{f}(t,k) = \widetilde{u_0}(k)
-i \int_0^t \iiint e^{is (-k^2 + \ell^2 - m^2 + n^2)} \widetilde{f}(s,\ell) \overline{\widetilde{f}(s,m)} \widetilde{f}(s,n)
\mu(k,\ell,m,n) \,d\ell\, dm\, dn\,ds,
\end{align}
where
\begin{align}
\label{coccinelle2}
\mu(k,\ell,m,n) = \int \overline{\psi(x,k) }\psi(x,\ell) \overline{\psi(x,m) }\psi(x,n) \, dx
\end{align}
characterizes the interaction between the generalized eigenfunctions.

At this point, the essential difference with the flat case becomes clear: if $V=0$, $\psi(x,k)$ should be replaced by $e^{ikx}$,
in which case $\mu(k,\ell,m,n) = \delta(k-\ell+m-n)$. But if $V\neq 0$, the structure of $\mu$ becomes much more involved:
we will see that it can be decomposed into
\begin{align}
\label{coccinelle3}
\begin{split}
\mu(k,\ell,m,n) = \sum_{\beta,\gamma,\delta,\epsilon = \pm 1} & \Big[
A_{\beta,\gamma,\delta,\epsilon}(k,\ell,m,n) \delta(\beta k + \gamma \ell + \delta m + \epsilon n)
\\
& + B_{\beta,\gamma,\delta,\epsilon}(k,\ell,m,n) \, \mbox{p.v.} \frac{1}{\beta k + \gamma \ell + \delta m + \epsilon n} \Big]
+ C(k,\ell,m,n),
\end{split}
\end{align}
where $A_{\beta,\gamma,\delta,\epsilon}$, $B_{\beta,\gamma,\delta,\epsilon}$, and $C$ are relatively smooth functions
(depending on the potential), and ``p.v.'' stands for principal value.

The structure of the coefficients in \eqref{coccinelle3} plays an important role.
In particular, we will see that the structure of the coefficients $B_{\beta,\gamma,\delta,\epsilon}$
will lead to some special cancellation of the worst terms appearing in the estimate for $\partial_k\widetilde{f}$.
Further null structures at low frequencies in some of the coefficients $B_{\beta,\gamma,\delta,\epsilon}$ and in $C$
will also allow us to close the crucial bounds on $\partial_k\widetilde{f}$ and $\widetilde{f}$ in \eqref{mtbounds}.

\smallskip
\subsubsection{The multilinear oscillatory integral}
The whole challenge is to analyze the right-hand side of~\eqref{coccinelle},
which is a multilinear oscillatory integral with phase $\Phi(k,\ell,m,n)=-k^2+\ell^2-m^2+n^2$,
where $\widetilde{f}$ has limited regularity and the kernel $\mu$ is as above.
It requires a delicate decomposition, which is the heart of the argument,
and will be explained precisely in the following sections. For the moment,
let us simply notice that, in regions in $(\ell,m,n)$ where $\mu$ is smooth and $\Phi$ nondegenerate,
the convergence of the the right-hand of~\eqref{coccinelle} is easy to establish.

First of all, problems arise, of course, close to the singular set of $\mu$
$$
\operatorname{Sing} \mu = \cup_{\beta,\gamma,\delta,\epsilon = \pm 1} \{\beta k + \gamma \ell + \delta m + \epsilon n = 0\}.
$$

Next, to take advantage of oscillations, one can integrate by parts through the formula
$$
\frac{1}{i s \partial_{\mathbf{e}} \Phi} \partial_{\mathbf{e}} e^{i s \Phi} = e^{is \Phi}
$$
if $\mathbf{e}$ is a vector in $(\ell,m,n)$ space. This is however only helpful if this manipulation does not result in the singularity of $\mu$ getting worse. In other words, $\mathbf{e}$ should be tangent to $\{\beta k + \gamma \ell + \delta m + \epsilon n = 0\}$ (where $\beta,\gamma,\delta,\epsilon$ depend on the part of $\mu$ which is considered). In other words, we see that the relevant notion of stationary points in $(\ell,m,n)$ (``space resonances'') is given by stationary points of
$\Phi$ restricted to $\{\beta k + \gamma \ell + \delta m + \epsilon n = 0\}$.

Finally, a last option is to integrate by parts in $s$ through the formula
$$
\frac{1}{i\Phi} \partial_s e^{is\Phi} = e^{is\Phi};
$$
obviously, this is only helpful away from the set $\{ \phi = 0 \}$ (``time resonances'').

Most worrisome are the points which belong to the three categories:
the singular set of $\mu$, space resonances, and time resonances.
It turns out that these are of the form $\ell,m,n = \pm k$ and will ultimately lead to an ODE giving an oscillatory phase correction.

\smallskip
\subsubsection{The bootstrap argument}
We will prove an a priori estimate for the following  norm
\begin{align}
\label{apriori0}
\| u \|_X = \sup_{t} \left[ \| \widetilde{f}(t) \|_{L^\infty} + \langle t \rangle^{-p_0} \| u(t) \|_{H^3}
  + \langle t \rangle^{-1/4+\alpha} \| \partial_k \widetilde{f}(t) \|_{L^2} \right].
\end{align}
(recall $p_0 = 1/100$). More precisely, we will assume that the initial data $u_0$ satisfies \eqref{mtdata} and that for $\e_1=\e_0^{2/3}$
we have the apriori bound
\begin{equation}
\label{bootstrap}
\|u\|_{X} \leq \e_{1}.
\end{equation}
We will then show that this estimate improves to
\begin{align}
\label{bootstrapest}
\|u\|_{X} \leq C\e_{0} +  C\e_{1}^3,
\end{align}
for some absolute constant $C>0$. For $\e_0$ sufficiently small, this estimate combined with a bootstrap argument, and the choice $\e_1 = 2 C \e_0$,
gives global existence of solutions which are small in the space $X$.
As part of the argument needed to obtain \eqref{bootstrapest} we will establish the asymptotic behavior of solutions
as described in \eqref{mtas+}-\eqref{mtas4} of Theorem \ref{maintheo}.

For simplicity, and without loss of generality, we only consider $t \geq 1$,
assuming that a local solutions has been already constructed on the time interval $[0,1]$ by standard methods.
Using also time reversibility we obtain solutions for all times.

We remark that in the definition \eqref{apriori0} we could equivalently replace $\| \partial_k \widetilde{f}(t) \|_{L^2}$ by
\begin{align*}
{\| \partial_k \mathbf{1}_+ \widetilde{f}(t) \|}_{L^2} + {\| \partial_k \mathbf{1}_- \widetilde{f}(t) \|}_{L^2},
\end{align*}
where $\mathbf{1}_\pm$ is the characteristic function of $\pm k \geq 0$,
and control this quantity instead. Notice this is finite at time $0$ because $\wt{u_0}(0)=0$, see Lemma \ref{Lem0}.

\smallskip
\subsubsection{Structure of the proof}
The rest of the paper is organized as follows:

\setlength{\leftmargini}{1.5em}
\begin{itemize}

\item Section \ref{secspth} contains an exposition of the elements of the spectral theory
of operators $-\partial_x^2 + V$ on $\mathbb{R}$ which will be needed.

\item Section \ref{secprel} is dedicated to three preliminary results: the linear estimate
$$
{\| u(t) \|}_{L^\infty} \lesssim {(1+|t|)}^{-1/2} {\| f \|}_X,
$$
which allows to deduce decay of $u$ from the control of the bootstrap norm, the energy estimate in $H^3$, and a lemma describing precisely the structure of the measure $\mu$ in \eqref{coccinelle2}.

\item Section \ref{secw} gives the control of the weighted norm component of the space $X$.
By weighted norm, we always mean $\| \partial_k \widetilde{f}(t) \|_{L^2}$, which is indeed akin to a weighted norm in physical space.
The control on this norm relies on a precise analysis of the multilinear oscillatory integral, and some key cancellation. 

\item Finally, Section \ref{secLinfty}  gives the control of $\| \widetilde{f}(t) \|_\infty$.
Once again, this is achieved through a precise analysis of the multilinear oscillatory integral.
It allows us to derive an ODE which describes the leading order behavior of $\widetilde{f}$, and whose solutions are bounded.

\end{itemize}

\medskip
\subsection*{Acknowledgements}
We thank Z. Hani for communicating to us that Cuccagna-Georgiev-Visciglia had announced a result for the case of odd solutions
and even potentials \cite{CGVann}. 
We thank A. Stefanov for letting us know about the paper \cite{IPNaumkin}.
P. G. was partially supported by the NSF grant DMS-1501019.
F. P. was partially supported by the NSF grant DMS-1265875.

\bigskip
\section{Spectral theory in dimension one}\label{secspth}

\medskip
\subsection{Jost solutions}\label{spth1}

Define $f_{+}(x,k)$ and $f_-(x,k)$ by the requirements that
\begin{align}
\label{f+-}
(- \partial_x^2 + V) f_{\pm}  = k^2 f_{\pm}, \quad \mbox{for all $x\in\R$, \quad and} \quad
\left\{
\begin{array}{ll}
f_{+}(x,k) \sim e^{ixk} & \mbox{as $x \to \infty$}
\\
f_{-}(x,k) \sim e^{-ixk} & \mbox{as $x \to - \infty$}.
\end{array}
\right.
\end{align}
Define
\begin{align}
\label{m+-}
m_{+}(x,k) = e^{-ikx} f_{+}(x,k) \quad \mbox{and} \quad m_{-}(x,k) = e^{ikx} f_{-}(x,k).
\end{align}
We will need precise bounds on $m_\pm$ and their derivatives, and for this we define
\begin{align}
\label{defWpm}
\mathcal{W}_{+}^s(x)= \int_{x}^{+ \infty} \langle y \rangle^s |V(y)| \, dy, \quad
 \mathcal{W}_{-}^s(x)= \int_{-\infty}^{x} \langle y \rangle^s |V(y)| \, dy.
\end{align}

Let us recall that we say that $V \in L^1_{\gamma}$ if $\langle x\rangle^\gamma |V| \in L^1$.

\begin{lem}
\label{lemm+-}
 For every $s\geq 0$, assuming that $V \in L^1_{s+1}$, we have the following estimates that are uniform in $x$ and $k$,
\begin{align}
\label{mgood}
&| \partial_{k}^s( m_{\pm}(x,k)- 1) | \lesssim { 1 \over \langle k \rangle} \mathcal{W}_{\pm}^{s+1}(x), \quad \pm x \geq -1,\\
\label{mbad}
& | \partial_{k}^s( m_{\pm}(x,k)- 1) | \lesssim { 1 \over \langle k \rangle}   \langle x \rangle^{s+1}, \quad \pm x \leq 1.
  \end{align}
  Moreover, we also have the following control of the $x$ derivatives:
  \begin{align*}
&|\partial_{x} \partial_{k}^s m_{\pm}(x,k) | \lesssim  \mathcal{W}_{\pm}^{s}(x), \quad \pm x \geq -1,\\
& |\partial_{x} \partial_{k}^sm_{\pm}(x,k) | \lesssim  \langle x \rangle^{s}, \quad \pm x \leq 1.
  \end{align*}
  \end{lem}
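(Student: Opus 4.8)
The standard approach is to set up and analyze the Volterra integral equation satisfied by $m_{\pm}$. The plan is to focus on $m_{+}$ (the argument for $m_{-}$ is symmetric under $x\mapsto -x$, $V(x)\mapsto V(-x)$). Plugging $f_{+}(x,k)=e^{ikx}m_{+}(x,k)$ into $(-\partial_x^2+V)f_{+}=k^2 f_{+}$ and using the boundary condition $m_{+}(x,k)\to 1$ as $x\to+\infty$, one obtains the representation
\begin{align}
\label{volterra}
m_{+}(x,k)=1+\int_x^{+\infty} D_k(y-x)\, V(y)\, m_{+}(y,k)\,dy,\qquad D_k(z):=\int_0^z e^{2ikt}\,dt=\frac{e^{2ikz}-1}{2ik}.
\end{align}
The kernel satisfies the elementary bound $|D_k(z)|\lesssim z/\langle kz\rangle \lesssim \langle z\rangle/\langle k\rangle$ for $z\geq 0$ (and more precisely $|D_k(z)|\le \min(z,1/|k|)$), which is the source of the $1/\langle k\rangle$ gain. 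First I would establish the $s=0$ case of \eqref{mgood} and \eqref{mbad} by iterating \eqref{volterra}: writing $m_{+}-1=\sum_{j\ge 1} g_j$ with $g_1(x,k)=\int_x^\infty D_k(y-x)V(y)\,dy$ and $g_{j+1}(x,k)=\int_x^\infty D_k(y-x)V(y)g_j(y,k)\,dy$, one gets by induction
\[
|g_j(x,k)|\le \frac{1}{\langle k\rangle}\,\frac{\big(\int_x^\infty \langle y\rangle |V(y)|\,dy\big)^j}{(j-1)!}\cdot\frac{1}{\langle k\rangle^{\,j-1}}\quad\text{for }x\ge -1,
\]
using $|D_k(y-x)|\lesssim \langle y\rangle/\langle k\rangle$ when $x\ge -1$ (so $y-x\le \langle y\rangle\langle x\rangle\lesssim \langle y\rangle$ there). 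Summing the series gives \eqref{mgood} for $s=0$; for $x\le 1$ one instead bounds $|D_k(y-x)|\lesssim \langle y-x\rangle/\langle k\rangle \lesssim \langle x\rangle \langle y\rangle/\langle k\rangle$ and absorbs the $\langle x\rangle$ factor to get \eqref{mbad}.

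Next I would treat the $k$-derivatives. Differentiating \eqref{volterra} $s$ times in $k$ and using $\partial_k^j D_k(z)=\int_0^z (2it)^j e^{2ikt}\,dt$, which obeys $|\partial_k^j D_k(z)|\lesssim \langle z\rangle^{j+1}/\langle k\rangle$ for $z\ge 0$ (integrating by parts once in $t$), we see that $\partial_k^s m_{+}$ solves the same Volterra equation with an inhomogeneous term built from $\partial_k^{j} D_k$ and lower-order derivatives $\partial_k^{i} m_{+}$, $i<s$. One then runs an induction on $s$: assuming \eqref{mgood}--\eqref{mbad} for all orders $<s$, the inhomogeneous term is controlled by $\langle x\rangle^{s}\mathcal W_{+}^{1}(x)/\langle k\rangle$-type quantities — here the extra powers $\langle y\rangle^{s}$ from $\partial_k^j D_k$ combine with $|V(y)|$ to produce $\mathcal W_{+}^{s+1}(x)$, which is exactly why the hypothesis $V\in L^1_{s+1}$ appears — and the same Volterra iteration as before closes the estimate. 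The weight bookkeeping (making sure the powers of $\langle y\rangle$ landing on $|V|$ never exceed $s+1$, and that the resulting geometric series still converges with a $1/\langle k\rangle$ prefactor) is the one place requiring care.

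Finally, the bounds on $\partial_x\partial_k^s m_{+}$ follow by differentiating \eqref{volterra} once in $x$: the boundary term is $-D_k(0)V(x)m_{+}(x,k)=0$ since $D_k(0)=0$, and $\partial_x D_k(y-x)=-e^{2ik(y-x)}$ has modulus $1$, so
\[
\partial_x m_{+}(x,k)=-\int_x^\infty e^{2ik(y-x)}V(y)m_{+}(y,k)\,dy,
\]
whose $k$-derivatives are estimated by inserting the already-proven bounds on $\partial_k^i m_{+}$ and using $|\partial_k^j e^{2ik(y-x)}|\lesssim \langle y-x\rangle^{j}\lesssim \langle y\rangle^{j}$; this produces $\mathcal W_{+}^{s}(x)$ for $x\ge -1$ (note: weight $s$, not $s+1$, since there is no $D_k$ contributing an extra power) and $\langle x\rangle^{s}$ for $x\le 1$. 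I expect the main obstacle to be purely organizational: setting up the induction on $s$ so that the chain rule applied to the product $D_k(y-x)V(y)m_{+}(y,k)$ distributes the $k$-derivatives correctly and one can verify at each step that the needed moment of $V$ is at most $\langle y\rangle^{s+1}|V(y)|$, while simultaneously keeping the geometric-series argument (hence the uniformity in $x$ and $k$) intact.
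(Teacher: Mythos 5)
Your plan is correct and follows essentially the same route as the paper's proof in Appendix A: the same Volterra equation with kernel $D_k(z)=(e^{2ikz}-1)/(2ik)$, the same derivative bounds $|\partial_k^j D_k(z)|\lesssim \langle z\rangle^{j+1}/\langle k\rangle$, induction on $s$ through the differentiated Volterra equations, and the identity $\partial_x m_+=-\int_x^\infty e^{2ik(x-y)}V(y)m_+(y,k)\,dy$ for the $x$-derivative bounds; the only cosmetic difference is that you solve the Volterra equation by Picard iteration, while the paper inverts $I-L$ on $[x_0,\infty)$ by smallness of $\mathcal{W}_+^1(x_0)$ and then propagates to all $x$ by Gronwall. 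One caution on the region $x\le 1$, which is exactly the bookkeeping you flag: if you literally insert $|D_k(y-x)|\lesssim \langle x\rangle\langle y\rangle/\langle k\rangle$ into the iteration while the iterate $g_j(y)$ already carries its own $\langle y\rangle$ weight for $y\le 1$, you produce $\langle y\rangle^2|V(y)|$ and overshoot the hypothesis $V\in L^1_{s+1}$; the fix is to use $\langle y-x\rangle\le\langle x\rangle$ for $x\le y\le 0$ (no extra $\langle y\rangle$ there) and the already-established unweighted bound on $g_j(y)$ for $y\ge 0$, which is precisely how the paper's Gronwall estimate for $z_+(x,k)/\langle x\rangle$ is arranged.
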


 The proof of these estimates is sketched in Appendix A.

\medskip
\subsection{Transmission, Reflection, and Scattering Matrix}\label{spth2} A classical reference for the formulas which we recall here is~\cite{DeiTru} (see also \cite{Weder2}, \cite{Yafaev} for example).
Denote $T(k)$ and $R_{\pm}(k)$ respectively the {\it transmission} and {\it reflection} coefficients associated to the potential $V$.
These coefficients are such that
\begin{align}
\label{f+f-}
\begin{split}
&f_+ (x,k) = \frac{1}{T(k)} f_-(x,-k) + \frac{R_-(k)}{T(k)} f_-(x,k),
\\
&f_- (x,k) = \frac{1}{T(k)} f_+(x,-k) + \frac{R_+(k)}{T(k)} f_+(x,k)
\end{split}
\end{align}
or, equivalently,
\begin{align*}
&f_+ (x,k) \sim \frac{1}{T(k)} e^{ikx} + \frac{R_-(k)}{T(k)} e^{-ikx} \quad \mbox{as $x \to - \infty$},
\\
&f_- (x,k) \sim \frac{1}{T(k)} e^{-ikx} + \frac{R_+(k)}{T(k)} e^{ikx} \quad \mbox{as $x \to \infty$}.
\end{align*}
Moreover, they are given by the formulas, see \cite[pp. 145-146]{DeiTru},
\begin{align}
\label{TRformula}
\begin{split}
& \frac{1}{T(k)} = 1 - \frac{1}{2ik} \int V(x) m_{\pm} (x,k)\,dx,
\\
& \frac{R_{\pm}(k)}{T(k)} = \frac{1}{2ik} \int e^{\mp 2ikx} V(x) m_{\mp}(x,k)\,dx,
\end{split}
\end{align}
and satisfy
\begin{align}
 \label{TRconj}
\begin{split}
& T(-k) = \overline{T(k)}, \qquad R_{\pm}(-k) = \overline{R_{\pm}(k)},
\\
& |R_{\pm}(k)|^2 + |T(k)|^2 = 1, \qquad T(k)\overline{R_-(k)} + R_+(k)\overline{T(k)} = 0.
\end{split}
\end{align}
In the present paper, we  recall that we consider the generic case
\begin{align}
\label{Vgeneric}
\int V(x) m_{\pm} (x,0)\,dx \neq 0,
\end{align}
for which
\begin{align}
\label{TRsmallk0}
T(0) = 0 \qquad \mbox{and} \qquad R_{\pm}(0) = -1.
\end{align}
From the formula \eqref{TRformula} above giving $T$ and $R_{\pm}$ and the
estimates of Lemma \ref{lemm+-}, we obtain the following:
\begin{lem}
\label{lemcoeff}
Assuming that $V \in L^1_{4}$, we have the uniform estimates for $k \in \mathbb{R}$:
\begin{equation}
\label{TRk}
|\partial_k^j T(k)| + |\partial_k^j R_{\pm} (k) | \lesssim {1 \over \langle k \rangle}, \qquad 1\leq j \leq 3.
\end{equation}
\end{lem}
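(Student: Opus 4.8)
The plan is to rewrite $T$ and $R_\pm$ as rational expressions in quantities that Lemma \ref{lemm+-} controls directly, and then to differentiate. Introduce
\begin{equation*}
a(k) := \int V(x)\, m_\pm(x,k)\,dx, \qquad b_\pm(k) := \int e^{\mp 2ikx} V(x)\, m_\mp(x,k)\,dx, \qquad D(k) := 2ik - a(k).
\end{equation*}
By \eqref{TRformula} the quantity $a(k)$ does not depend on the choice of sign, and \eqref{TRformula} is equivalent to
\begin{equation*}
T(k) = \frac{2ik}{D(k)} = 1 + \frac{a(k)}{D(k)}, \qquad R_\pm(k) = \frac{b_\pm(k)}{D(k)}.
\end{equation*}
Hence it suffices to establish: (i) $|a(k)| \lesssim 1$ and $|\partial_k^j a(k)| \lesssim \langle k\rangle^{-1}$ for $1 \le j \le 3$; (ii) $|\partial_k^j b_\pm(k)| \lesssim 1$ for $0 \le j \le 3$; and (iii) $|D(k)| \gtrsim \langle k\rangle$ for all real $k$; after which the Leibniz and Fa\`a di Bruno formulas finish the job.

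For (i) and (ii) I would differentiate under the integral sign (justified by Lemma \ref{lemm+-}) and use, for all $x$ and $0 \le s \le 3$, the uniform consequence $|\partial_k^s(m_\pm(x,k) - 1)| \lesssim \langle k\rangle^{-1}\langle x\rangle^{s+1}$ of \eqref{mgood}--\eqref{mbad} (using either bound according to the sign of $x$, together with $\mathcal{W}_\pm^{s+1}(x) \le {\|V\|}_{L^1_{s+1}} \lesssim 1$). Since $\partial_k^j 1 = 0$ for $j \ge 1$, one has $\partial_k^j a(k) = \int V(x)\,\partial_k^j(m_\pm(x,k) - 1)\,dx$, whence $|\partial_k^j a(k)| \lesssim \langle k\rangle^{-1}{\|V\|}_{L^1_{j+1}} \lesssim \langle k\rangle^{-1}$ because $V \in L^1_4$; the case $j = 0$, with $|m_\pm| \le 1 + |m_\pm - 1| \lesssim \langle x\rangle$, gives $|a(k)| \lesssim 1$. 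For $b_\pm$ one expands $\partial_k^j(e^{\mp 2ikx} m_\mp(x,k))$ by Leibniz: the terms carrying at least one $k$-derivative on $m_\mp$ are $O(\langle k\rangle^{-1})$ exactly as for $a$, while the leftover term $\int (\mp 2ix)^j V(x) m_\mp(x,k)\,dx$ is, using $|m_\mp| \lesssim \langle x\rangle$, bounded by ${\|V\|}_{L^1_{j+1}} \lesssim 1$; this proves (ii).

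Step (iii) is the crucial point, and the place where the genericity hypothesis enters. For $k \ne 0$, \eqref{TRformula} gives $D(k) = 2ik/T(k)$, so $|D(k)| = 2|k|/|T(k)| \ge 2|k|$ because $|T(k)| \le 1$ by \eqref{TRconj}; this already yields $|D(k)| \gtrsim \langle k\rangle$ for $|k| \ge 1$, and shows $D$ vanishes nowhere on $0 < |k| \le 1$. At $k = 0$, $D(0) = -a(0) = -\int V(x) m_\pm(x,0)\,dx$ is nonzero precisely by the genericity assumption \eqref{Vgeneric}. Since $a$ — and hence $D$ — is continuous (dominated convergence, with $|V(x) m_\pm(x,k)| \lesssim |V(x)|\langle x\rangle \in L^1$), $D$ is continuous and nowhere vanishing on the compact set $\{|k| \le 1\}$, hence bounded away from $0$ there; combined with $|D(k)| \ge 2|k|$ for $|k| \ge 1$, this gives $|D(k)| \gtrsim \langle k\rangle$ for all real $k$. (This is also how $T$ and $R_\pm$, a priori defined by \eqref{TRformula} only for $k \ne 0$, are seen to extend to $C^3$ functions across $k = 0$.)

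Finally, $|\partial_k^j D(k)| \lesssim 1$ for $0 \le j \le 3$ (indeed $|D'| = |2i - a'| \lesssim 1$ and $|\partial_k^j D| = |\partial_k^j a| \lesssim \langle k\rangle^{-1}$ for $j \ge 2$), so by the Fa\`a di Bruno formula and (iii), $|\partial_k^j (D(k)^{-1})| \lesssim \langle k\rangle^{-1}$ for $0 \le j \le 3$. The Leibniz rule then gives, for $1 \le j \le 3$,
\begin{equation*}
\partial_k^j T = \sum_{i=0}^{j}\binom{j}{i}\,(\partial_k^i a)\,\partial_k^{j-i}(D^{-1}), \qquad \partial_k^j R_\pm = \sum_{i=0}^{j}\binom{j}{i}\,(\partial_k^i b_\pm)\,\partial_k^{j-i}(D^{-1}),
\end{equation*}
and every summand is $O(\langle k\rangle^{-1})$ — for $T$ because $|a| \lesssim 1$ while $|\partial_k^i a| \lesssim \langle k\rangle^{-1}$ for $i \ge 1$, and for $R_\pm$ because $|\partial_k^i b_\pm| \lesssim 1$ — which is exactly \eqref{TRk}. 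The one genuinely delicate ingredient is the lower bound $|D(k)| = |2ik - a(k)| \gtrsim \langle k\rangle$: the large-$k$ regime is immediate from unitarity, but the behavior near $k = 0$, where $T(0) = 0$ and \eqref{TRformula} is singular, is exactly what the genericity condition \eqref{Vgeneric} is designed to exclude.
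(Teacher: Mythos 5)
Your proof is correct and follows exactly the route the paper indicates (the paper states that the lemma "follows from \eqref{TRformula} and Lemma \ref{lemm+-}" without giving details): you express $T=2ik/D$ and $R_\pm=b_\pm/D$ with $D=2ik-a$, bound $a,b_\pm$ and their derivatives via Lemma \ref{lemm+-} using $V\in L^1_4$, and obtain the lower bound $|D|\gtrsim\langle k\rangle$ from $|1/T|\ge 1$ at large $k$ together with the genericity condition $a(0)\neq 0$ near $k=0$. Your observation that genericity (a standing assumption of the paper) is what controls $D$ near $k=0$ is accurate and is the only nontrivial point in the argument.
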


Given $T$ and $R_\pm$ as above one defines the scattering matrix associated to the potential $V$ by
\begin{align}
\label{scatmat}
S(k) :=
\left( \begin{array}{cc}
T(k)  & R_+(k) \\ R_-(k) & T(k)
\end{array}
 \right), \qquad
S^{-1}(k) :=
\left( \begin{array}{cc}
\overline{T(k)} & \overline{R_-(k)} \\  \overline{R_+(k)} & \overline{T(k)}
\end{array}
 \right).
\end{align}

\medskip
\subsection{Flat and distorted Fourier transform}
We adopt the following normalization for the (flat) Fourier transform on the line:
$$
\widehat{\mathcal{F}} \phi (k) = \widehat{\phi}(k) = \frac{1}{\sqrt{2\pi}} \int e^{-ikx} \phi(x) \, dx.
$$
As is well-known,
$$
\widehat{\mathcal{F}}^{-1} \phi = \frac{1}{\sqrt{2\pi}} \int e^{ikx} \phi(k) \, dk= \what{\mathcal{F}}^* \phi,
$$
and $\mathcal{F}$ is an isometry on $L^2(\mathbb{R})$.

Setting now
\begin{align}
 \label{psixk}
\psi(x,k) := \frac{1}{\sqrt{2\pi}}
\left\{
\begin{array}{ll}
T(k) f_+(x,k) & \mbox{for $k \geq 0$} \\ \\
T(-k) f_-(x,-k) & \mbox{for $k < 0$},
\end{array}
\right.
\end{align}
the distorted Fourier transform is defined by
\begin{align}
\label{distF}
\widetilde{\mathcal{F}} \phi(k) = \widetilde {\phi}(k) = \int_\mathbb{R} \overline{\psi(x,k)} \phi(x)\,dx.
\end{align}

\medskip
\subsection{Decomposition of $\psi(x,k)$}

Let $\rho$ be a smooth, non-negative function, equal to $0$ outside of $B(0,2)$ and such that $\int \rho = 1$. Define $\chi_{\pm}$ by
\begin{equation}
\label{chi+-}
\chi_+(x) = H * \rho = \int_{-\infty}^{x} \rho(y)\,dy , \quad \mbox{and} \quad \chi_+(x) + \chi_-(x) = 1,
\end{equation}
where $H$ is the Heaviside function, $H=\mathbf{1}_{x \geq 0}$.

With $\chi_\pm$ as above, and using the definition of $\psi$ in \eqref{psixk} and $f_\pm$ and $m_\pm$ in \eqref{f+-}-\eqref{m+-},
as well as the identity \eqref{f+f-} we can write
\begin{align}
\label{psi>}
\begin{split}
\mbox{for} \quad k>0 \qquad \sqrt{2\pi}\psi(x,k) & = \chi_+(x)T(k)m_+(x,k)e^{ixk}
  \\ & + \chi_-(x) \big[ m_-(x,-k)e^{ikx} + R_-(k) m_-(x,k)e^{-ikx} \big],
\end{split}
\end{align}
and
\begin{align}
\label{psi<}
\begin{split}
\mbox{for} \quad k<0 \qquad \sqrt{2\pi}\psi(x,k) & = \chi_-(x)T(-k)m_-(x,-k)e^{ixk}
  \\ & + \chi_+(x) \big[m_+(x,k) e^{ikx} + R_+(-k)m_+(x,-k) e^{-ikx} \big].
\end{split}
\end{align}

We then decompose
\begin{align}
\label{psidec}
\sqrt{2\pi}\psi(x,k) = \psi_S(x,k) + \psi_L(x,k) + \psi_R(x,k) ,
\end{align}
where the singular part (non-decaying in $x$) is
\begin{align}
\label{psiS+-}
\begin{split}
\mbox{for} \quad k>0 \qquad \psi_S(x,k) & := \chi_-(x) \big[ e^{ikx} - e^{-ikx} \big],
\\
\mbox{for} \quad k<0 \qquad \psi_S(x,k) & := \chi_+(x) \big[ e^{ikx} - e^{-ikx} \big],
\end{split}
\end{align}
the singular part with improved low frequencies behavior is
\begin{align}
\label{psiL+-}
\begin{split}
\mbox{for} \quad k>0 \qquad \psi_L(x,k) & := \chi_+(x) T(k) e^{ikx} + \chi_-(x) (R_-(k) + 1) e^{-ixk},
\\
\mbox{for} \quad k<0 \qquad \psi_L(x,k) & := \chi_-(x) T(-k) e^{ikx} + \chi_+(x) (R_+(-k) + 1) e^{-ixk},
\end{split}
\end{align}
and the regular part is
\begin{align}
\label{psiR+-}
\begin{split}
\mbox{for} \quad k>0 \qquad \psi_R(k,x) & := \chi_+(x) T(k) (m_+(x,k)-1) e^{ikx}
  \\ & + \chi_-(x) \big[ (m_-(x,-k) - 1)e^{ikx} + R_-(k)(m_-(x,k) - 1) e^{-ixk} \big],
\\
\mbox{for} \quad k<0 \qquad \psi_R(k,x) & := \chi_-(x) T(-k) (m_-(x,-k)-1) e^{ikx}
  \\ & + \chi_+(x) \big[ (m_+(x,k) - 1)e^{ikx} + R_+(-k)(m_+(x,-k) - 1) e^{-ixk} \big].
\end{split}
\end{align}

\medskip
\subsection{Properties of the distorded Fourier transform}
Let us collect some useful results about the distorded Fourier transform defined in \eqref{distF}; these results
can be obtained as consequences of the general ``Weyl-Kodaira-Titchmarsh theory'', see for example \cite{Dunford} and \cite{Wilcox}.
Direct proofs in our framework can be found in the book \cite{Yafaev}, Chapter 5.

\begin{thm}
\label{theotordu}
Assume that $V \in L^1_1$,  and that $V$ has no bound states, then
 $\widetilde{\mathcal{F}}$ is an isometry on $L^2$,
$$\| \widetilde{\mathcal{F}} f \|_{L^2}
=  \|  f \|_{L^2}, \quad \forall f \in L^2$$
 and $\widetilde{\mathcal{F}}$ is a bijection with
$$
\widetilde{\mathcal{F}}^{-1} \phi(x) = \int_\mathbb{R} \psi(x,k) \phi(k)\,dk.
$$
Moreover,
the distorded  Fourier transform diagonalizes $-\partial_x^2 + V$:
\begin{equation}
\label{diago}
- \partial_x^2 + V = \widetilde{\mathcal{F}}^{-1} k^2 \widetilde{\mathcal{F}}.
\end{equation}
\end{thm}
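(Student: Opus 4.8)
\textbf{Proof proposal for Theorem \ref{theotordu}.}

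The plan is to deduce all three assertions from the classical Weyl--Kodaira--Titchmarsh expansion theorem specialized to the one-dimensional Schr\"odinger operator $L_V = -\partial_x^2 + V$ acting on $L^2(\R)$ with domain $H^2(\R)$. First I would recall that under $V \in L^1_1$ the Jost solutions $f_\pm(x,k)$ exist and depend analytically on $k$ in the appropriate half-planes, with the bounds of Lemma \ref{lemm+-}; in particular $T(k)$ is continuous and, since there are no bound states, $L_V$ has purely absolutely continuous spectrum equal to $[0,+\infty)$. The key input is the spectral resolution of $L_V$: the spectral measure restricted to $[0,\infty)$ is, after the change of variables $\lambda = k^2$, absolutely continuous with a density expressed through $f_\pm$, and the $2\times 2$ matrix form of the expansion collapses, precisely because there are no bound states and because of the linear relations \eqref{f+f-} among the Jost solutions, to the scalar kernel $\psi(x,k)$ of \eqref{psixk} on the full line $k \in \R$.

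Concretely, I would proceed as follows. Step one: establish the inversion/reconstruction formula by showing that for $\phi$ in a dense class (say Schwartz functions, or compactly supported $L^2$ functions) the map $\phi \mapsto \int_\R \psi(x,k) \, \widetilde{\mathcal F}\phi(k)\,dk$ recovers $\phi$; this is the content of the Weyl--Kodaira--Titchmarsh eigenfunction expansion, and the normalization in \eqref{psixk} with the factor $T(k)/\sqrt{2\pi}$ is exactly the one that makes the resulting transform unitary --- one checks this against the free case $V=0$ where $\psi(x,k) = e^{ikx}/\sqrt{2\pi}$ and $\widetilde{\mathcal F}$ reduces to $\widehat{\mathcal F}$. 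Step two: from the reconstruction formula and a Parseval-type identity (again valid on the dense class by the expansion theorem), deduce $\|\widetilde{\mathcal F}f\|_{L^2} = \|f\|_{L^2}$, hence $\widetilde{\mathcal F}$ extends to an isometry on all of $L^2$; surjectivity (hence that it is a bijection with inverse $\widetilde{\mathcal F}^{-1}\phi(x) = \int_\R \psi(x,k)\phi(k)\,dk$) follows because completeness of the generalized eigenfunctions --- equivalently, absence of a singular or point spectral component --- is guaranteed by the hypothesis that $L_V$ has no bound states together with the genericity/absolute-continuity on $(0,\infty)$ recalled in the introduction. Step three: the diagonalization \eqref{diago} is immediate once the transform is established: on the dense domain one computes, using $(-\partial_x^2 + V)\psi(\cdot,k) = k^2\psi(\cdot,k)$ and integrating by parts (legitimate by the decay of $m_\pm - 1$ and the regularity of $V$), that $\widetilde{\mathcal F}(L_V f)(k) = k^2 \widetilde{\mathcal F}f(k)$, which rearranges to \eqref{diago}.

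The main obstacle is Step one, the expansion theorem itself: proving that $\psi(x,k)$ with the stated normalization furnishes a \emph{complete orthonormal system of generalized eigenfunctions} requires either invoking the general Weyl--Kodaira--Titchmarsh machinery (as in \cite{Dunford}, \cite{Wilcox}) or reproducing the direct argument of \cite{Yafaev}, Chapter 5 --- this rests on a careful analysis of the resolvent kernel $(L_V - \lambda \pm i0)^{-1}$, its boundary values, Stone's formula for the spectral projections, and the passage $\lambda \to k^2$. Since the paper explicitly defers to these references, I would simply cite them here rather than reprove the result; the only thing requiring a word of justification in our setting is that the $2\times 2$ matrix spectral density degenerates to the single scalar weight carried by $\psi$, which is exactly where the \emph{no bound states} hypothesis and the relations \eqref{f+f-}, \eqref{TRconj} enter. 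All the remaining steps (isometry, surjectivity, diagonalization) are then routine consequences obtained by density and integration by parts.
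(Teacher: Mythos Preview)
Your proposal is correct and matches the paper's own treatment: the paper does not give a proof of Theorem \ref{theotordu} but simply cites the general Weyl--Kodaira--Titchmarsh theory (\cite{Dunford}, \cite{Wilcox}) and the direct argument in \cite{Yafaev}, Chapter 5, exactly as you do. Your outline --- Stone's formula for the spectral resolution, the expression of the spectral density through the Jost functions and the transmission coefficient, and then the routine deductions of isometry, surjectivity, and diagonalization --- is precisely the standard route those references take, so there is nothing to add.
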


Note that we can express the wave operators associated to $-\partial_x^2 + V$ with the help of $\widetilde{\mathcal{F}}$, for example,
$
W_+ = \widetilde{\mathcal{F}}^{-1} \widehat{\mathcal{F}}
$
and that these operators enjoy some $L^p$ boundedness properties, \cite{Weder1}, which nevertheless
we will not use here.

We shall only use the following elementary properties:
\begin{lem}\label{Lem0}
Consider a generic potential $V\in L^1_{1}$ with no bound states, then:

\begin{itemize}
\item[(i)] If    $\phi \in L^1$, then $\widetilde{\phi}$ is a continuous, bounded function. Furthermore, $\widetilde{\phi}(0)=0$.

\item [(ii)] There exists $C>0$ such that
\begin{equation}
\label{FH1} \|k \, \widetilde u \|_{L^2} \leq C (1 + \|V\|_{L^1}^{1 \over 2}) \|u\|_{H^1}, \quad \forall u \in H^1.
\end{equation}

\item[(iii)] If $V \in L^1_{3} $, there exists $C>0$ such that
$$ \| \partial_k \widetilde \phi \|_{L^2} \leq C \displaystyle \| \langle x \rangle \phi \|_{L^2}.$$
\end{itemize}

\end{lem}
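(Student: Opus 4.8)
The three statements are of increasing difficulty, and all rely on the decomposition \eqref{psidec}--\eqref{psiR+-} of $\sqrt{2\pi}\,\psi(x,k)$ into the singular part $\psi_S$, the low-frequency-improved singular part $\psi_L$, and the regular part $\psi_R$, together with the pointwise bounds of Lemma \ref{lemm+-} on $m_\pm-1$ and the bounds of Lemma \ref{lemcoeff} on $T,R_\pm$. For (i), I would start from $\widetilde\phi(k)=\int\overline{\psi(x,k)}\phi(x)\,dx$. Continuity and boundedness follow from dominated convergence once one checks that $\psi(x,k)$ is bounded uniformly in $(x,k)$ and continuous in $k$ for each $x$: boundedness is immediate from \eqref{psixk} since $|T(k)|\le 1$ and $|f_\pm|=|m_\pm|\lesssim 1$ by \eqref{mgood}--\eqref{mbad}, and continuity in $k$ (including across $k=0$, where one must match the two branches in \eqref{psixk}) is where the genericity hypothesis enters: at $k=0$ one has $T(0)=0$ and $R_\pm(0)=-1$ by \eqref{TRsmallk0}, so from \eqref{psi>}--\eqref{psi<} the limit of $\sqrt{2\pi}\psi(x,k)$ as $k\to 0^\pm$ is $\chi_\mp(x)[\,1\cdot 1 + (-1)\cdot 1\,]=0$ in both cases; hence $\psi(x,0)=0$, which simultaneously gives continuity at $0$ and the vanishing $\widetilde\phi(0)=\int\overline{\psi(x,0)}\phi(x)\,dx=0$.

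For (ii), the clean way is to use \eqref{diago}: $\widetilde{\mathcal F}$ diagonalizes $-\partial_x^2+V$, so $\|k\,\widetilde u\|_{L^2}^2 = \big( (-\partial_x^2+V)u, u\big)_{L^2} = \|\partial_x u\|_{L^2}^2 + \int V|u|^2\,dx$, using the isometry property of Theorem \ref{theotordu} and integration by parts. Then $\big|\int V|u|^2\big| \le \|V\|_{L^1}\|u\|_{L^\infty}^2 \lesssim \|V\|_{L^1}\|u\|_{H^1}^2$ by Sobolev embedding $H^1\hookrightarrow L^\infty$ in one dimension, and combining gives $\|k\,\widetilde u\|_{L^2}^2 \lesssim (1+\|V\|_{L^1})\|u\|_{H^1}^2$, i.e. \eqref{FH1}. (One should note this identity extends from Schwartz functions to all of $H^1$ by density.)

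For (iii), which I expect to be the main obstacle, I would write $\partial_k\widetilde\phi(k) = \int \partial_k\overline{\psi(x,k)}\,\phi(x)\,dx$ and estimate $\|\partial_k\widetilde\phi\|_{L^2}$ by splitting $\sqrt{2\pi}\,\psi = \psi_S+\psi_L+\psi_R$. The regular part $\psi_R$ is harmless: $\partial_k\psi_R$ is bounded and integrable in $x$ with a $\langle x\rangle$ weight to spare (from \eqref{mgood} with $s=1$ and Lemma \ref{lemcoeff}), so $\|\partial_k\widetilde{\phi_R}\|_{L^2}\lesssim \|\langle x\rangle\phi\|_{L^2}$ directly, or even via Plancherel after recognizing it as an $L^2$-bounded operator. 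The two singular pieces $\psi_S$ and $\psi_L$ each consist of terms of the form $\chi_\pm(x)\,c(k)\,e^{\pm ikx}$ with $c$ smooth and bounded with bounded derivative; differentiating in $k$ produces a term $c'(k)e^{\pm ikx}$ (treated like the regular part, since $\chi_\pm\phi\in L^2$) and the dangerous term $\pm i x\, c(k)\, e^{\pm ikx}\chi_\pm(x)$. For this last term, the operator $\phi\mapsto \int \mp i x\,\overline{c(k)}\,e^{\mp ikx}\chi_\pm(x)\phi(x)\,dx$ is, up to the bounded multiplier $\overline{c(k)}$ and the cutoff $\chi_\pm$, just $\pm i$ times the flat Fourier transform of $x\,\phi(x)$, hence bounded on $L^2$ by $\|x\phi\|_{L^2}\le\|\langle x\rangle\phi\|_{L^2}$. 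The one subtlety is the interplay at $k$ near $0$ and the branch change in \eqref{psixk}: since $c(k)$ here is either $T$, $R_\pm+1$, or $\pm 1$, and these are all continuous (indeed $C^1$ by Lemma \ref{lemcoeff} away from $0$, and bounded near $0$), one can work on $\{k>0\}$ and $\{k<0\}$ separately — this is exactly the remark made after \eqref{bootstrap} that $\|\partial_k\widetilde\phi\|_{L^2}$ may be replaced by $\|\partial_k\mathbf 1_+\widetilde\phi\|_{L^2}+\|\partial_k\mathbf 1_-\widetilde\phi\|_{L^2}$, which is finite precisely because $\widetilde\phi(0)=0$ by part (i). Summing the $O(1)$ many pieces gives the claimed bound.
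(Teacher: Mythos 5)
Your parts (i) and (ii) follow the paper's proof essentially verbatim (boundedness and continuity of $\psi$ from Lemma \ref{lemm+-}, vanishing at $k=0$ from \eqref{TRsmallk0}; the diagonalization \eqref{diago} plus isometry plus Sobolev embedding for \eqref{FH1}), so there is nothing to add there. For part (iii) you take a genuinely different route. The paper keeps the full kernel $\overline{\chi_+(x)T(k)m_+(x,k)e^{ikx}}$ intact, differentiates it into the three terms \eqref{eagle1}--\eqref{eagle3}, and invokes an $L^2$-boundedness theorem for pseudodifferential operators (Hwang's criterion, with $k$ as the ``space'' variable and $x$ as the ``frequency'' variable and symbol $a(y,\xi)=T'(y)\chi_+(\xi)m_+(\xi,y)$); the mixed dependence of $m_+$ on $(x,k)$ is exactly what forces this. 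You instead first split $\psi=\psi_S+\psi_L+\psi_R$, so that all the $m_\pm-1$ factors land in $\psi_R$, and the singular pieces become pure exponentials $\chi_\pm(x)c(k)e^{\pm ikx}$ with tensorized amplitudes; then flat Plancherel handles the $x\,c(k)e^{\pm ikx}$ terms and bounded multipliers handle the $c'(k)$ terms. This buys elementarity (no $\Psi$DO calculus) at the price of needing quantitative decay of the remainder. On that point you are slightly glib: ``$\partial_k\psi_R$ is bounded and integrable in $x$'' gives only an $L^\infty_k$ bound on the corresponding integral, not $L^2_k$; what closes the argument is that \eqref{mgood} supplies the extra factor $\langle k\rangle^{-1}$ (and $\mathcal{W}_\pm^{s+1}(x)\lesssim\langle x\rangle^{-(2-s)}$ for $V\in L^1_3$), so a Schur test in $(k,x)$ applies --- you should say this explicitly. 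Your handling of the branch point at $k=0$ (one-sided derivatives in $L^2$ plus continuity and $\widetilde\phi(0)=0$, so no Dirac mass appears) is correct and is in fact more careful than what the paper writes down.
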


We will use (ii) to obtain that a control on the regularity of the solution gives decay on the (generalized) Fourier side,
see Proposition \ref{propH3} below.
Also note that for us, the main consequence of (iii) will be that at the initial time one has
\begin{align}
\|\partial_{k} \widetilde f (0,k) \|_{L^2} \lesssim  \| \langle x \rangle u_{0}\|_{L^2} < \infty.
\end{align}
 Control at later times of $\partial_{k} \widetilde f$ will then guarantee decay for the nonlinear solution
through the linear estimate \eqref{propdisp2} in Proposition \ref{propdisp} below.

\begin{proof} $(i)$ Considering for instance the case $k>0$, recall that
\begin{align*}
\widetilde{\phi}(k) & = \int \overline{\psi(x,k)} \phi(x) \,dx \\
& = \frac{1}{\sqrt{2\pi}} \int \overline{ \left[ \chi_+ (x) T(k) m_+(x,k) e^{ikx} + \chi_-(x) (m_-(x,-k) e^{ikx} + R_-(k) m_-(x,k) e^{-ikx} ) \right] } \phi(x) \,dx
\end{align*}
The properties of $m$ and $T$ imply immediately that $\widetilde{\phi}$ is bounded and continuous.
In the generic case, $\widetilde{\phi}(0)$ follows by using $T(0) = 0 $ and $R_{\pm}(0) = -1$, see \eqref{TRsmallk0}-\eqref{TRk}.

\bigskip
\noindent $(ii)$ We note that
\begin{align*}
\|  k \, \widetilde u \|_{L^2}^2 &= \left( \widetilde{\mathcal{F}}u, k^2 \widetilde{\mathcal{F}} u \right)_{L^2}
= \left( \widetilde{\mathcal{F}}u, \widetilde{\mathcal{F}}(-\partial_{x}^2 + V) u \right)_{L^2}
= \left(u, (-\partial_{x}^2 + V) u\right)_{L^2}
\\ &= \| \partial_{x}u \|_{L^2}^2 + \int_{\R} V |u|^2\, dx,
\end{align*}
where we have used  \eqref{diago} for the second equality and the fact that $\widetilde{\mathcal{F}}$ is an isometry
for the third. This yields
$$ \|  k \, \widetilde u \|_{L^2}^2 \leq\|\partial_{x} u\|_{L^2}^2 + \|V\|_{L^1}\|u\|_{L^\infty}^2
  \lesssim (1 + \|V\|_{L^1}) \|u \|_{H^1}^2.$$

\bigskip

\noindent $(iii)$ Assuming that $\langle x \rangle \phi \in L^2$, we aim at proving that $\partial_k \widetilde \phi \in L^2$. Considering the case $k>0$, $\widetilde{\phi}$ is given by the above formula. To alleviate notations, we will focus on the first summand and show that, if $k>0$,
$ \displaystyle \partial_k \int \overline{ \chi_+ (x) T(k) m_+(x,k) e^{ikx} } \phi(x) \,dx \in L^2$. It splits into
\begin{align}
\label{eagle1} \partial_k \int \overline{ \chi_+ (x) T(k) m_+(x,k) e^{ikx} } \phi(x) \,dx &=  T'(k) \int \overline{ \chi_+ (x) m_+(x,k) e^{ikx} } \phi(x) \,dx \\
\label{eagle2} &  \qquad +\int \overline{ \chi_+ (x) T(k) \partial_k  m_+(x,k) e^{ikx} } \phi(x) \,dx \\
\label{eagle3} &  \qquad - i\int \overline{ \chi_+ (x) T(k) m_+(x,k)e^{ikx}x} \phi(x)\,dx.
\end{align}
Note that, though we are only interested in $k>0$, the terms \eqref{eagle1}, \eqref{eagle2}, \eqref{eagle3}
 are well defined for $k \in \mathbb{R}$, so that we can estimate their $L^2(\mathbb{R})$ norms.
We can then view ~\eqref{eagle1} as a pseudo differential operator applied to $\hat{\mathcal{F}}^{-1} \phi$
with $k$ playing the role of the space variable and $x$ the role of the frequency variable.
Let us recall that for a usual pseudo-differential operator defined by
$$ Op_{a}(u) (y) = {1 \over 2 \pi}\int_{\mathbb{R}} e^{iy \xi} a(y, \xi) \what u(\xi) \, d\xi,$$
 we have by  classical $L^2$ continuity results, see  for example  Theorem 2 in \cite{Hwang} or  \cite{Lerner},
that in dimension $1$, $Op_{a}$ is a bounded operator on $L^2$ as soon as
 $a$, $\partial_{y}a$, $\partial_{\xi}a$  and $\partial_{y \xi} a $ are bounded functions.
  By using this criterion with
  $$ a(y, \xi)=T'(y) \chi_{+}(\xi) m_{+}(\xi, y) $$
  we obtain the result from Lemmas \ref{lemm+-} and \ref{lemcoeff}.
   We handle \eqref{eagle2}, \eqref{eagle3} in the same way, this yields
   $$ \left \|  \partial_k \int \overline{ \chi_+ (x) T(k) m_+(x,k) e^{ikx} } \phi(x) \,dx \right\|_{L^2_{k}(\mathbb{R}_{+})}
    \lesssim \| \what{\mathcal{F}}^{-1} \phi \|_{L^2} + \| \mathcal{F}^{-1}(x \phi) \|_{L^2}
     \lesssim  \| \langle x \rangle \phi \|_{L^2}.$$
\end{proof}

\medskip
\subsection{Littlewood-Paley decomposition and other notations}\label{secLP}
In this article we will work with localizations in frequency
defined, as is standard in Littlewood-Paley theory, as follows:
We let $\varphi: \R \to [0,1]$ be an even, smooth function supported in $[-8/5,8/5]$ and equal to $1$ on $[-5/4,5/4]$.
For $k\in\Z$ we define $\varphi_k(x) := \varphi(2^{-k}x) - \varphi(2^{-k+1}x)$, so that the family $(\varphi_k)_{k\in\Z}$
forms a partition of unity,
\begin{equation*}
 \sum_{k\in\Z}\varphi_k(\xi)=1, \quad \xi \neq 0.
\end{equation*}
We also let
\begin{align*}
\varphi_{I}(x) := \sum_{k \in I \cap \Z}\varphi_k, \quad \text{for any} \quad I \subset \R, \qquad
\varphi_{\leq a}(x) := \varphi_{(-\infty,a]}(x), \qquad \varphi_{> a}(x) = \varphi_{(a,\infty]}(x),
\end{align*}
with similar definitions for $\varphi_{< a},\varphi_{\geq a}$.
To these cut-offs we associate frequency projections $P_k$ through
\begin{equation*}
 P_k g:=\mathcal{F}^{-1}\left(\varphi_k(\xi)\what{g}(\xi)\right)
\end{equation*}
and define similarly $P_{I}g:=\mathcal{F}^{-1}\left(\varphi_{I}(\xi)\what{g}(\xi)\right)$,
$P_{\leq k}g:=\mathcal{F}^{-1}\left(\varphi_{\leq k}(\xi) \what{g}(\xi)\right)$, $k\in\Z$ etc.
We will also sometimes denote $\underline{\varphi_k} = \varphi_{[k-2,k+2]}$.

We also denote $H$ the Heavyside function, and $\mathbf{1}_\pm = (1\pm H)/2$ the characteristic function of $\{\pm x > 0\}$.

We will also use the following notation for trilinear operators
\begin{align}
\label{tri}
T_{\alpha}(f_{1}, f_{2}, f_{3}) = \widehat{\mathcal{F}}^{-1}
\iiint_{\R\times\R\times\R} \widehat{\alpha}(k,\ell,m,n) \widehat{f}_{1}(\ell) \widehat{f}_{2}(m) \widehat{f}_{3}(n) \,d\ell dmdn.
\end{align}

\bigskip
\section{Preliminary results}
\label{secprel}

\medskip
\subsection{Linear estimates}\label{secLin}


\begin{prop}
\label{propdisp}
\begin{itemize}

\item[(i)] For any $t \geq 0$,
\begin{align}
 \label{propdisp1}
\| e^{-it\partial_x^2} f \|_{L^\infty} \lesssim \frac{1}{\sqrt t} \| \widehat{f} \|_{L^\infty}
+ \frac{1}{t^{\frac{3}{4}}} \| \partial_k \widehat{f} \|_{L^2}.
\end{align}

\item[(ii)] If $V \in L^1_1$, and does not have bound states, then for any $t \geq 0$,
\begin{align}
 \label{propdisp2}
\| e^{it(-\partial_x^2+V)} f \|_{L^\infty} \lesssim \frac{1}{\sqrt t} \| \widetilde{f}(t) \|_{L^\infty}
+ \frac{1}{t^{\frac{3}{4}}} \| \partial_k \widetilde{f} \|_{L^2}.
\end{align}
\end{itemize}
\end{prop}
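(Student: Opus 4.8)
The plan is to handle the two parts in order: part (i) is the by-now-standard stationary-phase estimate with limited regularity (in the spirit of \cite{KP}), and part (ii) is reduced to it after inserting the decomposition of the distorted kernel $\psi$.

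\emph{Part (i).} Write $e^{-it\partial_x^2}f(x)=\frac{1}{\sqrt{2\pi}}\int_{\mathbb R}e^{i(tk^2+kx)}\widehat f(k)\,dk$ and complete the square, so that the phase is $t(k-k_0)^2-tk_0^2$ with critical point $k_0=-x/(2t)$. I would split $\widehat f(k)=\widehat f(k_0)+\big(\widehat f(k)-\widehat f(k_0)\big)$. The first part gives $\widehat f(k_0)\int e^{it(k-k_0)^2}\,dk$, a Fresnel integral of size $\lesssim t^{-1/2}$, producing the $t^{-1/2}\|\widehat f\|_{L^\infty}$ term. For the second part I would cut at $|k-k_0|=t^{-1/2}$: on $\{|k-k_0|\le t^{-1/2}\}$ I use $|\widehat f(k)-\widehat f(k_0)|\le |k-k_0|^{1/2}\|\partial_k\widehat f\|_{L^2}$ and integrate $|k-k_0|^{1/2}$ to get $\lesssim t^{-3/4}\|\partial_k\widehat f\|_{L^2}$; on $\{|k-k_0|>t^{-1/2}\}$ I integrate by parts through $e^{it(k-k_0)^2}=\frac{1}{2it(k-k_0)}\partial_k e^{it(k-k_0)^2}$, and the boundary term at $|k-k_0|=t^{-1/2}$ together with the two bulk terms (one carrying $\partial_k\widehat f$, one carrying $(k-k_0)^{-2}(\widehat f(k)-\widehat f(k_0))$) are each $\lesssim t^{-3/4}\|\partial_k\widehat f\|_{L^2}$ by Cauchy--Schwarz and, for the last one, Hardy's inequality $\int |k-k_0|^{-2}|\widehat f(k)-\widehat f(k_0)|^2\,dk\lesssim\|\partial_k\widehat f\|_{L^2}^2$; the terms at $k=\pm\infty$ vanish by a density argument.

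\emph{Part (ii).} By Theorem \ref{theotordu} and \eqref{diago}, $e^{it(-\partial_x^2+V)}f(x)=\int_{\mathbb R}\psi(x,k)e^{itk^2}\widetilde f(k)\,dk$. I would insert the decomposition $\sqrt{2\pi}\,\psi=\psi_S+\psi_L+\psi_R$ of \eqref{psidec}--\eqref{psiR+-} and split the $k$-integral at $0$ (as in \eqref{psixk}). Every resulting summand has the form $\chi_{\pm}(x)\int_{\pm k>0}a(x,k)\,e^{i(tk^2+\sigma kx)}\,\widetilde f(k)\,dk$ with $\sigma\in\{+1,-1\}$, where $a(x,k)$ is either a coefficient $b(k)$ depending only on $k$ (built from $1$, $T(\pm k)$, $R_\pm(\mp k)$) for the $\psi_S,\psi_L$ pieces, or such a $b(k)$ times a factor $m_\pm(x,\mp k)-1$ for the $\psi_R$ pieces. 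The phase is exactly the flat one, with critical point $k_\sigma=-\sigma x/(2t)$, so the plan is to rerun the argument of part (i) on the half-line, carrying $a(x,k)$ along. Two structural facts make this work. First, uniformly in $x$, $\|a(x,\cdot)\|_{L^\infty_k}+\|\partial_k a(x,\cdot)\|_{L^2_k}\lesssim 1$: the coefficient estimates $|\partial_k^j T|+|\partial_k^j R_\pm|\lesssim\langle k\rangle^{-1}$ come from Lemma \ref{lemcoeff}, and for the $\psi_R$ pieces the factors $m_\pm(x,\mp k)-1$ and their $k$-derivatives are $\lesssim\langle k\rangle^{-1}$ \emph{uniformly in $x$ on the support of the accompanying cutoff} $\chi_\pm$, since the weights $\mathcal{W}^{s}_\pm(x)$ of Lemma \ref{lemm+-} are bounded there. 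Second, the low-frequency endpoint $k=0$ is benign: $\widetilde f$ is $\tfrac12$-H\"older (equivalently $\|\partial_k\widetilde f\|_{L^2}<\infty$, which is finite at the initial time by Lemma \ref{Lem0}) with $\widetilde f(0)=0$, and for the $\psi_L,\psi_R$ pieces the amplitude (or the relevant combination of amplitudes on each half-line) vanishes at $k=0$ thanks to the genericity relations $T(0)=0$, $R_\pm(0)=-1$ of \eqref{TRsmallk0}; consequently every boundary term at $k=0$ produced by integrating by parts either vanishes or is $\lesssim t^{-3/4}\|\partial_k\widetilde f\|_{L^2}$, and the H\"older bound $|\widetilde f(k)-\widetilde f(k_\sigma)|\lesssim|k-k_\sigma|^{1/2}\|\partial_k\widetilde f\|_{L^2}$ remains valid even when $k$ and $k_\sigma$ straddle the origin.

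Granting these, the $\{|k-k_\sigma|\le t^{-1/2}\}$ part is $\lesssim t^{-3/4}\|\partial_k\widetilde f\|_{L^2}$; the integration by parts on $\{|k-k_\sigma|>t^{-1/2}\}$ yields the same bound, the one new term (produced by $\partial_k a$) being $\lesssim t^{-1}\|\partial_k\widetilde f\|_{L^2}$ after Cauchy--Schwarz and Hardy; and the constant piece $\widetilde f(k_\sigma)\int_{\pm k>0}a(x,k)\,e^{it(k-k_\sigma)^2}\,dk$ is $\lesssim t^{-1/2}\|\widetilde f\|_{L^\infty}$ by a van der Corput estimate (splitting off a $t^{-1/2}$-neighbourhood of the endpoint when the critical point lies near it). Summing the finitely many summands gives \eqref{propdisp2}. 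The routine part of the whole proof is the oscillatory-integral bookkeeping, which simply transcribes part (i). The genuine difficulty is twofold: proving the amplitude bounds above \emph{uniformly in $x$} for the $\psi_R$ terms — this is exactly what the localization of $V$ in Lemmas \ref{lemm+-} and \ref{lemcoeff} provides, the $x$-decay of $m_\pm-1$ being converted into uniform boundedness once restricted to the support of $\chi_\pm$ — and controlling the low-frequency endpoint $k\to 0$, where absolute convergence of the oscillatory integral fails and one must exploit the genericity cancellations $T(0)=0$, $R_\pm(0)=-1$ together with the H\"older continuity and vanishing at $0$ of $\widetilde f$.
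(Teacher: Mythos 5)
Part (i) of your argument is, up to cosmetic reshuffling (you subtract $\widehat f(k_0)$ globally and invoke Hardy, the paper splits $|\widehat f(k)|\le|\widehat f(X)|+|k-X|^{1/2}\|\partial_k\widehat f\|_{L^2}$ inside the integral), exactly the paper's adapted Van der Corput proof, and it is correct.

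For part (ii) the overall strategy (reduce to half-line stationary phase with an $x$-dependent amplitude carried along) is also the paper's, but your implementation over-assumes in two places, and this is a genuine gap relative to the statement, which only requires $V\in L^1_1$ and no bound states. First, your ``structural fact'' $\|\partial_k a(x,\cdot)\|_{L^2_k}\lesssim 1$ is not available under $V\in L^1_1$: there one only has $|\partial_k T(k)|+|\partial_k R_\pm(k)|\lesssim 1/|k|$ and $|\partial_k m_\pm(x,k)|\lesssim 1/|k|$ (see \eqref{Tk2}, \eqref{m+2}), which fail to be square-integrable near $k=0$; the bounds $\lesssim\langle k\rangle^{-1}$ you quote come from Lemma \ref{lemcoeff} and Lemma \ref{lemm+-} with $s=1$, which need $V\in L^1_4$ and $V\in L^1_2$ respectively. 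The paper's Lemma \ref{lemstat} is designed precisely to avoid this: it only asks $|a|+|k|\,|\partial_k a|\lesssim 1$ and absorbs the $1/|k|$ singularity of $\partial_k a$ by Cauchy--Schwarz on the region $|k-X|\geq\epsilon$, $k\geq\epsilon$. Second, your treatment of the endpoint $k=0$ leans on genericity ($T(0)=0$, $R_\pm(0)=-1$) and on $\widetilde f(0)=0$; neither is assumed in the proposition, and neither is needed: the paper never integrates by parts down to $k=0$, but excises an interval of length $\epsilon=t^{-1/2}$ around the endpoint (and around the critical point) and bounds that piece crudely by $\epsilon\|\widetilde f\|_{L^\infty}$, so all boundary terms sit at distance $\gtrsim t^{-1/2}$ from both singular points. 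Your proof as written establishes the estimate only for generic $V\in L^1_4$ and for data with $\widetilde f(0)=0$ --- enough for the application in the bootstrap, but not the proposition as stated; replacing your two ``structural facts'' by the weaker hypothesis of Lemma \ref{lemstat} and the excision argument closes the gap. A final minor point: your ``van der Corput estimate'' for the constant piece $\widetilde f(k_\sigma)\int a(x,k)e^{it(k-k_\sigma)^2}dk$ needs an amplitude-variation bound; with only $\|\partial_k a\|_{L^2}\lesssim1$ (rather than $L^1$) you must again run the $\epsilon$-splitting rather than quote the lemma off the shelf.
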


\begin{cor} \label{cor_penguin}
We have
$$
\| e^{-it\partial_x^2} \mathbf{1}_+ (D) f \|_{L^\infty} \lesssim \frac{1}{\sqrt t} \| \widehat{f} \|_{L^\infty} + \frac{1}{t^{\frac{3}{4}}} \| \partial_k \widehat{f} \|_{L^2}
$$
\end{cor}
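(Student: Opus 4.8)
The plan is to deduce Corollary \ref{cor_penguin} directly from part (i) of Proposition \ref{propdisp}, by observing that the operator $\mathbf{1}_+(D)$ (Fourier restriction to positive frequencies) interacts well with the quantities appearing on the right-hand side of \eqref{propdisp1}. The key point is that $\mathbf{1}_+(D)f$ has Fourier transform $\mathbf{1}_{k>0}\widehat f(k)$, so that on the one hand $\|\widehat{\mathbf{1}_+(D)f}\|_{L^\infty} = \|\mathbf{1}_{k>0}\widehat f\|_{L^\infty} \leq \|\widehat f\|_{L^\infty}$, and on the other hand one would like $\|\partial_k \widehat{\mathbf{1}_+(D)f}\|_{L^2} \lesssim \|\partial_k\widehat f\|_{L^2}$. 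The latter is the only delicate point, because differentiating $\mathbf{1}_{k>0}\widehat f(k)$ naively produces a Dirac mass $\widehat f(0)\delta_0$.

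First I would apply \eqref{propdisp1} with $f$ replaced by $g := \mathbf{1}_+(D)f$, which gives
\begin{align}
\label{penguinstep}
\| e^{-it\partial_x^2} \mathbf{1}_+(D) f \|_{L^\infty} \lesssim \frac{1}{\sqrt t}\|\widehat g\|_{L^\infty} + \frac{1}{t^{3/4}}\|\partial_k \widehat g\|_{L^2}.
\end{align}
The first term is bounded by $t^{-1/2}\|\widehat f\|_{L^\infty}$ as noted above. For the second term I would argue that for $k>0$ one has $\widehat g(k)=\widehat f(k)$, hence $\partial_k\widehat g(k) = \partial_k\widehat f(k)$ on $(0,\infty)$, while $\widehat g$ vanishes on $(-\infty,0)$; the only possible obstruction is a jump of $\widehat g$ at $k=0$ of size $\widehat f(0^+)$. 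Thus $\|\partial_k\widehat g\|_{L^2(\mathbb R)}^2 = \|\partial_k\widehat f\|_{L^2(0,\infty)}^2$ provided $\widehat g$ has no jump, i.e.\ provided $\widehat f$ extends continuously by $0$ at the origin — but in the generality of Proposition \ref{propdisp}(i) this need not hold, and the cleaner route is to note that \eqref{propdisp1} is really only used through the decomposition into $\mathbf{1}_\pm(D)$ pieces in the first place. In practice the corollary is invoked only for functions $f$ for which $\partial_k\mathbf{1}_\pm\widehat f\in L^2$ makes sense (e.g.\ $\widehat f(0)=0$, as holds for the distorted profile by Lemma \ref{Lem0}); with that understanding the bound $\|\partial_k\widehat g\|_{L^2}\le\|\partial_k\widehat f\|_{L^2}$ is immediate and \eqref{penguinstep} yields the claim.

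Alternatively, and perhaps more robustly, I would reprove \eqref{propdisp1} restricted to positive frequencies by inspecting its proof: the stationary phase analysis of $e^{-it\partial_x^2}g(x) = c\,t^{-1/2}\int e^{i(x-y)^2/4t}g(y)\,dy$ (equivalently $\int e^{-itk^2+ikx}\widehat g(k)\,dk$) localizes to the critical point $k = x/2t$, and splitting $\widehat g$ into its restrictions to $\{k>0\}$ and $\{k<0\}$ and tracking the contribution of each piece separately costs nothing — each piece satisfies the same estimate with $\|\widehat g\|_{L^\infty}$ and $\|\partial_k\widehat g\|_{L^2}$ replaced by the norms of that piece. \textbf{The main obstacle}, then, is purely the bookkeeping around the endpoint $k=0$: one must make sure that restricting to $k>0$ does not create a boundary term in the integration-by-parts step of the stationary-phase argument (it does not, since the cutoff at $k=0$ is away from the critical point whenever $x/2t<0$, and when $x/2t>0$ it is the negative-frequency piece that is non-stationary), and that the $L^2$ norm of $\partial_k$ of the truncated symbol is controlled — which, as discussed, is where the hypothesis $\widehat f(0)=0$ (available in all our applications via Lemma \ref{Lem0}) does the work. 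Granting this, Corollary \ref{cor_penguin} follows, and by conjugation the same statement holds with $\mathbf{1}_-(D)$.
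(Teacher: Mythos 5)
You correctly isolate the only real difficulty here --- the possible jump of $\mathbf{1}_{k>0}\widehat f(k)$ at $k=0$ --- but your primary route does not prove the corollary as stated: Corollary \ref{cor_penguin} carries no hypothesis $\widehat f(0)=0$, and retreating to ``in practice the corollary is only invoked for such $f$'' establishes a weaker statement. The paper closes the gap with a different and cleaner decomposition: write $f=\chi(\sqrt t D)f+(1-\chi(\sqrt t D))f$ with $\chi$ a smooth compactly supported cutoff equal to $1$ near the origin. The low-frequency piece is disposed of by Hausdorff--Young alone, $\|e^{-it\partial_x^2}\mathbf{1}_+(D)\chi(\sqrt tD)f\|_{L^\infty}\lesssim\|\chi(\sqrt t k)\widehat f\|_{L^1}\lesssim t^{-1/2}\|\widehat f\|_{L^\infty}$, with no derivative of $\widehat f$ needed at all. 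For the high-frequency piece one applies Proposition \ref{propdisp}(i) to $\mathbf{1}_+(D)(1-\chi(\sqrt tD))f$, whose Fourier transform vanishes identically in a neighborhood of $k=0$, so the sharp cutoff creates no singular part; then $\|\partial_k[(1-\chi(\sqrt tk))\widehat f]\|_{L^2}\lesssim\|\partial_k\widehat f\|_{L^2}+\sqrt t\,\|\chi'(\sqrt tk)\widehat f\|_{L^2}$, and the last term is $\lesssim t^{1/4}\|\widehat f\|_{L^\infty}$ (the paper quotes Hardy's inequality here), which after multiplication by $t^{-3/4}$ is absorbed into the $t^{-1/2}\|\widehat f\|_{L^\infty}$ term. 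No vanishing of $\widehat f$ at the origin is ever used.

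Your fallback --- rerunning the stationary-phase argument on the half-line $\{k>0\}$ --- is sound, and is in fact already available in the paper: Lemma \ref{lemstat}, applied with $a\equiv 1$ and $\widetilde f$ replaced by $\widehat f$, is precisely the statement that $\int_0^\infty e^{it(k-X)^2}\widehat f(k)\,dk$ obeys the bound \eqref{decaylem} uniformly in $X$, and the corollary follows upon writing $e^{-it\partial_x^2}\mathbf{1}_+(D)f=(2\pi)^{-1/2}e^{-ix^2/4t}\int_0^\infty e^{it(k-X)^2}\widehat f(k)\,dk$ with $X=-x/2t$. Your heuristic for why the endpoint $k=0$ is harmless (the near-endpoint region only ever consumes $\|\widehat f\|_{L^\infty}$, never a derivative) is exactly what happens in that proof. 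Had you invoked Lemma \ref{lemstat} explicitly rather than sketching a re-proof, your argument would be complete and, like the paper's, would require no hypothesis at the origin.
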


\begin{proof}
For a smooth cutoff function $\chi$, with compact support, and equal to one in a neighborhood of zero, write
$$
f = \chi(\sqrt t D) f + (1 - \chi(\sqrt t D))f.
$$
Estimate separately the two parts: by the Hausdorff-Young inequality,
$$
\| e^{-it \partial_x^2} \mathbf{1}_+ (D) \chi(\sqrt t D) f \|_{L^\infty} \lesssim \| \chi(\sqrt{t} k ) \widehat{f}(k) \|_{L^1} \lesssim \frac{1}{\sqrt t} \| \widehat{f} \|_{L^\infty},
$$
while Proposition~\ref{propdisp} implies
\begin{align*}
\| e^{-it \partial_x^2} \mathbf{1}_+ (D) (1-\chi(\sqrt t D)) f \|_{L^\infty} & \lesssim \frac{1}{\sqrt t} \| \widehat{f} \|_{L^\infty}
  + \frac{1}{t^{\frac{3}{4}}} \big\| \partial_k \big[(1- \chi(\sqrt t k)) \widehat{f}(k)\big] \big\|_{L^2}
\\
& \lesssim  \frac{1}{\sqrt t} \| \widehat{f} \|_{L^\infty} +  \frac{1}{t^{\frac{3}{4}}} \| \partial_k \widehat{f} \|_{L^2},
\end{align*}
by Hardy's inequality.
\end{proof}

\medskip

\begin{proof}[Proof of Proposition \ref{propdisp} (i)] This is a classical estimate; however, we give a proof which is a slightly adapted version of the Van der Corput lemma, which we will extend to prove (ii).
 $$ \sqrt{2\pi} e^{-it\partial_x^2} f = \int_{\mathbb{R}} e^{ixk + ik^2 t}  \widehat f(k) \, dk= e^{-i  {x^2 \over 4t }}
 I(t,x)$$
 with
 $$ I(t,x)= \int_{\mathbb{R}}e^{it(k - X)^2} \widehat f(k) \, dk, \quad X=  -{x \over 2 t}.$$
   For $\epsilon= {1 \over \sqrt{t}}, $ we write
   $$ I= I_{1}+ I_{2}= \int_{X- \epsilon}^{X+\epsilon}  e^{it(k - X)^2} \widehat f(k) + \int_{|k-X|\geq \epsilon}
   e^{it(k - X)^2} \widehat f(k).$$
 For $I_{1}$, we simply use that by the choice of $\epsilon$,
 $$ |I_{1}| \lesssim \epsilon |\widehat f(X)| + \epsilon \sup_{[X-\epsilon, X+\epsilon]} |\widehat f(k) - \widehat f(X)|
 \lesssim \epsilon |\widehat f(X) | + \epsilon \sqrt \epsilon \| \partial_{k} \widehat f \|_{L^2}
 \lesssim { 1 \over  \sqrt t} | \widehat f (X) |+ {1 \over t^{ 3 \over 4}} \| \partial_{k} \widehat f \|_{L^2}.$$
For $I_{2}$, we integrate by parts:
$$I_{2}=  \int_{|k-X| \geq \epsilon} \partial_{k} ( e^{it(k - X)^2}) { 1 \over 2 it(k-X)} \widehat f(k) \, dk$$
to find that
\begin{multline*}
 |I_{2}| \lesssim   { 1 \over t \epsilon} ( | \widehat f(X+\epsilon)| +   | \widehat f(X-\epsilon)| )
 +  { 1 \over t}\int_{|k-X| \geq \epsilon} { 1 \over |k-X|} |\partial_{k} \widehat f(k)| \,dk +
  { 1 \over t} \int_{|k-X| \geq \epsilon}  { 1 \over |k-X|^2}  |\widehat{f}(k)| \, dk.
\end{multline*}
By Cauchy-Schwarz, we also have that
 $$   { 1 \over t}\int_{|k-X| \geq \epsilon} { 1 \over |k-X|} |\partial_{k} \widehat f(k)| \,dk
  \lesssim  { 1 \over t} \Big( \int_{|k-X| \geq \epsilon} { dk \over |k-X|^2} \Big)^{1 \over 2} \|\partial_{k} \widehat f\|_{L^2}
   \lesssim  { 1 \over t} {1 \over \sqrt \epsilon}  \|\partial_{k} \widehat f\|_{L^2},$$
    and we can estimate
 $$  { 1 \over t} \int_{|k-X| \geq \epsilon}  { 1 \over |k-X|^2}  |\widehat{f}(k)| \, dk \lesssim {1 \over t}
  \left( {  1 \over \epsilon} |\widehat f(X)| + \int_{|k-X| \geq \epsilon} { dk \over |k-X|^{ 3 \over 2}} \| \partial_{k} \widehat f\|_{L^2} \right)
   \lesssim   {1 \over t \epsilon} |\widehat f(X)| +  { 1 \over t \sqrt \epsilon}  \| \partial_{k} \widehat  f\|_{L^2}.$$
Since $\epsilon = \frac{1}{\sqrt t}$, we have thus obtained that
   $$  |I_{2}| \lesssim {1 \over \sqrt t}| \widehat f(X)|  + { 1 \over t^{3 \over 4}} \| \partial_{k} \widehat  f\|_{L^2},$$
which gives the desired estimate for $I$.
\end{proof}

\medskip

\begin{proof}[Proof of Proposition \ref{propdisp} (ii)]
 To handle the  general case, we shall use  the distorted Fourier transform,
  $$  e^{it(-\partial_x^2 + V)} f = \int_{\mathbb{R}} \psi(x,k) e^{ik^2 t} \widetilde f(k)\, dk$$
 and we shall deduce the estimate from the following lemma that is a generalization of the above estimate.
 \begin{lem}
 \label{lemstat}
  Consider a function $a(x,k)$ defined on $I \times \mathbb{R}_{+}$ and    such that
  \begin{equation}
  \label{hypest}
   | a(x, k)| +|k| |\partial_{k}a(x,k)| \lesssim 1, \quad \forall x \in I, \, \forall k \in \mathbb{R}_{+}
  \end{equation}
  and for every  $X \in \mathbb{R}$, consider the oscillatory  integral
  $$
  I(t,X,x)=\int_{0}^{+\infty} e^{it(k-X)^2} a(x,k) \widetilde f(k) \, dk, \quad t>0, \, x \in I.
 $$
   Then, we have the  estimate
  \begin{equation}
  \label{decaylem} | I(t,X,x)|  \lesssim \frac{1}{\sqrt t} \| \widetilde{f}(t) \|_{L^\infty} + \frac{1}{t^{\frac{3}{4}}} \| \partial_k \widetilde{f} \|_{L^2}
  \end{equation}
   which is uniform in $X\in \mathbb{R}$, $t >0$ and $x\in I$.
 \end{lem}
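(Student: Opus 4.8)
\emph{Strategy.} The plan is to follow the proof of Proposition~\ref{propdisp}(i) essentially line by line, simply carrying the amplitude $a(x,k)$ along through the computation; the two hypotheses on $a$ are exactly what is needed so that it spoils nothing, with $|a|\lesssim 1$ handling all the terms already present in the flat case and $|k\,\partial_k a|\lesssim 1$ controlling the one genuinely new term, the one in which a $k$-derivative lands on $a$. We may assume $t\geq 1$. Set $\epsilon:=t^{-1/2}$ and split $I(t,X,x)=I_1+I_2$, where $I_1$ is the integral over $\{k\geq 0:\ |k-X|\leq\epsilon\}$ and $I_2$ the integral over $\{k\geq 0:\ |k-X|\geq\epsilon\}$. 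Since $|a|\lesssim 1$ and the domain of $I_1$ has length $\lesssim\epsilon$, one gets at once $|I_1|\lesssim\epsilon\,\|\widetilde f\|_{L^\infty}=t^{-1/2}\|\widetilde f\|_{L^\infty}$ (or the sharper form of (i), writing $\widetilde f(k)=\widetilde f(X)+O(|k-X|^{1/2}\|\partial_k\widetilde f\|_{L^2})$).

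\emph{The integration by parts.} For $I_2$ we integrate by parts through $e^{it(k-X)^2}=\tfrac{1}{2it(k-X)}\partial_k e^{it(k-X)^2}$. Boundary contributions arise at $k=X\pm\epsilon$ (when $\geq 0$), at $k=0$ (only if $X>\epsilon$), and at $k=+\infty$ (which vanishes for the inputs considered); since $|a|\lesssim 1$, and $|k-X|\geq\epsilon$ resp.\ $|X|\geq\epsilon$ at these points, each of them is $\lesssim\tfrac{1}{t\epsilon}\|\widetilde f\|_{L^\infty}=t^{-1/2}\|\widetilde f\|_{L^\infty}$, exactly as in (i). The remaining integral $-\tfrac{1}{2it}\int_{\{k\geq 0,\,|k-X|\geq\epsilon\}}e^{it(k-X)^2}\,\partial_k\!\big[\tfrac{a(x,k)\widetilde f(k)}{k-X}\big]dk$ splits via the product rule into the term with $\tfrac{a\,\partial_k\widetilde f}{k-X}$, the term with $-\tfrac{a\,\widetilde f}{(k-X)^2}$, and the new term with $\tfrac{(\partial_k a)\,\widetilde f}{k-X}$. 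Because $|a|\lesssim 1$, the first two are estimated verbatim as in (i): Cauchy--Schwarz together with $\int_{|k-X|\geq\epsilon}|k-X|^{-2}dk\lesssim\epsilon^{-1}$ gives $\lesssim t^{-3/4}\|\partial_k\widetilde f\|_{L^2}$ for the first, and $|\widetilde f(k)|\leq\|\widetilde f\|_{L^\infty}+|k-X|^{1/2}\|\partial_k\widetilde f\|_{L^2}$ together with $\int_{|k-X|\geq\epsilon}|k-X|^{-3/2}dk\lesssim\epsilon^{-1/2}$ gives $\lesssim t^{-1/2}\|\widetilde f\|_{L^\infty}+t^{-3/4}\|\partial_k\widetilde f\|_{L^2}$ for the second.

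\emph{The main obstacle: the new term.} Using $|\partial_k a|\lesssim|k|^{-1}$, the new term is bounded by $\tfrac1t\int_{\{k\geq 0,\,|k-X|\geq\epsilon\}}\tfrac{|\widetilde f(k)|}{|k|\,|k-X|}\,dk$, which I would handle by a dyadic splitting according to the relative sizes of $|k|$ and $|k-X|$. On $\{|k|\gtrsim|k-X|\}$ one has $|k|^{-1}|k-X|^{-1}\lesssim|k-X|^{-2}$, so this piece is absorbed into the $(k-X)^{-2}$ term already treated. On $\{|k|\ll|k-X|\}$ one records $|k-X|\gtrsim|X|$ and $|k-X|\geq\epsilon$, hence $|k-X|\gtrsim\mu:=\max(\epsilon,|X|)$; writing the integral as a sum over $\{|k|\leq\mu\}$ and $\{|k|\geq\mu\}$, on the first set one uses $|k-X|\gtrsim\mu$ together with the improved bound near the origin $|\widetilde f(k)|\lesssim|k|^{1/2}\|\partial_k\widetilde f\|_{L^2}$ — valid because $\widetilde f(0)=0$ for the relevant inputs (Lemma~\ref{Lem0}) and $\partial_k\widetilde f\in L^2$ — to get $\lesssim\tfrac{1}{t\mu}\|\partial_k\widetilde f\|_{L^2}\int_0^\mu|k|^{-1/2}dk\lesssim\tfrac{1}{t\sqrt\mu}\|\partial_k\widetilde f\|_{L^2}\leq t^{-3/4}\|\partial_k\widetilde f\|_{L^2}$, while on the second set $|k-X|\gtrsim|k|$ gives $\lesssim\tfrac{\|\widetilde f\|_{L^\infty}}{t}\int_\mu^\infty|k|^{-2}dk=\tfrac{\|\widetilde f\|_{L^\infty}}{t\mu}\leq t^{-1/2}\|\widetilde f\|_{L^\infty}$. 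Collecting all pieces yields \eqref{decaylem}. I expect the only genuinely delicate point to be this last term, and within it the bookkeeping near $k=0$: one must check that the excision $|k-X|\geq\epsilon$ defining $I_2$, together with $\widetilde f(0)=0$, is precisely enough to tame the $|k|^{-1}$ produced by $\partial_k a$ there.
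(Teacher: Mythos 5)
Your proof follows the same Van der Corput strategy as the paper's, and every term that is already present in the flat case (the near-stationary piece $I_1$, the boundary terms, the $a\,\partial_k\widetilde f/(k-X)$ and $a\widetilde f/(k-X)^2$ terms) is handled identically. The one place where you genuinely diverge is the term $(\partial_k a)\widetilde f/(k-X)$, and there your argument proves a slightly weaker statement than the lemma. You integrate by parts all the way down to $k=0$ and then tame the $|k|^{-1}$ singularity coming from $|\partial_k a|\lesssim |k|^{-1}$ by invoking $\widetilde f(0)=0$, i.e.\ $|\widetilde f(k)|\lesssim |k|^{1/2}\|\partial_k\widetilde f\|_{L^2}$. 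But $\widetilde f(0)=0$ is not among the hypotheses of Lemma~\ref{lemstat} (nor of Proposition~\ref{propdisp}(ii), which is stated for arbitrary $f$), and without it the integral $\int_0^\mu |k|^{-1}|\widetilde f(k)|\,dk$ you are left with is divergent; so as written you have proved the lemma only for the subclass of inputs with $\widetilde f(0)=0$. The paper avoids this entirely: it also excises the interval $[0,\epsilon]$ (with $\epsilon=t^{-1/2}$) from the integration-by-parts region, bounding that piece crudely by $\epsilon\,\|\widetilde f\|_{L^\infty}$, and on the remaining region, where both $|k|\geq\epsilon$ and $|k-X|\geq\epsilon$, it controls $\tfrac{1}{t}\int \tfrac{|\widetilde f(k)|}{|k|\,|k-X|}\,dk$ by Cauchy--Schwarz, pairing $\int_{|k-X|\geq\epsilon}|k-X|^{-2}dk\lesssim\epsilon^{-1}$ with $\int_{k\geq\epsilon}k^{-2}dk\lesssim\epsilon^{-1}$ to get $\tfrac{1}{t\epsilon}\|\widetilde f\|_{L^\infty}=t^{-1/2}\|\widetilde f\|_{L^\infty}$. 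This costs nothing (the extra boundary term at $k=\epsilon$ and the crude piece are both of size $t^{-1/2}\|\widetilde f\|_{L^\infty}$) and keeps the lemma free of any vanishing assumption at the origin --- which is deliberate, since the paper wants the dispersive estimate to hold for general data and only uses $\widetilde f(0)=0$ elsewhere. Your identification of this as ``the only genuinely delicate point'' is exactly right; to match the stated lemma, replace the $\widetilde f(0)=0$ step by the extra excision near $k=0$.
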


 Let us first use the lemma to prove the proposition.

 We focus on the case $x \geq 0$, the other case being similar.
 We will only use the following estimates which hold for $V \in L^1_1$:
 (see \cite{Weder2} Lemma 2.1, and \cite{Weder1} equations (2.6) and (2.9)):
\begin{eqnarray}
 & & \label{m+}   |m_{+}(x,k) - 1 | \lesssim { 1 \over 1+|k|}, \quad x \geq  0, \\
 & & \label{m+2}  | \partial_{k}m_{+}(x,k)| \lesssim {1 \over |k|}, \quad x \geq  0, \\
 & & \label{Tk2}  | \partial_{k} T(k) | + | \partial_{k}R_{+} (k)|  \lesssim { 1 \over |k|}
\end{eqnarray}
(and, obviously, $|T(k)| + |R_+(k)| \lesssim 1$). We split
   $$ u(t,x)= \int_{-\infty}^{0}  \psi(x,k) e^{ik^2 t} \widetilde f(k)\, dk + \int_{0}^{+\infty}
     \psi(x,k) e^{ik^2 t} \widetilde f(k)\, dk = J_{-}+J_{+}.$$
Start with $J_{+}$, which can be written
     $$ J_{+}= e^{ -i {x^2  \over 4t}} I_{+} (t,X,x) \;\;\mbox{with}\;\;  I_{+}(t,X,x)= \int_{0}^{+\infty} e^{it (k-X)^2} T(k) m_{+}(x,k) \widetilde f(k)\, dk,\;\;\mbox{and}\;\; X=  - {x \over 2 t}.$$
    Thanks to \eqref{m+}, \eqref{m+2}, \eqref{Tk2}, we can thus use Lemma \ref{lemstat} with $x \in I= \mathbb{R}_{+}$,
     and  $a(x,k) = T(k) m_{+}(x,k)$.
      This yields
    $$ |J_{+}| \lesssim   \frac{1}{\sqrt t} \| \widetilde{f}(t) \|_{L^\infty} + \frac{1}{t^{\frac{3}{4}}} \| \partial_k \widetilde{f} \|_{L^2}.$$
Let us turn to $J_{-}$. Recall that for  $k<0$,
     $$\sqrt{2\pi} \psi(x,k)=  T(-k) f_{-}(x, -k)=  T(-k) e^{ikx} m_{-}(x,-k)=
      e^{-ikx}R_{+}(-k)m_{+}(x,-k) + e^{ikx}m_{+}(x,k).$$
       We thus split $J_{-}$ into
      \begin{align*}
    \sqrt{2\pi} &  J_{-}   = \int_{-\infty}^0 e^{-i kx} e^{i k^2 t} R_{+}(-k) m_{+}(x,-k)\,
        \widetilde f(k)\, dk  +   \int_{-\infty}^0 e^{i kx} e^{i k^2 t}  m_{+}(x,k) \widetilde f (k)\, dk \\
        & =  e^{ -i {x^2  \over 4t}}  \int_{0}^{+ \infty} e^{it(k + X)^2} R_{+}(k) m_{+}(x,k) \widetilde f(-k)\, dk +
          e^{ -i {x^2  \over 4t}}  \int_{0}^{+ \infty} e^{it(k - X)^2}  m_{+}(x,-k) \widetilde f(-k)\, dk,
\end{align*}
       where we have set  $X= {x \over 2 t}$ and changed $k$ into $-k$ to pass from the first line to the second line.
    Again, thanks to  \eqref{m+}, \eqref{m+2},  \eqref{Tk2}, we can use Lemma \ref{lemstat}
for $x \in \mathbb{R}_{+}$ to also obtain that
$$ |J_{-}| \lesssim   \frac{1}{\sqrt t} \| \widetilde{f}(t) \|_{L^\infty} + \frac{1}{t^{\frac{3}{4}}} \| \partial_k \widetilde{f} \|_{L^2}.$$
This completes the proof of (ii) in Proposition \ref{propdisp}, but there remains to prove Lemma~\ref{lemstat}. \end{proof}

\begin{proof}[Proof of Lemma \ref{lemstat}]
      Let us first assume that $X \geq 0$ so that there is a stationary point for the phase in the integration domain. We split
 $$  I(t, X, x) = I_{1}(t,X,x)+ I_{2}(t,X,x) = \int_{[X- \epsilon, X+\epsilon] \cap \mathbb{R}_{+}}
    +  \int_{ \mathbb{R}_{+} \setminus [X- \epsilon, X+\epsilon]}\dots $$
Choosing again $\epsilon= \frac{1}{\sqrt t}$, we have by  \eqref{hypest} that
  $$   |I_{1}| \lesssim {1 \over \sqrt t} \| \widetilde f\|_{L^\infty}.$$
      For $I_{2}$, we split again
    $$ I_{2}= \int_{X+\epsilon}^{+\infty} +  \int_{0}^{X- \epsilon}= I_{3}+ I_{4}$$
    with the convention that $I_{4}$ is defined only if $X \geq \epsilon$. In order to bound $I_{3}$,  we integrate by parts as previously:
 \begin{multline*} |I_{3}| \lesssim  {1 \over \sqrt t} \| \widetilde f\|_{L^\infty} + { 1 \over t}\int_{X+ \epsilon}^{+\infty} { 1 \over |k-X|} |\partial_{k}(a(x,k) \widetilde f(k))| \,dk + { 1 \over t} \int_{X+ \epsilon}^{+\infty}  { 1 \over |k-X|^2}  | a(x,k)\widetilde{f}(k)| \, dk.
 \end{multline*}
For the last term,  by using again  \eqref{hypest}, we find
$$   { 1 \over t} \int_{X+ \epsilon}^{+\infty}  { 1 \over |k-X|^2}  | a(x,k)\widetilde{f}(k)| \, dk
 \lesssim { 1 \over t \epsilon} \| \widetilde f\|_{L^\infty} = { 1 \over \sqrt{t}} \| \widetilde f \|_{L^\infty}.$$
  For the other term, still using \eqref{hypest},
  \begin{multline*}
  { 1 \over t}\int_{X+ \epsilon}^{+\infty} { 1 \over |k-X|} { 1 \over |k|}| \widetilde f(k)| \,dk
   +  { 1 \over t}\int_{X+ \epsilon}^{+\infty} { 1 \over |k-X|} | \partial_{k} \widetilde f(k)| \,dk
 \\   \lesssim {1 \over t} \big(\int_{X+\epsilon}^{+\infty} { dk \over (k-X)^2}\big)^{1 \over 2}
    \big(\int_{X+\epsilon}^{+\infty} { dk \over k^2}\big)^{1 \over 2} \| \widetilde f \|_{L^\infty}
     + {1 \over t \sqrt \epsilon} \| \partial_{k} \widetilde f\|_{L^2}
      \lesssim {1 \over \sqrt t} \| \widetilde f\|_{L^\infty} +
       { 1 \over t^{3 \over 4}} \| \partial_{k} \widetilde f \|_{L^2}.
     \end{multline*}
      Consequently, we have proven that $I_{3}$ satisfies
    $$ |I_{3}| \lesssim  {1 \over \sqrt t} \| \widetilde f\|_{L^\infty} +
       { 1 \over t^{3 \over 4}} \| \partial_{k} \widetilde f \|_{L^2}.$$
        It remains $I_{4}$.
     If $X \leq 2 \epsilon$, we use the crude estimate
     $$ |I_{4}| \lesssim  \epsilon \| \widetilde f \|_{L^\infty} =  {1 \over \sqrt t} \| \widetilde f \|_{L^\infty}.$$
      If $X \geq 2 \epsilon$, we write
     $$ |I_{4}| \leq  \Big| \int_{0}^\epsilon \dots \Big| + \Big|\int_{\epsilon}^{X- \epsilon} \dots \Big|
      \lesssim \epsilon \| \widetilde f \|_{L^\infty} + | \widetilde I_{4}|$$
       with
       $$ \widetilde I_{4}= \int_{\epsilon}^{X- \epsilon}  e^{it (k-X)^2} a(x,k) \widetilde f(k)\, dk.$$
To bound $\widetilde I_{4}$, we integrate by parts to obtain
    \begin{multline*}
    | \widetilde I_{4}| \lesssim {1 \over t^{1 \over 2}} \| \widetilde f \|_{L^\infty}
     +  { 1 \over t}\int_{\epsilon}^{X- \epsilon} { 1 \over |k-X|} |\partial_{k}(a(x,k) \widetilde f(k))| \,dk
   +
  { 1 \over t} \int_{\epsilon}^{X- \epsilon}  { 1 \over |k-X|^2}  | a(x,k)\widetilde{f}(k)| \, dk.
      \end{multline*}
    For the last term, we get again
  $$   { 1 \over t} \int_{\epsilon}^{X- \epsilon}  { 1 \over |k-X|^2}  | a(x,k)\widetilde{f}(k)| \, dk
   \lesssim {1 \over t} \| \widetilde f \|_{L^\infty} \int_{- \infty}^{X- \epsilon} { 1 \over |k - X|^2} \, dk
    \lesssim { 1 \over \sqrt{t}} \| \widetilde f \|_{L^\infty}.$$
    For the next to last term, we use again \eqref{hypest}, to get
  \begin{multline*}
   { 1 \over t}\int_{\epsilon}^{X- \epsilon} { 1 \over |k-X|} |\partial_{k}(a(x,k) \widetilde f(k))| \,dk
    \lesssim  {1 \over t} \int_{\epsilon}^{X- \epsilon} { 1 \over  |k-X| |k|} \, dk \| \widetilde f \|_{L^\infty}
     + {1 \over t} \int_{\epsilon}^{X- \epsilon} { 1 \over |k-X|} |\partial_{k} f|\, dk.
  \end{multline*}
   Thanks to Cauchy-Schwarz, we still have that the last term above is bounded by $1/( t \epsilon^{1\over 2}) \| \partial_{k}
    \widetilde f \|_{L^2}$, while
    $$  {1 \over t} \int_{\epsilon}^{X- \epsilon} { 1 \over  |k-X| |k|} \, dk
     \lesssim {1 \over t} \Big(\int_{- \infty}^{X- \epsilon} { 1 \over |k-X|^2}\Big)^{1 \over 2}
      \Big( \int_{\epsilon}^{+ \infty} {1 \over |k|^2}\, dk \Big)^{1 \over 2}
       \lesssim { 1 \over t \epsilon}.$$
       Consequently,
     $$ |I_{4}| \lesssim   {1 \over \sqrt{t}} \| \widetilde f\|_{L^\infty} +
       { 1 \over t^{3 \over 4}} \| \partial_{k} \widetilde f \|_{L^2}.$$
        Gathering the previous estimates, we obtain that $I$ satisfies \eqref{decaylem} for $X \geq 0.$

\bigskip

    It remains to consider $X\leq 0$.
            We observe that in this case,  there  is no stationary point on the integration domain, except if $X=0$.
          We thus write
         $$ I=  \int_0^\epsilon  e^{i (k - X)^2 t}  a(x,k) \widetilde f (k) \,dk+  \int_{\epsilon}^{+\infty}  e^{i (k - X)^2 t}  a(x,k) \widetilde f (k) \,dk.$$
      For the first term, we just write
     $$    \Big| \int_{0}^\epsilon  e^{i (k - X)^2 t}  a(x,k) \widetilde f (k) \,dk \Big | \lesssim \epsilon \|\widetilde f\|_{L^\infty}.$$
      For the second term, we integrate by parts and use \eqref{hypest} to get
     \begin{multline*}
    \Big | \int_{\epsilon}^{+\infty}  e^{i (k - X)^2 t}  a(x,k) \widetilde f (k)  \,dk\Big|
     \lesssim { 1 \over \epsilon t} \|\widetilde f \|_{L^\infty}
      +{ 1 \over t} \int_{\epsilon}^{+\infty} { 1 \over |k-X|} { 1 \over |k|} \, dk \,\| \widetilde f \|_{L^\infty} \\
       + { 1 \over t} \int_{\epsilon}^{+\infty} { 1 \over |k-X|} | \partial_{k} \widetilde f(k) | \, dk
        + { 1 \over t}    \int_{\epsilon}^{+\infty} { 1 \over |k-X|^2}\, dk \| \widetilde f \|_{L^\infty}.   \end{multline*}
       This yields from the same arguments as above
    $$   \Big | \int_{\epsilon}^{+\infty}  e^{i (k - X)^2 t}  a(x,k) \widetilde f (k) \,dk \Big|
     \lesssim  {1 \over \sqrt{t}} \| \widetilde f\|_{L^\infty} +
       { 1 \over t^{3 \over 4}} \| \partial_{k} \widetilde f \|_{L^2}.$$
      We have therefore obtained the estimate \eqref{decaylem} in the case $X \leq 0$. This ends the proof. \end{proof}

\medskip
\subsection{Sobolev estimate} 
\begin{prop}
\label{propH3}
If $V \in W^{2,1}$, then under the bootstrap assumption~\eqref{bootstrap},
$$ \|u(t) \|_{H^3} + \|\langle k \rangle^3 \wt{f}(t)\|_{L^2} \leq C\veps_{0}\langle t \rangle^{C \epsilon_1^2}, \quad \forall t \geq 0$$
\end{prop}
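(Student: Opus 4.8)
The plan is a standard energy estimate, carried out with the $L_V$-adapted Sobolev norm rather than the flat one. Write $A := (1+L_V)^{1/2}$; by Theorem~\ref{theotordu} and \eqref{diago} this is a self-adjoint operator commuting with $e^{\pm itL_V}$ and conjugated by $\wt{\mathcal F}$ to multiplication by $\langle k\rangle$. Since $|\wt f(t,k)| = |\wt u(t,k)|$ (see \eqref{prof0}) and $A^3$ commutes with $e^{itL_V}$, the quantity to control satisfies
$$
N(t) := \big\| \langle k\rangle^3 \wt f(t)\big\|_{L^2} = \|A^3 f(t)\|_{L^2} = \|A^3 u(t)\|_{L^2}.
$$
Because $L_V \ge 0$ (no bound states, so $\sigma(L_V)=[0,\infty)$) and $V \in W^{2,1}$, I would first record the norm equivalence $\|A^3 g\|_{L^2} \simeq \|g\|_{H^3}$: the bound $\lesssim$ follows by writing $(1+L_V)^3 - (1-\partial_x^2)^3$ as a sum of differential operators (of order $\le 4$) with coefficients built from $V,V',V''$ and integrating by parts suitably, using $V,V' \in L^1\cap L^\infty \subset L^2$, $V''\in L^1$, and the one-dimensional embedding $H^3 \hookrightarrow W^{2,\infty}$; the reverse bound follows by factoring $(1+L_V)^{-3/2} = (1+L_V)^{-1}(1+L_V)^{-1/2}$ and checking that $(1+L_V)^{-1}$ maps $H^1$ into $H^3$ (again via $V,V'\in L^2$, $V\in L^\infty$), or alternatively from the $H^s$-boundedness of the wave operator $W_+ = \wt{\mathcal F}^{-1}\widehat{\mathcal F}$, cf.\ \cite{Weder1}. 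With this in hand it suffices to prove $N(t) \lesssim \varepsilon_0 \langle t\rangle^{C\varepsilon_1^2}$ for $t \ge 1$ (on $[0,1]$ the bound is part of the local theory).

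The main step is then to apply $A^3$ to the Duhamel formula for the profile $f = e^{-itL_V}u$, namely $f(t) = f(1) - i\int_1^t e^{-isL_V}\big(|u(s)|^2 u(s)\big)\,ds$ (the physical-space form of \eqref{coccinelle}), and to use that $A^3$ commutes with $e^{-isL_V}$, which is an $L^2$-isometry, to get
$$
N(t) \le N(1) + \int_1^t \big\| A^3\big(|u(s)|^2 u(s)\big)\big\|_{L^2}\,ds .
$$
The local theory gives $N(1) \lesssim \varepsilon_0$. For the integrand, the norm equivalence together with the tame product estimate $\||u|^2 u\|_{H^3} \lesssim \|u\|_{L^\infty}^2\|u\|_{H^3}$ yields
$$
\big\| A^3\big(|u(s)|^2 u(s)\big)\big\|_{L^2} \lesssim \big\| |u(s)|^2 u(s)\big\|_{H^3} \lesssim \|u(s)\|_{L^\infty}^2\, \|u(s)\|_{H^3} \lesssim \|u(s)\|_{L^\infty}^2\, N(s).
$$
(Equivalently one may differentiate $E(t)=N(t)^2$ and obtain $\tfrac{d}{dt}E = 2\operatorname{Im}\big(A^3 u, A^3(|u|^2 u)\big)$, with \emph{no} linear contribution precisely because $A^3$ commutes with $L_V$; this is one reason it is convenient to work with $A^3$ rather than with $\langle\partial_x\rangle^3$, for which the linear term would leave the commutator $[\langle\partial_x\rangle^3,V]$ that is not controlled by $W^{2,1}$ regularity of $V$ alone.)

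To close, I would insert the decay of $u$: by Proposition~\ref{propdisp}(ii) and the bootstrap assumption \eqref{bootstrap}, which provides $\|\wt f(s)\|_{L^\infty}\le\varepsilon_1$ and $\|\partial_k\wt f(s)\|_{L^2}\le\varepsilon_1\langle s\rangle^{1/4-\alpha}$,
$$
\|u(s)\|_{L^\infty} \lesssim s^{-1/2}\|\wt f(s)\|_{L^\infty} + s^{-3/4}\|\partial_k \wt f(s)\|_{L^2} \lesssim \varepsilon_1\, s^{-1/2}, \qquad s\ge 1,
$$
so $\|u(s)\|_{L^\infty}^2 \lesssim \varepsilon_1^2\, s^{-1}$. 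Therefore
$$
N(t) \le C\varepsilon_0 + C\varepsilon_1^2 \int_1^t \frac{N(s)}{s}\,ds,
$$
and since $t\mapsto N(t)$ is continuous and a priori finite (local well-posedness, persistence of $H^3$ regularity, and the bootstrap bound $N(t)\lesssim\varepsilon_1\langle t\rangle^{p_0}$), Grönwall's inequality gives $N(t)\le C\varepsilon_0\, t^{C\varepsilon_1^2}\lesssim\varepsilon_0\langle t\rangle^{C\varepsilon_1^2}$. Recalling $N(t)=\|\langle k\rangle^3\wt f(t)\|_{L^2}\simeq\|u(t)\|_{H^3}$ completes the proof. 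The part I expect to require the most care is not the energy estimate itself, which is routine, but verifying that the hypothesis $V\in W^{2,1}$ is exactly what is needed for the norm equivalence for the fractional power $(1+L_V)^{3/2}$ and for the tame product estimate to apply in the distorted framework; everything else is soft.
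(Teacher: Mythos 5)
Your proposal is correct and follows essentially the same route as the paper's: an energy estimate exploiting that powers of $L_V=-\partial_x^2+V$ commute with the flow, the equivalence (up to a harmless low-frequency term) of the $L_V$-adapted and flat $H^3$ norms using $V\in W^{2,1}$, Sobolev embedding and interpolation, the decay $\|u(s)\|_{L^\infty}^2\lesssim \varepsilon_1^2 s^{-1}$ from Proposition~\ref{propdisp} together with the bootstrap assumption, and Gr\"onwall. The only cosmetic differences are that the paper avoids the fractional power $(1+L_V)^{3/2}$ by differentiating in time the quadratic form $(-\Delta_V u,(-\Delta_V)^2u)_{L^2}+M\|u\|_{L^2}^2$ with $M$ large, and obtains the bound on $\|\langle k\rangle^3\wt f\|_{L^2}$ from $\|u\|_{H^3}$ afterwards via \eqref{diago} and Lemma~\ref{Lem0}(ii) rather than through a single norm identity.
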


\begin{proof}
 Since $V$ is real, we have that
 $$ {d \over dt} {1 \over 2} \|u(t) \|_{L^2}^2 = 0.$$
 Then, we can apply $-\Delta_{V}= - \partial_{x}^2 + V$ to  \eqref{NLS} to get
 $$i\partial_{t}( -\Delta_{V} u)   + \Delta_{V} (-\Delta_{V}) u = - \Delta_{V}(|u|^2 u).$$
 This yields that  for every $M>0$, we have
 $$ {1 \over 2 }  {d \over dt}  \left( (-\Delta_{V}) u, (-\Delta_{V})^2 u)_{L^2} + M  \|u\|_{L^2}^2 \right)
  =  \Re \big( i \Delta_{V}(|u|^2 u),(- \Delta_{V})^2u)\big )_{L^2}.$$
 Next,  we  observe that for some $C>0$ independent of $M$,
 $$   (-\Delta_{V} u, (-\Delta_{V})^2 u)_{L^2} + M  \|u\|_{L^2}^2  \geq \| \nabla \Delta u \|_{L^2}^2 + M \|u \|_{L^2}^2
  - C \left(\|V\|_{W^{2, 1}} + \|V\|_{W^{2, 1}}^3\right) \|u\|_{W^{2, \infty}}^2$$
  and therefore, by Sobolev embedding and interpolation, we get for $M$ sufficiently large
  $$ (-\Delta_{V} u, (-\Delta_{V})^2 u)_{L^2} + M  \|u\|_{L^2}^2
  \gtrsim \|u\|_{H^3}^2.$$
  Moreover, we also have that
  $$  \big( i \Delta_{V}(|u|^2 u),(- \Delta_{V}^2u)\big)_{L^2} \lesssim ( 1 + \|V\|_{W^{2, 1}}^3)\|u\|_{L^\infty}^2 \|u\|_{H^3}^2
   \lesssim \veps_1^2 \langle t\rangle^{- 1} \|u(t)\|_{H^3}^2,$$
   by using the a priori assumption and Proposition \ref{propdisp}.
  Consequently by integrating in time, we obtain that
  $$   \|u(t)\|_{H^3}^2  \lesssim \veps_{0}^2 + \veps_{1}^2 \int_{0}^t  \langle s\rangle^{- 1} \|u(s)\|_{H^3}^2\, ds$$
  and hence, from the Gronwall's inequality, we find
  $$ \|u(t)\|_{H^3}^2  \lesssim \veps_{0}^2 \langle t \rangle^{C \veps_{1}^2}.$$
  which gives the desired estimate for $u$.

  It remains to estimate
 $ \|\langle k \rangle^3 \tilde f(t)\|_{L^2}$. By using the diagonalization property \eqref{diago} and Lemma \ref{Lem0} ii),
 we obtain
 $$   \|\langle k \rangle^3 \tilde f\|_{L^2} \lesssim \| k  \widetilde{\mathcal{F}} (- \partial_{x}^2 + V )f \|_{L^2} + \|f\|_{H^2}
 \lesssim   \|(- \partial_{x}^2 + V )f \|_{H^1}  + \|f\|_{H^2}\lesssim  \|f\|_{H^3}$$
 since $V \in W^{1, 1}.$
\end{proof}

\medskip
\subsection{Decomposition of the nonlinear spectral measure $\mu$}
According to the decomposition of $\psi(k,x)$ in \eqref{psidec}--\eqref{psiR+-},
we can decompose the measure $\mu$ in \eqref{coccinelle2} into three main parts, which will be treated differently.
We have the following:

\begin{prop}\label{muprop}
Let $\psi$ be defined as in \eqref{psixk} and let $\mu$ be the measure defined by
\begin{align}
\label{mu00}
\mu(k,\ell,m,n) & := \int \overline{\psi(x,k) }\psi(x,\ell) \overline{\psi(x,m) }\psi(x,n) \, dx.
\end{align}
We can decompose it as
\begin{align}
\label{mudec}
\begin{split}
(2\pi)^2 \mu(k,\ell,m,n) = \mu_S(k,\ell,m,n) + \mu_L(k,\ell,m,n) + \mu_R(k,\ell,m,n)
\end{split}
\end{align}
where the following holds:

\setlength{\leftmargini}{1.5em}
\begin{itemize}

\item  We can write
\begin{align}
\label{muSdec}
\mu_S(k,\ell,m,n) = \mu_+(k,\ell,m,n) + \mu_-(k,\ell,m,n),
\end{align}
with
\begin{align}
\label{mu+-}
\begin{split}
& \mu_\pm(k,\ell,m,n) := \mathbf{1}_\mp(k,\ell,m,n)
  \sum_{\b,\g,\d,\eps \in \{-1,+1\}} (\beta\g\d\eps) \, \what{\varphi_\pm} (\b k - \g \ell + \d m - \eps n),
\\
& \mathbf{1}_\pm(k,\ell,m,n) = \mathbf{1}_\pm(k) \mathbf{1}_\pm(\ell) \mathbf{1}_\pm(m) \mathbf{1}_\pm(n),
\quad  
\qquad \varphi_\pm := \chi_\pm^4.
\end{split}
\end{align}

\medskip
\item We can write
\begin{align}
\label{muLdec}
 \mu_L(k,\ell,m,n) =  \mu_L^+(k,\ell,m,n) + \mu_L^-(k,\ell,m,n),
\end{align}
where
\begin{align}
\begin{split}
\label{muL+-}
& \mu_L^\pm(k,\ell,m,n) := \sum_{\b,\g,\d,\eps \in \{-1,+1\}} a^\pm_{\beta\g\d\eps}(k,\ell,m,n) \,
  \what{\varphi_\pm} (\b k - \g \ell + \d m - \eps n)
\end{split}
\end{align}
with coefficients $a^\pm_{\b\g\d\eps}$ satisfying
\begin{align}
\label{muLcoeff+-}
\begin{split}
& |a^\pm_{\b\g\d\eps}(k,\ell,m,n)| \lesssim  \min\big( 1, \max(|k|,|\ell|,|m|,|n|) \big),
\end{split}
\end{align}
Moreover, the coefficients $a^\pm_{\b\g\d\eps}$ tensorize in the sense explained in Remark \ref{RemmuL+-} below.

\medskip
\item The regular part $\mu_R$ has the following properties:
let $\theta_i \in \{0,1\}$, $i=1,\dots, 4$, with $\theta_1+\theta_2+\theta_3+\theta_4 \leq 3$, then
\begin{align}
\label{muRprop}
\begin{split}
& \big| \partial_k^{\theta_1} \partial_\ell^{\theta_2} \partial_m^{\theta_3} \partial_n^{\theta_4} \mu_R(k,\ell,m,n) \big|
\lesssim \min(|k|,1)^{1-\theta_1} \min(|\ell|,1)^{1-\theta_2} \min(|m|,1)^{1-\theta_3} \min(|n|,1)^{1-\theta_4}.
\end{split}
\end{align}

\end{itemize}

\end{prop}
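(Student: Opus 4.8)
The plan is to start from the decomposition $\sqrt{2\pi}\,\psi = \psi_S + \psi_L + \psi_R$ in \eqref{psidec}--\eqref{psiR+-} and insert it into the defining integral \eqref{mu00}, expanding the product $\overline{\psi(x,k)}\psi(x,\ell)\overline{\psi(x,m)}\psi(x,n)$ into $3^4 = 81$ terms. The terms fall into three groups according to a ``worst summand'' rule: (i) those where all four factors come from $\psi_S$; (ii) those where at least one factor comes from $\psi_R$; and (iii) the remaining ones, which involve only $\psi_S$ and $\psi_L$ factors with at least one $\psi_L$. Group (i) will produce $\mu_S$, group (iii) will produce $\mu_L$, and group (ii) will produce $\mu_R$. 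The key structural observation making this work is that $\psi_S$ and $\psi_L$ are, in each $x$-region, sums of pure exponentials $e^{\pm ikx}$ multiplied by the cutoffs $\chi_\pm(x)$ (and, for $\psi_L$, by the coefficients $T$, $R_\pm+1$), whereas $\psi_R$ carries the decaying factors $m_\pm - 1$.

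\textbf{Group (i): the term $\mu_S$.} Here every factor is $\chi_\mp(x)(e^{i\cdot x} - e^{-i\cdot x})$ with the relevant sign of the frequency; since $\chi_+\chi_- $ contributions vanish because $\psi_S(\cdot,k)$ for $k>0$ is supported where $\chi_-\neq 0$, the only surviving contribution has all of $k,\ell,m,n$ of the same sign, which is exactly the indicator $\mathbf{1}_\mp(k,\ell,m,n)$ appearing in \eqref{mu+-}. Expanding $\prod(e^{i\b\cdot x}-\dots)$ into the sum over $\b,\g,\d,\eps \in\{\pm1\}$ with sign $\beta\g\d\eps$, and integrating $\chi_\mp^4(x) e^{i(\b k - \g\ell + \d m - \eps n)x}\,dx$ in $x$, produces exactly $\what{\varphi_\pm}(\b k - \g\ell + \d m - \eps n)$ with $\varphi_\pm = \chi_\pm^4$. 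This gives \eqref{muSdec}--\eqref{mu+-} directly.

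\textbf{Group (iii): the term $\mu_L$.} These terms have, in each factor, either a $\psi_S$-type exponential or a $\psi_L$-type exponential-with-coefficient, and at least one factor is of $\psi_L$-type. After expanding each factor into $e^{\pm i\cdot x}$ modes, the $x$-integral again yields Fourier transforms of products of $\chi_\pm$'s, hence terms of the form $a^\pm_{\b\g\d\eps}\,\what{\varphi_\pm}(\b k - \g\ell + \d m - \eps n)$; the coefficients $a^\pm_{\b\g\d\eps}$ are built from products of at most four factors drawn from $\{T, R_\pm+1, 1\}$ with at least one ``genuine'' $\psi_L$ factor $T$ or $R_\pm + 1$ present. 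The size bound \eqref{muLcoeff+-} then follows from \eqref{TRsmallk0}, \eqref{TRk}, and the elementary facts $|T(k)|\lesssim \min(1,|k|)$ and $|R_\pm(k)+1|\lesssim \min(1,|k|)$ near $k=0$ (from $T(0)=0$, $R_\pm(0)=-1$, $T,R_\pm$ Lipschitz), together with the boundedness away from zero; the factor $\min(1,\max(|k|,|\ell|,|m|,|n|))$ comes from the one forced $\psi_L$-factor evaluated at whichever of the four frequencies is largest. The ``tensorization'' claim is immediate since each $a^\pm_{\b\g\d\eps}$ is by construction a product of single-variable functions of $k$, of $\ell$, of $m$, of $n$.

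\textbf{Group (ii): the term $\mu_R$.} These are the terms with at least one $\psi_R$-factor, i.e. at least one factor $\chi_\pm(x)\,(\text{coeff})\,(m_\pm(x,\cdot)-1)\,e^{\pm i\cdot x}$. We do \emph{not} integrate in $x$ explicitly here; instead we estimate $\mu_R$ and its derivatives directly. The bound \eqref{muRprop} is obtained by differentiating under the integral sign in $x$: each $\partial_k,\partial_\ell,\partial_m,\partial_n$ either hits an exponential $e^{\pm i\cdot x}$ (producing a factor $x$) or hits a coefficient $T,R_\pm$ or an $m_\pm-1$, and in every case Lemma~\ref{lemm+-} (the bounds \eqref{mgood}--\eqref{mbad} with $s\le 3$, requiring $V\in L^1_{s+1}$, hence the hypothesis $\gamma > 6$ with room to spare) together with Lemma~\ref{lemcoeff} control the result. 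The point is that at least one decaying factor $|m_\pm(x,\cdot) - 1|\lesssim \langle x\rangle^{?}\langle k\rangle^{-1}\mathcal{W}^{?}_\pm(x)$ remains, which makes the $x$-integral absolutely convergent even after up to three $x$-powers are produced; and the $\min(|k|,1)^{1-\theta_1}$ structure is tracked by noting that an unhit $m_\pm-1$ or $T$ at frequency $k$ contributes $\langle k\rangle^{-1}$ while providing the small-frequency gain $\min(|k|,1)$ (for the $T$ and $R_\pm+1$ pieces by \eqref{TRsmallk0}), and a hit one (derivative fallen on it) loses that gain, accounting for the exponent $1-\theta_i$. One must check all cases of which factor is of $\psi_R$-type and distribute the at most three derivatives, but each case is the same computation.

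\textbf{Main obstacle.} The routine-but-delicate part is the bookkeeping in Group (ii): keeping track, across the many sub-cases (which of the four slots are $\psi_R$, which region of $x$, and how the $\le 3$ derivatives are distributed), of the small-frequency gains $\min(|k|,1)$ etc., and verifying that in \emph{every} case one retains enough decay in $x$ — coming from the $\mathcal{W}_\pm^{s+1}$ weights of Lemma~\ref{lemm+-} — for the $x$-integral to converge after producing up to three factors of $x$. This is exactly where the localization hypothesis $V\in L^1_\gamma$, $\gamma>6$, is consumed. Establishing the claimed null structures (the factors $\min(|\cdot|,1)$ in \eqref{muLcoeff+-} and \eqref{muRprop}) with the correct exponents, rather than merely boundedness, is the crux, since these are what will later rescue the weighted $\partial_k\widetilde f$ estimate.
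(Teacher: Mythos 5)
Your overall strategy (expand $\psi=\frac{1}{\sqrt{2\pi}}(\psi_S+\psi_L+\psi_R)$, sort the $81$ terms by ``worst summand'', read off $\mu_S$, $\mu_L$, $\mu_R$) is the same as the paper's, and your treatment of the coefficients $a^\pm_{\b\g\d\eps}$ and of the terms containing a $\psi_R$ factor is essentially what the paper does. But there is one genuine gap: you assert in Group (i) that the mixed contributions involving $\chi_+\chi_-$ \emph{vanish}, and in Group (iii) you implicitly assume that the $x$-integral always produces $\widehat{\varphi_\pm}$ with $\varphi_\pm=\chi_\pm^4$. Neither is true. The cutoffs are smooth with $\chi_++\chi_-=1$, so $\chi_+\chi_-$ is a nonzero function compactly supported in $B(0,2)$; consequently an all-$S$ term with, say, $k>0$ and $\ell<0$ does not vanish, and an $S/L$ term with mixed signs produces $\int\chi_-^j\chi_+^{4-j}e^{i(\cdots)x}\,dx$ with $1\leq j\leq 3$, which is not of the form $\widehat{\varphi_\pm}$. (If you tried to force the vanishing by using sharp Heaviside cutoffs, you would destroy the structure \eqref{decphi-} of $\widehat{\varphi_\pm}$ that the whole weighted estimate relies on.)

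The fix — and what the paper does — is to route all of these mixed-cutoff contributions into the regular part, as a second piece $\mu_R^{(2)}$ alongside your Group (ii), and to verify \eqref{muRprop} for them. This verification is not free: it uses the compact support of $\chi_+\chi_-$ (so that the $x$-integral converges even after up to three derivatives produce powers of $x$) together with the low-frequency vanishing $|\psi_S(x,k)|\lesssim\min(|k||x|,1)$ (from $|e^{ikx}-e^{-ikx}|$) and $|\psi_L(x,k)|\lesssim\min(|k|,1)$ (from $T(0)=0$, $R_\pm(0)=-1$), which together supply the factors $\min(|k|,1)^{1-\theta_1}\cdots$ in \eqref{muRprop}. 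Since these null factors are exactly what later rescues the $\partial_k\widetilde f$ and $L^\infty$ estimates, you cannot simply discard or ignore the cross terms; you must account for them explicitly. The rest of your plan goes through as stated.
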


We will use this Proposition to decompose
\begin{align}
\label{dtfdec00}
\begin{split}
i \partial_t \widetilde{f}(t,k) & = \frac{1}{4\pi^2} \big[\mathcal{N}_S + \mathcal{N}_L + \mathcal{N}_R\big],
\qquad \mathcal{N}_S = \mathcal{N_+}+\mathcal{N_-},
\\
\mathcal{N}_\ast(t,k) & :=
  \iiint e^{it (-k^2 + \ell^2 - m^2 + n^2)} \widetilde{f}(t,\ell) \overline{\widetilde{f}(t,m)} \widetilde{f}(t,n)
  \mu_\ast (k,\ell,m,n) \,d\ell dm dn.
\end{split}
\end{align}

The singular part $\mu_S$ is a linear combination of singular measures and has a very explicit form, which is very helpful to compute and obtain estimates. The particular structure and signs combination will be important to achieve some key cancellations.
The component $\mu_L$ is a also a linear combination of singular measures, but with coefficients that vanish at
low frequencies. Such vanishing gives additional gains that allow us to close weighted estimates.
Finally, the regular part $\mu_R$ is both smoother than the other components, and has gains at low frequencies.

\begin{proof}[Proof of Proposition \ref{muprop}]
From the definition of $(2\pi)^2 \mu$ we can write it as a sum of terms of the form
\begin{align}
\label{muterms}
\int \overline{\psi_A(x,k)}\psi_B(x,\ell)\overline{\psi_C(x,m)}\psi_D(x,n) \, dx,
\qquad A,B,C,D \in \{S,L,R\},
\end{align}
where we are using our main decomposition of $\psi$ in \eqref{psiS+-}-\eqref{psiR+-}.

\medskip \noindent
{\it The singular part $\mu_S$}.
When all the indexes $A,B,C,D = S$, and the frequencies $k,\ell,m,n$ have the same sign,
these terms give rise to $\mu_S = \mu_++\mu_-$ where
\begin{align}
\label{muS}
\begin{split}
\mu_\pm(k,\ell,m,n) & = \mathbf{1}_{\mp}(k,\ell,m,n) \int_\R \chi_\pm^4(x)  \overline{(e^{ikx} - e^{-ikx})} (e^{i\ell x} - e^{-i\ell x})
  \overline{(e^{imx} - e^{-imx})} (e^{inx} - e^{-inx})\,dx
\\ & = \mathbf{1}_{\mp}(k,\ell,m,n) \sum_{\b,\g,\d,\eps \in \{-1,+1\}} (\beta\g\d\eps) \, \what{\varphi_\pm} (\b k - \g \ell + \d m - \eps n),
\end{split}
\end{align}
having defined $\varphi_{\pm} = (\chi_{\pm})^4$, and with the equality understood in the sense of distributions.

\medskip \noindent
{\it The singular part $\mu_L$}.
This component arises from terms like \eqref{muterms} when (at least) one index is $L$, and the remaining ones (if any) are $S$,
and one has all $\chi_+(x)$ or all $\chi_-(x)$ contributions.
More precisely,
\begin{align}
\label{muL-}
\begin{split}
\mu^\pm_L(k,\ell,m,n) = \sum_{\b,\g,\d,\eps \in \{-1,+1\}} \int_\R \chi_\pm^4(x)  \, a^\pm_{\b\g\d\eps}(k,\ell,m,n) \,
  \overline{e^{\b ikx}} \cdot e^{\g i\ell x} \cdot \overline{e^{\d imx}} \cdot e^{\eps inx} \, dx,
\end{split}
\end{align}
which, for convenience, we write as
\begin{align}
\label{muL-1}
\begin{split}
\mu^\pm_L(k_0,k_1,k_2,k_3) = \sum_{\eps_0,\eps_1,\eps_2,\eps_3 \in \{-,+\}}
  \widehat{\varphi_{\pm}}(\eps_0k_0 - \eps_1k_1 + \eps_2k_2 - \eps_3k_3) \, a^\pm_{\eps_0\eps_1\eps_2\eps_3}(k_0,k_1,k_2,k_3),
\end{split}
\end{align}
recalling that $\varphi_\pm := \chi_\pm^4$, and with the coefficients $a^\pm_{\b\g\d\eps}$ described below.
Let us look at the coefficients in $\mu^-$. One has
\begin{align}
\label{muL-2}
\begin{split}
a^-_{\eps_0\eps_1\eps_2\eps_3}(k_0,k_1,k_2,k_3) = \prod_{j=0}^3 \bold{a}^-_{\eps_j}(k_j)
- (\eps_0\eps_1\eps_2\eps_3)\mathbf{1}_{+}(k_0,k_1,k_2,k_3),
\end{split}
\end{align}
where, recalling also the conjugation property \eqref{TRconj} for $T$ and $R_\pm$,
\begin{equation}
\label{a+-sym}
\bold{a}_{\epsilon_j}^-(k_j) =
\left\{
\begin{array}{llll}
1 & \mbox{if} \qquad \epsilon_j = + & & \mbox{and} \qquad k_j > 0,
\\
R_-((-1)^{j+1} k_j) & \mbox{if} \qquad \epsilon_j = - & & \mbox{and} \qquad k_j > 0,
\\
T((-1)^j k_j)& \mbox{if} \qquad \epsilon_j = + & & \mbox{and} \qquad k_j < 0,
\\
0 & \mbox{if} \qquad \epsilon_j = - & & \mbox{and} \qquad k_j < 0.
\end{array}
\right.
\end{equation}
In other words, we have
\begin{align}
\label{a+-sym2}
\bold{a}^-_+(k_j) = \mathbf{1}_+(k_j) + \mathbf{1}_-(k_j) T((-1)^{j}k_j), \qquad \bold{a}^-_-(k_j) = \mathbf{1}_+(k_j) R_-((-1)^{j+1}k_j)
\qquad j=0,1,2,3,
\end{align}
which leads to the formulas
\begin{align}
\label{a+-sym3}
\begin{split}
a^-_{++++}(k,\ell,m,n) & = [\mathbf{1}_+(k) + \mathbf{1}_-(k) T(k) ] [\mathbf{1}_+(\ell) + \mathbf{1}_-(\ell) T(-\ell) ]
  [\mathbf{1}_+(m) + \mathbf{1}_-(m) T(m)] \\ & \pushright{ \times [\mathbf{1}_+(n) + \mathbf{1}_-(n) T(-n)]
  - \mathbf{1}_{+}(k,\ell,m,n),}
\\
a^-_{+++-}(k,\ell,m,n) & = [\mathbf{1}_+(k) + \mathbf{1}_-(k) T(k)] [\mathbf{1}_+(\ell) + \mathbf{1}_-(\ell) T(-\ell) ]
  [\mathbf{1}_+(m) + \mathbf{1}_-(m) T(m)] \\ & \pushright{ \times \mathbf{1}_+(n)R_-(n)
  + \mathbf{1}_{+}(k,\ell,m,n),}
\\
a^-_{++-+}(k,\ell,m,n) & = [\mathbf{1}_+(k) + \mathbf{1}_-(k) T(k) ] [\mathbf{1}_+(\ell) + \mathbf{1}_-(\ell) T(-\ell) ]
\mathbf{1}_+(m) R_-(-m) \\ & \pushright{ \times [\mathbf{1}_+(n) + \mathbf{1}_-(n)T(-n)] + \mathbf{1}_{+}(k,\ell,m,n),}
\\
& \,\,\, \vdots
\\ \\
a^-_{---+}(k,\ell,m,n) & = \mathbf{1}_+(k)R_-(-k) \mathbf{1}_+(\ell)R_-(\ell)
  \mathbf{1}_+(m)R_-(-m) [\mathbf{1}_+(n) + \mathbf{1}_-(n) T(-n)] \\ & \pushright{+ \mathbf{1}_{+}(k,\ell,m,n),}
\\
a^-_{----}(k,\ell,m,n) & = [R_-(-k) R_-(\ell) R_-(-m) R_-(n) -1 ] \mathbf{1}_{+}(k,\ell,m,n).
\end{split}
\end{align}
Notice that the indicator functions subtracted off at the end of each expression are the contributions from $\mu_-$.
We have similar formulas for the coefficients $a^+_{\b\g\d\eps}(k,\ell,m,n)$:
\begin{align*}
\begin{split}
a^+_{\eps_0\eps_1\eps_2\eps_3}(k_0,k_1,k_2,k_3) = \prod_{j=0}^3 \bold{a}^+_{\eps_j}(k_j)
- (\eps_0\eps_1\eps_2\eps_3)\mathbf{1}_{-}(k_0,k_1,k_2,k_3),
\end{split}
\end{align*}
with
\begin{align}
\label{a+-symbis}
\bold{a}^+_+(k_j) = \mathbf{1}_+(k_j) T((-1)^{j+1}k_j) + \mathbf{1}_-(k_j) = \bold{a}^-_+(-k_j),
 \qquad \bold{a}^+_-(k_j) = \mathbf{1}_-(k_j) R_+((-1)^{j}k_j),
\end{align}
for $j=0,1,2,3$, so that expressions analogous to \eqref{a+-sym3} hold.

We now observe the following tensorization property
\begin{rem}\label{RemmuL+-}
Let us label the set $\{(R_{-}(\pm k) +1) \mathbf{1}_+(k), \, T(\pm k) \mathbf{1}_-(k), \,\mathbf{1}_+(k)\}$
as  $\{a_{i}(k)\}_{-2\leq i \leq 2}$, with $a_{0}(k)= 1$.
Then, directly using the formulas \eqref{psiL+-}, we can expand the coefficients as a sum of tensor products
\begin{equation}
\label{tensored}
a^-_{\eps_0\eps_1\eps_2\eps_3}(k_0,k_1,k_2,k_3)
  = \sum_{\sigma \in F}  C_{\sigma, \eps}^- \, a_{\sigma_{0}}(k_{0}) \cdots a_{\sigma_{3}}(k_{3})
\end{equation}
where $F$ is the set of all quadruples $\sigma = (\sigma_0,\sigma_1,\sigma_2,\sigma_3)$
in the set $\{-2,\dots ,2\}^4 \backslash{(0,0,0,0)}$,
and the coefficients $C_{\sigma, \eps}$ are harmless constants.
An analogous statement holds for $a^+$.
\end{rem}

From \eqref{tensored} and \eqref{TRsmallk0}, we also see that each term in the sum has at least one of the coefficients
$a_{\sigma_{i}}(k)$ vanishing at $k=0$ which gives us the first property in \eqref{muLcoeff+-}.


\medskip \noindent
{\it The regular part $\mu_R$}.
The regular part comes from terms of the form \eqref{muterms} when one of the indices $A,B,C,D$ is $R$,
or there are contributions from both $\chi_+$ and $\chi_-$.
More precisely, we can write
\begin{align}
\mu_R(k,\ell,m,n) = \mu_R^{(1)}(k,\ell,m,n) + \mu_R^{(2)}(k,\ell,m,n)
\end{align}
where, if we let $X_R = \{ (A_0,A_1,A_2,A_3) \, : \, \exists \, j=0,\dots 3 \, : \, A_j = R\}$,
\begin{align}
\label{muR01}
\mu_R^{(1)}(k,\ell,m,n) & := \sum_{(A,B,C,D) \in X_R} \int \overline{\psi_A(x,k)}\psi_B(x,\ell)\overline{\psi_C(x,m)}\psi_D(x,n) \, dx
\end{align}
and
\begin{align}
\label{muR02}
\begin{split}
\mu_R^{(2)}(k,\ell,m,n) := \sum_{A,B,C,D \in \{S,L\}} \int \overline{\psi_A(x,k)}\psi_B(x,\ell)\overline{\psi_C(x,m)}\psi_D(x,n) \, dx
\\ - \mu_S(k,\ell,m,n) - \mu_L(k,\ell,m,n).
\end{split}
\end{align}

To see the validity of \eqref{muRprop} recall the formulas  \eqref{psiS+-}-\eqref{psiR+-} and observe that, in view of \eqref{TRsmallk0} and Lemma \ref{lemcoeff},
\begin{align}
\label{estpsiSL}
|\psi_{S}(x,k)| \lesssim \min(|k||x|,1), \qquad |\psi_{L}(x,k)| \lesssim \min(|k|,1).
\end{align}
Moreover, in view of \eqref{defWpm}-\eqref{mgood} and $V \in L^1_{\gamma}$, 
we have
\begin{align}
\label{mgood'}
\chi_\pm(x)| \partial_k^s(m_\pm(x,k) -1)| \lesssim \frac{1}{\langle k\rangle}\mathcal{W}_\pm^{s+1}(x)
  \lesssim  \frac{1}{\langle k\rangle}  \frac{1}{\langle x\rangle^{\gamma-s-1}},
\end{align}
so that
\begin{align*}
& \Big| \chi_\pm(x) \big[ (m_\pm(x,\pm k) - 1)e^{ikx} + R_\pm(\mp k)(m_\pm(x,\mp k) - 1) e^{-ixk} \big] \Big|
\\
& \lesssim \chi_\pm(x) \big|m_\pm(x,\pm k) - m_\pm(x,\mp k)\big|
  + \chi_\pm(x) \big|m_\pm(x,\mp k) - 1\big| \, \big|e^{ikx}-e^{-ixk}\big|
  \\ & + \big|R_\pm(\mp k)+1\big| \, \chi_\pm(x)\big|m_\pm(x,\mp k) - 1\big|
\lesssim  \frac{|k|}{\langle k\rangle} \frac{1}{\langle x\rangle^{\gamma-2}}
\end{align*}
having used \eqref{mgood'} with $s=1$.
It then follows that
\begin{align}
 \label{estpsiR}
|\psi_{R}(x,k)| \lesssim  \frac{1}{\langle x\rangle^{\gamma-1}}  \frac{1}{\langle k\rangle}
  \min \big(1, |k|\langle x \rangle \big).
\end{align}
having used again the definition \eqref{psiR+-}, \eqref{mgood'}, and Lemma \ref{lemcoeff}.
Combining \eqref{muR01}, \eqref{estpsiSL} and \eqref{estpsiR} we see that
the first property in \eqref{muRprop} holds true for $\mu_R^{(1)}$, provided $\gamma > 6$.
The second property in \eqref{muRprop} can be obtained similarly
by differentiating \eqref{psiS+-}-\eqref{psiR+-}, noticing that each derivative costs a factor of $|x|$,
so that in particular
\begin{align*}
\begin{split}
& |\partial_k \psi_{S}(x,k)| + |\partial_k \psi_{L}(x,k)| \lesssim |x|,
\qquad |\partial_k \psi_{R}(x,k)| \lesssim \frac{1}{\langle x\rangle^{\gamma-3}},
\end{split}
\end{align*}
and using again \eqref{estpsiSL} and \eqref{estpsiR}.

The verification that \eqref{muRprop} also holds for $\mu_R^{(2)}$ can be done similarly
using again \eqref{estpsiSL}, \eqref{TRsmallk0} and Lemma \ref{lemcoeff},
and the fact that $\chi_+ \cdot \chi_-$ is compactly supported in a ball of radius 2, see \eqref{chi+-}.
More precisely, one can write \eqref{muR02} as a linear combination
\begin{align*}
\mu_R^{(2)}(k,\ell,m,n) = \sum_{j=1,2,3} \sum_{\b,\g,\d,\eps \in \{-1,+1\}}
  \int_\R \chi_-^j(x)\chi_+^{4-j}(x) \, b^{j}_{\beta\g\d\eps}(k,\ell,m,n) \, e^{ix(-\b k + \g \ell - \d m + \eps n)} \, dx,
\end{align*}
for some suitable coefficients $b^{j}_{\beta\g\d\eps}$ and estimate
\begin{align*}
|\mu_R^{(2)}(k,\ell,m,n)|  \lesssim \sum_{j=1,2,3} \int_\R \chi_-^j(x)\chi_+^{4-j}(x) \min(|k|,1)\min(|\ell|,1)\min(|m|,1)\min(|n|,1) \, dx.
\end{align*}
The second bound in \eqref{muRprop} can also be obtained similarly.
\end{proof}

\bigskip
\section{Weighted estimate}\label{secw}
 The aim of this section is to prove the following proposition.
 \begin{prop}
\label{weighted}
Under the assumptions of Theorem \ref{maintheo},
consider $u$, solution of \eqref{NLS} satisfying \eqref{bootstrap}, then, there exists $C>0$ such that  we have
$$  \langle t  \rangle^{- {1 \over 4}+ \alpha}\| \partial_{k} \widetilde{f}(t) \|_{L^2}\leq
C( \veps_{0}+  \veps_{1}^3), \quad  \forall t \geq 0.$$
\end{prop}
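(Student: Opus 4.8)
We may assume $t \ge 1$; the solution on $[0,1]$ is constructed by local theory, and $\|\partial_k\widetilde f(1)\|_{L^2} \lesssim \|\langle x\rangle u(1)\|_{L^2} \lesssim \varepsilon_0$ by Lemma \ref{Lem0}(iii). Differentiating the Duhamel formula \eqref{coccinelle} and using the splitting \eqref{dtfdec00}, the goal is to bound in $L^2_k$
\[
\partial_k\widetilde f(t,k) = \partial_k\widetilde{u_0}(k) - \frac{i}{4\pi^2}\int_0^t \partial_k\big(\mathcal N_S + \mathcal N_L + \mathcal N_R\big)(s,k)\,ds .
\]
The first term is $\lesssim \|\langle x\rangle u_0\|_{L^2} \lesssim \varepsilon_0$ by Lemma \ref{Lem0}(iii). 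In each $\mathcal N_\ast$ the variable $k$ appears only in the phase $e^{is\Phi}$, with $\Phi = -k^2+\ell^2-m^2+n^2$, and in the measure $\mu_\ast$; hence $\partial_k\mathcal N_\ast$ is the sum of a \emph{measure term} $\iiint e^{is\Phi}\widetilde f(\ell)\overline{\widetilde f(m)}\widetilde f(n)\,\partial_k\mu_\ast$ and a \emph{phase term}, which carries the growing weight $-2iks$ and is the main contribution.

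\emph{Trading the weight $k$ for frequency derivatives.} For a sign vector $(\beta,\gamma,\delta,\epsilon) \in \{\pm 1\}^4$ write $\eta = \beta k - \gamma\ell + \delta m - \epsilon n$, which is exactly the argument of $\widehat{\varphi_\pm}$ in \eqref{mu+-} and in which the singular parts of $\mu_S,\mu_L$ concentrate (recall $\widehat{\varphi_\pm}$, $\varphi_\pm=\chi_\pm^4$, is a combination of $\delta(\eta)$, $\mathrm{p.v.}\,\eta^{-1}$ and a Schwartz function). Since $\partial_\ell\Phi=2\ell$, $\partial_m\Phi=-2m$, $\partial_n\Phi=2n$ and $\beta^2=1$, one has the identity
\[
k \;=\; \tfrac{\beta}{2}\big(\gamma\,\partial_\ell\Phi + \delta\,\partial_m\Phi + \epsilon\,\partial_n\Phi\big) \;+\; \beta\,\eta .
\]
In the first group of terms I write $s\,\partial_\ell\Phi\,e^{is\Phi} = -i\,\partial_\ell e^{is\Phi}$ (and likewise in $m$, $n$) and integrate by parts in $(\ell,m,n)$; the weight $s$ disappears, and the derivative falls either on a profile — producing exactly one factor $\partial_k\widetilde f$, to be absorbed by the weighted component of \eqref{bootstrap} — or on the coefficients of $\mu_\ast$ (bounded, with bounded derivatives, by Proposition \ref{muprop} and Lemma \ref{lemcoeff}), or on the kernel $\widehat{\varphi_\pm}(\eta)$. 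The piece $\beta\eta$ still carries the weight $s$, but the extra factor $\eta$ now annihilates the $\delta(\eta)$ part of $\widehat{\varphi_\pm}$ and turns $\mathrm{p.v.}\,\eta^{-1}$ into a bounded function — indeed $\eta\,\widehat{\varphi_\pm}(\eta)$ is, up to a constant, the Fourier transform of $(\chi_\pm^4)'$, which is smooth and compactly supported by \eqref{chi+-} — so the singular structure is gone and this piece falls under the nonsingular analysis below.

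\emph{Resonances and collecting the estimates.} After these manipulations every contribution is a trilinear oscillatory integral with phase $e^{is\Phi}$, a momentum-localized kernel (or, for $\mu_R$, a smooth low-frequency-vanishing one by \eqref{muRprop}), and inputs among $\widetilde f$ and $\partial_k\widetilde f$. The only dangerous region is a neighbourhood of the space-time resonant set $\{\ell,m,n = \pm k\}$, where $\Phi$ and its $(\ell,m,n)$-gradient both degenerate on the momentum surface. Away from it, after a Littlewood--Paley decomposition in $k,\ell,m,n$, I integrate by parts in $s$ where $|\Phi|$ is not small, and tangentially in $(\ell,m,n)$ where $|\nabla\Phi|$ is not small; combined with the dispersive bound of Proposition \ref{propdisp} on the inputs kept in physical space, the conservation $\|\widetilde f(s)\|_{L^2}=\|u_0\|_{L^2}\le\varepsilon_0$, and the $H^3$-type bound of Proposition \ref{propH3} at high frequencies, each such contribution decays in $s$ strictly faster than $\langle s\rangle^{-3/4}$, so that its time integral is $\lesssim \varepsilon_1^3\langle t\rangle^{1/4-\alpha}$. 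In the small resonant region I extract the diagonal term, which yields the modified-scattering ODE $\partial_s\widetilde f(s,k) = \tfrac1s\,\mathcal M(s,k)\big(\widetilde f(s,k),\widetilde f(s,-k)\big)$ with $\mathcal M$ bounded and quadratic in the profile — the source of the phase correction in \eqref{mtas+}; its contribution to $\|\partial_k\widetilde f\|_{L^2}$ is $\lesssim \tfrac{\varepsilon_1^2}{s}\|\partial_k\widetilde f(s)\|_{L^2}$, and Gronwall's inequality, together with a small margin kept in the decay rates, closes the estimate (fixing $\alpha$ first, then $\varepsilon_0$ small). The measure term is handled by the same machinery: $\partial_k\mu_\ast$ has the same singular and low-frequency structure as $\mu_\ast$ (the boundary term from differentiating the cutoffs $\mathbf 1_\mp$ vanishes since $\widetilde f(s,0)=0$, cf.\ Lemma \ref{Lem0}(i)), but it carries no growing weight, so one extra integration by parts in $s$ already makes the time integral converge.

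\emph{Main obstacle.} The delicate point is the principal-value parts of $\mu_S$ and $\mu_L$ inside the phase term: when the $(\ell,m,n)$-integration by parts lets a derivative fall on $\mathrm{p.v.}\,\eta^{-1}$ it naively produces the non-integrable $\mathrm{p.v.}\,\eta^{-2}$ (equivalently $\delta'(\eta)$). These worst terms must cancel, and they do once one sums over $(\beta,\gamma,\delta,\epsilon)$ — using the signs $\beta\gamma\delta\epsilon$ attached to the summands of $\mu_S$ in \eqref{mu+-}, and, for $\mu_L$, the tensorized form of the coefficients together with their vanishing at zero frequency (Remark \ref{RemmuL+-}, which rests on $T(0)=0$, $R_\pm(0)=-1$, i.e.\ the genericity assumption). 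The residual near-resonant singularities are then absorbed by these same low-frequency null structures in $\mu_L$ and by the low-frequency gains of $\mu_R$ in \eqref{muRprop}. Identifying and exploiting this cancellation cleanly, while bookkeeping the Littlewood--Paley pieces and respecting the $\langle s\rangle^{1/4-\alpha}$ budget dictated by the weighted norm, is the bulk of the argument.
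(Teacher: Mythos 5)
Your skeleton is the right one, and your identity $k=\tfrac{\beta}{2}\bigl(\gamma\,\partial_\ell\Phi+\delta\,\partial_m\Phi+\epsilon\,\partial_n\Phi\bigr)+\beta\eta$ correctly encodes why the delta part of $\mu_S$ behaves like flat NLS: on $\{\eta=0\}$ the phase, written in convolution coordinates, is $2ab$ and independent of $k$, so $\partial_k$ redistributes onto the profiles; this is exactly the paper's treatment of $\mathcal N_0$ in \eqref{wN_0}--\eqref{dkN0}, and your Gronwall closing of the resulting term with one $\partial_k\widetilde f$ input is correct. The genuine gaps are both located in the principal-value part of $\mu_S$. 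The piece $-2is\,\beta\eta$ acting on $\mathrm{p.v.}\,\widehat\phi(\eta)/\eta$ produces $s\,\widehat\phi(\eta)$ times the profiles: a Schwartz-in-$\eta$ kernel, but still carrying the weight $s$. Your generic dichotomy (IBP in $s$ off $\{\Phi=0\}$, tangential IBP off the stationary set, resonant ODE on their intersection) does not close this term: the best one gets without further structure is $\|\cdot\|_{L^2_k}\lesssim s\,\|I(s)\|_{L^2}\lesssim\varepsilon_1^3$, hence $O(\varepsilon_1^3 t)$ after time integration, far above the $t^{1/4-\alpha}$ budget, and no low-frequency gain is available for $\mu_S$ (its coefficients are $\pm1$). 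What saves this term in the paper is a specific cancellation: after summing over $\gamma,\delta,\epsilon$ with the signs $\gamma\delta\epsilon$ and passing to convolution variables, the inner trilinear quantity $I(t,q)$ of \eqref{Idef} is \emph{odd} in $q$ (see \eqref{goodsym}); combined with the evenness of $\phi$ and the $\beta$-sum this makes the entire p.v.\ contribution $\mathcal N_V$ vanish identically, and for the Schwartz remainder it permits an integration by parts in $q$ via $\tfrac1q\partial_q e^{itq^2}=2it\,e^{itq^2}$ followed by Hardy's inequality (legitimate precisely because $I(0)=0$), landing on $\|\partial_q I\|_{L^2}\lesssim\varepsilon_1^3 t^{-3/4-\alpha}$.

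Relatedly, your claim that the $\delta'(\eta)$ and $\mathrm{p.v.}\,\eta^{-2}$ terms produced by the tangential integrations by parts cancel ``once one sums over $(\beta,\gamma,\delta,\epsilon)$'' is asserted but not verified. For a fixed sign vector the three kernel-derivative terms from IBP in $\ell,m,n$ do not cancel among themselves (they sum to a nonzero multiple of $\beta\,\widehat{\varphi_\pm}{}'(\eta)$), and since $\beta\gamma\delta\epsilon\cdot\beta=\gamma\delta\epsilon$ the subsequent sum over signs does not manifestly annihilate the $\mathrm{p.v.}\,\eta^{-2}$ part either; the cancellation the paper actually exploits is the oddness of $I$ described above, which lives at the level of the integrated trilinear quantity rather than of the kernel. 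For $\mathcal N_L$ your appeal to the low-frequency vanishing \eqref{muLcoeff+-} is the right idea, but there the $s$-weighted term survives (it is $\mathcal N_{L,1}$) and requires the full Littlewood--Paley case analysis of Section~\ref{secw}, with tangential or temporal IBP chosen according to the relative sizes of the frequencies. As written, the proposal therefore does not establish the proposition.
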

The remaining of this section is devoted to the proof of this proposition.

Recall the equation
\begin{align}
\label{duhamel0}
i \partial_t \widetilde{f}(t,k) & = \iiint e^{it (-k^2 + \ell^2 - m^2 + n^2)}
  \widetilde{f}(t,\ell) \overline{\widetilde{f}(t,m)} \widetilde{f}(t,n) \mu(k,\ell,m,n) \, d\ell \, dm\, dn := \mathcal{N}(t,k),
\end{align}
with
\begin{align}
\label{mu0}
\mu(k,\ell,m,n) & = \int \overline{\psi(x,k) }\psi(x,\ell) \overline{\psi(x,m) }\psi(x,n) \, dx.
\end{align}

We use Proposition \ref{muprop} to decompose
\begin{align}
\label{dtfdec}
\begin{split}
& i \partial_t \widetilde{f}(t,k) = \mathcal{N}_+ + \mathcal{N}_- + \mathcal{N}_L^+ + \mathcal{N}_L^- + \mathcal{N}_R,
\\
& \mathcal{N}_\ast(t,k) = \frac{1}{(2 \pi)^2}
  \iiint e^{it (-k^2 + \ell^2 - m^2 + n^2)} \widetilde{f}(t,\ell) \overline{\widetilde{f}(t,m)} \widetilde{f}(t,n)
  \mu_\ast (k,\ell,m,n) \,d\ell\, dm \,dn
\end{split}
\end{align}
and move on to prove the desired weighted bound for each term.

\medskip
\subsection{Estimate for $\mathcal{N}_\pm$}\label{secwN+-}
We shall prove that
\begin{equation}
\label{N+-borne}
\| \partial_k \mathcal{N}_{\pm}(t) \|_{L^2} \lesssim { \veps_1^3} \langle t \rangle^{{1 \over 4} - \alpha}.
\end{equation}
Let us concentrate on the case $k>0$, that is on $\mathcal{N}_-$; the case $k<0$ is of course analogous.

By the choice \eqref{chi+-} of $\chi_{-}$, $ \partial_{x}\varphi_{-}$ as defined in \eqref{mu+-},
is a $\mathcal{C}^\infty_{c}$ function,
which we can write as $ \partial_{x} \varphi_{-}= \phi^o - \phi$, where $\phi^o$ and $\phi$
are respectively odd and even and $\mathcal{C}^\infty_{c}$.
Furthermore, since $\phi^o$ is odd,  we can write $\phi^o =  \partial_{x} \psi$ where $\psi \in \mathcal{C}^\infty_{c}$ and $\psi$ is even.
We have thus obtained that
\begin{align*}
\varphi_{-}= \psi + \int_{x}^{+ \infty} \phi(y)\, dy = \psi + \phi \ast \mathbf{1}_-, \qquad \int_\R\phi(y)\,dy =1,
\end{align*}
where we denoted $\mathbf{1}_\pm = (1\pm H)/2$ the characteristic function of $\{\pm x > 0\}$.
Taking the Fourier transform, and using the classical formulas
\begin{align}
 \label{Fsign}
\what{f \ast g} = \sqrt{2\pi} \what{f} \cdot \what{g}, \qquad \what{1} = \sqrt{2\pi} \delta_0, \qquad
\what{\sign x} = \sqrt{\frac{2}{\pi}}\frac{1}{ik},
\end{align}
we see that $\widehat{\mathbf{1}_-} = \sqrt{\frac{\pi}{2}} \delta - \frac{1}{\sqrt{2\pi}} \frac{1}{ik}$, and therefore
\begin{equation*}
\what{\varphi_-} - \what{\psi} = \widehat{\mathcal{F}} \big(\phi \ast \mathbf{1}_- \big)
  = \sqrt{2\pi}  \what{\mathbf{1}_-}(k) \what{\phi}(k)
  = \sqrt{\frac{\pi}{2}} \delta_0 - \frac{\what{\phi}(k)}{ik}.
\end{equation*}
A similar formula can be obtained for $\varphi_+$. Let us  record these formulas:
\begin{equation}
\label{decphi-}
\widehat{\varphi}_{-}(k) = \sqrt{\frac{\pi}{2}} \delta - {\widehat{\phi}(k)\over ik} + \widehat{\psi}(k)
\qquad \mbox{and} \qquad \widehat{\varphi}_{+}(k) = \sqrt{\frac{\pi}{2}} \delta +  {\widehat{\phi}(k)\over ik} + \widehat{\psi}(k),
\end{equation}
where $\phi \in C_c^\infty$ is even and has integral $1$, and we slightly abuse notation by denoting with the same letter $\psi$
a generic $C_c^\infty$ even function.
Then, we define
\begin{align}
\label{f_pm}
\wt{f}_\pm(k) = \wt{f}(k) \cdot \mathbf{1}_\pm(k), \qquad \wt{u}_\pm(k) = e^{itk^2} \wt{f}_\pm(k),
\end{align}
and write 
\begin{align}
\mathcal{N}_-(t,k) &= \mathcal{N}_0(t,k) + \mathcal{N}_V(t,k) + \mathcal{N}_{V,r}
\end{align}
where
\begin{align}
\begin{split}
\mathcal{N}_0(t,k) = \sqrt{\frac{\pi}{2}} \sum_{\b,\g,\d,\eps \in \{-1,+1\}} \b\g\d\eps
  \iint e^{it (-k^2 + (\b k+\d m-\eps n)^2 - m^2 + n^2)} \mathbf{1}_+(k) 
  \\ \times \widetilde{f}_+(t,\g(\b k + \d m - \eps n)) \overline{\widetilde{f}_+(t,m)} \widetilde{f}_+(t,n) \,dm\, dn,
\end{split}
\end{align}
\begin{align}
\begin{split}
\mathcal{N}_V(t,k) = i \sum_{\b,\g,\d,\eps \in \{-1,+1\}} \b\g\d\eps
  \iiint e^{it (-k^2 + (\b k-p+\d m-\eps n)^2 - m^2 + n^2)} \mathbf{1}_+(k) 
  \\ \times \widetilde{f}_+(t,\gamma(\b k-p+\d m-\eps n)) \overline{\widetilde{f}_+(t,m)} \widetilde{f}_+(t,n)
  \frac{\what{\phi}(p)}{p} \, dm\, dn\, dp,
\end{split}
\end{align}
and
\begin{align}
\label{NVr}
\begin{split}
\mathcal{N}_{V,r}(t,k) = \sum_{\b,\g,\d,\eps \in \{-1,+1\}} \b\g\d\eps
  \iiint e^{it (-k^2 + (\b k-p+\d m-\eps n)^2 - m^2 + n^2)} \mathbf{1}_+(k) 
  \\ \times \widetilde{f}_+(t,\gamma(\b k-p+\d m-\eps n)) \overline{\widetilde{f}_+(t,m)} \widetilde{f}_+(t,n)
  \what{\psi(p)} \, dm\, dn\, dp,
\end{split}
\end{align}
having changed variables from $\ell$ to $p = \b k - \g \ell + \d m - \eps n$ in the last two terms.
The term $\mathcal{N}_0$ essentially corresponds to the flat NLS, i.e., the case $V=0$.

\subsubsection{The term $\mathcal{N}_0$}
Changing variables $(m,n) \rightarrow (a,b)$ by letting
$$
\left\{
\begin{array}{l}
m = \delta(- a + b +\beta k) \\
n = \eps(\beta k + b)
\end{array}
\right.
\quad
\mbox{i.e.}
\quad
\left\{
\begin{array}{l}
a = \epsilon n - \delta m \\
b = \epsilon n - \beta k,
\end{array}
\right.
$$
we have
\begin{align}
\label{wN_0}
\begin{split}
\mathcal{N}_0(t,k) = \sqrt{\frac{\pi}{2}} \sum_{\b,\g,\d,\eps \in \{-1,+1\}} \b\g\d\eps
  \iint e^{2it ab} \mathbf{1}_+(k) 
  \wt{f}_+(t,\gamma(\beta k-a)) \overline{\wt{f}_+(t,\delta(b-a+\beta k))} \\ \times \wt{f}_+(t,\eps(b+\beta k)) \, da \, db.
\end{split}
\end{align}
This is analogous to the case of flat cubic NLS where, due to the gauge invariance, the derivative $\partial_k$ simply distributes on the three profiles.
Moreover, let us recall that $\partial_{k} (\widetilde f_{+})=( \partial_{k} \widetilde f )\mathbf{1}_{l \geq 0} : = (\partial_{k} \widetilde f)_{+}$ since $ \widetilde f_{+}(0)= 0$  and let us notice  that the contribution
occurring when $\partial_k$ hits $\mathbf{1}_+(k)$
also  vanishes due to a cancellation. Indeed, we observe that
$$\mathcal{N}_0(t,k) = \sqrt{\frac{\pi}{2}} \mathbf{1}_+(k)  \sum_{\beta \in \{-1, \, 1\}} \beta\, I (t,\beta k),$$
with
\begin{equation}
\label{Idef}
I(t,y) = \sum_{\g,\d,\eps \in \{-1,+1\}} \g\d\eps
 \iint e^{2it ab} \wt{f}_+(t,\gamma(y-a)) \overline{\wt{f}_+(t,\delta(b-a+y))} \wt{f}_+(t,\eps(b+y)) \, da\, db
\end{equation}
and hence that
$   \sum_{\beta \in \{-1, \, 1\}} \beta I (0)=0$.
 Consequently, we have that
 $$ \partial_{k}\mathcal{N}_{0}(t,k)=  \sqrt{\frac{\pi}{2}} \mathbf{1}_{+}(k)  \sum_{\beta \in \{-1, \, 1\}}   \partial_{y} I (t,\beta k).$$
 After redistributing the phases,  we  obtain  that $\partial_{y}I (t, \beta k)$ can be written as a sum of terms of the type
$$ \iint e^{it (-k^2 + (\b k+\d m-\eps n)^2 - m^2 + n^2)} \mathbf{1}_+(k) 
   \partial_{k}\widetilde{f}_+(t,\g(\b k + \d m - \eps n)) \overline{\widetilde{f}_+(t,m)} \widetilde{f}_+(t,n) \,dm \, dn.$$
   The above term can be written as
   $$  \widehat{\mathcal{F}}  \left[e^{it \partial_{x}^2} \mathbf{1}_{+}( D)\left(
    e^{-it \partial_{x}^2} (\widehat{\mathcal{F}}^{-1} \partial_{k}\widetilde f_{+})\left(\gamma \cdot\right)
     \overline{ e^{-it \partial_{x}^2} (\widehat{\mathcal{F}}^{-1} \widetilde f_{+})\left( \delta \cdot \right)}
      e^{-it \partial_{x}^2} (\widehat{\mathcal{F}}^{-1} \widetilde f_{+})(\eps\, \cdot)\right) \right](\beta k)$$
This yields the estimate
\begin{align}
\label{dkI}
{\| \partial_k I (t) \|}_{L^2} \lesssim {\| \widehat{\mathcal{F}}^{-1} \partial_k \wt{f}_{+} \|}_{L^2}
    {\| e^{-it \partial_{x}^2} \widehat{\mathcal{F}}^{-1}(\mathbf{1}_+(k) \wt{f}) \|}_{L^\infty}^2.
\end{align}
Hence,  by using the  (flat) linear estimate of Corollary \ref{cor_penguin} to deduce that
\begin{align}
\label{flatbis}
{\| e^{-it \partial_{x}^2} \widehat{\mathcal{F}}^{-1}(\mathbf{1}_+(k) \wt{f}) \|}_{L^\infty}
    \lesssim \frac{1}{\sqrt{t}} {\| \wt{f}(t) \|}_{L^\infty} + \frac{1}{t^{3/4}} {\| \partial_k \wt{f}(t) \|}_{L^2},
\end{align}
we finally obtain by using the bootstrap assumption that
\begin{equation}
\label{dkN0}
{\| \partial_k \mathcal{N}_0(t) \|}_{L^2} \lesssim \| \partial_{k} I(t) \|  \lesssim \e_1^3 \langle t \rangle^{- { 3 \over 4} - \alpha}.
\end{equation}
Note that by using the above arguments, we have since
$$ I(t,y)= \widehat{\mathcal{F}}  \left[e^{it \partial_{x}^2} \mathbf{1}_{+}(D)\left(
    e^{-it \partial_{x}^2} (\widehat{\mathcal{F}}^{-1}\widetilde f_{+})\left(\gamma\cdot\right)
     \overline{ e^{-it \partial_{x}^2} (\widehat{\mathcal{F}}^{-1} \widetilde f_{+})\left(\delta\cdot \right)}
      e^{-it \partial_{x}^2} (\widehat{\mathcal{F}}^{-1} \widetilde f_{+})(\eps\, \cdot)\right) \right](\beta y)
      $$
  that
 \begin{equation}
 \label{Idet}
 \| I(t) \|_{L^2} \lesssim   {\| \wt{f}_{+} \|}_{L^2}
    {\| e^{-it \partial_{x}^2} \widehat{\mathcal{F}}^{-1}(\mathbf{1}_+(k) \wt{f}) \|}_{L^\infty}^2  \lesssim{ \e_1^3 \over t}.
 \end{equation}

\subsubsection{The term $\mathcal{N}_V$}
Changing variables $(m,n) \rightarrow (a,b)$ by letting
$$
\left\{
\begin{array}{l}
m = \delta(- a + b -p +\beta k) \\
n = \eps(\beta k - p + b)
\end{array}
\right.
\quad
\mbox{i.e.}
\quad
\left\{
\begin{array}{l}
a = \epsilon n - \delta m \\
b = p + \epsilon n - \beta k,
\end{array}
\right.
$$
we can write
\begin{align}
\begin{split}
& \mathcal{N}_V(t,k) =  \sum_{\b \in \{-1,+1\}} \b
  \int e^{it (-k^2 + (p-\beta k)^2)}\, \mathbf{1}_+(k) \, I (t,\beta k - p) \, \frac{\what{\phi}(p)}{p} \, dp,
\end{split}
\end{align}
where $I(t,y)$ is defined in \eqref{Idef}.
By setting $q= p-\beta k$, we can also write that
$$
\mathcal{N}_V(t,k) =  \sum_{\b \in \{-1,+1\}} \b
  \int e^{it (-k^2 + q^2)}\, \mathbf{1}_+(k) \, I (t, -q) \, \frac{\what{\phi}(q+ \beta k)}{q + \beta k} \, dq$$
and we observe,  first changing variable $\gamma \to -\gamma$,
$\delta \to -\delta$, $\epsilon \to -\epsilon$, and then $a \to - a$ and $b \to -b$, that:
\begin{align}
\label{goodsym}
\begin{split}
I(q) & = \sum_{\g,\d,\eps \in \{-1,+1\}} \g\d\eps \iint e^{2it ab} \wt{f}_+(\gamma(q-a)) \overline{\wt{f}_+(\delta(b-a+q))} \wt{f}_+(\eps(b+q)) \, da db
\\
& = - \sum_{\g,\d,\eps \in \{-1,+1\}} \g\d\eps \iint e^{2it ab}
\wt{f}_+(\gamma(-q+ a)) \overline{\wt{f}_+(\delta(-b +a-q))} \wt{f}_+(\eps(-b-q)) \, da db
\\
& = - I(-q).
\end{split}
\end{align}
By using this symmetry property, we find that
$$\mathcal{N}_V(t,k) =  - {1 \over 2 } \sum_{\b  \in \{-1,+1\}} \b
  \int e^{it (-k^2 + q^2)}\, \mathbf{1}_+(k) \, I(t,q) \, \left ( \frac{\what{\phi}(q+ \beta k)}{q + \beta k}
  -\frac{\what{\phi}(-q+ \beta k)}{-q + \beta k}\right)
   \, dq$$
and by writing out explicitly the terms corresponding to $\beta = 1$ and $-1$ we finally  get
\begin{align}
\begin{split}
\mathcal{N}_V(t,k) & = -  {1 \over 2}  \int e^{it (-k^2 + q^2)} \,\mathbf{1}_{+}(k) I(t,q)
  \\ & \times \Big[ {\what{\phi}(q+k) \over q+k} - {\what{\phi}(-q+k) \over -q+ k} - {\what{\phi}(q-k)
   \over q-k} +{ \what{\phi}(-q-k)  \over -q -k}\Big] \, dq.
\end{split}
\end{align}
 Since $\phi$ is even,
 this yields $\mathcal{N}_V(t,k) \equiv 0$.

\subsubsection{The term $\mathcal{N}_{V,r}$}\label{NVrsec}
As above, we can write
\begin{align}
\label{NVr2}
\mathcal{N}_{V, r}(t,k) =  \sum_{\b, \in \{-1,+1\}} \b
  \int e^{it (-k^2 + (p-\beta k)^2)}\, \mathbf{1}_+(k) \, I(t,\beta k- p) \, \what{\psi}(p) \, dp
\end{align}
where now $\widehat{\psi}$ is even (above $\widehat{\phi}(p)/p$ was odd) and in the Schwartz class.
 By computing $\partial_{k}$, we find
 \begin{align}
& \partial_k \mathcal{N}_{V, r} = \mathcal{N}_{1} + \mathcal{N}_{2}
\end{align}
where
\begin{align}
\begin{split}
\mathcal{N}_1(t,k) & =   -2 i t  \mathbf{1}_{+}(k)\sum_{\b  \in \{-1,+1\}}  \int e^{it (-k^2 + q^2)}
  \, I (t,q) \, (q+ \beta k) \what{\psi}(q+ \beta k) \, dq,
\\
\mathcal{N}_2(t,k) & =  -\mathbf{1}_{+}(k) \sum_{\b  \in \{-1,+1\}}  \int e^{it (-k^2 + (p-\b k)^2)} \,
  \partial_y I (t,p-\beta k) \, \what{\psi}(p) \, dp,
\end{split}
\end{align}
having changed variables to $q = p - \beta k$ for the first term.

Let us start with the estimate of $\mathcal{N}_{2}$. We first observe that
since $\psi$ is a Schwartz class function, we obtain  from the Young inequality that
   $$ \|\mathcal{N}_2\|_{L^2} \lesssim   \|\partial_{y} I \|_{L^2}$$
and hence, by using \eqref{dkN0}, we find
$$\|  \mathcal{N}_2(t) \|_{L^2} \lesssim \e_1^3 \langle t \rangle^{- { 3 \over 4} - \alpha}.$$

To handle $\mathcal{N}_{1}$, we shall integrate by parts in $q$ using that
 $ {1 \over q}\partial_{q} (e^{it q^2})= 2i t  e^{it q^2}.$
  This yields
 \begin{align*}
 \begin{split}  \mathcal{N}_{1}(t,k) &  =
  \mathbf{1}_{+}(k)\sum_{\b  \in \{-1,+1\}}  \int e^{it (-k^2 + q^2)}
  \,  {I (t,q) \over q} \, \psi_{1}(q+ \beta k)    \, dq
  \\
  & \mbox{\hspace{4cm}}  + \mbox{p.v. }\int e^{it (-k^2 + q^2)}
  \,\partial_{q} \left( {I (t,q) \over q} \right) \,  \psi_{2}(q+ \beta k)\, dq \\
  & = \mathcal{N}_{1, 1}+ \mathcal{N}_{1,2}
 \end{split}
  \end{align*}
  where
  $$ \psi_{1}(y)=  \what{\psi}(y) + y \partial_{y}  \what{\psi}(y), \quad \psi_{2}(y)
  =y \,\what{\psi}(y).$$
  The above integration by parts can be justified by  integrating by parts for $|q| \geq \eps >0$ \texttt{}and passing to the limit $\eps\rightarrow 0$.
  Indeed, since $I(t,q)$ is an odd function thanks to \eqref{goodsym}, we observe that
   the boundary term
  $$ e^{i t( -k^2 + \eps^2)}\left( {I (t, \eps) \over \eps} \what{\psi}(\eps + \beta k) - {I(t, - \eps) \over - \eps} \what{\psi}(- \eps + \beta k)
  \right) = e^{i t( -k^2 + \eps^2)}  {I (t, \eps) \over \eps} \left( \what{\psi}( \eps + \beta k) - \what{\psi} (- \eps + \beta k)\right)$$
  tends to zero when $\eps$ tends to zero.
 Since $\psi_{1}$ is in the Schwartz class, we get as before
 $$ \| \mathcal{N}_{1, 1}\|_{L^2} \lesssim \| I(t, q)/q \|_{L^2}.$$
 Next, again since $I(t,0)= 0$, we can
 use  the Hardy inequality and \eqref{dkN0} to get that
  $$  \| \mathcal{N}_{1, 1}\|_{L^2} \lesssim  \| \partial_{q} I (t)\|_{L^2}
   \lesssim   \eps_1^3 \langle t \rangle^{- { 3 \over 4} - \alpha}.$$
   For the second term, we can symmetrize by using that the function $\partial_{q}(I/q)$ is odd, to obtain
  \begin{multline*}
   \mathcal{N}_{1,2}  ={1 \over 2}  \mathbf{1}_{+}(k)\sum_{\b  \in \{-1,+1\}} \mbox{p.v. }\int e^{it (-k^2 + q^2)}
  \,\partial_{q} \left( {I (t,q) \over q} \right) \,\left(   \psi_{2}(q+ \beta k) - \psi_{2}(-q+ \beta k) \right)\, dq \\
 =  {1 \over 2}  \mathbf{1}_{+}(k)\sum_{\b  \in \{-1,+1\}} \mbox{p.v. }\int e^{it (-k^2 + q^2)}
  \, \left(  \partial_{q} I (t,q) - { I(t,q) \over q} \right) \,\left(  { \psi_{2}(q+ \beta k) - \psi_{2}(-q+ \beta k) \over q} \right)\, dq
  \end{multline*}
  Again, since $\psi_{2}$ is a Schwartz class function,  we have that
  $$ \sup_{k} \int_{\mathbb{R}}  \left|{ \psi_{2}(q+ \beta k) - \psi_{2}(-q+ \beta k) \over q} \right|\, dq
   + \sup_{q} \int_{\mathbb{R}} \left|{ \psi_{2}(q+ \beta k) - \psi_{2}(-q+ \beta k) \over q} \right|\, dk <+ \infty$$
   and therefore, we obtain that
  $$  \| \mathcal{N}_{1, 2}(t) \|_{L^2} \lesssim \| \partial_k I(t) \|_{L^2}+\left\|  { I(t, k)  \over k } \right\|_{L^2} \lesssim
   \e_1^3 \langle t \rangle^{- { 3 \over 4} - \alpha}
  $$
  by using again the Hardy inequality and \eqref{dkN0}.

  We have thus obtained that
  $$ \| \partial_{k} \mathcal{N}_{V,r}\|_{L^2} \lesssim   \e_1^3 \langle t \rangle^{- { 3 \over 4} - \alpha}.$$

Gathering all the above estimates, we find \eqref{N+-borne}.

\medskip
\subsection{Estimate for $\mathcal{N}_L^\pm$}

As before, we only treat $\mathcal{N}_L^-$.  By \eqref{decphi-}, we can write
\begin{align}
\label{NLdec}
\mathcal{N}_L^-(t,k) & = \mathcal{N}_{L,0}(t,k) + \mathcal{N}_{L,V}(t,k)+ \mathcal{N}_{L, V,r}
\end{align}
where
\begin{align*}
\begin{split}
\mathcal{N}_{L,0}(t,k) = \sqrt{\frac{\pi}{2}} \sum_{\b,\g,\d,\eps \in \{-1,+1\}} \iint a^-_{\beta\g\d\eps}(k,\g(\b k+\d m-\eps n),m,n)
  e^{it (-k^2 + (\b k+\d m-\eps n)^2 - m^2 + n^2)}
  \\ \times 
  \widetilde{f}(t,\g(\b k+\d m-\eps n)) \overline{\widetilde{f}(t,m)} \widetilde{f}(t,n) \,dm\, dn,
\end{split}
\end{align*}
\begin{align*}
\begin{split}
\mathcal{N}_{L,V}(t,k) =  \sum_{\b,\g,\d,\eps \in \{-1,+1\}} \iiint  a^-_{\beta\g\d\eps}(k,\gamma(\b k-p+\d m-\eps n),m,n)
 e^{it (-k^2 + (\b k-p+\d m-\eps n)^2 - m^2 + n^2)}
  \\ \times
 \widetilde{f}(t,\gamma(\b k-p+\d m-\eps n)) \overline{\widetilde{f}(t,m)} \widetilde{f}(t,n)
  \frac{\what{\phi}(p)}{p} \, dm \,dn\, dp,
\end{split}
\end{align*}
and
\begin{align}
\label{NLvr}
\begin{split}
\mathcal{N}_{L,V, r}(t,k) =  \sum_{\b,\g,\d,\eps \in \{-1,+1\}} \iiint  a^-_{\beta\g\d\eps}(k,\gamma(\b k-p+\d m-\eps n),m,n)
 e^{it (-k^2 + (\b k-p+\d m-\eps n)^2 - m^2 + n^2)}
  \\ \times
 \widetilde{f}(t,\gamma(\b k-p+\d m-\eps n)) \overline{\widetilde{f}(t,m)} \widetilde{f}(t,n)
   \what{\psi}(p) \, dm\, dn\, dp.
\end{split}
\end{align}

\subsubsection{The $\mathcal{N}_{L,0}$ contribution} \label{nlzero}
This is similar to the term $\mathcal{N}_0$ in \eqref{wN_0}. Indeed, by using the expansion \eqref{tensored} of the symbols $a_-$, the problem reduces to estimating terms of the form
\begin{align}
\begin{split}
a_{\sigma_{1}}(\beta k) \iint   e^{it (-k^2 + (\b k+\d m-\eps n)^2 - m^2 + n^2)}
 g_{\sigma_{2}}(t,\g(\b k+\d m-\eps n)) g_{\sigma_{3}}(t,m) g_{\sigma_{4}}(t,n) \,dm \,dn,
\end{split}
\end{align}
where we have set
\begin{equation}
\label{defg} g_{\sigma_{i}}(t, k)= a_{\sigma_{i}}(k) \widetilde{f}(t,k).
\end{equation}
The bounds on $a_{\sigma_i}$ as well as the bootstrap assumption on $f$ imply that
 $$ \|\widehat{\mathcal{F}}^{-1} e^{-it k^2} g_{\sigma_{i}}(t)\|_{L^\infty} \lesssim  \frac{\e_1}{\sqrt t}, \quad \| \partial_{k} g_{\sigma_{i}}(t) \|_{L^2}
  \lesssim\e_1 \langle t \rangle^{\frac{1}{4}-\alpha},$$
  and  the estimates follow exactly as above, giving
  $$ \|\partial_{k} \mathcal{N}_{L,0} (t) \|_{L^2} \lesssim \veps_1^3 \langle t \rangle^{- { 3 \over 4} - \alpha}.$$

\subsubsection{The $\mathcal{N}_{L,V}$ contribution}
The main idea here is to use the vanishing of the $a^-$ coefficients, see  \eqref{muLcoeff+-},
in order to perform various integration by parts.
We begin by changing variables as we did before: $(m,n) \to (a,b)$ with letting $(m,n) = (\d (-a+b-p+\b k),\eps (\b k - p + b))$ so that
\begin{align}
\begin{split}
\mathcal{N}_{L,V}(t,k) = i \sum_{\b,\g,\d,\eps \in \{-1,+1\}}
  \int e^{it (-k^2 + (p-\b k)^2)}  I_{\beta\g\d\eps}(\beta k-p) \frac{\what{\phi}(p)}{p} \, dp,
\\
\end{split}
\end{align}
where
\begin{align}
\begin{split}
I_{\beta\g\d\eps}(y) & = \iint e^{2itab} a^-_{\beta\g\d\eps}(k,\gamma(y-a),\delta(b-a+y), \eps(y+ b))
  \\ & \times \widetilde{f}(t,\gamma(y-a)) \overline{\widetilde{f}(t,\delta(b-a+y)} \widetilde{f}(t,\eps(y+ b))\, da \, db.
\end{split}
\end{align}
Applying $\partial_k$ gives two types of terms:
\begin{align}
\label{dkNLV}
\begin{split}
& \partial_k \mathcal{N}_{L,V} = \mathcal{N}_{L,1} + \mathcal{N}_{L,2},
\\
& \mathcal{N}_{L,1} = 2t \sum_{\b,\g,\d,\eps \in \{-1,+1\}} \beta
  \int e^{it (-k^2 + q^2)} \, I_{\beta\g\d\eps}(-q) \, \what{\phi}(q+\beta k)\,dq,
\\
& \mathcal{N}_{L,2} =  i\sum_{\b,\g,\d,\eps \in \{-1,+1\}}\beta
  \int e^{it (-k^2 + (p-\b k)^2)} \, \partial_y I_{\beta\g\d\eps}(p-\beta k) \, \frac{\what{\phi}(p)}{p} \,dp.
\end{split}
\end{align}

\medskip
\noindent
{\it The term $\mathcal{N}_{L,2}$}. We start with this term, which can be easily bounded. Proceeding as in Section~\ref{nlzero}, we observe that $I_{\beta \gamma \delta \epsilon}$ can be written as $I(t,y)$ in~\eqref{Idef} if one replaces $\widetilde{f}$ by $g_\sigma$. By the boundedness properties of $a_\sigma(D)$ exploited in Section~\ref{nlzero}, we can follow the argument used when estimating $I(t,y)$ to deduce the equivalent of~\eqref{dkN0}, namely
$$
\| \partial_k I_{\beta \gamma \delta \epsilon} \|_{L^2} \lesssim t^{-\frac{3}{4}-\alpha} \e_1^3.
$$
Now observe that
$$
\mathcal{N}_{L,2} = \sum_{\beta,\gamma,\delta,\epsilon}\widehat{ \mathcal{F}} e^{it \partial_x^2} \left[ e^{-it\partial_x^2} \widehat{ \mathcal{F}}^{-1} (\partial_k I_{\beta \gamma \delta \epsilon}) \widehat{\mathcal{F}}^{-1}  \frac{\widehat{\phi}(k)}{k}\right].
$$
Since $\widehat{\mathcal{F}}^{-1}  \frac{\widehat{\phi}(k)}{k}$ is a bounded function, we obtain the desired estimate:
\begin{align*}
\| \mathcal{N}_{L,2} \|_{L^2} \lesssim \| \partial_k I_{\beta \gamma \delta \epsilon} \|_{L^2} \lesssim t^{-\frac{3}{4}-\alpha} \e_1^3.
\end{align*}

\medskip
\noindent
{\it The term $\mathcal{N}_{L,1}$}.
This is the term where we exploit the vanishing of the $\mu_L$ part of the spectral measure, see  \eqref{muLcoeff+-}. The desired bound will be achieved if we can show (for any choice of $\beta$, $\gamma$, $\delta$, and $\epsilon$)
\begin{align}
\label{NL10}
{\Big\| \int_0^t \mathcal{M}(s,k) \, ds \Big\|}_{L^2_k} \lesssim \e_1^3 \langle t \rangle^{1/4-\alpha},
\end{align}
where
$$
\mathcal{M}(t,k) = t \, \iiint e^{it (-k^2 + q^2 + 2ab)} \widetilde{f}(t,\gamma(-q-a)) \overline{\widetilde{f}(t,\delta(b-a-q)} \widetilde{f}(t,\eps(-q+ b)) \mu(k,a,b,q) \,da\,db\,dq
$$
with
$$
\mu(k,a,b,q) = a^-_{\beta\g\d\eps}(k,\gamma(-q-a),\delta(b-a-q), \eps(-q+ b)) \widehat{\phi}(q+\beta k).
$$
Using the notation for Littlewood-Paley cutoffs from \ref{secLP},
we decompose dyadically with respect to the output variable $k$, and the maximum of the input variables.
More precisely, we decompose $\mathcal{M} = \sum_{K,J} \mathcal{M}_{K,J}(t,k)$ by setting
\begin{align}
\label{NL12}
\begin{split}
\mathcal{M}_{K,J}(k) := t \, \iiint e^{it (-k^2 + q^2 + 2ab)} \,\widetilde{f}(\gamma(-q-a)) \overline{\widetilde{f}(\delta(b-a-q)} \widetilde{f}(\eps(-q+ b)) \mu_{K,J}(k,a,b,q) \, da \,db\, dq,
\end{split}
\end{align}
with
\begin{equation}
\begin{split}
\label{NL12'}
\mu_{K,J}(k,a,b,q) & :=  a^-_{\beta\g\d\eps}(k,\gamma(-q-a),\delta(b-a-q), \eps(-q+ b)) \widehat{\phi}(q+\beta k) \\
& \qquad \qquad \qquad \qquad \times \varphi_K(k) \varphi_J\big( |(q+a,b-a-q,q-b)| \big)
\end{split}
\end{equation}
We then distinguish two main cases depending on the relative sizes of $J$ and $K$ by splitting
\begin{align}
\label{NL1split}
\begin{split}
& \mathcal{M} = \mathcal{M}_1 + \mathcal{M}_2,
\qquad \mathcal{M}_1 := \sum_{J \geq K-10} \mathcal{M}_{K,J},
\qquad \mathcal{M}_2 := \sum_{J < K-10} \mathcal{M}_{K,J}.
\end{split}
\end{align}
The first term corresponds to the case when the maximum of three input variables is larger or comparable to the output frequency $k$,
while in the term $\mathcal{M}_2$ the frequency $k$ is dominant.

\medskip
\noindent
{\it Case 1: Estimate of $\mathcal{M}_1$.}
We begin by treating the case when $k$ is not the dominant frequency and distinguish several subcases.
Note that since $K \leq J+10$ we have
\begin{align*}
\mu_{K,J}(k,a,b,q) = \mu_{K,J}(k,a,b,q) \varphi_{\leq J+20}(q+\beta k).
\end{align*}

\medskip
\noindent
{\it Subcase 1.0: Small times $t \leq 1$.} It is easy to see that
$$
\| \mathcal{M}_1(t) \|_{L^2} \lesssim \| u(t) \|_{H^3}^3 \lesssim \e_1^3.
$$
Therefore, we can assume in the following that $t \geq 1$.

\medskip
\noindent
{\it Subcase 1.1: Low Frequencies $2^J \leq t^{-6/13}$.}
Due to the bound \eqref{muLcoeff+-}, for $K \leq J + 10$
on the support of $\mu_{K,J}$ we have
\begin{align*}
|\mu_{K,J}(k,a,b,q) | \lesssim 2^J.
\end{align*}
Using the support properties of $\mu_{K,J}$ we can then estimate
\begin{align*}
{\| \mathcal{M}_{K,J}(t) \|}_{L^2} & \lesssim t  \cdot 2^{K/2} \sup_k
  \iiint \left| \mu_{K,J}(k,a,b,q) \widetilde{f}(\gamma(-q-a)) \overline{\widetilde{f}(\delta(b-a-q)} \widetilde{f}(\eps(-q+ b)) \right| \,da \,db \, dq \\
& \lesssim t 2^{K/2} 2^{J} {\| \wt{f} \|}_{L^\infty}^3 \iiint \big| \underline{\varphi_{J}}\big(|(q+a,b-a-q,q-b)|\big) \big| \,da \, db \, dq
\\
& \lesssim t  2^{K/2} 2^J 2^{3J} \e_1^3.
\end{align*}
Summing over $K \leq J+10$ with $2^J \leq t^{-\frac{6}{13}}$ gives us
\begin{align}
\label{NL14}
\sum_{\substack{K-10 \leq J \\ 2^J \leq t^{-6/13}}} {\| \mathcal{M}_{K,J}(t) \|}_{L^2} \lesssim  t^{-1} \e_1^3.
\end{align}
From now on we may assume $2^J \geq t^{-6/13}$.
In the next step we compare the size of the integration variables $a$ and $b$ to $2^J$.
Without loss of generality we may assume that $\max\{|a|,|b|\} = |b|$, and consider terms of the form
\begin{align}
\label{NL15}
\begin{split}
\mathcal{M}_{K,J,B}(t,k) := t \iiint e^{it (-k^2 + q^2 + 2ab)} \widetilde{f}(\gamma(-q-a)) \overline{\widetilde{f}(\delta(b-a-q)} \widetilde{f}(\eps(-q+ b)) \\ \times \mu_{K,J}(k,a,b,q) \varphi_B(b)\, da \, db \, dq.
\end{split}
\end{align}

\medskip
\noindent
{\it Subcase 1.2: $B \geq J-20$ and $J \leq 0$}.
In this case we resort to the identity $(1/2itb) \partial_a e^{it (-k^2 + q^2 + 2ab)} = e^{it (-k^2 + q^2 + 2ab)}$ to integrate by parts in $a$, leading to
\begin{align*}
& \mathcal{M}_{K,J,B} = \mathcal{M}_{K,J,B}^1 + \mathcal{M}_{K,J,B}^2 + \{ \mbox{similar terms} \},
\end{align*}
with
\begin{align*}
& \mathcal{M}_{K,J,B}^1 := \iiint e^{it (-k^2 + q^2 + 2ab)}
\widetilde{f}(\gamma(-q-a)) \overline{\widetilde{f}(\delta(b-a-q))} \widetilde{f}(\eps(-q+ b)) \, m_1(k,a,b,q) \,da\, db\, dq,
\\
& m_1(k,a,b,q) := \partial_a \big[ \mu_{K,J}(k,a,b,q) \big]
  \frac{\varphi_B(b)}{2ib},
\end{align*}
and
\begin{align*}
& \mathcal{M}_{K,J,B}^2 := \iiint e^{it (-k^2 + q^2 + 2ab)}
  \partial_a  \widetilde{f}(\gamma(-q-a)) \overline{\widetilde{f}(\delta(b-a-q))} \widetilde{f}(\eps(-q+ b)) m_2(k,a,b,q) \, da\, db\, dq,
\\
& m_2(k,a,b,q) := \mu_{K,J}(k,a,b,q) \frac{\varphi_B(b)}{2ib},
\end{align*}
with similar terms arising when $\partial_a$ hits the second and third profiles $\widetilde{f}$.

We will now denote, for any symbol $m$,
\begin{align*}
m^{\sharp}(k,\ell,m,n) = m(k,a,b,q),
\end{align*}
where $a,b,q$ are given by the change of variables $(\ell,m,n)= (\gamma(-q-a),\delta(b-a-q),\e(-q+b))$ performed before.
Notice that, in view of the support restrictions (in particular $J \sim B$) and Proposition~\ref{muprop},
\begin{align*}
\| \widehat{\mathcal{F}}^{-1} m_1^\sharp \|_{L^2_w L^1_{x,y,z}} & \lesssim \| \widehat{\mathcal{F}}^{-1} m_1^\sharp \|_{L^2_{w,x,y,z}}^{1/4}
\| |(x,y,z)|^2 \widehat{\mathcal{F}}^{-1} m_1^\sharp \|_{L^2_{w,x,y,z}}^{3/4} \\
& = \| m_1^\sharp \|_{L^2_{k,\ell,m,n}}^{1/4} \| \nabla_{\ell,m,n}^2 m_1^\sharp \|_{L^2_{k,\ell,m,n}}^{3/4}
\lesssim (2^{(K+J)/2})^{1/4} (2^{(K-3J)/2})^{3/4}
\\ & \lesssim 2^{\frac{K}{2}-J}.
\end{align*}

Since
$$
\mathcal{M}^1_{K,J,B} = e^{-itk^2} \widehat{\mathcal{F}}
T_{\widehat{\mathcal{F}}^{-1} m_1^{\sharp}} (\widehat{\mathcal{F}}^{-1} \widetilde{u},
\widehat{\mathcal{F}}^{-1} \widetilde{u} ,\widehat{\mathcal{F}}^{-1} \widetilde{u}),
$$
see the notation in \ref{secLP},
we can bound, by Lemma \ref{lemmult}, the above estimate on $\widehat{\mathcal{F}}^{-1} m_1^\sharp$, and the linear estimate~\eqref{propdisp1},
\begin{align*}
\| \mathcal{M}^1_{K,J,B}\|_{L^2} & \lesssim \|  \widehat{\mathcal{F}}^{-1} m_1^\sharp \|_{L^2_w L^1_{x,y,z}} \|  \widehat{\mathcal{F}}^{-1}\widetilde{u} \|_{L^\infty}^3  \\
&  \lesssim 2^{\frac{K}{2}-J} \left[ \frac{\| \widetilde f\|_{L^\infty}}{\sqrt{t}} + \frac{\| \partial_k \widetilde{f} \|_{L^2}}{t^{3/4}}\right]^3 \lesssim  2^{\frac{K}{2}-J} t^{-3/2} \e_1^3.
\end{align*}
Therefore, summing over all indices in the current configuration, we obtain the bound
$$
\sum_{\substack{K\leq J+10\\1 > 2^J> t^{-6/13} \\ J \sim B}}  \| \mathcal{M}_{K,J,B}^2 \|_{L^2} \lesssim \sum_{\substack{K\leq J+10\\1 > 2^J> t^{-6/13} \\ J \sim B}} 2^{\frac{K}{2}-J} t^{-3/2} \e_1^3 \lesssim t^{-\frac{33}{26}} \e_1^3,
$$
which suffices!

Turning to $\mathcal{M}^2_{K,J,B}$, one proceeds similarly by observing first that
$$
\| \widehat{\mathcal{F}}^{-1} m_2^\sharp \|_{L^2_{w,x} L^1_{y,z}} \lesssim \| \widehat{\mathcal{F}}^{-1} m_2^\sharp \|_{L^2_{w,x,y,z}}^{1/2}
  \| |(y,z)|^2 \widehat{\mathcal{F}}^{-1} m_1^\sharp \|_{L^2_{w,x,y,z}}^{1/2} \lesssim 2^{(K+J)/2}.
$$
Therefore,
$$
\| \mathcal{M}^2_{K,J,B}\|_{L^2}  \lesssim \|  \widehat{\mathcal{F}}^{-1} m_2^\sharp \|_{L^2_{w,x} L^1_{y,z}} \|  \widehat{\mathcal{F}}^{-1}\widetilde{u} \|_{L^\infty}^2 \| \partial_k \widetilde{f} \|_{L^2} \lesssim 2^{(K+J)/2} t^{-\frac{3}{4} - \alpha} \e_1^3,
$$
which, after summing over all indices in the current configuration, leads to the acceptable bound
$$
\sum_{\substack{K\leq J+10\\1 > 2^J> t^{-6/13} \\ J \sim B}}  \| \mathcal{M}_{K,J,B}^2 \|_{L^2}
\lesssim \sum_{\substack{K\leq J+10\\1 > 2^J> t^{-6/13}}} 2^{(K+J)/2} t^{-\frac{3}{4} - \alpha}\e_1^3 \lesssim t^{-\frac{3}{4} - \alpha}\e_1^3.
$$

\medskip
\noindent
{\it Subcase 1.3: $B \geq J-20$ and $J \geq 0$}.
Integrating by parts in $b$ as in the case $J \leq 0$, matters reduce to estimating
$$\sum_{\substack{K \leq J + 10, \\ B \sim J, \, J \geq 0}} \mathcal{M}^1_{K,J,B}
+ \sum_{\substack{K \leq J + 10, \\ B \sim J, \, J \geq 0}} \mathcal{M}^2_{K,J,B}.$$
We will only discuss the latter sum, which is slightly more delicate.
Arguing as in Section \ref{nlzero} to replace $f$ by $g_{\sigma_i}$, observe that
\begin{align*}
\sum_{\substack{K \leq J + 10, \\ B \sim J, \, J \geq 0}} \mathcal{M}^2_{K,J,B}
= \iiint e^{it(-k^2 + \ell^2 - m^2 + n^2)} g_{\sigma_1}(\ell) g_{\sigma_2}(m) g_{\sigma_3}(n) \varphi_J(|(\ell,m,n)|)
\\ \times \varphi_{\leq J+10}(k) \frac{\underline{\varphi_J}(-\gamma \ell + \delta m)}{-\gamma \ell + \delta m} \,d\ell\,dm\,dn,
\end{align*}
which can also be written as
\begin{align*}
& \sum_{\substack{K \leq J + 10 \\ B \sim J, \, J \geq 0}} \mathcal{M}^2_{K,J,B}
= e^{-itk^2} \int \widehat{\phi}(p) \mathcal{T}_{J,p}(g_{\sigma_1}, g_{\sigma_2}, g_{\sigma_3})\,dp
\\
&  \mathcal{T}_{J,p}(g_{\sigma_1}, g_{\sigma_2}, g_{\sigma_3})
:= \iint e^{it(\ell^2 - m^2 + n^2)} g_{\sigma_1}(\ell) g_{\sigma_2}(m) g_{\sigma_3}(n) \nu_{J,p}(k,m,n) \,dm\,dn,
\\
& \nu_{J,p}(k,m,n) := \varphi_J(|(\ell,m,n)|) \varphi_{\leq J+10}(k)
  \frac{\underline{\varphi_J}(-\gamma \ell + \delta m)}{-\gamma \ell + \delta m}
\end{align*}
where in the last integral, $\ell$ always stands for $\ell = \gamma(\beta k + \delta m -\epsilon n - p)$. Observe that the Fourier transform of the kernel $\nu_{J,p}$ is easily bounded by
$$
\| \widehat{ \mathcal{F}}^{-1} \nu_{J,p} \|_{L^1} \lesssim 2^{-J}.
$$
It is then easy to conclude by Lemma~\ref{lemmult2} that
$$
\Big\| \sum_{\substack{K \leq J + 10 \\ B \sim J, \, J \geq 0}} \mathcal{M}^2_{K,J,B} \Big\|_{L^2}
\lesssim \sum_{J \geq 0} 2^{-J} \frac{\varepsilon_1^3}{t^{\frac{3}{4}+\alpha}} \lesssim \frac{\varepsilon_1^3}{t^{\frac{3}{4}+\alpha}},
$$
which leads to the desired estimate.

\medskip
\noindent
{\it Subcase 1.4: $B \leq J-20$.}
We now consider the term $\mathcal{M}_{K,J,B}$, when $B \leq J-20$. Here again, the difficulty lies in estimating the contribution of $J \leq0$; we will focus on it and omit the case $J \geq 0$. Observe that on the support of this oscillatory integral we must have $|a|+|b| \approx 2^B \ll \max\{|a+q|,|b-a-q|,|b-q|\} \approx 2^J$. It then follows that $|q| \approx 2^J$.
We can then integrate by parts in $q$. More precisely we can write
\begin{align}
\label{NL18}
\begin{split}
\sum_{B \leq J-20} \mathcal{M}_{K,J,B}(t,k) = t \iiint e^{it (-k^2 + q^2 + 2ab)} \,
 \widetilde{f}(\gamma(-q-a)) \overline{\widetilde{f}(\delta(b-a-q))} \widetilde{f}(\eps(-q+ b))\\
  \mu_{K,J}(k,a,b,q) \varphi_{\leq J-20}(b) \underline{\varphi}_J(q) \, da\, db\, dq,
\end{split}
\end{align}
and, similarly to what was done above, obtain
\begin{align*}
& \sum_{B \leq J-20} \mathcal{M}_{K,J,B}(t,k) = \mathcal{M}_{K,J}^3 + \mathcal{M}_{K,J}^4,
\end{align*}
with
\begin{align*}
& \mathcal{M}_{K,J}^3 := \iiint e^{it (-k^2 + q^2 + 2ab)}  \widetilde{f}(\gamma(-q-a)) \overline{\widetilde{f}(\delta(b-a-q))} \widetilde{f}(\eps(-q+ b))  m_3(k,a,b,q) \,da\, db\, dq,
\\
& m_3(k,a,b,q) = \partial_q \Big[  \mu_{K,J}(k,a,b,q) \frac{\underline{\varphi}_J(q)}{2iq} \Big] \varphi_{\leq J-20}(b),
\end{align*}
and
\begin{align*}
& \mathcal{M}_{K,J}^4 := \iiint e^{it (-k^2 + q^2 + 2ab)}
  \partial_q\big[ \wt{f}(\g(-q-a)) \overline{\wt{f}(\d(b-a-q))} \wt{f}(\eps(-q+b)) \big] m_4(k,a,b,q) \, da db dq,
\\
& m_4(k,a,b,q) := \mu_{K,J}(k,a,b,q)  \varphi_{\leq J-20}(b) \frac{\underline{\varphi}_J(q)}{2iq}.
\end{align*}

Direct computations show that the following bounds hold:
\begin{align*}
& {\big\| \what{\mathcal{F}}^{-1} m_3^{\sharp} \big\|}_{L^2_w L^1_{x,y,z}} \lesssim 2^{\frac{K}{2}-J}.
\\
& {\big\| \what{\mathcal{F}}^{-1} m_4^{\sharp} \big\|}_{L^2L^2L^1L^1} \lesssim 2^{(K+J)/2}.
\end{align*}
We can then proceed exactly as we did for the terms $\mathcal{M}_{K,J,B}^1$ and $\mathcal{M}_{K,J,B}^2$ above,
applying Lemma \ref{lemmult} and obtaining the desired bounds.
This shows that the term $\mathcal{M}_1$ in \eqref{NL1split} satisfies the estimate \eqref{NL10}.

\bigskip
\noindent
{\it Case 2: Estimate on $\mathcal{M}_2$}.
In this case the variable $k$ dominates all the others. Again we distinguish the case of small and high frequencies.

\medskip
\noindent
{\it Subcase 2.1: $t \leq 1$ or $2^K \leq t^{-6/13}$.} Here we can proceed exactly as in Subcase 1.1 above to deduce the desired estimate.

\medskip
\noindent
{\it Subcase 2.2: $2^K \geq t^{-6/13}$.}
In this case we integrate by parts in time. Let us denote the oscillating phase in \eqref{NL12} by
$$
\Phi = \Phi(k,a,b,q) = -k^2 + q^2 + 2ab,
$$
and observe that for $K \geq J+10$, on the support of the integral, we have $|k| \gg |a|, |b|, |q|$
and, in particular, $|\Phi| \gtrsim k^2$.
Integrating by parts in time via the identity $\partial_s e^{is\Phi} = (1/i\Phi)e^{is\Phi}$, we get
\begin{align}
\label{NL19}
\begin{split}
& \int_0^t \mathcal{M}_{K,J}(s,k)\,ds = t S^1(t,k) - S^1(1,k) + \int_1^t S^1(s,k) \, ds + \int_1^t S^2(s,k) \, ds + \{ \mbox{similar terms} \},
\\
& S^1(t,k) := \iiint e^{is\Phi} \wt{f}(\g(-q-a)) \overline{\wt{f}(\d(b-a-q))} \wt{f}(\eps(-q+b)) \sigma(k,a,b,q) \, da\, db\, dq,
\\
& S^2(t,k) := \iiint t \,e^{it\Phi}\partial_t \wt{f}(\g(-q-a)) \overline{\wt{f}(\d(b-a-q))} \wt{f}(\eps(-q+b)) \sigma(k,a,b,q) \,da\,db\,dq,
\end{split}
\end{align}
with similar terms arising when $\partial_t$ hits the second or the third profile $\widetilde{f}$, and
\begin{align*}
\sigma(k,a,b,q) := \frac{1}{i \Phi(k,a,b,q)} \mu_{K,J}(k,a,b,q).
\end{align*}
It is not hard to verify that
satisfies
\begin{align*}
\begin{split}
& {\big\| \what{\mathcal{F}}^{-1} \sigma^\sharp \big\|}_{L^2_{w,x} L^1_{y,z}} \lesssim 2^{\frac{3}{2}(J-K)},
\end{split}
\end{align*}
for $J\leq 0$. For $J>0$ the bound above would have an extra factor of $2^{J/2}$:
This loss can be tolerated for $2^J \leq t^{1/3}$ by proceeding as we do below,
while for $2^J \leq t^{1/3}$ one can rely on the a priori $H^3$ bound of Proposition \ref{propH3} to obtain the desired estimate.
We leave the details of this simpler case to the reader.

Using Lemma \ref{lemmult} we have
\begin{align*}
{\| S^1(t) \|}_{L^2} \lesssim  2^{\frac{3}{2}(J-K)} {\| \what{\mathcal{F}}^{-1}\wt{u}\|}_{L^2} {\| \what{\mathcal{F}}^{-1}\wt{u}\|}_{L^\infty}^2 \lesssim 2^{\frac{3}{2}(J-K)} t^{-1} \e_1^3,
\end{align*}
which after summation in $J,K$ over the current range of indices, leads to the acceptable contribution
$$
\sum_{\substack{K \geq J+10\\2^K\geq t^{-6/13}}} 2^{\frac{3}{2}(J-K)} t^{-1} \e_1^3 \lesssim \frac{\log t}{t} \e_1^3.
$$
Finally, recalling that $\partial_t f = e^{-it(-\partial_x^2 + V)} |u|^2 u$, we can estimate
\begin{align*}
{\| S^2(t) \|}_{L^2} \lesssim  t 2^{\frac{3}{2}(J-K)} \| \partial_t \widetilde f \|_{L^2} \| \what{\mathcal{F}}^{-1}\wt{u} \|_{L^\infty}^2
\lesssim 2^{\frac{3}{2}(J-K)} t^{-1} \e_1^3,
\end{align*}
which again largely suffices since
$$
\sum_{\substack{K \geq J+10\\1>2^K>t^{-6/13}}} 2^{\frac{3}{2}(J-K)} t^{-1} \e_1^3 \lesssim \frac{\log t}{t} \e_1^3.
$$
This concludes the proof of \eqref{NL10}, and of the weighted $L^2$-bound for $\mathcal{N}_{L,V}$.

To complete the estimate of $\mathcal{N}_L^-$, see \eqref{NLdec}, one needs to control the smoother remainder term
$\mathcal{N}_{L,V,r}$ in \eqref{NLvr}. This can be estimated exactly as in \ref{NVrsec} where we treated the similar
term $\mathcal{N}_{V,r}$, see the formula \eqref{NVr}. Therefore, we omit the details.

\medskip
\subsection{Estimates for $\mathcal{N}_R$}
We now look at the regular part
\begin{align}
\label{N_R0}
\begin{split}
4\pi^2\mathcal{N}_R(t,k) =
  \iiint e^{it \Phi(k,\ell,m,n)} \widetilde{f}(t,\ell) \overline{\widetilde{f}(t,m)} \widetilde{f}(t,n) \mu_R (k,\ell,m,n) \,d\ell\,dm\,dn,
\\ \Phi(k,\ell,m,n) = -k^2 + \ell^2 - m^2 + n^2,
\end{split}
\end{align}
where the measure $\mu_R$ is defined in Proposition \ref{muprop}, and want to show that this is a remainder term.
In particular we will establish the following Lemma which contains also an estimate for the $L^\infty_k$ norm of $\mathcal{N}_R(t,\cdot)$
to be used in the next section.

\begin{lem}\label{NR}
Under the a priori assumptions \eqref{bootstrap} we have
\begin{align}
\label{NRest}
& {\| \mathcal{N}_R(t,k) \|}_{L^\infty_k} \lesssim \e_1^3 \langle t \rangle^{-5/4},
\\
\label{NRest2}
& {\Big\| \int_0^t \partial_k \mathcal{N}_R(t,s) \, ds \Big\|}_{L^2} \lesssim \e_1^3 \langle t \rangle^{1/4 - \alpha}.
\end{align}
\end{lem}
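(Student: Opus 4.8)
The plan is to pass to physical space. Since $\psi_S+\psi_L+\psi_R=\sqrt{2\pi}\,\psi$, if we set $U_E(t,x):=\int\psi_E(x,\ell)\,e^{it\ell^2}\widetilde f(t,\ell)\,d\ell$ for $E\in\{S,L,R\}$, then $U_S+U_L+U_R=\sqrt{2\pi}\,u(t,x)$, and by Proposition~\ref{muprop} the quantity $4\pi^2\,\mathcal{N}_R(t,k)$ is $e^{-itk^2}$ times a finite sum of integrals $\int\overline{\psi_A(x,k)}\,U_B(x)\,\overline{U_C(x)}\,U_D(x)\,dx$ over tuples $(A,B,C,D)\in\{S,L,R\}^4$ with at least one entry equal to $R$, plus the $\mu_R^{(2)}$--terms, which have the same form but with the $x$--integral restricted to $|x|\le2$. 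The relevant building blocks are: \textbf{(i)} $\|U_E(t)\|_{L^\infty_x}\lesssim t^{-1/2}\e_1$ for every $E$, obtained from Lemma~\ref{lemstat} with the symbol bounds of Lemmas~\ref{lemm+-} and~\ref{lemcoeff} ($T$ and $R_\pm+1$ being admissible symbols for $E=L$, and an extra weight $\langle x\rangle^{-(\gamma-1)}$ being available for $E=R$); \textbf{(ii)} the improved bounds $|U_R(t,x)|\lesssim\langle x\rangle^{-(\gamma-1)}t^{-1/2}\e_1$ and $\|\psi_R(x,\cdot)\|_{L^2_k}\lesssim\langle x\rangle^{-(\gamma-1)}$, with each $k$-- or $x$--derivative costing at most two powers of $\langle x\rangle$; \textbf{(iii)} for $E\in\{S,L\}$ the splitting $\psi_E(x,k)=c^E_1(x,k)e^{ikx}+c^E_2(x,k)e^{-ikx}$ with $c^E_j$ and all of $\partial_x c^E_j$, $\partial_k c^E_j$, $\partial_x\partial_k c^E_j$ uniformly bounded, so that pairing $\psi_E(\cdot,k)$ against an $L^2_x$--function and measuring in $L^2_k$ is controlled by a pseudo-differential $L^2$--boundedness estimate, exactly as in the proof of Lemma~\ref{Lem0}(iii).

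Granting this, \eqref{NRest} follows at once: in each summand one factor carries $\langle x\rangle^{-(\gamma-1)}$ or $\mathbf 1_{|x|\le2}$, the remaining three $U$--factors carry $t^{-1/2}\e_1$ each, and $\psi_A(x,k)$ is $O(1)$; integrating in $x$ and using $\gamma>2$ gives $\|\mathcal{N}_R(t)\|_{L^\infty_k}\lesssim t^{-3/2}\e_1^3$, which is stronger than claimed.

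For \eqref{NRest2} I would split $4\pi^2\,\partial_k\mathcal{N}_R=\mathcal{R}_1+\mathcal{R}_2$, where $\mathcal{R}_2$ is the contribution of $\partial_k\mu_R$. In $\mathcal{R}_2$ the derivative falls only on the output eigenfunction $\psi_A(x,k)$, costing at most one power of $\langle x\rangle$ (harmless since $\gamma>3$); hence the argument above gives $\|\mathcal{R}_2(s)\|_{L^2}\lesssim s^{-3/2}\e_1^3$, so $\int_0^t\mathcal{R}_2$ is $\lesssim\e_1^3$. The main term is $\mathcal{R}_1(s,k)=-2isk\iiint e^{is\Phi}\widetilde f(\ell)\overline{\widetilde f(m)}\widetilde f(n)\,\mu_R\,d\ell\,dm\,dn$ with $\Phi=-k^2+\ell^2-m^2+n^2$. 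I would treat it along exactly the lines of the analysis of $\mathcal{N}_{L,V}$ in Section~\ref{secw}, which is in fact strictly easier here because $\mu_R$ is smooth (no principal value) and, by Proposition~\ref{muprop}, $\mu_R(0,\ell,m,n)=0$: decompose dyadically with $|k|\sim 2^K$ and $\max(|\ell|,|m|,|n|)\sim 2^J$; the range $2^{\max(K,J)}\lesssim s^{-\rho}$ (a sufficiently small fixed exponent) is trivial from $|\mu_R|\lesssim\min(|k|,1)\min(|\ell|,1)\min(|m|,1)\min(|n|,1)$; when $K\le J$ the phase is non-stationary in the largest input variable ($|\partial_\ell\Phi|\sim 2^J$), so integrating by parts there cancels the factor $s$, and after redistributing the remaining derivative onto $\widetilde f$, $\mu_R$ or the cutoffs one closes by Lemma~\ref{lemmult}, together with kernel bounds for $\widehat{\mathcal F}^{-1}(\mu_R\varphi_K\varphi_J)$ obtained by interpolation as for $\mathcal{N}_{L,V}$ (the spatial localization $\langle x\rangle^{-(\gamma-1)}$ with $\gamma>6$ supplying all the needed $x$--decay) and the dispersive decay of Proposition~\ref{propdisp}; when $J<K$ one has $|\Phi|\gtrsim k^2$, so inside $\int_0^t$ one integrates by parts in time via $\partial_s e^{is\Phi}=i\Phi e^{is\Phi}$, which trades the weight $s$ for $s/\Phi$ of size $\lesssim s\,2^{-2K}$, and combined with $|\mu_R|\lesssim\min(2^K,1)$, $\|\partial_s\widetilde f\|_{L^2}\lesssim s^{-1}\e_1^3$ and Proposition~\ref{propdisp} the resulting terms sum (in $K,J$) and integrate (in $s$) to at most $\e_1^3\log\langle t\rangle\lesssim\e_1^3\langle t\rangle^{1/4-\alpha}$. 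The main obstacle is precisely the growing weight $sk$ produced by $\partial_k e^{is\Phi}$; it is overcome only by combining integration by parts in the input frequencies (away from the space--resonant set $\ell=m=n=0$) with integration by parts in time (away from the time--resonant set $\Phi=0$), the triple--resonant point $\ell=m=n=k=0$ being neutralized by the vanishing of $\mu_R$ that was built into Proposition~\ref{muprop}.
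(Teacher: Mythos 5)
Your proposal is correct in substance and closes the lemma, but it takes a partly different route from the paper, most visibly for \eqref{NRest}. The paper never leaves the distorted Fourier side: it decomposes dyadically in all four frequencies, integrates by parts in the two largest input variables via $e^{\mp itm^2}=(\mp 2itm)^{-1}\partial_m e^{\mp itm^2}$, and pays with the $L^1$-type bounds \eqref{bootbound} on $\varphi_K\widetilde f$, $\varphi_K k^{-1}\widetilde f$ and $\partial_k\big[\varphi_K k^{-1}\widetilde f\big]$ (Hardy's inequality plus the bootstrap norm); the low-frequency vanishing $\min(|\ell|,1)\cdots$ of $\mu_R$ is exactly what renders the $1/m$, $1/n$ weights harmless at the stationary point. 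Your physical-space route---writing $4\pi^2\mathcal{N}_R$ as $e^{-itk^2}$ times $\sum\int\overline{\psi_A(x,k)}\,U_B\overline{U_C}U_D\,dx$, getting $t^{-1/2}$ decay for each $U_E$ from Lemma \ref{lemstat} and convergence of the $x$-integral from the $\langle x\rangle$-decay of $\psi_R$ (or the compact support of $\chi_+\chi_-$)---is a legitimate alternative and even yields the stronger rate $t^{-3/2}$; the paper's computation in fact also produces a rate better than $t^{-5/4}$ and merely states the weaker one. For \eqref{NRest2} your argument is essentially the paper's mechanism, transplanted from its treatment of $\mathcal{N}_{L,V}$ rather than copied from its actual treatment of $\mathcal{N}_R$: the same dichotomy between output-frequency dominant (integrate by parts in $s$ using $|\Phi|\gtrsim k^2$, with $\|\partial_s\widetilde f\|_{L^2}\lesssim \e_1^3 s^{-1}$) and input dominant (integrate by parts in the input frequencies to kill the weight $sk$), the same reliance on the vanishing of $\mu_R$ at zero frequency, and on Lemma \ref{lemmult} with kernel bounds coming from the $x$-decay in the integral representation of $\mu_R$; the paper instead performs two or three such integrations by parts and sums pointwise bounds, but both close. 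Two imprecisions you should repair in a full write-up: first, the dichotomy must be $J<K-C$ rather than $J<K$, since for $2^J\sim 2^K$ the phase can vanish (e.g.\ $\ell=k$, $m=n$) and $|\Phi|\gtrsim k^2$ fails---the overlap region has to be absorbed into the non-stationary-in-the-inputs case; second, for the $\partial_k\mu_R$ contribution the phrase ``the argument above gives'' hides the conversion from a pointwise-in-$k$ bound to an $L^2_k$ bound, which does require either your item (iii) (the pseudo-differential $L^2$-boundedness argument of Lemma \ref{Lem0}(iii)) or the bound $\|\partial_k\psi_R(x,\cdot)\|_{L^2_k}\lesssim\langle x\rangle^{-(\gamma-3)}$ when the derivative falls on a regular output eigenfunction; with that made explicit the estimate for this term goes through as you claim.
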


\begin{proof}
We will use \eqref{muRprop} from Proposition \ref{muprop}:
\begin{align}
\label{NR3}
\begin{split}
& \big| \partial_k^{\theta_1} \partial_\ell^{\theta_2} \partial_m^{\theta_3} \partial_n^{\theta_4} \mu_R(k,\ell,m,n) \big|
\lesssim \min(|k|,1)^{1-\theta_1} \min(|\ell|,1)^{1-\theta_2} \min(|m|,1)^{1-\theta_3} \min(|n|,1)^{1-\theta_4}
\end{split}
\end{align}
for $\theta_1, \theta_2 , \theta_3, \theta_4 = 0$ or $1$, $\theta_1+\theta_2+\theta_3+\theta_4 \leq 3$.

We decompose
\begin{align}
\label{NR4}
\begin{split}
& \mathcal{N}_R(t,k) := \sum_{K,L,M,N \in \Z} \mathcal{N}_{KLMN}(t,k)
\\
& \mathcal{N}_{KLMN}(t,k) =
  \iiint e^{it \Phi(k,\ell,m,n)} \widetilde{f}(t,\ell) \overline{\widetilde{f}(t,m)} \widetilde{f}(t,n)
  \underline{\varphi}_{KLMN}\mu_R (k,\ell,m,n) \,d\ell\,dm\,dn,
\\
& \underline{\varphi}_{KLMN}(k,\ell,m,n) := \varphi_K(k) \varphi_L(\ell) \varphi_M(m) \varphi_N(n).
\end{split}
\end{align}
Without loss of generality, for the rest of this proof we will assume $$L\leq M \leq N.$$

Let us begin by recording some basic estimates: under our bootstrap assumptions, see \eqref{apriori0}-\eqref{bootstrap}, we have
\begin{equation}
\begin{split}
\label{bootbound}
& \int_\R |\varphi_K(k) \wt{f}(k) | \,dk \lesssim \min(2^K, 2^{-5K/2} t^{p_0}) \e_1,
\\
& \int_\R | \varphi_K(k) k^{-1} \wt{f}(k) | \, dk \lesssim \min\big( 2^{K/2}\langle t \rangle^{1/4-\alpha}, 2^{-K/2} \big) \e_1,
\\
& \int_\R | \partial_k [\varphi_K(k) k^{-1} \wt{f}(k)]| \, dk \lesssim 2^{-K/2}\langle t \rangle^{1/4-\alpha} \e_1,
\end{split}
\end{equation}
where we used Hardy's inequality in deriving the last two estimates.

\medskip \noindent
{\it Proof of \eqref{NRest}}. The case $|t|<1$ is immediate, so we will assume that $t\geq1$.
Integrating by parts and using the bounds~\eqref{bootbound} and~\eqref{NR3},

\begin{align*}
 |\mathcal{N}_{KLMN}(t,k) | & \lesssim \frac{1}{t^2} \iiint \Big| \widetilde{f}(t,\ell) \partial_m\partial_n \Big( \frac{1}{m} \overline{\widetilde{f}(t,m)} \frac{1}{n} \widetilde{f}(t,n) \underline{\varphi}_{KLMN}(k,\ell,m,n) \mu_R (k,\ell,m,n) \Big) \Big| \,d\ell\, dm\, dn \\
& \lesssim \frac{\e_1^3}{t^2} 2^{K_- + L_- + M_- + N_-} \min(2^L,2^{-5L/2} t^{p_0})
\cdot 2^{-M/2} t^{\frac{1}{4} - \alpha}  \cdot 2^{-N/2} t^{\frac{1}{4} - \alpha}.
\end{align*}
Summing over $L,M$ and $N$ gives the desired bound:
$$
 \sum_{L<M<N}|\mathcal{N}_{KLMN}(t,k) | \lesssim \frac{\e_1^3}{t^{5/4}}.
$$

\bigskip \noindent
{\it Proof of \eqref{NRest2}}.
We now prove the weighted $L^2$ bound. Adopting the notation \eqref{NR4} we calculate
\begin{align}
\begin{split}
& 4\pi^2 \partial_k \mathcal{N}(t,k) = I(t,k) + II(t,k),
\\
& I(t,k) := \iiint e^{it \Phi(k,\ell,m,n)} \widetilde{f}(t,\ell) \overline{\widetilde{f}(t,m)} \widetilde{f}(t,n)
  \partial_k\mu_R (k,\ell,m,n) \,d\ell \,dm\, dn,
\\
& II(t,k) := -2itk \iiint e^{it \Phi(k,\ell,m,n)} \widetilde{f}(t,\ell) \overline{\widetilde{f}(t,m)} \widetilde{f}(t,n)
  \mu_R (k,\ell,m,n) \,d\ell \, dm\, dn.
\end{split}
\end{align}
We will focus on the more complicated estimate of $II(t,k)$. Again we decompose according to \eqref{NR4}:
\begin{align}
\begin{split}
& II(t,k) := \sum_{K,L,M,N \in \Z} II_{KLMN}(t,k)
\\
&  II_{KLMN}(t,k) =
  -2itk \iiint e^{it \Phi(k,\ell,m,n)} \widetilde{f}(t,\ell) \overline{\widetilde{f}(t,m)} \widetilde{f}(t,n)
  \underline{\varphi}_{KLMN}\mu_R (k,\ell,m,n) \,d\ell \,dm\, dn.
\end{split}
\end{align}
We now distinguish between the cases $K \geq N + 10$ and $K < N + 10$.

\medskip \noindent
{\it Case 1: $K \geq N+10$}.
In this case we have $|\Phi| \gtrsim k^2 \approx 2^{2K}$, and we can resort to integration by parts in $s$:
\begin{align}
\label{NR10}
& \int_0^t II_{KLMN}(s,k) \,ds = -2t A(t,k) + 2\int_0^t A(s,k) \, ds + \int_0^t 2s B(s,k) \, ds,
\\
\nonumber
& A_{KLMN}(t,k) = \iiint e^{it \Phi(k,\ell,m,n)} \frac{k}{\Phi(k,\ell,m,n)}
  \widetilde{f}(t,\ell) \overline{\widetilde{f}(t,m)} \widetilde{f}(t,n)
  \underline{\varphi}_{KLMN}\mu_R (k,\ell,m,n) \,d\ell \, dm \, dn,
\\
\nonumber
& B_{KLMN}(t,k) = \iiint e^{it \Phi(k,\ell,m,n)} \frac{k}{\Phi(k,\ell,m,n)}
   \partial_t \big[ \widetilde{f}(t,\ell) \overline{\widetilde{f}(t,m)} \widetilde{f}(t,n) \big]
  \underline{\varphi}_{KLMN}\mu_R (k,\ell,m,n) \,d\ell \, dm\, dn.
\end{align}

To estimate $A$ we integrate by parts in the frequencies $m$ and $n$ similarly to what was done above in the proof of  \eqref{NRest2}.
Using the bootstrap bounds~\eqref{bootbound} and the bounds on $\mu_R$~\eqref{NR3}, we get
\begin{align*}
\begin{split}
&| A_{KLMN}(t,k)| \\
&\quad  \lesssim \frac{1}{|t|^2} \iiint \Big| \widetilde{f}(t,\ell)
  \partial_m\partial_n \Big( \frac{k\underline{\varphi}_{KLMN}(k,\ell,m,n) }{\Phi(k,\ell,m,n)}
  \, \frac{1}{m} \overline{\widetilde{f}(t,m)} \frac{1}{n} \widetilde{f}(t,n) \mu_R (k,\ell,m,n) \Big) \Big| \,d\ell\, dm\, dn \\
& \quad \lesssim \frac{ \e_1^3}{|t|^2} 2^{K_- + L_- + M_- + N_-} 2^{-K} \min(2^L,2^{-5L/2} t^{p_0})
  \cdot 2^{-M/2} t^{\frac{1}{4} - \alpha} \cdot 2^{-N/2} t^{\frac{1}{4} - \alpha}.
\end{split}
\end{align*}
Using the above bound and summing over the current configuration,
$$
\sum_{\substack{L\leq M\leq N\\K\geq N+10}}\| A_{KLMN}(t,k) \|_{L^2}
  \lesssim \sum_{\substack{L\leq M\leq N \\ K \geq N+10}} 2^{K/2} \| A_{KLMN}(t,k) \|_{L^\infty} \lesssim \frac{\e_1^3}{|t|^{5/4}}.
$$

Turning to $B_{KLMN}$, split it first into
\begin{align*}
 B_{KLMN}(t,k) & = \iiint e^{it \Phi(k,\ell,m,n)} \frac{k}{\Phi(k,\ell,m,n)}
   \partial_t \widetilde{f}(t,\ell) \overline{\widetilde{f}(t,m)} \widetilde{f}(t,n)  \underline{\varphi}_{KLMN}\mu_R (k,\ell,m,n) \,d\ell \, dm\, dn \\
& \qquad +  \iiint e^{it \Phi(k,\ell,m,n)} \frac{k}{\Phi(k,\ell,m,n)}
    \widetilde{f}(t,\ell) \overline{\widetilde{f}(t,m)} \partial_t\widetilde{f}(t,n)  \underline{\varphi}_{KLMN}\mu_R (k,\ell,m,n) \,d\ell \, dm\, dn \\
& \qquad + \{ \mbox{similar term} \} \\
& = B^1_{KLMN}(t,k) + B^2_{KLMN}(t,k) + \{ \mbox{similar term} \}.
\end{align*}

Let us consider first $B^1_{KLMN}$. Integrating by parts in $m$ and $n$,
and using that $\| \partial_t \wt{f}\|_{L^2} \lesssim \e_1^3t^{-1}$, we see that
\begin{align*}
\begin{split}
| B^1_{KLMN}(t,k) |  \lesssim \frac{1}{|t|^2} \iiint \Big| \partial_t \widetilde{f}(t,\ell)
  \partial_m\partial_n \Big( \frac{k\underline{\varphi}_{KLMN}(k,\ell,m,n) }{\Phi(k,\ell,m,n)}
  \, \frac{1}{m} \overline{\widetilde{f}(t,m)} \frac{1}{n} \widetilde{f}(t,n) \mu_R (k,\ell,m,n) \Big) \Big| \\\,d\ell\, dm \,dn
\\
\lesssim \frac{\e_1^3}{|t|^2} 2^{K_-+ L_- + M_- + N_-} \cdot 2^{-K} \cdot \frac{2^{L/2}}{t} \cdot 2^{-M/2} t^{\frac{1}{4} - \alpha}
\cdot 2^{-N/2} t^{\frac{1}{4} - \alpha}.
\end{split}
\end{align*}
This $L^\infty$ bound leads to an acceptable contribution:
\begin{align*}
\sum_{\substack{L<M<N \\ K \geq N+10}} \| B^1_{KLMN}(t,k) \|_{L^2}
  \lesssim \sum_{\substack{L<M<N \\ K \geq N+10}} 2^{K/2} \| B^1_{KLMN}(t,k) \|_{L^\infty} \lesssim \frac{\e_1^3}{t^{5/2}}.
\end{align*}

To estimate $B^2_{KLMN}$ first notice that
\begin{align*}
{\| \langle k \rangle\partial_t \wt{f}\|}_{L^2} \lesssim {\big\|\langle k \rangle \wt{u^3} \big\|}_{L^2}
  \lesssim {\big\| u^3 \big\|}_{H^1} \lesssim \e_1^3 t^{p_0-1},
\end{align*}
having used \eqref{FH1}.
Then we can integrate by parts in $\ell$ and $m$ to obtain
\begin{align*}
\begin{split}
| B^2_{KLMN}(t,k) |  \lesssim \frac{1}{|t|^2} \iiint \Big|
  \partial_\ell\partial_m \Big( \frac{k\underline{\varphi}_{KLMN}(k,\ell,m,n) }{\Phi(k,\ell,m,n)} \, \frac{1}{\ell} \overline{\widetilde{f}(t,\ell)} \frac{1}{m} \widetilde{f}(t,m) \mu_R (k,\ell,m,n) \Big)\partial_t  \widetilde{f}(t,n)  \Big|\\
\,d\ell\, dm \,dn
\\
\lesssim \frac{\e_1^3}{|t|^2} 2^{K_-+ L_- + M_- + N_-} \cdot 2^{-K} \cdot 2^{-L/2} t^{\frac{1}{4} - \alpha}
  \cdot 2^{-M/2} t^{\frac{1}{4} - \alpha}
  \cdot 2^{-N/2} t^{p_0-1},
\end{split}
\end{align*}
which, after summing over all current indices,  leads to an acceptable contribution:
\begin{align*}
\sum_{\substack{L<M<N \\ K \geq N+10}} \| B^2_{KLMN}(t,k) \|_{L^2}
  \lesssim \sum_{\substack{L\leq M\leq N \\ K \geq N+10}} 2^{K/2} \| B^2_{KLMN}(t,k) \|_{L^\infty} \lesssim \frac{\e_1^3}{t^{5/2}}.
\end{align*}

\medskip
\noindent
{\it Case 2: $K < N+10$}.
We distinguish two subcases depending on the size of $N$.

\smallskip
\noindent
{\it Subcase 2.1: $2^N \geq  t^{1/4}$}.
We integrate by parts in $\ell$ and $m$, and use again \eqref{bootbound}, to obtain
\begin{equation*}
\begin{split}
&|II_{KLMN}(t,k)| \\
&\lesssim  \frac{1}{|t|^2} \iiint \Big|
\partial_\ell\partial_m \Big(k\underline{\varphi}_{KLMN}(k,\ell,m,n) \, \frac{1}{\ell} \overline{\widetilde{f}(t,\ell)} \frac{1}{m} \widetilde{f}(t,m) \mu_R (k,\ell,m,n) \Big)  \widetilde{f}(t,n)  \Big| \,d\ell\, dm \,dn \\
& \lesssim  \frac{\e_1^3}{|t|}\cdot 2^{K} \cdot 2^{K_-+ L_- + M_- + N_-}
  \cdot 2^{-L/2} t^{\frac{1}{4} - \alpha}  \cdot 2^{-M/2} t^{\frac{1}{4} - \alpha} \cdot 2^{-5N/2} t^{p_0},
\end{split}
\end{equation*}
which, after using this to estimate the $L^2$ norm and summing over all current indices, gives an acceptable contribution
\begin{align*}
\sum_{\substack{L\leq M<N\leq  \\ 2^N \geq t^{1/4}}} \| II_{KLMN}(t,k) \|_{L^2} \lesssim
  \sum_{\substack{L\leq M\leq N \\ 2^N \geq t^{1/4}}} 2^{K/2} \| II_{KLMN}(t,k) \|_{L^\infty} \lesssim \frac{\e_1^3 }{t^{3/4+2\alpha-p_0}}.
\end{align*}

\smallskip
\noindent
{\it Subcase 2.2: $2^N \leq t^{1/4}$}. Integrating by parts in $\ell$, $m$, and $n$ leads to the bound
\begin{align*}
\begin{split}
& |II_{KLMN}(t,k) |
\\
& \lesssim \frac{1}{|t|^2} \iiint \Big| \partial_m\partial_n\partial_\ell
  \Big( \frac{1}{\ell}  \widetilde{f}(t,\ell) \frac{1}{m} \overline{\widetilde{f}(t,m)} \frac{1}{n} \widetilde{f}(t,n)
  \, k \underline{\varphi}_{KLMN}(k,\ell,m,n) \mu_R (k,\ell,m,n) \Big) \Big| \,d\ell\, dm\, dn
\\
& \lesssim \frac{\e_1^3}{t^2} 2^{K}2^{K_-+ L_- + M_- + N_-} \cdot 2^{-L/2} t^{\frac{1}{4} - \alpha}
  \cdot 2^{-M/2} t^{\frac{1}{4} - \alpha} \cdot 2^{-N/2} t^{\frac{1}{4} - \alpha},
\end{split}
\end{align*}
which gives
\begin{align*}
\sum_{\substack{L\leq M\leq N \\ 2^N \leq t^{1/4}}} \| II_{KLMN}(t,k) \|_{L^2} \lesssim
  \sum_{\substack{L\leq M\leq N \\ 2^N \leq t^{1/4}}} 2^{K/2} \| II_{KLMN}(t,k) \|_{L^\infty} \lesssim \frac{\e_1^3 }{t^{1+3\alpha}}.
\end{align*}
This concludes the proof of \eqref{NRest2}.
\end{proof}

\bigskip
\section{Pointwise estimate}\label{secLinfty}

In this section we prove the key $L^\infty$ bound. Recall Duhamel's formula
\begin{align}
\begin{split}
i \partial_t \widetilde{f}(t,k) &= \frac{1}{4\pi^2} \big[ \mathcal{N}_+ + \mathcal{N}_- + \mathcal{N}_L + \mathcal{N}_R \big],
\\
\mathcal{N}_\ast(t,k) &=
  \iiint e^{it (-k^2 + \ell^2 - m^2 + n^2)} \widetilde{f}(t,\ell) \overline{\widetilde{f}(t,m)} \widetilde{f}(t,n)
  \mu_\ast (k,\ell,m,n) \,d\ell dm dn.
\end{split}
\end{align}
together with Proposition \ref{muprop}.
Our aim is to find asymptotics for such expressions, and show that $\widetilde{f}(t,k)$ satisfies an ODE whose solutions
are bounded in $L^\infty_k$, uniformly in time.


\medskip
\subsection{Three stationary phase lemmas}

\begin{lem}\label{AsLem1}
For $k,t \in \R$, consider the integral expression
\begin{align}
\label{I1}
\begin{split}
I[g_1,g_2,g_3](t,k) = \iiint e^{it \Phi(k,p,m,n)}
  g_1(\gamma(\beta k - p + \delta m - \epsilon n)) \bar{g_2(m)} g_3(n) \frac{\widehat{\phi}(p)}{p} \, dm\, dn\, dp
\\
\Phi(k,p,m,n) = -k^2 + (\beta k - p + \delta m - \epsilon n)^2 - m^2 + n^2.
\end{split}
\end{align}
for an even bump function $\phi \in C_0^\infty$, and with $g:=(g_1,g_2,g_3)$ satisfying
\begin{align}
\label{AsLem1as}
{\| g(t) \|}_{L^\infty} + {\| \langle k \rangle g(t) \|}_{L^2} + \langle t \rangle^{-1/4+\alpha} {\| g'(t) \|}_{L^2}
\leq 1.
\end{align}
for some $\alpha > 0 $. Then, for any $t \in \mathbb{R}$,
\begin{align}
\label{I2}
\begin{split}
I[g_1,g_2,g_3](t,k) = \frac{\pi}{|t|} e^{-itk^2} \int e^{it(-p+\beta k)^2} g_1(\gamma(-p+\beta k))
\overline{g_2(\delta(-p+\beta k))} g_3(\epsilon (-p+\beta k)) \frac{\widehat{\phi}(p)}{p} \,dp
\\ + O(|t|^{-1-\alpha/3}).
\end{split}
\end{align}
\end{lem}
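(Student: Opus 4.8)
The plan is to freeze $k$ and $p$, recast the $(m,n)$-integral as a two-dimensional oscillatory integral with the non-degenerate phase $2ab$, and extract its leading term by stationary phase; the heart of the matter is an error estimate that must absorb the $\langle t\rangle^{1/4-\alpha}$ growth of $\|g_i'\|_{L^2}$.

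\textbf{Step 1: reduction to the phase $2ab$.} In the $(m,n)$-integral perform the unimodular change of variables $(m,n)\mapsto(a,b)$, $a=\epsilon n-\delta m$, $b=p+\epsilon n-\beta k$ — i.e.\ $m=\delta(-a+b-p+\beta k)$, $n=\epsilon(-p+\beta k+b)$ — exactly as in the treatment of $\mathcal{N}_V$ earlier. Using $\beta^2=\gamma^2=\delta^2=\epsilon^2=1$, a direct computation gives $\Phi(k,p,m,n)=-k^2+(\beta k-p)^2+2ab$, hence, with $w=w(k,p):=\beta k-p$,
\begin{align*}
I[g_1,g_2,g_3](t,k)&=e^{-itk^2}\int e^{it(\beta k-p)^2}\,\frac{\widehat\phi(p)}{p}\,J(t,k,p)\,dp,
\\
J(t,k,p)&=\iint e^{2itab}\,G(a,b)\,da\,db,\qquad
G(a,b):=g_1(\gamma(w-a))\,\overline{g_2(\delta(w-a+b))}\,g_3(\epsilon(w+b)).
\end{align*}
The phase $2ab$ has a single critical point $(a,b)=(0,0)$, at which its Hessian is non-degenerate (determinant $-4$) with vanishing signature; so the expected leading term of $J$ is $\frac{\pi}{|t|}G(0,0)=\frac{\pi}{|t|}g_1(\gamma w)\overline{g_2(\delta w)}g_3(\epsilon w)$, with no extra phase factor, and plugging this into the $p$-integral gives precisely the main term of \eqref{I2} (recall $w=-p+\beta k$).

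\textbf{Step 2: stationary phase for $J$.} One must show $J(t,k,p)=\frac{\pi}{|t|}G(0,0)+O(|t|^{-1-\alpha/3})$, uniformly in $k,p$. Split the integral into $\{|a|,|b|\le\rho\}$ and its complement, with $\rho=|t|^{-1/2+\eta}$ for a small $\eta=\eta(\alpha)>0$. On the near region replace $G$ by $G(0,0)$: since $H^1(\R)\hookrightarrow C^{1/2}$ and, by \eqref{AsLem1as}, the relevant Hölder seminorms of the $g_i$ are $\lesssim\langle t\rangle^{1/4-\alpha}$, one has $|G(a,b)-G(0,0)|\lesssim\langle t\rangle^{1/4-\alpha}(|a|^{1/2}+|b|^{1/2})$, so this error is $\lesssim\langle t\rangle^{1/4-\alpha}\rho^{5/2}$, while $G(0,0)\iint_{|a|,|b|\le\rho}e^{2itab}\,da\,db=\frac{\pi}{|t|}G(0,0)+O(|t|^{-2}\rho^{-2})$ after rescaling $(a,b)=(|t|^{-1/2}u,|t|^{-1/2}v)$ and recognizing the truncated Fresnel integral. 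On the complementary region one integrates by parts in whichever of the two variables is bounded below by $\rho$; a single integration by parts produces only $O(\langle t\rangle^{-1/2-\alpha})$, so one must iterate — and here the product structure is essential, since $g_1,g_2,g_3$ depend on three distinct linear combinations of $(a,b)$, which allows the (possibly repeated) integrations by parts and an accompanying dyadic decomposition to be arranged without ever differentiating a factor $g_i$ twice; the resulting terms are then bounded by Cauchy--Schwarz, using $\|g_i'\|_{L^2}\lesssim\langle t\rangle^{1/4-\alpha}$, $\|g_i\|_{L^2}\le1$, and $\|g_i\|_{L^1}\lesssim\|\langle k\rangle g_i\|_{L^2}\le1$ from \eqref{AsLem1as}. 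Choosing $\eta$ in the admissible range (roughly $\alpha/6\le\eta\le4\alpha/15$) makes all these errors $\lesssim|t|^{-1-\alpha/3}$.

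\textbf{Step 3: the $p$-integral.} Since each $g_i\in H^1\cap L^1$, the amplitude $G$ and hence $J(t,k,p)$ and its leading term depend Hölder-continuously on $p$, so the principal-value integral against $\widehat\phi(p)/p$ makes sense; $\widehat\phi$ being Schwartz and the Step~2 bound uniform in $p$, it passes to $I$, yielding \eqref{I2}.

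\textbf{Main obstacle.} The whole difficulty is the error estimate in Step~2: the amplitude $G$ has only $H^1$ regularity with $\|G'\|_{L^2}\lesssim\langle t\rangle^{1/4-\alpha}$, so a naive stationary-phase bound — or one integration by parts — loses, and the estimate is saved only by iterating the integration by parts in directions adapted to the three linear combinations on which $g_1,g_2,g_3$ depend (so that no $g_i$ is differentiated twice), together with the $L^1$-integrability that the weighted norm in \eqref{AsLem1as} supplies in the non-stationary directions. The slack $\alpha/3$, rather than $\alpha$, in the exponent of \eqref{I2} is precisely what absorbs the losses in this argument.
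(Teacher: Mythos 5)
Your Step 1 (the change of variables to the phase $2ab$) and your near-stationary analysis coincide with the paper's, but there are two genuine gaps in the rest.

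First, the far region. You correctly compute that a single integration by parts followed by absolute-value (Cauchy--Schwarz) estimates only yields $O(|t|^{-1/2-\alpha})$, but your proposed fix --- iterating the integration by parts "without ever differentiating a factor $g_i$ twice" --- does not work. The three factors are constant along $\partial_b$, $\partial_a+\partial_b$ and $\partial_a$ respectively, so \emph{any} two admissible directions of integration by parts share one of the three functions, and integration by parts produces \emph{all} terms, including the one where that shared function is differentiated twice; since you control only $\|g_i'\|_{L^2}$, those terms are out of reach. The paper's actual mechanism is different and essential: after a \emph{single} integration by parts (in $b$, say, on the region $|a|\gtrsim |t|^{-1/2+\rho}$), one does not take absolute values in $b$ but instead takes the $L^2_a$ norm of $\int e^{it[-(a-b)^2+b^2]}\partial_b[g_2(a-b)g_3(-b)]\,db$ via Plancherel; this exhibits a product of two free Schr\"odinger evolutions, one of which is placed in $L^\infty$ using the dispersive estimate of Proposition \ref{propdisp} (giving the extra $t^{-1/2}$) and the other in $L^2$ (giving $\|g'\|_{L^2}\lesssim t^{1/4-\alpha}$). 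Combined with $\|a^{-1}\mathbf{1}_{|a|\ge t^{-1/2+\rho}}\|_{L^2_a}\lesssim t^{1/4-\rho/2}$ this yields $t^{-1-\alpha-\rho/2}$. Without this bilinear/dispersive ingredient (or an equivalent), your far-region error does not close.

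Second, the principal value at $p=0$. A bound on the $(m,n)$-error that is merely \emph{uniform} in $p$ cannot be integrated against $\widehat\phi(p)/p$, since $|\widehat\phi(p)|/|p|$ is not integrable near $p=0$; invoking H\"older continuity in $p$ to say the p.v.\ "makes sense" does not produce a quantitative $O(|t|^{-1-\alpha/3})$ bound for the error's contribution. The paper devotes its Step 1 precisely to this: it splits $\widehat\phi = \Psi_- + \Psi_+$ with $\Psi_-$ supported in $|p|\lesssim t^{-3}$, uses the evenness of $\widehat\phi$ and the p.v.\ cancellation together with the $C^{1/2}$ modulus of continuity of $g_1$ (term $A$) and of the phase (term $B$) to dispose of the $\Psi_-$ piece, and only then runs the stationary-phase analysis on $|p|\gtrsim t^{-3}$, where $1/|p|$ is integrable against $\widehat\phi$ up to a harmless logarithm. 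You need this (or some substitute) before your Step 3 can be carried out.
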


Note that the assumptions \eqref{AsLem1as} above are consistent with taking $g(k) = a_i(k)\wt{f}(k)$, $-2\leq i\leq 2$,
where the coefficients $a_{i}(k)$ are as in Remark \ref{RemmuL+-},
in view of our a priori assumptions \eqref{apriori0}, \eqref{TRk}, and Lemma \ref{Lem0}.

\begin{proof}[Proof of Lemma \ref{AsLem1}]
This is a nonlinear stationary phase argument with amplitudes of limited smoothness, and singularities in the integrand. We assume from now on that $t>0$; the case $t<0$ can be easily deduced by taking the complex conjugate of $I$.

\medskip
\noindent
{\it Step 1: The case $|p| \lesssim t^{-3}$}
Let us define
\begin{align*}
{\Psi_-}(p) = \widehat{\phi}(p) \varphi(pt^3), \qquad  {\Psi_+}(p) = \widehat{\phi}(p) - {\Psi_-}(p),
\end{align*}
and correspondingly let
\begin{align}
\label{AsLem10}
I_\pm[g_1,g_2,g_3](t,k) := \iiint e^{it \Phi(k,p,m,n)}
  g_1(\gamma(\beta k - p + \delta m - \epsilon n)) \bar{g_2(m)} g_3(n)  \frac{{\Psi_\pm}(p)}{p} \, dm\, dn\, dp.
\end{align}
Let us look at $I_-$ and observe that, since the $dp$ integral is understood in the p.v. sense and $\phi_-$ is even, we have
\begin{align}
\begin{split}
I_-[g_1,g_2,g_3](t,k) = \iiint \Big[ e^{it \Phi(k,p,m,n)} g_1(\gamma(\beta k - p + \delta m - \epsilon n)) \bar{g_2(m)} g_3(n)
  \\ - e^{it \Phi(k,0,m,n)} g_1(\gamma(\beta k + \delta m - \epsilon n)) \bar{g_2(m)} g_3(n)  \Big]
  \frac{{\Psi_-}(p)}{p} \, dm \,  dn \, dp.
\end{split}
\end{align}
It follows that we can estimate $|I_-(t,k)| \lesssim A + B$, with
\begin{align}
 \label{AsLem11}
\begin{split}
A & = \iiint \Big| g_1(\gamma(\beta k - p + \delta m - \epsilon n)) -
  g_1(\gamma(\beta k + \delta m - \epsilon n)) \Big| \big|\bar{g_2(m)} g_3(n)\big| \frac{|{\Psi_-}(p)|}{|p|} \, dm\, dn \,dp
\\
B & = \iiint \Big| e^{it\Phi(k,p,m,n)} - e^{it\Phi(k,0,m,n)} \Big| \big|g_1(\gamma(\beta k + \delta m - \epsilon n)) \bar{g_2(m)} g_3(n)\big|
  \frac{|{\Psi_-}(p)|}{|p|} \, dm\, dn\, dp.
\end{split}
\end{align}

Using the assumption on the derivative of $g_1$ in \eqref{AsLem1as} we can estimate
\begin{align*}
\begin{split}
\Big| g_1(\gamma(\beta k - p + \delta m - \epsilon n)) - g_1(\gamma(\beta k + \delta m - \epsilon n)) \Big|
& \lesssim \int_0^p | g_1^\prime(\gamma(\beta k + z + \delta m - \epsilon n)) | dz
\\
& \lesssim |p|^{1/2} {\| g_1' \|}_{L^2}
\end{split}
\end{align*}
and therefore obtain
\begin{align*}
\begin{split}
A & \lesssim \| g_1^\prime \|_{L^2} \| g_2 \|_{L^1} \| g_3 \|_{L^1}   \, \int \frac{|{\Psi_-}(p)|}{|p|^{1/2}} \, dp \lesssim \langle t \rangle^{-5/4}.
\end{split}
\end{align*}

For the second term in \eqref{AsLem11} we have
\begin{align*}
B & \lesssim \iiint t \big| p^2-2p(\beta k + \delta m - \epsilon n) \big| \,
  \big |g_1(\gamma(\beta k + \delta m - \epsilon n)) \bar{g_2(m)} g_3(n) \big| \frac{|{\Psi_-}(p)|}{|p|} \, dm\, dn\, dp
\\
& \lesssim \langle t \rangle {\| \langle k \rangle g \|}_{L^2}^2 {\| g \|}_{L^1 } \int |{\Psi_-}(p)| \, dp
  \lesssim \langle t \rangle^{-2}.
\end{align*}
This shows that $I_-$ is a remainder term, and from now on we concentrate on $I_+[g_1,g_2,g_3](t,k)$, often simply denoting it $I_+$.

In a similar way, one can show that
\begin{align*}
&\frac{\pi}{t} e^{-itk^2} \int e^{it(-p+\beta k)^2} g_1(\gamma(-p+\beta k))
\overline{g_2(\delta(-p+\beta k))} g_3(\epsilon (-p+\beta k)) \frac{\widehat{\phi}(p)}{p} \,dp \\
& \quad = \frac{\pi}{t} e^{-itk^2} \int e^{it(-p+\beta k)^2} g_1(\gamma(-p+\beta k))
\overline{g_2(\delta(-p+\beta k))} g_3(\epsilon (-p+\beta k)) \frac{\Psi_+(p)}{p} \,dp + O(t^{-1-\alpha/3}).
\end{align*}

\medskip
\noindent
{\it Step 2}.
We change variables from $(m,n)$ to $(a,b)$ by letting $m = \delta(a-b-p+\beta k)$ and $n = \epsilon(-p+\beta k -b)$.
This gives
\begin{align}
\label{I_+}
\begin{split}
& I_+(t,k) = e^{-itk^2} \int e^{it(-p+\beta k)^2} J(k,p) \frac{{\Psi_+}(p)}{p} \,dp,
\qquad J(k,p) : = \iint e^{2itab} G(a,b; p,k)\, da\, db,
\\
& G(a,b; p,k) := g_1(\gamma (a - p+\beta k)) \bar{g_2(\delta(a-b-p+\beta k))} g_3(\epsilon(-p+\beta k -b)).
\end{split}
\end{align}
We then decompose, for a parameter $\rho>0$ to be determined,
\begin{align}
\label{I_+2}
\begin{split}
& J = \frac{\pi}{t} G(0,0;p,k) + J_1 + J_2 + J_3,
\\
& J_1 = \iint e^{2itab} G(a,b;p,k) \, \varphi(|a|t^{1/2-\rho}) \varphi(|b|t^{1/2-\rho}) \, da\,db - \frac{\pi}{t} G(0,0;p,k),
\\
& J_2 = \iint e^{2itab} G(a,b;p,k) \, \big[ 1 - \varphi(|a|t^{1/2-\rho}) \big] \, da\,db,
\\
& J_3 = \iint e^{2itab} G(a,b;p,k) \, \varphi(|a|t^{1/2-\rho}) \big[ 1 - \varphi(|b|t^{1/2-\rho}) \big] \, da\,db.
\end{split}
\end{align}
Notice that since the integral in $dp$ is supported on $|p| \gtrsim t^{-3}$
it will suffice to show that $J_i$, $i=1,2,3$, are $O(t^{-1-\alpha/3})$ to obtain that their contributions
to $I_+$, through \eqref{I_+}, are acceptable remainder terms.

Integrating successively in $a$ and $b$, one obtains that
\begin{align}
\label{I_+3}
\begin{split}
\iint e^{2itab} \, \varphi(t^{1/2-\rho}|a|) \varphi(t^{1/2-\rho}|b|) \, da\, db
& = \sqrt{2\pi} \int t^{\rho-\frac{1}{2}} \widehat{\varphi}(2t^{\frac{1}{2}+\rho} b) \varphi (t^{\frac{1}{2}-\rho} b)\,db
\\
& = \frac{\pi}{t} \varphi(0)^2 + O (t^{-2}) = \frac{\pi}{t} + O (t^{-2}).
\end{split}
\end{align}
Therefore, we can write
\begin{align*}
J_1 = \iint e^{2itab} [G(a,b;p,k) -  G(0,0;p,k)] \, \varphi(|a|t^{1/2-\rho}) \varphi(|b|t^{1/2-\rho}) \, da \, db+ O (t^{-2}).
\end{align*}
Arguing as above, using the a priori bounds on the derivative of $g$, we see that
\begin{align*}
| G(a,b;p,k) -  G(0,0;p,k) | \lesssim (|a|+|b|)^{1/2} {\| g^\prime \|}_{L^2} \| g \|_{L^\infty}^2 \lesssim (|a|+|b|)^{1/2} t^{1/4-\alpha}.
\end{align*}
which gives us
\begin{align*}
| J_1 | & \lesssim \iint |G(a,b;p,k) - G(0,0;p,k) |\, \varphi(|a|t^{1/2-\rho}) \varphi(|b|t^{1/2-\rho}) \, da \, db+ t^{-2}
  \\
& \lesssim t^{(-1/2+\rho)(5/2)} t^{1/4-\alpha} + t^{-2} \lesssim t^{-1-\alpha+(5/2)\rho}+ t^{-2}.
\end{align*}

To treat $J_2$ we integrate by parts in $b$ and estimate
\begin{align*}
| J_2 | & \lesssim \frac{1}{t} \Big| \iint \frac{1}{a} e^{2itab} \partial_b G(a,b;p,k) \,\big[ 1 - \varphi(|a|t^{1/2-\rho}) \big] \,da \,db \Big|
\\
& \lesssim \frac{1}{t} {\| g_1 \|}_{L^\infty} {\| a^{-1} (1 - \varphi(|a|t^{1/2-\rho}) \|}_{L^2_a}
  \Big\| \int e^{it[ - (a-b)^2 + b^2]} \partial_b \big[ g_2(a-b) g_3(-b) \big] db \Big\|_{L^2_a}
\\
& \lesssim \frac{1}{t} \cdot t^{1/4-\rho/2} \cdot {\| g^\prime \|}_{L^2} {\|e^{it\partial^2_{x}} \what{g} \|}_{L^\infty}
  \lesssim t^{-1-\rho/2-\alpha}.
\end{align*}
Notice that we have used the linear estimate \eqref{propdisp} and the apriori assumptions \eqref{AsLem1} to deduce
${\|e^{it\partial^2_{x}} \what{g} \|}_{L^\infty} \lesssim t^{-1/2}$.
A similar estimate can be obtained for $J_3$ by integrating by parts in $a$:
\begin{align*}
| J_3 | & \lesssim K_1 + K_2
\\
K_1 & = \frac{1}{t} \Big| \iint \frac{1}{b} e^{2itab} \partial_a G(a,b;p,k)
  \,\varphi(|a|t^{1/2-\rho}) \big[ 1 - \varphi(|b|t^{1/2-\rho}) \big] \,da\, db \Big|
\\
K_2 & = \frac{1}{t} \Big| \iint \frac{1}{b} e^{2itab} G(a,b;p,k)
  \,\varphi'(|a|t^{1/2-\rho})  t^{1/2-\rho} \big[ 1 - \varphi(|b|t^{1/2-\rho}) \big] \,da\, db \Big|
\end{align*}
The term $K_1$ can be estimated analogously to $J_2$ above so we can skip it.
For $K_2$ we have
\begin{align*}
K_2 & \lesssim \frac{1}{t} \cdot t^{1/2-\rho}  {\big\| g_1(a-p+\beta k)\varphi'(|a|t^{1/2-\rho}) \big\|}_{L^2_a}
  \\ & \qquad \times {\Big\| \int \frac{1 - \varphi(|b-p+\beta k|t^{1/2-\rho})}{b-p+\beta k} e^{it[ -(a-b)^2 + (-b)^2]}
  g_2(a-b) g_3(-b) \, db \Big\|}_{L^2_a}
\\
& \lesssim \frac{1}{t} \cdot t^{1/2-\rho} \cdot t^{-1/4+\rho/2}{\| g\|}_{L^\infty}
  \cdot {\|e^{it\partial^2_{x}} \what{g} \|}_{L^\infty} \cdot  {\| b^{-1} (1 - \varphi(|b|t^{1/2-\rho}) \|}_{L^2_b}{\| g \|}_{L^\infty}
  \\ & \lesssim t^{-1-\rho}.
\end{align*}
Choosing $\rho = \alpha/3$ concludes the proof.
\end{proof}

From the proof of the above lemma, we record the following corollary.

\begin{lem}\label{AsLem1'}
For $k,t \in \R$, consider the integral expression
\begin{align}
\label{I1'}
\begin{split}
L[g_1,g_2,g_3](t,k) = \iint e^{it \Phi(k,p,m,n)}
  g_1(\gamma(\beta k + \delta m - \epsilon n)) \bar{g_2(m)} g_3(n) \, dm\, dn,
\end{split}
\end{align}
with $\Phi(k,p,m,n) = -k^2 + (\beta k + \delta m - \epsilon n)^2 - m^2 + n^2$ and $g:=(g_1,g_2,g_3)$ satisfying
\begin{align}
\label{AsLem1as'}
{\| g(t) \|}_{L^\infty} + {\| \langle k \rangle g(t) \|}_{L^2} + \langle t \rangle^{-1/4+\alpha} {\| g'(t) \|}_{L^2}
\leq 1.
\end{align}
for some $\alpha > 0 $. Then, for any $t \in \mathbb{R}$,
\begin{align}
\label{I2'}
\begin{split}
L[g_1,g_2,g_3](t,k) = \frac{\pi}{|t|}g_1(\gamma\beta k) \overline{g_2(\delta\beta k)} g_3(\epsilon \beta k)+ O(|t|^{-1-\alpha/3}).
\end{split}
\end{align}
\end{lem}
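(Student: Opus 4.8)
The plan is to observe that this statement is essentially the content of Step 2 in the proof of Lemma \ref{AsLem1}, stripped of the (harmless) $dp$-integration and the principal-value factor $\widehat\phi(p)/p$. Assume $t>0$; the case $t<0$ follows by taking complex conjugates. The first step is the change of variables $(m,n)\mapsto(a,b)$ given by $m=\delta(a-b+\beta k)$, $n=\epsilon(\beta k-b)$, which has unit Jacobian. Using $\beta^2=\gamma^2=\delta^2=\epsilon^2=1$ one checks that $\beta k+\delta m-\epsilon n=\beta k+a$ and that the phase collapses to $-k^2+(\beta k+\delta m-\epsilon n)^2-m^2+n^2=2ab$, so that
\begin{align*}
L[g_1,g_2,g_3](t,k)=\iint e^{2itab}\,G(a,b;k)\,da\,db,\qquad G(a,b;k):=g_1(\gamma(\beta k+a))\,\overline{g_2(\delta(a-b+\beta k))}\,g_3(\epsilon(\beta k-b)),
\end{align*}
and in particular $G(0,0;k)=g_1(\gamma\beta k)\,\overline{g_2(\delta\beta k)}\,g_3(\epsilon\beta k)$, which is the claimed main term up to the factor $\pi/t$.

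Next I would run the stationary-phase decomposition exactly as in \eqref{I_+2}: write $L=\frac{\pi}{t}G(0,0;k)+J_1+J_2+J_3$, where $J_1$, $J_2$, $J_3$ are defined as there, with $G(a,b;p,k)$ replaced by $G(a,b;k)$ and with a parameter $\rho>0$ to be fixed. The computation \eqref{I_+3} produces the main term $\frac{\pi}{t}G(0,0;k)$ up to an $O(t^{-2})$ error. The bounds on $J_1$, $J_2$, $J_3$ obtained in the proof of Lemma \ref{AsLem1} use only the a priori bounds \eqref{AsLem1as} on $(g_1,g_2,g_3)$ — which here take the identical form \eqref{AsLem1as'} — together with the linear dispersive estimate \eqref{propdisp} to control $\|e^{it\partial_x^2}\widehat g\|_{L^\infty}\lesssim t^{-1/2}$; they give $|J_1|\lesssim t^{-1-\alpha+(5/2)\rho}+t^{-2}$, $|J_2|\lesssim t^{-1-\rho/2-\alpha}$, and $|J_3|\lesssim t^{-1-\rho}$. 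Taking $\rho=\alpha/3$ then yields $|J_1|+|J_2|+|J_3|\lesssim t^{-1-\alpha/3}$, which is the desired remainder bound.

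There is essentially no obstacle here beyond bookkeeping: the genuinely delicate part — the bilinear stationary-phase argument with amplitudes of only $H^1$-type regularity and the splitting into near- and far-diagonal regions in $(a,b)$ — is already carried out in Lemma \ref{AsLem1}. The simplification relative to that lemma is that here there is no $dp$-integration and no principal-value singularity, so Step 1 of that proof is not needed at all, and one passes directly from the reduced integral $\iint e^{2itab}G(a,b;k)\,da\,db$ to the asymptotic formula. The only point worth double-checking carefully is the bookkeeping of the signs $\beta,\gamma,\delta,\epsilon$ in the change of variables, to confirm that the three arguments $\gamma\beta k$, $\delta\beta k$, $\epsilon\beta k$ come out correctly in $G(0,0;k)$.
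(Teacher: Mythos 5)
Your proposal is correct and is exactly the paper's argument: the authors prove Lemma \ref{AsLem1'} by the one-line observation that $L[g_1,g_2,g_3]$ coincides with $J(k,p=0)$ from \eqref{I_+}, whose asymptotics $\frac{\pi}{t}G(0,0;p,k)+O(t^{-1-\alpha/3})$ were already established in Step~2 of the proof of Lemma \ref{AsLem1}. Your change of variables, the collapse of the phase to $2ab$, and the reuse of the $J_1,J_2,J_3$ bounds with $\rho=\alpha/3$ are precisely that reduction, spelled out.
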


\begin{proof}
Simply notice that the trilinear operator $L$ in \eqref{I1'} coincides with $J(k,p=0)$ in \eqref{I_+}.
\end{proof}
\medskip
To deal with expressions such as those in \eqref{I2}, we will use the following:

\begin{lem}\label{AsLem2}
For $K \in \R, t>0$, consider the integral expression
\begin{align}
\label{AsLem2I}
I(t,K) = \mathrm{p.v.}  \int e^{it x^2} g(x) \frac{\psi(x-K)}{x-K} \,dx
\end{align}
for $\psi \in \mathcal{S}$, and $g$ satisfying
\begin{align}
\label{AsLem2as}
{\| g \|}_{L^\infty} + \langle t \rangle^{-1/4+\alpha} {\| g^\prime \|}_{L^2} \leq 1,
\end{align}
for some $\alpha \in (0,\frac{1}{4})$.
Then, for large $t>0$ we have
\begin{align}
\label{AsLem2conc}
I(t,K) = h(t,\sqrt{|t|} K) g(K) + O(|t|^{-\alpha/3})
\end{align}
where
\begin{align}
\label{AsLem2h}
h(t,y) =
\left\{ \begin{array}{ll}
\displaystyle \psi(0) e^{iy^2} \mathrm{p.v.} \int e^{i2xy + ix^2} \varphi(|x||t|^{-2\alpha+2\rho})
  \frac{dx}{x} \quad & \mathrm{for} \quad t>0,
\\
\\
\overline{h(-t,y)} & \mathrm{for} \quad t<0.
\end{array}\right.
\end{align}
\end{lem}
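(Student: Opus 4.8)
The plan is to run a nonlinear stationary phase analysis, in the spirit of the proof of Lemma~\ref{AsLem1}, keeping track of the competition between the stationary point $x=0$ of the phase $tx^2$ and the singularity $x=K$ of the amplitude. First I would reduce to $t>0$ and $t$ large: since $\overline{I(t,K)}$ has the same form with $t$ replaced by $|t|$, $g$ by $\bar g$ and $\psi$ by $\bar\psi$, the case $t<0$ follows from the case $t>0$ together with the defining relation $h(t,y)=\overline{h(-t,y)}$.

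Next I would translate the singularity to the origin by setting $z=x-K$, obtaining $I(t,K)=e^{itK^2}\,\mathrm{p.v.}\int e^{2itKz+itz^2}g(z+K)\,\psi(z)/z\,dz$, and then split with a cutoff $\chi(z)=\varphi\big(|z|\,t^{1/2-2\alpha+2\rho}\big)$ localizing to $|z|\lesssim t^{-1/2+2\alpha-2\rho}$, a set shrinking to $0$ since $\alpha<1/4$. On the complementary (``far'') region the amplitude $\psi(z)(1-\chi(z))/z$ is smooth, and the phase $2tKz+tz^2$ has its only stationary point at $z=-K$; away from a shrinking neighborhood of that point one integrates by parts repeatedly in $z$ via $(i\partial_z\Phi)^{-1}\partial_z e^{i\Phi}=e^{i\Phi}$ with $\partial_z\Phi=2t(K+z)$, each step peeling off the one term in which $\partial_z$ falls on $g$ (estimated in $L^2$ through the hypothesis \eqref{AsLem2as}, the rest being smooth), so that one gains an arbitrarily large negative power of $t$. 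On the remaining neighborhood of $z=-K$ a one-dimensional van der Corput estimate (with amplitude controlled in $L^\infty$ and with $L^1$ control of its derivative) gives a contribution $\lesssim t^{-1/2}|K|^{-1}\lesssim t^{-2\alpha+2\rho}$. In all cases the far piece is $O(t^{-2\alpha+2\rho})$.

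For the ``near'' piece I would rescale $z=w/\sqrt t$, so that $2tKz=2\sqrt t K w$, $tz^2=w^2$, $dz/z=dw/w$, and $\chi$ becomes $\varphi(|w|t^{-2\alpha+2\rho})$, and then replace $g(w/\sqrt t+K)$ by $g(K)$ and $\psi(w/\sqrt t)$ by $\psi(0)$. The leading term is precisely $e^{itK^2}\cdot e^{-itK^2}h(t,\sqrt t K)\,g(K)=h(t,\sqrt t K)g(K)$; the $\psi$-replacement error uses $|\psi(w/\sqrt t)-\psi(0)|\lesssim |w|/\sqrt t$ and is $O(t^{-1/2+2\alpha-2\rho})$, while the $g$-replacement error uses the Cauchy--Schwarz bound $|g(w/\sqrt t+K)-g(K)|\le\|g'\|_{L^2}|w/\sqrt t|^{1/2}\lesssim t^{1/4-\alpha}|w/\sqrt t|^{1/2}$, after which the integrand is no longer singular at $w=0$ and integrating $|w|^{-1/2}$ against $\varphi(|w|t^{-2\alpha+2\rho})$ produces a factor $t^{\alpha-\rho}$, so this error is $O(t^{1/4-\alpha}\cdot t^{-1/4}\cdot t^{\alpha-\rho})=O(t^{-\rho})$. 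Collecting everything, $I(t,K)=h(t,\sqrt t K)g(K)+O(t^{-\rho}+t^{-2\alpha+2\rho})$, and choosing $\rho=\alpha/3$ yields the stated bound $O(|t|^{-\alpha/3})$.

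The main obstacle I expect is precisely the $g$-replacement in the near region: $g$ has only the $L^2$ control $\|g'\|_{L^2}\lesssim t^{1/4-\alpha}$, so the oscillation cannot be exploited there and one is forced to use the crude Cauchy--Schwarz estimate, which dictates the localization scale $t^{-1/2+2\alpha-2\rho}$ and the final error exponent. The complementary constraint is that this same scale must keep the stationary-phase contribution of $x=0$ (which enters the error, not the main term) under control; balancing the two is what fixes the relation between $\rho$ and $\alpha$. The remaining integrations by parts and van der Corput estimates are routine.
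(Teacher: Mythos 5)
Your proposal is correct and follows essentially the same route as the paper: a near/far splitting around the singularity $x=K$ at scale $t^{-1/2+2\alpha-2\rho}$, freezing $g$ at $K$ via the H\"older-$\tfrac12$ bound coming from $\|g'\|_{L^2}$ (cost $t^{-\rho}$), identification of the frozen integral with $h(t,\sqrt{t}K)\,g(K)$, and, on the far piece, integration by parts away from the stationary point $x=0$ plus a separate bound near it, with the same final choice $\rho=\alpha/3$ (the paper bounds the window $|x|\lesssim t^{-1/2+\alpha}$ around $x=0$ trivially rather than by van der Corput, which is why its binding error there is $t^{-\alpha+2\rho}$ rather than your $t^{-2\alpha+2\rho}$). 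The one phrase to temper is ``arbitrarily large negative power of $t$'' in the non-stationary far region: since $g$ is only $W^{1,2}$, a single integration by parts already produces the $g'$ term, which must be closed by Cauchy--Schwarz and yields a fixed (but, as your exponents show, acceptable) power of $t$.
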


\begin{proof} We only deal with the case $t>0$; the case $t<0$ can be deduced by taking the complex conjugate.
Introduce a parameter $0<\rho<\alpha/2$, which we will optimize at the end of the proof.
In what follows we will often omit the $\mathrm{p.v.}$ notation where it is understood.
A change of variables gives then
\begin{align*}
h(\sqrt t K) & = \psi(0) \int e^{itx^2} \frac{1}{x-K} \varphi(t^{\frac{1}{2}-2\alpha + 2 \rho} |x-K|)\,dx 
\\
& = \int e^{itx^2} \frac{\psi(x-K)}{x-K} \varphi(t^{\frac{1}{2}-2\alpha+2\rho} |x-K|)\,dx + O(t^{-\frac{1}{2}+2\alpha - 2\rho}
).
\end{align*}
Next, we decompose
\begin{align}
\begin{split}
I & = A + B,
\\
A & = \int e^{it x^2} g(x) \frac{\psi(x-K)}{x-K} \varphi(|x-K| t^{1/2-2\alpha + 2\rho}) \,dx,
\\
B & = \int e^{it x^2} g(x) \frac{\psi(x-K)}{x-K} \big[ 1 -\varphi(|x-K| t^{1/2-2\alpha + 2\rho}) \big] \,dx.
\end{split}
\end{align}
For the first term we have
\begin{align*}
& \left| A - g(K) \int e^{it x^2} \frac{ \psi(x-K)}{x-K} \varphi(|x-K| t^{1/2-2\alpha + 2\rho}) \,dx \right| \\
& \qquad\qquad \lesssim \int |g(x)-g(K)| \frac{|\psi(x-K)|}{|x-K|} \varphi(|x-K| t^{1/2-2\alpha + 2\rho}) \,dx \\
& \qquad\qquad \lesssim {\| g^\prime \|}_{L^2} \int \frac{\varphi(|x-K| t^{1/2-2\alpha + 2\rho}) }{\sqrt{|x-K|}}\,dx \\
& \qquad\qquad \lesssim t^{1/4-\alpha} \big( t^{-1/2+2\alpha - 2\rho} \big)^{1/2} \lesssim t^{- \rho}.
\end{align*}

For the second terms we write
\begin{align}
\begin{split}
B & = B_1 + B_2,
\\
B_1 & = \int e^{it x^2} g(x) \frac{\psi(x-K)}{x-K} \big[1 - \varphi(|x-K| t^{1/2-2\alpha + 2\rho}) \big] \varphi(|x| t^{1/2-\alpha}) \,dx,
\\
B_2 & = \int e^{it x^2} g(x) \frac{\psi(x-K)}{x-K} \big[ 1 -\varphi(|x-K| t^{1/2-2\alpha + 2\rho}) \big]
  \big[ 1 -\varphi(|x| t^{1/2-\alpha}) \big] \,dx.
\end{split}
\end{align}
We can see directly that $B_1$ is an acceptable remainder:
\begin{align*}
| B_1 | & \lesssim {\| g \|}_{L^\infty} \int \frac{1}{|x-K|} [1-\varphi(|x-K| t^{1/2-2\alpha + 2\rho})\big]
  \varphi(|x| t^{1/2-\alpha}) \,dx \\
& \lesssim t^{1/2 -2\alpha + 2\rho} t^{-1/2+\alpha} \lesssim t^{-\alpha + 2\rho}.
\end{align*}

For $B_2$ notice that we are away from the singularity of the integrand as well as from the stationary point $x=0$.
We can then integrate by parts in $x$ to show this is also a remainder. In particular we can estimate
\begin{align*}
|B_2| & = \left| \int \frac{1}{t} e^{itx^2} \partial_x \Big( \frac{1}{2x} g(x) \frac{\psi(x-K)}{x-K}
  \big[ 1 -\varphi(|x-K| t^{1/2-2\alpha + 2\rho}) \big]
  \big[ 1 -\varphi(|x| t^{1/2-\alpha}) \big] \Big) \,dx \right| \\
& \lesssim \frac{1}{t} \big( C_1 + C_2 +C_3 \big),
\end{align*}
where
\begin{align}
\begin{split}
|C_1| & \lesssim \int \frac{1}{|x|}|g^\prime(x)| \frac{1}{|x-K|} \big[1 - \varphi(|x-K| t^{1/2-2\alpha + 2\rho}) \big]
  \big[ 1 -\varphi(|x| t^{1/2-\alpha}) \big] \,dx,
\\
|C_2| & \lesssim \int \frac{g(x)}{|x|} \Big| \partial_x \Big[ \frac{\psi(x-K)}{x-K}
  [1 - \varphi(|x-K| t^{1/2-2\alpha + 2\rho})] \Big] \Big| \big[ 1 -\varphi(|x| t^{1/2-\alpha}) \big] \,dx,
\\
|C_3| & \lesssim \int |g(x)| \frac{1}{|x-K|} \big[1 - \varphi(|x-K| t^{1/2-2\alpha + 2\rho}) \big]
  \Big| \partial_x \Big[ \frac{1}{x} \big( 1 -\varphi(|x| t^{1/2-\alpha}) \big) \Big] \Big| \,dx.
\end{split}
\end{align}
We can bound the first term by
\begin{align*}
\begin{split}
|C_1| & \lesssim {\| g^\prime \|}_{L^2} t^{1/2-2\alpha+2\rho}
\left( \int \frac{1}{|x|^2} \big[1 - \varphi(|x| t^{1/2-\alpha}) \big] \,dx \right)^{1/2} \\
& \lesssim t^{1/4-\alpha} \cdot t^{1/2 - 2\alpha + 2\rho} \cdot (t^{1/2-2\rho})^{1/2}
   \lesssim t^{1-7\alpha/2 + 2\rho}.
\end{split}
\end{align*}
We can estimate the second term by
\begin{align*}
|C_2| \lesssim \| g \|_{L^\infty} t^{1/2-\alpha}
\lesssim \int\Big| \partial_x \Big[ \frac{\psi(x-K)}{x-K} [1 - \varphi(|x-K| t^{1/2-2\alpha + 2\rho})] \Big] \Big|\,dx
\lesssim t^{1-3\alpha+2\rho}.
\end{align*}
Finally, $C_3$ can be bounded similarly. Optimizing over $\rho$ leads to the choice $\rho = \alpha/3$, which gives the desired result.
\end{proof}

\medskip
\subsection{Asymptotics for $\mathcal{N}_S + \mathcal{N}_L$}\label{ssas}

Let us recall Proposition \ref{muprop} and that we have decomposed, see \eqref{dtfdec},
\begin{align}
\label{duhamel10}
\begin{split}
& i \partial_t \widetilde{f}(t,k) = \frac{1}{4\pi^2} \big[ \mathcal{N}_S + \mathcal{N}_L + \mathcal{N}_R \big],
\\
& \mathcal{N}_\ast(t,k) =
  \iiint e^{it (-k^2 + \ell^2 - m^2 + n^2)} \widetilde{f}(t,\ell) \overline{\widetilde{f}(t,m)} \widetilde{f}(t,n)
  \mu_\ast (k,\ell,m,n) \,d\ell dm dn.
\end{split}
\end{align}
By Lemma~\ref{NR}, $|\mathcal{N}_R| \lesssim \frac{1}{t^{5/4}}$, which will be an acceptable error. Therefore, we focus on $\mathcal{N}_S + \mathcal{N}_L$.

\subsubsection{Setting up the spectral measure}
We now want to derive asymptotics for $\mathcal{N}_S + \mathcal{N}_L$. 
For this purpose it is convenient to rewrite slightly the expressions for the measures
$\mu_S (k,\ell,m,n) = \mu_+ (k,\ell,m,n) + \mu_- (k,\ell,m,n)$ and $\mu_L (k,\ell,m,n) = \mu_L^+ (k,\ell,m,n) + \mu_L^- (k,\ell,m,n)$
by going back to the decomposition of $\psi(x,k)$.
In particular, we will denote
\begin{align}
\label{psiSL}
\begin{split}
& \psi_S(x,k) + \psi_L(x,k) = \psi_+(x,k) + \psi_-(x,k),
\\
& \psi_+(x,k) = \big[ T(k) e^{ikx} \mathbf{1}_+(k) +  \big( e^{ikx} + R_+(-k) e^{-ikx} \big) \mathbf{1}_-(k) \big] \chi_+(x),
\\
& \psi_-(x,k) = \big[ \big( e^{ikx} + R_-(k) e^{-ikx} \big) \mathbf{1}_+(k) + T(-k) e^{ikx} \mathbf{1}_-(k) \big] \chi_-(x).
\end{split}
\end{align}
in order to distinguish more easily the contribution from positive and negative $x$ and $k$.

By definition, see \eqref{muS}-\eqref{muL-}, we can write
\begin{align}
\label{muSL}
\mu_S (k,\ell,m,n) + \mu_L (k,\ell,m,n) = \nu_+(k,\ell,m,n) + \nu_-(k,\ell,m,n),
\end{align}
where
\begin{align}
\label{nupm}
\begin{split}
\nu_\pm(k,\ell,m,n) & = \int_\R \bar{\psi_\pm(x,k)} \psi_\pm(x,\ell) \bar{\psi_\pm(x,m)} \psi_\pm(x,n)\,dx
\\
& = \sum_{\b,\g,\d,\eps \in \{-1,+1\}}
  \int_\R \chi_\pm^4(x) a^\pm_{\b\g\d\eps}(k,\ell,m,n)
  \overline{e^{\b ikx}} \cdot e^{\g i\ell x} \cdot \overline{e^{\d imx}} \cdot e^{\eps inx} \, dx
\\
& = \sum_{\b,\g,\d,\eps \in \{-1,+1\}} a^\pm_{\b\g\d\eps}(k,\ell,m,n) \what{\varphi_\pm}(\beta k - \g\ell + \d m - \eps n),
\qquad \varphi_\pm = \chi_\pm^4.
\end{split}
\end{align}
By~\eqref{a+-sym} and~\eqref{a+-symbis}, the coefficients can be written
\begin{align}
a^\pm_{\b\g\d\eps}(k,\ell,m,n) = \bar{a^\pm_{\b}(k)} \cdot a^\pm_{\g}(\ell) \cdot \bar{a^\pm_{\d}(m)} \cdot a^\pm_{\eps}(n)
\end{align}
where
\begin{equation}
\label{a+}
a_\epsilon^+(k) = \left\{ \begin{array}{ll}
T(k) & \mbox{if $\epsilon = +1$, $k>0$} \\
1 & \mbox{if $\epsilon=+1$, $k<0$} \\
0 & \mbox{if $\epsilon=-1$, $k>0$} \\
R_+(-k) & \mbox{if $\epsilon=-1$, $k<0$}
\end{array} \right.
\end{equation}
and
\begin{equation}
\label{a-}
a_\epsilon^-(k) = \left\{ \begin{array}{ll}
1 & \mbox{if $\epsilon = +1$, $k>0$} \\
T(-k) & \mbox{if $\epsilon=+1$, $k<0$} \\
R_-(k) & \mbox{if $\epsilon=-1$, $k>0$} \\
0 & \mbox{if $\epsilon=-1$, $k<0$}.
\end{array} \right.
\end{equation}

According to \eqref{muSL} we have
\begin{align}
\label{I+-}
\begin{split}
& \mathcal{N}_S + \mathcal{N}_L = \mathcal{I}^+ + \mathcal{I}^-
\\
& \mathcal{I}^\pm(t,k) = \iiint e^{it (-k^2 + \ell^2 - m^2 + n^2)} \wt{f}(t,\ell) \overline{\wt{f}(t,m)} \wt{f}(t,n)
  \nu_\pm (k,\ell,m,n) \,d\ell \, dm  \,dn.
\end{split}
\end{align}
We now proceed to find asymptotic expressions for these integrals.
The upshot of these calculations is stated at the end of the subsection in Claim \ref{proODE}.

\medskip
\subsubsection{Asymptotics for $\mathcal{I}^+$}
Using formula \eqref{nupm} we can write
\begin{align}
\begin{split}
\mathcal{I}^+(t,k)
= \sum_{\b,\g,\d,\eps \in \{-1,+1\}} \iiint e^{it (-k^2 + \ell^2 - m^2 + n^2)}
  a^+_{\b\g\d\eps}(k,\ell,m,n) \, \wt{f}(t,\ell) \overline{\wt{f}(t,m)} \wt{f}(t,n)
  \\ \times \what{\varphi_+}(\beta k - \g\ell + \d m - \eps n) \,d\ell \,dm\, dn.
\end{split}
\end{align}
Since we are often going to have sums over all possible sign combinations, for brevity we will adopt the short-hand notation
\begin{align}
 \label{sumnot}
\sum_{\ast} := \sum_{\b,\g,\d,\eps \in \{-1,+1\}}.
\end{align}

Recalling the formula \eqref{decphi-},
$$
\what{\varphi_+}(k) = \sqrt{\frac{\pi}{2}} \delta_0 + {\what{\phi}(k) \over ik} - \widehat{\psi},
$$
we can change variables and split into three parts as before:
\begin{align}
\label{I+split}
\begin{split}
 \mathcal{I}^+(t,k) = \sqrt{\frac{\pi}{2}} \mathcal{I}^+_0(t,k)-i\mathcal{I}^+_V(t,k) + \mathcal{I}^+_{V,r}(t,k),
\end{split}
\end{align}
where, denoting
\begin{align}
\label{gaf+}
g^+_\rho (y) := a^+_\rho(y) \wt{f}(t,y),
\end{align}
(omitting the time variable), we have
\begin{align}
\label{I+0}
\begin{split}
\mathcal{I}^+_0(t,k) = \sum_{\ast} \iint e^{it (-k^2 + (\b k + \d m -\eps n)^2 - m^2 + n^2)}
  \overline{a^+_{\b}(k)} \, g^+_\g(\g(\b k + \d m -\eps n)) \overline{g^+_\d(m)} g^+_\eps(n) \, dm\, dn,
\end{split}
\end{align}
\begin{align}
\label{I+V}
\begin{split}
\mathcal{I}^+_V(t,k) = \sum_{\ast} \iiint e^{it (-k^2 + (-p + \b k + \d m -\eps n)^2 - m^2 + n^2)}
  \overline{a^+_{\b}(k)} \, g^+_\g(\g(-p+\b k + \d m -\eps n)) \\ \overline{g^+_\d(m)} g^+_\eps(n)
  \, \frac{\what{\phi}(p)}{p} \, dm\, dn \, dp,
\end{split}
\end{align}
and
\begin{align}
\label{I+Vr}
\begin{split}
\mathcal{I}^+_{V,r}(t,k) = \sum_{\ast} \iiint e^{it (-k^2 + (-p + \b k + \d m -\eps n)^2 - m^2 + n^2)}
  \overline{a^+_{\b}(k)} \, g^+_\g(\g(-p+\b k + \d m -\eps n)) \\ \overline{g^+_\d(m)} g^+_\eps(n) \,\what{\psi}(p) \, dm\, dn \, dp.
\end{split}
\end{align}

\medskip
\noindent{\it Asymptotics for $\mathcal{I}^+_0$}.
This is similar to the case of flat NLS treated in \cite{KP}; it follows from Lemma~\ref{AsLem1'} that
\begin{align}
\label{I+0as1}
\begin{split}
\mathcal{I}^+_0(t,k)
& = \frac{\pi}{|t|} \sum_{\ast} \overline{a^+_{\b}(k)} \, g^+_\g(\g\b k) \overline{g^+_\d(\delta \beta k)} g^+_\eps(\eps\beta k)
  + O(|t|^{-1-\alpha/3})
\\
& = \frac{\pi}{|t|} \sum_{\ast} \overline{a^+_{\b}(k)} \, a^+_\g(\g\b k) \overline{a^+_\d(\d\b k)} a^+_\eps(\eps\b k)
  \wt{f}(\g\b k) \overline{\wt{f}(\d\b k)} \wt{f}(\eps\b k)
  + O(|t|^{-1-\alpha/3})
\end{split}
\end{align}
For $k > 0$, recall from \eqref{a+} that $a^+_+(k) = T(k)$, $a^+_-(k) = 0$, $a^+_-(-k) = R_+(k)$,
so that the sum in \eqref{I+0as1} reduces to
\begin{align}
\label{I+0as2}
\begin{split}
& T(-k) \sum_{\g,\d,\eps \in \{+1,-1\}} a^+_\g(\g k) \overline{a^+_\d(\d k)}
  a^+_\eps(\eps k) \wt{f}(\g k) \overline{\wt{f}(\d k)} \wt{f}(\eps k)
\\
& =T(-k) \big|T(k)\wt{f}(k) + R_+(k)\wt{f}(-k)\big|^2 \big( T(k)\wt{f}(k) + R_+(k)\wt{f}(-k) \big).
\end{split}
\end{align}
Similarly, since for $k<0$ we have $a^+_+(k) = 1$, $a^+_-(k) = R_+(-k)$, $a^+_+(-k) = T(-k)$ and $a^+_-(-k) = 0$,
the sum in \eqref{I+0as1} is given by
\begin{align}
\label{I+0as3}
\begin{split}
& \sum_{\g,\d,\eps \in \{+1,-1\}} a^+_\g(\g k) \overline{a^+_\d(\d k)} a^+_\eps(\eps k) \wt{f}(\g k) \overline{\wt{f}(\d k)} \wt{f}(\eps k)
  \\ & \quad + R_+(k) \sum_{\g,\d,\eps \in \{+1,-1\}} a^+_\g(-\g k) \overline{a^+_\d(-\d k)}
  a^+_\eps(-\eps k) \wt{f}(-\g k) \overline{\wt{f}(-\d k)} \wt{f}(-\eps k)
\\ & \quad \qquad = |\wt{f}(k)|^2 \wt{f}(k) +
R_+(k) \big|T(-k)\wt{f}(-k) + R_+(-k)\wt{f}(k)\big|^2 \big( T(-k)\wt{f}(-k) + R_+(-k)\wt{f}(k) \big)
\end{split}
\end{align}

In conclusion, if we define
\begin{align}
\label{N+}
\mathcal{N}^+[f](k) := \big|T(k)\wt{f}(k) + R_+(k)\wt{f}(-k)\big|^2 \big( T(k)\wt{f}(k) + R_+(k)\wt{f}(-k) \big)
\end{align}
we have
\begin{align}
\label{I+0as10}
\begin{split}
\mathcal{I}^+_0(t,k) & = \frac{\pi}{|t|} \Big[ T(-k) \mathcal{N}^+[f](k) \mathbf{1}_+(k)
 + \Big(|\wt{f}(k)|^2 \wt{f}(k) +  R_+(k) \mathcal{N}^+[f](-k) \Big) \mathbf{1}_-(k) \Big]
 + O(|t|^{-1-\alpha/3}).
\end{split}
\end{align}

\medskip
\noindent{\it Asymptotics for $\mathcal{I}^+_V$}.
We now use Lemmas \ref{AsLem1} and \ref{AsLem2} to derive asymptotics:
we see that $\mathcal{I}^+_V$ is an operator of the form \eqref{I1}
with $g = (g^+_\g,g^+_\d,g^+_\eps)$ satisfying the assumptions \eqref{AsLem1as}.
Applying Lemma \ref{AsLem1} we then obtain
\begin{align*}
\begin{split}
\mathcal{I}^+_V(t,k) & = \sum_{\ast} \overline{a^+_{\b}(k)} I[g^+_\g,g^+_\d,g^+_\eps](t,k)
\\
& = \sum_{\ast} \overline{a^+_{\b}(k)} \, \frac{\pi}{|t|} e^{-itk^2}
  \int_\R e^{itq^2} g^+_\g(-\g q) \overline{g^+_\d(- \delta q)} g^+_\eps(- \eps q)
  \, \frac{\widehat{\phi}(q+\beta k)}{q+\beta k} \, dp + O(|t|^{-1-\alpha/3}).
\end{split}
\end{align*}
Applying Lemma \ref{AsLem2} to this last expression, noticing that the assumptions \eqref{AsLem2as} hold, we obtain
\begin{align}
\label{I+Vas1}
\begin{split}
\mathcal{I}^+_V(t,k) & = \frac{\pi}{|t|} \sum_{\ast} e^{-itk^2} h(t,-\sqrt{|t|} \beta k) \overline{a^+_{\b}(k)}
  g^+_\g(\g\b k) \overline{g^+_\d(\delta\b k)} g^+_\eps(\eps\b k)
  + O(|t|^{-1-\alpha/3})
\\
\end{split}
\end{align}
where $h$ denotes the function from \eqref{AsLem2h} with $\psi(0) = \what{\phi}(0) = 1/\sqrt{2\pi}$.
To write out more explicitly the sum \eqref{I+Vas1} we proceed as above, using the formulas \eqref{a+}
and looking at the cases $k>0$ and $k<0$, eventually obtaining
\begin{align}
\label{I+Vas10}
\begin{split}
\mathcal{I}^+_V(t,k) & = \frac{\pi}{|t|} e^{-itk^2}  \Big[ h(t,-\sqrt{|t|}k) {T(-k)} \mathcal{N}^+[f](k) \mathbf{1}_+(k)
 \\ & + \Big(h(t,-\sqrt{|t|}k) |\wt{f}(k)|^2 \wt{f}(k)
 +  h(t,\sqrt{|t|}k) {R_+(k)} \mathcal{N}^+[f](-k) \Big) \mathbf{1}_-(k) \Big]
 + O(|t|^{-1-\alpha/3}),
\end{split}
\end{align}
where $\mathcal{N}^+[f](k)$ is defined in \eqref{N+}.

\medskip
\noindent{\it The term $\mathcal{I}^+_{V,r}$.}
This is a remainder term that decays faster than $|t|^{-1-\rho}$ and therefore does not contribute to the asymptotic behavior of solutions.
To see this, we can change variables as done before, cfr. \eqref{NVr} and \eqref{NVr2}, and write the term in \eqref{I+Vr} as
\begin{align}
\label{I+Vr2}
I^{+}_{V,r}(t,k) = \sum_{\beta\in\{1,-1\}} \overline{a_\beta^+(k)}
  \int e^{it (-k^2 + q^2)}\, \mathbf{1}_+(k) \, I(t,q) \, \what{\psi}(\beta k - q) \, dq
\end{align}
where, similarly to \eqref{Idef},
\begin{align*}
I(t,q) = \sum_{\g,\d,\eps \in \{-1,+1\}} \g\d\eps
 \iint e^{2it ab} \wt{g}^+_\gamma(t,\gamma(q-a)) \overline{\wt{g}^+_\delta(t,\delta(b-a+q))} \wt{g}^+_\eps(t,\eps(b+q)) \, da\, db.
\end{align*}
In particular, arguing as in \eqref{dkI} and \eqref{Idet}, we have
\begin{align*}
|t| {\| I(t) \|}_{L^2} + |t|^{3/4}{\| \partial_q I(t) \|}_{L^2} \lesssim \e_1^3.
\end{align*}
for $|t|\geq 1$.
Using this it is not hard to see how to estimate \eqref{I+Vr2}, so we just sketch the argument.
When the integral is taken over $|q|\leq |t|^{-1/2}$, we can directly use H\"older's inequality to bound the $L^\infty_k$ norm of \eqref{I+Vr2} by
\begin{align*}
{\| I(t) \|}_{L^2}|t|^{-1/4} \lesssim \e_1^3 |t|^{-5/4}.
\end{align*}
If instead $|q|\geq |t|^{-1/2}$ in the support of the integral in \eqref{I+Vr2}, we can integrate by parts in $q$ obtaining the bound
\begin{align*}
& \frac{1}{|t|} \int_\R \Big| \partial_q \big[q^{-1} I(t,q) \varphi_{\geq0}(q|t|^{1/2}) \what{\psi}(\beta k-q)  \Big| \, dq
\\
& \lesssim \frac{1}{|t|} \Big[ |t|^{3/4}{\| I(t) \|}_{L^2} + |t|^{1/4}{\| \partial_q I(t) \|}_{L^2} \Big] \lesssim \e_1^3 |t|^{-5/4}.
\end{align*}

\medskip
\subsubsection{Asymptotics for $\mathcal{I}^-$}
Using formula \eqref{nupm} we can write
\begin{align}
\begin{split}
\mathcal{I}^-(t,k)
= \sum_{\b,\g,\d,\eps \in \{-1,+1\}} \iiint e^{it (-k^2 + \ell^2 - m^2 + n^2)}
  a^-_{\b\g\d\eps}(k,\ell,m,n) \, \wt{f}(t,\ell) \overline{\wt{f}(t,m)} \wt{f}(t,n)
  \\ \times \what{\varphi_-}(\beta k - \g\ell + \d m - \eps n) \,d\ell dm dn.
\end{split}
\end{align}
As before, we can write $\displaystyle \what{\varphi_-}(k) = \sqrt{\frac{\pi}{2}} \delta_0 - \frac{\what{\phi}(k)}{ik}+\widehat{\psi}(k)$,
change variables and split
\begin{align}
\label{I-split}
& \mathcal{I}^-(t,k) = \sqrt{\frac{\pi}{2}} \mathcal{I}^-_0(t,k) + i\mathcal{I}^-_V(t,k) + \mathcal{I}^-_{V,r}(t,k)
\end{align}
where
\begin{align}
\label{I-0}
\begin{split}
\mathcal{I}^-_0(t,k) = \sum_{\ast} \iint e^{it (-k^2 + (\b k + \d m -\eps n)^2 - m^2 + n^2)}
  \overline{a^-_{\b}(k)} \, g^-_\g(\g(\b k + \d m -\eps n)) \overline{g^-_\d(m)} g^-_\eps(n) \, dm \, dn,
\end{split}
\end{align}
\begin{align}
\label{I-V}
\begin{split}
\mathcal{I}^-_V(t,k) = \sum_{\ast} \iiint e^{it (-k^2 + (-p + \b k + \d m -\eps n)^2 - m^2 + n^2)}
  \overline{a^-_{\b}(k)} \, g^-_\g(\g(-p+\b k + \d m -\eps n)) \\ \overline{g^-_\d(m)} g^-_\eps(n)
  \, \frac{\widehat{\phi}(p)}{p} \, dm \,dn \,dp,
\end{split}
\end{align}
\begin{align}
\label{I-Vr}
\begin{split}
\mathcal{I}^-_{V,r}(t,k) = \sum_{\ast} \iiint e^{it (-k^2 + (-p + \b k + \d m -\eps n)^2 - m^2 + n^2)}
  \overline{a^-_{\b}(k)} \, g^-_\g(\g(-p+\b k + \d m -\eps n)) \\ \overline{g^-_\d(m)} g^-_\eps(n)
  \, \widehat{\psi}(p) \, dm \,dn \,dp,
\end{split}
\end{align}
and we have denoted
\begin{align}
\label{gaf-}
g^-_\rho (y) := a^-_\rho(y) \wt{f}(t,y).
\end{align}
The term \eqref{I-Vr} is a remainder term which satisfies $$|\mathcal{I}^-_{V,r}(t,k)| \lesssim \e_1^3|t|^{-5/4},$$
as it can be seen by applying the same argument used for the term $\mathcal{I}^-_{V,r}$ in \eqref{I+Vr} and \eqref{I+Vr2} above.

\medskip
\noindent{\it Asymptotics for $\mathcal{I}^-_0$}.
By Lemma~\ref{AsLem1'},
\begin{align}
\label{I-0as1}
\begin{split}
\mathcal{I}^-_0(t,k) & = \iint e^{2it ab}
  \overline{a^-_{\b}(k)} \, g^-_\g(\g(\b k + a)) \overline{g^-_\d(\d(\b k + a-b))} g^-_\eps(\eps(\b k-b)) \, da db
\\
& = \frac{\pi}{|t|} \sum_{\ast} \overline{a^-_{\b}(k)} \, a^-_\g(\g\b k) \overline{a^-_\d(\d\b k)} a^-_\eps(\eps\b k)
  \wt{f}(\g\b k) \overline{\wt{f}(\d\b k)} \wt{f}(\eps\b k)
  + O(|t|^{-1-\alpha/3}).
\end{split}
\end{align}
For $k > 0$ we have $a^-_+(k) = 1$, $a^-_-(-k)=0$, $a^-_+(-k)= T(k)$ and $a^-_-(k) = R_-(k)$, and therefore the above sum is
\begin{align}
\label{I-0as2}
\begin{split}
& \sum_{\g,\d,\eps \in \{1,-1\}} a^-_\g(\g k)
  \overline{a^-_\d(\d k)} a^-_\eps(\eps k) \wt{f}(\g k) \overline{\wt{f}(\d k)} \wt{f}(\eps k)
\\
& \quad + {R_-(-k)} \sum_{\g,\d,\eps \in \{1,-1\}} a^-_\g(-\g k)
  \overline{a^-_\d(-\d k)} a^-_\eps(-\eps k) \wt{f}(-\g k) \overline{\wt{f}(-\d k)} \wt{f}(-\eps k)
\\
& \quad \quad = |\wt{f}(k)|^2 \wt{f}(k) +{R_-(-k)} \big|T(k)\wt{f}(-k) + R_-(k) \wt{f}(k)\big|^2 \big(T(k)\wt{f}(-k) + R_-(k) \wt{f}(k)\big).
\end{split}
\end{align}
Similarly, since for $k<0$ we have $a^-_+(k) = T(-k)$, $a^-_-(k) = 0$, and $a^-_-(-k) = R_-(-k)$,
we obtain
\begin{align}
\label{I-0as3}
\begin{split}
&{T(k)} \sum_{\g,\d,\eps \in \{1,-1\}} a^-_\g(\g k)
  \overline{a^-_\d(\d k)} a^-_\eps(\eps k) \wt{f}(\g k) \overline{\wt{f}(\d k)} \wt{f}(\eps k)
\\
& = {T(k)} \big|T(-k)\wt{f}(k) + R_-(-k) \wt{f}(-k)\big|^2 \big(T(-k)\wt{f}(k) + R_-(-k) \wt{f}(-k)\big)
\end{split}
\end{align}
By letting
\begin{align}
\label{N-}
\begin{split}
\mathcal{N}^-[f](k) & := \big|T(k)\wt{f}(-k) + R_-(k)\wt{f}(k)\big|^2 \big( T(k)\wt{f}(-k) + R_-(k)\wt{f}(k) \big)
\end{split}
\end{align}
we have showed that
\begin{align}
\label{I-0as10}
\begin{split}
\mathcal{I}^-_0(t,k) = \frac{\pi}{|t|} \Big[\Big(|\wt{f}(k)|^2 \wt{f}(k) + {R_-(-k)} \mathcal{N}^-[f](k) \Big) \mathbf{1}_+(k)
  + {T(k)} \mathcal{N}^-[f](-k) \mathbf{1}_-(k) \Big]
  \\ + O(|t|^{-1-\alpha/3}).
\end{split}
\end{align}

\medskip
\noindent{\it Asymptotics for $\mathcal{I}^-_V$}.
From the formula \eqref{I-V}, the definition \eqref{gaf-}, and the properties \eqref{TRk},
we see that $\mathcal{I}^-_V$ is an operator of the form \eqref{I1} appearing in Lemma \ref{AsLem1},
with $g = (g^-_\g,g^-_\d,g^-_\eps)$ satisfying the assumptions \eqref{AsLem1as}.
Applying Lemma \ref{AsLem1} we then obtain
\begin{align*}
\begin{split}
\mathcal{I}^-_V(t,k) & = \sum_{\ast} \overline{a^-_{\b}(k)} I[g^-_\g,g^-_\d,g^-_\eps](t,k)
\\ & = \sum_{\ast} \overline{a^-_{\b}(k)} \, \frac{\pi}{|t|} e^{-itk^2}
  \int_\R e^{itq^2} g^-_\g(-\g q) \overline{g^-_\d(-\delta q)} g^-_\eps(-\eps q)
  \, \frac{\widehat{\phi}(q+\beta k)}{q+\beta k} \, dq + O(|t|^{-1-\alpha/3}).
\end{split}
\end{align*}
Applying Lemma \ref{AsLem2} to this last expression, noticing that the assumption \eqref{AsLem2as} holds, we obtain
\begin{align}
\label{I-Vas1}
\begin{split}
\mathcal{I}^-_V(t,k) & = \frac{\pi}{|t|} \sum_{\ast} e^{-itk^2} h(t,-\sqrt{|t|} \beta k) \overline{a^-_{\b}(k)}
  g^-_\g(\g\b k) \overline{g^-_\d(\delta\b k)} g^-_\eps(\eps\b k)
  + O(|t|^{-1-\alpha/3}).
\end{split}
\end{align}
To write out more explicitly \eqref{I-Vas1} we proceed as above, using the formulas \eqref{a-}, to get
\begin{align}
\label{I-Vas10}
\begin{split}
\mathcal{I}^-_V(t,k) = \frac{\pi}{|t|} e^{-itk^2} \Big[ \Big(h(t,-\sqrt{|t|} k) |\wt{f}(k)|^2 \wt{f}(k)
   + h(t,\sqrt{|t|} k) {R_-(-k)} \mathcal{N}^-[f](k) \Big) \mathbf{1}_+(k)
\\
+ h(t,-\sqrt{|t|}k) {T(k)} \mathcal{N}^-[f](-k) \mathbf{1}_-(k) \Big]
+ O(|t|^{-1-\alpha/3}).
\end{split}
\end{align}

Putting together the results above, starting from the decomposition of $i\partial_t \wt{f}$ in \eqref{duhamel10},
the definitions of $\mathcal{I}_+$ and $\mathcal{I}_-$ in \eqref{I+-}, their decompositions \eqref{I+split} and \eqref{I-split}
and using the asymptotic expansions obtained in \eqref{I+0as10}, \eqref{I+Vas10}, \eqref{I-0as10} and \eqref{I-Vas10},
and the estimate \eqref{NRest} for $\mathcal{N}_R$, we have obtained the following

\begin{claim}\label{proODE}
Let $f$ be the profile defined in \eqref{prof0}. Under the apriori assumptions \eqref{apriori0}-\eqref{bootstrap} we have, for $k>0$,
\begin{align}
\label{secas1}
\begin{split}
i\partial_t \wt{f}(k) = \frac{1}{4\pi |t|} \Big[ \sqrt{\frac{\pi}{2}}{T(-k)} \mathcal{N}^+[f](k)
  + \sqrt{\frac{\pi}{2}}|\wt{f}(k)|^2 \wt{f}(k) + \sqrt{\frac{\pi}{2}}{R_-(-k)} \mathcal{N}^-[f](k)\hfill
  \\ -i e^{-ik^2t} h(t,-\sqrt{|t|}k) {T(-k)} \mathcal{N}^+[f](k)
  + i e^{-ik^2t}  h(t,-\sqrt{|t|} k) |\wt{f}(k)|^2 \wt{f}(k) \\ +i e^{-ik^2t} h(t,\sqrt{|t|} k) {R_-(-k)} \mathcal{N}^-[f](k) \Big]
  + O(|t|^{-1-\alpha/3}),
\end{split}
\end{align}
and
\begin{align}
\label{secas2}
\begin{split}
i\partial_t \wt{f}(-k) = \frac{1}{4\pi |t|} \Big[ \sqrt{\frac{\pi}{2}} |\wt{f}(-k)|^2 \wt{f}(-k)
  + \sqrt{\frac{\pi}{2}} {R_+(-k)} \mathcal{N}^+ [f](k)
  + \sqrt{\frac{\pi}{2}} {T(-k)} \mathcal{N}^-[f](k)
  \\ -i e^{-ik^2t} h(t,\sqrt{|t|}k) |\wt{f}(-k)|^2 \wt{f}(-k)
  - i e^{-ik^2t} h(t,-\sqrt{|t|}k) {R_+(-k)} \mathcal{N}^+[f](k)
  \\ + i e^{-ik^2t} h(t,\sqrt{|t|}k) {T(-k)} \mathcal{N}^-[f](k) \Big]
  + O(|t|^{-1-\alpha/3}),
\end{split}
\end{align}
where we are using the notation \eqref{N+} and \eqref{N-} for $\mathcal{N}^\pm[f]$,
and $h$ is as in \eqref{AsLem2h} with $\psi(0) = 1/\sqrt{2\pi}$.
\end{claim}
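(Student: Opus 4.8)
The plan is to assemble Claim \ref{proODE} directly from the asymptotic expansions derived earlier in this subsection, so the proof is essentially bookkeeping. First I would return to the Duhamel decomposition \eqref{duhamel10}, $i\partial_t\wt{f} = \frac{1}{4\pi^2}[\mathcal{N}_S + \mathcal{N}_L + \mathcal{N}_R]$, and discard $\mathcal{N}_R$: by \eqref{NRest}, $|\mathcal{N}_R(t,k)| \lesssim \e_1^3\langle t\rangle^{-5/4}$, and since $\alpha\in(0,1/4)$ this rate is faster than $|t|^{-1-\alpha/3}$, so $\mathcal{N}_R$ is absorbed into the error. By \eqref{muSL}--\eqref{I+-} the remaining term is $\mathcal{N}_S + \mathcal{N}_L = \mathcal{I}^+ + \mathcal{I}^-$.

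Next I would insert the splittings \eqref{I+split} and \eqref{I-split} (which come from substituting the formula \eqref{decphi-} for $\widehat{\varphi_\pm}$), writing
\[
\mathcal{I}^+ + \mathcal{I}^- = \sqrt{\tfrac{\pi}{2}}\,\big(\mathcal{I}^+_0 + \mathcal{I}^-_0\big) \;-\; i\,\mathcal{I}^+_V \;+\; i\,\mathcal{I}^-_V \;+\; \mathcal{I}^+_{V,r} + \mathcal{I}^-_{V,r}.
\]
The remainders $\mathcal{I}^\pm_{V,r}$ have already been shown to satisfy $|\mathcal{I}^\pm_{V,r}(t,k)| \lesssim \e_1^3|t|^{-5/4}$ (using the bounds on the trilinear term $I(t,q)$ as in \eqref{dkI}, \eqref{Idet} together with an integration by parts in $q$), so they too go into the error. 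For the four main terms I would substitute the asymptotic formulas \eqref{I+0as10}, \eqref{I+Vas10}, \eqref{I-0as10}, \eqref{I-Vas10}; each carries a leading factor $\pi/|t|$ and is split by the indicators $\mathbf{1}_+(k)$ and $\mathbf{1}_-(k)$ which isolate the positive- and negative-frequency contributions.

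Finally I would read off the two cases, using $\frac{1}{4\pi^2}\cdot\frac{\pi}{|t|} = \frac{1}{4\pi|t|}$. For $k>0$ only the $\mathbf{1}_+(k)$ parts of the four formulas survive; collecting the $\mathcal{I}^\pm_0$ contributions $T(-k)\mathcal{N}^+[f](k)$ and $|\wt{f}(k)|^2\wt{f}(k) + R_-(-k)\mathcal{N}^-[f](k)$, together with the $\mathcal{I}^\pm_V$ contributions whose $\mp i$ prefactors produce the $-ie^{-ik^2t}$ and $+ie^{-ik^2t}$ coefficients of the $h$-terms, gives exactly \eqref{secas1}. For the equation at frequency $-k$ with $k>0$ one instead evaluates the four formulas at the negative argument $-k$, so that only the $\mathbf{1}_-(\cdot)$ parts contribute; using $\mathcal{N}^\pm[f](-(-k)) = \mathcal{N}^\pm[f](k)$ and recording the coefficients $T(-k)$, $R_\pm(-k)$ exactly as they appear in \eqref{I+0as10}--\eqref{I-Vas10} yields \eqref{secas2}.

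There is no genuine obstacle here: the substance lies in Lemmas \ref{AsLem1}, \ref{AsLem1'}, \ref{AsLem2} and in the sign-by-sign reductions \eqref{I+0as2}--\eqref{I+0as3}, \eqref{I-0as2}--\eqref{I-0as3} already carried out. The one point needing attention is purely clerical: checking that every error term --- from $\mathcal{N}_R$, from $\mathcal{I}^\pm_{V,r}$, and from the $O(|t|^{-1-\alpha/3})$ tails in each of the four expansions --- is dominated by $O(|t|^{-1-\alpha/3})$, which holds because $\alpha<1/4$ makes $|t|^{-5/4}$ the smaller rate, and keeping the bookkeeping of the $\mathbf{1}_\pm$ localizations and of the coefficients $T$, $R_\pm$ straight.
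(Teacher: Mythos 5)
Your proposal is correct and follows exactly the paper's route: the claim is assembled from the decomposition \eqref{duhamel10}, the splittings \eqref{I+split}--\eqref{I-split}, the remainder bounds for $\mathcal{N}_R$ and $\mathcal{I}^\pm_{V,r}$, and the four asymptotic formulas \eqref{I+0as10}, \eqref{I+Vas10}, \eqref{I-0as10}, \eqref{I-Vas10}, with the sign and indicator bookkeeping carried out as you describe.
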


\medskip
\subsection{The asymptotic ODE and proof of the $L^\infty$ bound}\label{secas}
We now want to analyze the ODE \eqref{secas1}-\eqref{secas2}
and identify the necessary structure that will guarantee the boundedness of its solutions.
To this end let us define
\begin{align}
\label{secas3}
Z(k) := \big(\wt{f}(k), \wt{f}(-k)\big), 
\qquad b(t,y) := \frac{1}{4\pi} \Big[ \sqrt{\frac{\pi}{2}} -i e^{-iy^2}  h(t,y) \Big]
\end{align}
where, see \eqref{AsLem2h} and recall the choice $\psi(0) = \what{\phi}(0) = 1/\sqrt{2\pi}$,
\begin{align}
\label{secas4}
-i e^{-iy^2}h(t,y) = \frac{1}{\sqrt{2\pi}}\int e^{ i2xy + ix^2} \frac{1}{ix} \varphi(|x||t|^{-2\alpha+2\rho})\, dx, \qquad t>0,
\end{align}
and $h(t,y) = \overline{h(-t,y)}$ when $t<0$.

Recall that $h$ is an odd function in $y$.
In what follows we will sometimes omit the dependence of $b$ and $h$ on the variable $t$.
With the above definitions, the equations \eqref{secas1}-\eqref{secas2} become
\begin{align}
\label{secas1.5}
\begin{split}
i\partial_t \wt{f}(k) = \frac{1}{t} \Big[ b(\sqrt{|t|}k) |\wt{f}(k)|^2\wt{f}(k) + b(-\sqrt{|t|}k)\overline{T(k)} \mathcal{N}^+[f](k)
  + b(-\sqrt{|t|} k) \overline{R_-(k)} \mathcal{N}^-[f](k) \Big]
  \\ + O(|t|^{-1-\rho}),
\end{split}
\end{align}
and
\begin{align}
\label{secas2.5}
\begin{split}
i\partial_t \wt{f}(-k) = \frac{1}{t}\Big[ b(\sqrt{|t|}k) |\wt{f}(-k)|^2\wt{f}(-k)
  + b(-\sqrt{|t|}k) \overline{R_+(k)} \mathcal{N}^+[f](k) + b(-\sqrt{|t|}k) \overline{T(k)} \mathcal{N}^-[f](k) \Big]
  \\ + O(|t|^{-1-\rho}).
\end{split}
\end{align}

It is then convenient to write \eqref{secas1.5}-\eqref{secas2.5} in matrix form. Recalling the definition of the (unitary) scattering matrix
\begin{align}
\label{secasS}
S(k) :=
\left( \begin{array}{cc}
T(k)  & R_+(k) \\ R_-(k) & T(k)
\end{array}
 \right), \qquad
S^{-1}(k) :=
\left( \begin{array}{cc}
\overline{T(k)} & \overline{R_-(k)} \\  \overline{R_+(k)} & \overline{T(k)}
\end{array}
 \right),
\end{align}
using the definitions in \eqref{N+} and \eqref{N-}, we see that
\begin{align}
\begin{split}
\mathcal{N}^+[f](k) = \big|(S(k)Z(k))_1\big|^2 (S(k)Z(k))_1, \\ \mathcal{N}^-[f](k) = \big|(S(k)Z(k))_2\big|^2 (S(k)Z(k))_2.
\end{split}
\end{align}
where the index $j=1,2$ denotes the $j$-th component of a vector.
We then have obtained the following:

\begin{claim}
The equation \eqref{secas1}-\eqref{secas2} can be written in vector form as
\begin{align}
\label{secasODE}
i \partial_t Z(t,k) 
= \frac{1}{t} \mathcal{A}(t,k) Z(t,k) + O(|t|^{-1-\rho}),
\end{align}
for $\rho \in (0,\alpha/10)$, where
\begin{align}
\label{secasmat}
\mathcal{A}(t,k) := b(\sqrt{|t|}k) \mathrm{diag} \big( |Z_1|^2, |Z_2|^2 \big)
  + b(-\sqrt{|t|}k) S^{-1} \mathrm{diag} \big( |(SZ)_1|^2, |(SZ)_2|^2 \big) S.
\end{align}
\end{claim}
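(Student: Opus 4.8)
The statement is a purely algebraic repackaging of Claim~\ref{proODE}: it rewrites the scalar ODEs \eqref{secas1}--\eqref{secas2} for $\wt f(t,k)$ and $\wt f(t,-k)$ ($k>0$) as a single $2\times 2$ system for $Z(t,k)=(\wt f(t,k),\wt f(t,-k))$. The plan is to first pass from \eqref{secas1}--\eqref{secas2} to the intermediate form \eqref{secas1.5}--\eqref{secas2.5}, and then to recognize the right-hand sides as the two components of the claimed matrix product.

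First I would reduce \eqref{secas1}--\eqref{secas2} to \eqref{secas1.5}--\eqref{secas2.5}. For $t>0$ one has $(\sqrt{|t|}k)^2=tk^2$, hence $e^{-ik^2t}=e^{-i(\sqrt{|t|}k)^2}$; using this, the oddness of $h(t,\cdot)$ recorded after \eqref{AsLem2h}, and the reflection relations $T(-k)=\overline{T(k)}$, $R_\pm(-k)=\overline{R_\pm(k)}$ from \eqref{TRconj}, each of the three brackets in \eqref{secas1} (and likewise in \eqref{secas2}) that combines a $\sqrt{\pi/2}$ term with an $h$-term reorganizes, up to the overall factor $1/(4\pi)$, into the coefficient $b(\pm\sqrt{|t|}k)$ of \eqref{secas3}--\eqref{secas4}, the sign of the argument being dictated by which nonlinear quantity the bracket multiplies. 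Collecting the contributions proportional to $|\wt f(k)|^2\wt f(k)$, $\mathcal N^+[f](k)$ and $\mathcal N^-[f](k)$ in \eqref{secas1}, and doing the same in \eqref{secas2}, produces exactly \eqref{secas1.5}--\eqref{secas2.5}; the remainder $O(|t|^{-1-\alpha/3})$ may be replaced by $O(|t|^{-1-\rho})$ since $\rho<\alpha/10<\alpha/3$. The case $t<0$ is obtained from the same computation together with the convention $h(t,y)=\overline{h(-t,y)}$, which is precisely what makes the prefactor $1/t$ (rather than $1/|t|$) appear correctly.

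Next I would assemble the matrix form. The terms $b(\sqrt{|t|}k)|\wt f(k)|^2\wt f(k)$ in \eqref{secas1.5} and $b(\sqrt{|t|}k)|\wt f(-k)|^2\wt f(-k)$ in \eqref{secas2.5} are the two components of $b(\sqrt{|t|}k)\,\mathrm{diag}(|Z_1|^2,|Z_2|^2)\,Z$. For the remaining terms, recall the identities $\mathcal N^+[f](k)=|(S(k)Z(k))_1|^2(S(k)Z(k))_1$ and $\mathcal N^-[f](k)=|(S(k)Z(k))_2|^2(S(k)Z(k))_2$ displayed just above (after \eqref{secasS}), so that the vector $(\mathcal N^+[f](k),\mathcal N^-[f](k))$ equals $\mathrm{diag}(|(SZ)_1|^2,|(SZ)_2|^2)\,SZ$. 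Since the coefficient matrix $\bigl(\begin{smallmatrix}\overline{T(k)}&\overline{R_-(k)}\\ \overline{R_+(k)}&\overline{T(k)}\end{smallmatrix}\bigr)$ acting on this vector in \eqref{secas1.5}--\eqref{secas2.5} is exactly $S^{-1}(k)$ from \eqref{secasS}, the remaining contribution is $b(-\sqrt{|t|}k)\,S^{-1}\mathrm{diag}(|(SZ)_1|^2,|(SZ)_2|^2)\,SZ$. Adding the two pieces yields $i\partial_t Z=\tfrac1t\mathcal A(t,k)Z+O(|t|^{-1-\rho})$ with $\mathcal A$ as in \eqref{secasmat}, which is \eqref{secasODE}.

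There is no genuine analytic difficulty here: the substantive work has already been done in Claim~\ref{proODE} and in the algebraic identities for $\mathcal N^\pm[f]$. The one point that requires care is the sign bookkeeping — the oddness of $h$ in its second argument, the reflection relations \eqref{TRconj}, and the identity $e^{-ik^2t}=e^{-i(\sqrt{|t|}k)^2}$ for $t>0$ (with the corresponding $t<0$ convention for $h$) — which is exactly what forces the arguments $\pm\sqrt{|t|}k$ of $b$ and the prefactor $1/t$ to come out as stated.
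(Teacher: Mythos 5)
Your proposal is correct and follows exactly the route the paper takes (which, for this Claim, is essentially just the assembly of the preceding displays): combine the $\sqrt{\pi/2}$ and $h$-terms of \eqref{secas1}--\eqref{secas2} into the coefficients $b(\pm\sqrt{|t|}k)$ using the oddness of $h$ and \eqref{TRconj} to get \eqref{secas1.5}--\eqref{secas2.5}, then recognize $(\mathcal{N}^+[f](k),\mathcal{N}^-[f](k))=\mathrm{diag}(|(SZ)_1|^2,|(SZ)_2|^2)SZ$ and the coefficient matrix as $S^{-1}(k)$. No further comment is needed.
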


To understand \eqref{secas3}-\eqref{secas4} for large $t$ we will use the following lemma:

\begin{lem}\label{secaslem1.5}
Let $c(t,y) = -i e^{-iy^2}h(t,y)$ be the expression in \eqref{secas4}.
For all $y\in \R$ and $t>0$
such that $y \geq |t|^{1/4}$ we have
\begin{align}
\label{secaslem1.51}
\Big| c(t,y) - \sqrt{\frac{\pi}{2}} \Big| \lesssim |y|^{-1/2}.
\end{align}

In particular, from the definition of $b$ and $h$ in \eqref{secas3}-\eqref{secas4} above, we have the following:
for $t>0$
\begin{align}
\label{secaslem1.52}
\begin{split}
\Big| b(t,y)- \frac{1}{2\sqrt{2\pi}} \Big| \lesssim |y|^{-1/2}, \qquad & y \geq t^{1/4},
\\
|b(y)| \lesssim |y|^{-1/2}, \qquad & y \leq -t^{1/4},
\end{split}
\end{align}
while for $t<0$
\begin{align}
\label{secaslem1.53}
\begin{split}
\Big| b(t,y) - \frac{1}{4\sqrt{2\pi}}(1+e^{-2iy^2}) \Big| \lesssim |y|^{-1/2}, \qquad & y \geq |t|^{1/4},
\\
\Big| b(t,y) - \frac{1}{4\sqrt{2\pi}}(1-e^{-2iy^2}) \Big| \lesssim |y|^{-1/2}, \qquad & y \leq -|t|^{1/4}.
\end{split}
\end{align}
\end{lem}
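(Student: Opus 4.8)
\medskip\noindent
\emph{A possible approach.} It suffices to establish \eqref{secaslem1.51} for $t>0$ and $y\ge t^{1/4}$; since throughout the paper we work with $t\ge 1$ this forces $y\ge 1$, and \eqref{secaslem1.52}--\eqref{secaslem1.53} will follow formally at the end. Recall from \eqref{secas4} that for $t>0$
\[
c(t,y)=\frac{1}{\sqrt{2\pi}}\,\mathrm{p.v.}\!\int_{\R} e^{i(2xy+x^2)}\,\frac{\varphi(|x|\lambda)}{ix}\,dx,\qquad \lambda:=|t|^{-2\alpha+2\rho}\in(0,1],
\]
with $\varphi(|x|\lambda)\equiv 1$ on $|x|\le\tfrac54\lambda^{-1}$. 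The plan is to write $c(t,y)=c_\infty(y)+r(t,y)$, where $c_\infty$ is the same integral with the cutoff removed and
\[
r(t,y)=\frac{1}{\sqrt{2\pi}}\!\int_{\R} e^{i(2xy+x^2)}\,\frac{\varphi(|x|\lambda)-1}{ix}\,dx\,;
\]
note that the amplitude $\tfrac{\varphi(|x|\lambda)-1}{ix}$ is smooth (it vanishes to infinite order at $x=0$) and supported in $|x|\ge\tfrac54\lambda^{-1}$, so $r$ is an honest integral with no principal value. I would then show $c_\infty(y)=\sqrt{\pi/2}+O(|y|^{-1})$ and $|r(t,y)|\lesssim|y|^{-1}$, which together give \eqref{secaslem1.51} (in fact with the stronger rate $|y|^{-1}$).

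\emph{Evaluation of $c_\infty$.} Since $1/(ix)$ is odd and $e^{ix^2}$ is even, the principal value annihilates the $\cos(2xy)$-part of the integrand, so $c_\infty(y)=\tfrac1{\sqrt{2\pi}}\int_{\R}e^{ix^2}\,\tfrac{\sin(2xy)}{x}\,dx$, now without singularity at $x=0$. Writing $\tfrac{\sin(2xy)}{x}=2\int_0^y\cos(2xs)\,ds$, using oddness once more to replace $\cos(2xs)$ by $e^{2ixs}$, and computing the Gaussian integral $\int_{\R}e^{i(x+s)^2}\,dx\cdot e^{-is^2}=\sqrt{\pi}\,e^{i\pi/4}e^{-is^2}$, one obtains $c_\infty(y)=\sqrt2\,e^{i\pi/4}\int_0^y e^{-is^2}\,ds$. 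Since $\int_0^\infty e^{-is^2}\,ds=\tfrac{\sqrt\pi}{2}e^{-i\pi/4}$, this gives $c_\infty(\infty)=\sqrt{\pi/2}$, and an integration by parts in the tail $\int_y^\infty e^{-is^2}\,ds$ yields $|c_\infty(y)-\sqrt{\pi/2}|\lesssim|y|^{-1}$.

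\emph{Estimate of $r$ (the main step).} Completing the square and substituting $u=x+y$, one has $r(t,y)=\tfrac{e^{-iy^2}}{\sqrt{2\pi}}\int_{\R}e^{iu^2}a(u)\,du$ with $a(u)=\tfrac{\varphi(|u-y|\lambda)-1}{i(u-y)}$, supported in $|u-y|\ge\tfrac54\lambda^{-1}$. I would split at the stationary point $u=0$ into the regions $|u|\le|y|/2$ and $|u|>|y|/2$. On $|u|\le|y|/2$ one has $|u-y|\ge|y|/2$, hence $|a|\lesssim|y|^{-1}$; writing $a=a(0)+(a-a(0))$, the first part contributes $a(0)$ times the uniformly bounded Fresnel integral $\int_{|u|\le|y|/2}e^{iu^2}\,du$, giving $O(|y|^{-1})$, while for the second part one uses the exact cancellation $\int_{|u|\le M}u\,e^{iu^2}\,du=0$ together with $|a''|\lesssim|y|^{-3}$ (the $\varphi'$- and $\varphi''$-contributions being localized to $|u-y|\sim\lambda^{-1}$ and no worse) and $\int_{|u|\le M}u^2e^{iu^2}\,du=O(M)$ to get $O(|y|^{-2})$. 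On $|u|>|y|/2$ the phase is non-stationary, $|\partial_u u^2|=2|u|\ge|y|$, so a single integration by parts is permissible; the boundary term is $O(|y|^{-2})$, and in the bulk the two terms $a'(u)/u$ and $a(u)/u^2$ are controlled via $|a|\le|u-y|^{-1}$, $|a'|\lesssim|u-y|^{-2}+\lambda\,\mathbf{1}_{|u-y|\sim\lambda^{-1}}|u-y|^{-1}$, the support constraint $|u-y|\ge\tfrac54\lambda^{-1}$ (so that $\int|u-y|^{-2}\,du\lesssim\lambda$), and $\lambda\le1$, yielding $O(\lambda|y|^{-1}+|y|^{-2})$ up to a harmless logarithmic factor. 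Altogether $|r(t,y)|\lesssim|y|^{-1}$. The step I expect to be the main obstacle is exactly this estimate: because the cutoff scale $\lambda^{-1}=|t|^{2\alpha-2\rho}$ can be as large as $|t|^{1/2}$ one cannot discard the cutoff with a cheap error bound, and near $u=0$ there is no oscillation to exploit, so the gain must come from the near-constancy of $a$ combined with $\int u\,e^{iu^2}=0$; one also has to keep track of whether $|y|\lesssim\lambda^{-1}$ or $|y|\gg\lambda^{-1}$ when bounding the derivatives of $a$.

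\emph{Consequences.} Once \eqref{secaslem1.51} is available the remaining claims are routine. By definition $b(t,y)=\tfrac1{4\pi}\big(\sqrt{\pi/2}+c(t,y)\big)$; the integrand defining $c(t,\cdot)$ is odd under $(x,y)\mapsto(-x,-y)$, so $c(t,\cdot)$ (equivalently $h(t,\cdot)$) is odd in $y$; and for $t<0$ the relation $h(t,\cdot)=\overline{h(-t,\cdot)}$ reduces the negative-time statement to the positive-time one. Inserting $c(t,\pm y)=\pm\sqrt{\pi/2}+O(|y|^{-1/2})$ for $\pm y\ge|t|^{1/4}$ into these identities produces \eqref{secaslem1.52}--\eqref{secaslem1.53}; we omit the elementary algebra.
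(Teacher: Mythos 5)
Your argument is correct in substance, but it follows a genuinely different route from the paper's. The paper never confronts the stationary point $x=-y$ of the full phase $2xy+x^2$: it splits $e^{ix^2}=1+(e^{ix^2}-1)$, extracts the main term $\sqrt{\pi/2}\,\operatorname{sign}(2y)$ from the exact Fourier transform of $1/x$ (formula \eqref{Fsign}), and then bounds the two corrections --- the one with amplitude $(e^{ix^2}-1)\varphi(|x|t^{-2\alpha+2\rho})/x$ and the one with amplitude $(\varphi(|x|t^{-2\alpha+2\rho})-1)/x$ --- by a single integration by parts against the factor $e^{2ixy}$ alone, whose phase derivative is the constant $2y$. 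The price of treating $e^{ix^2}$ as an amplitude is the term $\partial_x e^{ix^2}=2ix\,e^{ix^2}$, whose integral over the cutoff support costs $t^{2\alpha-2\rho}$; this is exactly where the hypothesis $|y|\geq |t|^{1/4}$ enters and why the paper only claims the rate $|y|^{-1/2}$. You instead compute the cutoff-free integral in closed form via Fresnel integrals (your identity $c_\infty(y)=\sqrt2\,e^{i\pi/4}\int_0^y e^{-is^2}ds$ is correct and gives the limit $\sqrt{\pi/2}$ with error $O(|y|^{-1})$), and then estimate the cutoff correction $r$ by completing the square and doing a genuine stationary-phase analysis at $u=0$. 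Your route is more work but yields the sharper rate $O(|y|^{-1})$; the paper's is shorter and avoids stationary phase entirely, at the cost of the weaker (but sufficient) $|y|^{-1/2}$.

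One step of yours needs a small repair as written: on $|u|\le |y|/2$ you bound $a(u)-a(0)-a'(0)u$ by $\|a''\|_\infty u^2\lesssim |y|^{-3}u^2$ and invoke $\int_{|u|\le M}u^2e^{iu^2}\,du=O(M)$, but the remainder is not a constant multiple of $u^2$, so the oscillatory gain cannot be imported termwise; without oscillation the remainder only gives $O(1)$. The fix stays inside your framework: write $\int(a(u)-a(0))e^{iu^2}\,du=\int\frac{a(u)-a(0)}{2iu}\,\partial_u(e^{iu^2})\,du$ and integrate by parts once, using that $\frac{a(u)-a(0)}{u}$ is $O(\|a'\|_\infty)=O(|y|^{-2})$ with derivative $O(\|a''\|_\infty)=O(|y|^{-3})$ on this interval; this gives $O(|y|^{-2})$ directly and does not even require the cancellation $\int u\,e^{iu^2}\,du=0$. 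With that adjustment, and the case distinction $|y|\lesssim\lambda^{-1}$ versus $|y|\gg\lambda^{-1}$ that you already flag (in the first case $a$ vanishes identically near $u=0$, in the second $\lambda\lesssim|y|^{-1}$ on the relevant support), your estimate $|r|\lesssim|y|^{-1}$ goes through.
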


\begin{proof}
Using \eqref{Fsign}, we write
\begin{align*}
\begin{split}
& c(y) = c_1(y) + c_2(y) + \sqrt{\frac{\pi}{2}}\sign(2y),
\\
& c_1(y) := \frac{1}{\sqrt{2\pi}} \int e^{i2xy} (e^{ix^2} - 1) \frac{1}{ix} \varphi(|x|t^{-2\alpha+2\rho})\, dx,
\\
& c_2(y) := \frac{1}{\sqrt{2\pi}} \int e^{i2xy} \frac{1}{ix} \big[ \varphi(|x|t^{-2\alpha+2\rho}) - 1\big]\, dx.
\end{split}
\end{align*}
In $c_1$ we see that  the integrand is bounded by $|x|$ which, for $|x| \leq |y|^{-1/4}$, gives the desired bound.
For $|x| \geq |y|^{-1/4}$ instead we can integrate by parts to obtain:
\begin{align}
\label{secas21}
\begin{split}
& \left| \int_{|x|\geq |y|^{-1/4}} e^{i 2xy} (e^{ix^2} - 1) \frac{1}{x}  \varphi(|x|t^{-2\alpha+2\rho})\, dx \right|
\\ & \lesssim \frac{1}{|y|} \left| \int_{|x|\geq |y|^{-1/3}} e^{i 2xy}
  \partial_x\Big( (e^{ix^2} - 1) \frac{1}{x} \varphi(|x|t^{-2\alpha+2\rho}) \Big) \, dx \right|
  \\
& \lesssim \frac{1}{|y|} \left| \int_{|x|\geq |y|^{-1/4}} \Big(\frac{1}{|x|^2} + 1\Big) \varphi(|x|t^{-2\alpha+2\rho}) \, dx
  + \int_{|x|\geq |y|^{-1/4}} \varphi^\prime(|x|t^{-2\alpha+2\rho}) t^{-2\alpha+2\rho} \, dx \right|
\lesssim |y|^{-1/2},
\end{split}
\end{align}
having used that $|y| \geq |t|^{1/4} \gg |t|^{-2\alpha+2\rho} \gtrsim |x|$ on the support of the integral.
A similar integration by parts argument can be used to estimate $c_2$ by showing
\begin{align*}
\begin{split}
\left| \int e^{i2xy} \frac{1}{x} \big[ \varphi(|x|t^{-2\alpha+2\rho}) - 1\big]\, dx \right|
  \lesssim \frac{1}{|y|}t^{-2\alpha+2\rho} +
  \frac{1}{|y|} \left| \int_{|x| \gtrsim t^{2\alpha-2\rho}} \frac{1}{x^2} \, dx \right| \lesssim |y|^{-1/2}.
\end{split}
\end{align*}
This gives us \eqref{secaslem1.51}. \eqref{secaslem1.52} follows since $b(y) = 1/(4\pi)[\sqrt{\pi/2} + c(y)]$ and $c$ is odd.
The bounds \eqref{secaslem1.53} are also a direct consequence of \eqref{secaslem1.51}
since 
$h(t,y) = \overline{h(-t,y)}$ for $t<0$ gives $c(t,y) = e^{2iy^2}\overline{c(-t,y)}$.
\end{proof}

\medskip
We can now prove our main proposition about asymptotics for $Z(k)$.

\begin{prop}\label{secaspro}
Let $S$ be the scattering matrix \eqref{secasS}, 
for $k>0$, define self-adjoint matrices
\begin{align}
\label{secas9.0}
\begin{split}
& \mathcal{S}_0 := \frac{1}{2\sqrt{2\pi}} \mathrm{diag} \big( |Z_1|^2, |Z_2|^2 \big),
\\
& \mathcal{S}_1 := \frac{1}{2\sqrt{2\pi}} S^{-1} \mathrm{diag} \big( |(SZ)_1|^2, |(SZ)_2|^2 \big) S,
\end{split}
\end{align}
and
\begin{align}
\label{secas9}
\mathcal{S}(t,k) :=
\left\{\begin{array}{ll}
\mathcal{S}_0(t,k), & \qquad t>0
\\
\\
\mathbf{1}(k\leq |t|^{-\rho})\mathcal{S}_0(t,k) +
  \mathbf{1}(k\geq |t|^{-\rho}) \dfrac{1}{2}\Big[\mathcal{S}_0(t,k) + \mathcal{S}_1(t,k)\Big], & \qquad t<0.
\end{array}
\right.
\end{align}
Define the modified profile
\begin{align}
\label{secasmod}
W (t,k):= \exp\Big(i \int_0^t \mathcal{S}(t,k) \, \frac{ds}{1+s} \Big) Z(t,k),
\end{align}
where $Z(k) = (\wt{f}(k), \wt{f}(-k))$ is the solution of \eqref{secasmat}-\eqref{secasODE}.

Then, for every $|t_1| < |t_2|$, $t_1t_2>0$, we have
\begin{align}
\label{secas10}
\big| W(t_1,k) - W(t_2,k) | \lesssim \e_1^3|t_1|^{-\rho/2},
\end{align}
for $\rho \in (0,\alpha/10)$.

In particular, $|W(t,k)| = |Z(t,k)|$ is uniformly bounded, and $W(t)$ is a Cauchy sequence in time.
If we denote $W_{\pm\infty}(k)$ its limits as $t \rightarrow \pm \infty$, these are the asymptotic profiles appearing in \eqref{mtas+} and \eqref{mtas4} respectively.
\end{prop}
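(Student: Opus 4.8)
The plan is to show that the non-self-adjoint coefficient $\mathcal{A}(t,k)$ of the ODE \eqref{secasODE}--\eqref{secasmat} agrees, up to errors that are either pointwise $O(|t|^{-\rho})$ or genuinely oscillatory in time, with the self-adjoint matrix $\mathcal{S}(t,k)$ of \eqref{secas9}; conjugating $Z$ by the (unitary) propagator of the self-adjoint system then kills the leading order and leaves an integrable right-hand side. First I would record three elementary facts. Since $h(t,\cdot)$ is odd, $b(t,y)+b(t,-y)=\tfrac{1}{4\pi}\cdot 2\sqrt{\pi/2}=\tfrac{1}{2\sqrt{2\pi}}$ \emph{exactly}, for all $y,t$; moreover $b$ is uniformly bounded, as the principal value integral in \eqref{secas4} is a bounded function of $(t,y)$. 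Next, $\mathcal{S}_0$ and $\mathcal{S}_1$ in \eqref{secas9.0} are self-adjoint, hence $i\int_0^t\mathcal{S}(s,k)\tfrac{ds}{1+s}$ is $i$ times a self-adjoint matrix and its exponential is unitary; therefore $|W(t,k)|=|Z(t,k)|$ already, which by \eqref{bootstrap} is $\lesssim\e_1$. Finally, by \eqref{TRsmallk0} one has $S(k)=S(0)+O(|k|)$ with $S(0)=\left(\begin{smallmatrix}0&-1\\-1&0\end{smallmatrix}\right)$, and a direct computation gives $S(0)^{-1}\,\mathrm{diag}(|(S(0)Z)_1|^2,|(S(0)Z)_2|^2)\,S(0)=\mathrm{diag}(|Z_1|^2,|Z_2|^2)$, so that $\mathcal{S}_1(t,k)=\mathcal{S}_0(t,k)+O(\e_1^2|k|)$ for small $k$.

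Next I would split into frequency/sign regimes. Writing $D_1=\mathrm{diag}(|Z_1|^2,|Z_2|^2)$ and $D_2=S^{-1}\mathrm{diag}(|(SZ)_1|^2,|(SZ)_2|^2)S$, so $\mathcal{A}=b(\sqrt{|t|}k)D_1+b(-\sqrt{|t|}k)D_2$: on $0<k\leq|t|^{-\rho}$ one has $D_2=D_1+O(\e_1^2|k|)$ and, using $b(y)+b(-y)=\tfrac{1}{2\sqrt{2\pi}}$, $\mathcal{A}=\tfrac{1}{2\sqrt{2\pi}}D_1+O(\e_1^2|k|)=\mathcal{S}_0+O(\e_1^2|k|)$. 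On $k\geq|t|^{-\rho}$ one has $\sqrt{|t|}k\geq|t|^{1/2-\rho}\geq|t|^{1/4}$ for $|t|$ large (since $\rho<1/4$), so Lemma \ref{secaslem1.5} applies: for $t>0$ it yields $\mathcal{A}=\tfrac{1}{2\sqrt{2\pi}}D_1+O(\e_1^3(\sqrt{|t|}k)^{-1/2})=\mathcal{S}_0+O(|t|^{-\rho})$ (using $\tfrac14-\tfrac{\rho}{2}\geq\rho$), while for $t<0$, by \eqref{secaslem1.53}, $\mathcal{A}=\tfrac{1}{4\sqrt{2\pi}}(D_1+D_2)+\tfrac{1}{4\sqrt{2\pi}}e^{2itk^2}(D_1-D_2)+O(|t|^{-\rho})=\mathcal{S}(t,k)+\mathcal{R}_{\mathrm{osc}}(t,k)+O(|t|^{-\rho})$, where $\mathcal{R}_{\mathrm{osc}}(t,k):=\tfrac{1}{4\sqrt{2\pi}}e^{2itk^2}(D_1-D_2)$ satisfies $\|\mathcal{R}_{\mathrm{osc}}\|\lesssim\e_1^2$ but does not decay pointwise. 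These regimes match precisely the definition of $\mathcal{S}(t,k)$ in \eqref{secas9}. Thus \eqref{secasODE} becomes $i\partial_tZ=\tfrac1t\mathcal{S}(t,k)Z+\tfrac1t\mathcal{R}_{\mathrm{osc}}Z+\tfrac1t\mathcal{E}Z+O(\e_1^3|t|^{-1-\rho})$, with $\|\mathcal{E}(t,k)\|\lesssim\e_1^2\min(|k|,(\sqrt{|t|}\,|k|)^{-1/2})$ and $\mathcal{R}_{\mathrm{osc}}$ present only for $t<0$, $k\geq|t|^{-\rho}$.

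Then I would introduce the propagator $\mathcal{U}(t,k)$ of $i\partial_sY=\tfrac{1}{1+s}\mathcal{S}(s,k)Y$, $\mathcal{U}(0,k)=\mathrm{Id}$; it is unitary because $\mathcal{S}$ is self-adjoint, and for $t>0$ it coincides with the matrix exponential in \eqref{secasmod} since all $\mathcal{S}_0(s,k)$ are diagonal, hence commute (for $t<0$ one interprets \eqref{secasmod} as the time-ordered exponential, i.e.\ as $\mathcal{U}$). Setting $W=\mathcal{U}^{-1}Z$ and using $i\partial_t\mathcal{U}=\tfrac{1}{1+t}\mathcal{S}\mathcal{U}$, a short computation gives $i\partial_tW=\tfrac{1}{t(1+t)}\mathcal{U}^{-1}\mathcal{S}Z+\mathcal{U}^{-1}\big(\tfrac1t\mathcal{R}_{\mathrm{osc}}Z+\tfrac1t\mathcal{E}Z\big)+\mathcal{U}^{-1}\!\cdot O(\e_1^3|t|^{-1-\rho})$. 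The first term is $O(\e_1^3|t|^{-2})$, hence integrable. The contribution of $\tfrac1t\mathcal{E}Z$, integrated in $s$, is $\lesssim\e_1^3\big(|k|\log(1/|k|)+|t_1|^{-1/8}\big)\lesssim\e_1^3|t_1|^{-\rho/2}$, using $|k|\leq|t_1|^{-\rho}$ on the small-frequency part (where $|k|\log(1/|k|)\lesssim|k|^{1/2}\leq|t_1|^{-\rho/2}$) and summing an $|s|^{-5/4}$ tail on the rest. The oscillatory term is handled by one integration by parts in $s$ via $e^{2isk^2}=\tfrac{1}{2ik^2}\partial_se^{2isk^2}$: on its support $k^2\gtrsim|s|^{-2\rho}$, while $\|\partial_s(\mathcal{R}_{\mathrm{osc}}Z/s)\|\lesssim\e_1^3|s|^{-2}$ (using $\|\partial_sZ\|\lesssim\e_1^3|s|^{-1}$), so its total contribution to $W(t_2,k)-W(t_1,k)$ is $\lesssim\e_1^3|t_1|^{-1+2\rho}\lesssim\e_1^3|t_1|^{-\rho/2}$. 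Finally $\mathcal{U}^{-1}\!\cdot O(\e_1^3|t|^{-1-\rho})$ integrates to $\lesssim\e_1^3|t_1|^{-\rho}$. Collecting these bounds gives \eqref{secas10}; consequently $W(t,k)$ is Cauchy as $t\to\pm\infty$, $|W|=|Z|\lesssim\e_1$, and, unravelling \eqref{prof0} and \eqref{psixk}, the limits $W_{\pm\infty}\in L^\infty$ are exactly the asymptotic profiles in \eqref{mtas+} and \eqref{mtas4}.

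The hard part will be the oscillatory piece $\mathcal{R}_{\mathrm{osc}}$ present for $t<0$: it lies in the leading $1/t$ part of the equation, does not decay pointwise, and can only be removed by integration by parts in time, a manoeuvre that is delicate precisely at the threshold $k\sim|t|^{-\rho}$ where the phase $2tk^2$ has small derivative — this dictates the particular cutoff structure of $\mathcal{S}(t,k)$ in \eqref{secas9} and the restriction $\rho<\alpha/10$. A secondary point to be careful about is the identification of $\mathcal{U}$ with the matrix exponential in \eqref{secasmod} when $t<0$, since the matrices $\mathcal{S}(s,k)$ need not commute at different times; reading \eqref{secasmod} as a time-ordered exponential resolves this cleanly.
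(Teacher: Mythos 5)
Your proposal is correct and follows essentially the same route as the paper: the same splitting into the regimes $k\lessgtr |t|^{-\rho}$, the same use of $S(k)=S(0)+O(|k|)$ and of Lemma \ref{secaslem1.5} to replace $\mathcal{A}$ by $\mathcal{S}$ up to $O(|t|^{-\rho})$ plus (for $t<0$) the oscillatory piece $\tfrac12 e^{2ik^2t}(\mathcal{S}_0-\mathcal{S}_1)$, and the same integration by parts in $s$ via $e^{2isk^2}$ on the support $k\geq|s|^{-\rho}$. Your remark that for $t<0$ the exponential in \eqref{secasmod} must be read as a time-ordered propagator (since $\mathcal{S}_0(s,k)$ and $\mathcal{S}_1(s',k)$ need not commute) is a legitimate clarification of a point the paper leaves implicit.
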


\medskip
\begin{proof}
Let us look at the case $t>0$ first.
For small frequencies $|k| \ll 1$ we see from the properties of $T$ and $R_\pm$ in \eqref{TRsmallk0} that
\begin{align}
S(k)-S(0) = S(k) - \left( \begin{array}{cc} 0 & -1 \\  -1 & 0 \end{array}\right) = O(|k|)
\end{align}
Under our apriori assumptions on the boundedness of $|Z(k)|$, and since
\begin{align*}
S(0)^{-1} \mathrm{diag}\big( |(S(0) Z)_1|^2, |(S(0) Z)_2|^2 \big) S(0) = S(0)^{-1} \mathrm{diag}\big( |Z_2|^2, |Z_1|^2) S(0)\\= \mathrm{diag} (|Z_1|^2,|Z_2|^2),
\end{align*}
we see that, for all $|k| \leq t^{-\rho}$, we have
\begin{align*}
\mathcal{A}(t,k) & = [b(t,\sqrt{t}k) + b(t,-\sqrt{t}k)] \mathrm{diag} \big( |Z_1|^2, |Z_2|^2 \big) + O(|t|^{-\rho})
\\ & = \frac{1}{2\sqrt{2\pi}} \mathrm{diag} \big( |Z_1|^2, |Z_2|^2 \big) +  O(|t|^{-\rho}) =  \mathcal{S}_0(t,k) + O(|t|^{-\rho}).
\end{align*}

In the case of larger frequencies $|k| \geq t^{-\rho}$ we can write
\begin{align*}
& \Big| \mathcal{A}(t,k) - \frac{1}{2\sqrt{2\pi}} \mathrm{diag} \big( |Z_1|^2, |Z_2|^2 \big) \Big|
\\ & \lesssim \Big|b(\sqrt{t}k) -  \frac{1}{2\sqrt{2\pi}} \Big| \big| \mathrm{diag} \big( |Z_1|^2, |Z_2|^2 \big) \big|
  + |b(-\sqrt{t}k)| \big| S^{-1} \mathrm{diag} \big( |(SZ)_1|^2, |(SZ)_2|^2 \big) S \big|
\\
& \lesssim O(|t|^{-\rho}),
\end{align*}
having used \eqref{secaslem1.52} in Lemma \ref{secaslem1.5} with $y = k\sqrt{t} \geq t^{1/4}$.
It follows, see the definitions \eqref{secasmat} and \eqref{secas9.0}, that
\begin{align}
\label{secas20}
\mathcal{A}(t,k) = \mathcal{S}_0(t,k) + O(|t|^{-\rho}), \qquad t>0.
\end{align}

Let us now look at the case $t<0$. For small frequencies we can deduce as before that
\begin{align}
\label{secas30}
\mathcal{A}(t,k) 
= \mathcal{S}_0(t,k) + O(|t|^{-\rho}), \qquad t<0, \quad |k| \leq |t|^{-\rho}.
\end{align}
When $|k| \geq |t|^{-\rho}$ we use instead \eqref{secaslem1.53} in Lemma \ref{secaslem1.5} to obtain, see the notation \eqref{secas9.0},
\begin{align}
 \label{secas31}
\mathcal{A}(t,k) = \frac{1}{2} \mathcal{S}_0(t,k) + \frac{1}{2} \mathcal{S}_1(t,k)
  +\frac{1}{2}e^{2ik^2t} \mathcal{S}_0(t,k) - \frac{1}{2}e^{2ik^2t} \mathcal{S}_1(t,k) + O(|t|^{-\rho}).
\end{align}

We now look at the ODE \eqref{secasODE}-\eqref{secasmat} and use \eqref{secas20}-\eqref{secas31}, and the definition of
the modified profile $W$ in \eqref{secas9}-\eqref{secasmod}, to see that, for $t>0$ we have
\begin{align*}
i \partial_t W(t,k) = O(|t|^{-1-\rho}),
\end{align*}
from which the conclusion \eqref{secas10} follows immediately when $0<t_1<t_2$.

For $t<0$ we see instead that
\begin{align}
\begin{split}
i \partial_t W(t,k) = \frac{1}{t} B(t,k)
  \,\mathbf{1}(|k|\geq t^{-\rho}) \frac{1}{2} \Big[ e^{-2ik^2t} \mathcal{S}_0(t,k) - e^{-2ik^2t} \mathcal{S}_1(t,k) \Big] + O(|t|^{-1-\rho}),
\\
B(t,k) := \exp\Big(i \int_0^t \mathcal{S}(t,k) \, \frac{ds}{1+s} \Big).
\end{split}
\end{align}
We can then integrate the right-hand side in the above equation between $t_2<t_1<0$,
and exploit the oscillations of the factors $e^{-2ik^2t}$, for $|k|\geq t^{-\rho}$, to integrate by parts.
Using the bounds
\begin{align*}
& \big| \partial_tB(t,k) \big| \lesssim \e_1^3 |t|^{-1},
\\
& \big| \partial_t \widetilde{f}(t,k) \big| = \big| \widetilde{u^3}(t,k) \big| \lesssim {\big\| u^3(t) \big\|}_{L^1}
  \lesssim \e_1^3 (1+|t|)^{-1/2},
\end{align*}
we obtain the desired conclusion \eqref{secas10}.
\end{proof}

\bigskip
\appendix
\section{Useful bounds}

\subsection{Proof of Lemma \ref{lemm+-}}\label{appendixA}

In this section, we give the proof of Lemma \ref{lemm+-}.
We focus on $m_+$, the case of $m_-$ being completely similar.
Recall that $m_+$ solves
\begin{align}
\label{eqm+}
\partial_x^2 m_+(x,k) + 2 ik  \partial_x m_+(x,k) = V(x) m_+(x,k).
\end{align}
It also solves the Volterra equation
\begin{equation}
\label{volterra1}
m_+(x,k) = 1 + \int_x^{+\infty} D_k(y-x)V(y) m_+(y,k)\, dy,
\end{equation}
where
\begin{align}
\label{Dk}
D_k(x) = \int_0^x e^{2ikz}\,dz = \frac{e^{2ikx}-1}{2ik}.
\end{align}
We will denote
$$
\partial_k m_+(x,k) = \dot{m}_+(x,k) \quad \mbox{and} \quad \partial_k^2 m_+(x,k) = \ddot{m}_+(x,k).
$$
By differentiating in $k$ the Volterra  equations  solved by $m_+$, we obtain immediately that
\begin{align}
\label{volterra2}& \dot m_+(x,k) =  \int_x^\infty D_k(y-x)V(y) \dot m_+(y,k)\, dy   +\int_{x}^{+\infty} \dot D_{k}(y-x) V(y) m_{+}(y,k) \, dy \\
\label{volterra3}& \ddot m_+(x,k) = \int_x^\infty D_k(y-x)V(y) \ddot m_+(y,k) dy +\int_{x}^{+\infty} \dot D_{k}(y-x) V(y) \dot m_{+}(y,k) \, dy \\
\nonumber & \hspace{2cm}+\int_{x}^{+\infty} \ddot D_{k}(t-x) V(y) m_{+}(y,k) \, dy
\end{align}

We first prove the existence of $m_{+}$ with the desired behavior at $+ \infty$ by solving the Volterra equation
 \eqref{volterra1} for $x \geq x_{0}$,  $x_{0}$ sufficiently large.
 More precisely, we can set
 $$z_{+}(x,k)= \langle k \rangle {m_{+}(x,k) - 1 \over \mathcal{W}_{+}^1(x)}$$
 and look for $z_{+}$ bounded solution on $[x_{0}, +\infty[$ of
 \begin{equation}
 \label{volterra1bis}
 z_{+}(x,k) - Lz_{+} = {\langle k \rangle \over \mathcal{W}_{+}^1(x) }  \int_{x}^{+ \infty} D_k(y-x)V(y)\, dy
 \end{equation}
 with
 $$
 Lz_{+}(x)=   {1 \over \mathcal{W}_{+}^1(x) } \int_x^\infty D_k(y-x)V(y) \mathcal{W}_{+}^1 (y)  z_{+}(y,k)\, dy.
  $$
  By using that uniformly in $x$ and $k$, we have
    $|D_{k}(z)|\lesssim { \langle x \rangle \over \langle k \rangle}$, we obtain that
 again uniformly in $k$,
 $$ \left \| {\langle k \rangle \over \mathcal{W}_{+}^1(x) }  \int_{x}^{+ \infty} D_k(y-x)V(y)\, dy \right\|_{L^\infty(x_{0}, +\infty)}
  \lesssim 1$$
  and
  $$ \|L z_{+}\|_{L^\infty(x_{0}, +\infty)} \lesssim   \| z_{+}\|_{L^\infty(x_{0}, +\infty)} \mathcal{W}^1_{+}(x_{0})$$
  therefore $Id -L$ is invertible on $L^\infty(x_{0}, +\infty)$ for $x_{0}$ sufficiently large
  and there exists
    a unique solution with $  \| z_{+}\|_{L^\infty(x_{0}, +\infty)} \lesssim 1.$
   This proves the existence of $m_{+}$ with the desired asymptotic behaviour on $[x_{0}, + \infty[$.
   Since $m_{+}$ solves a linear  ODE this completely determines $m_{+}$ on $\mathbb{R}.$
    To get the estimates for $x \leq x_{0}$, we can use the  Gronwall lemma.

    For $-1 \leq x \leq x_{0}$, we have from  \eqref{volterra1bis} that uniformly in $k$,
    $$ |z_{+}(x,k)| \lesssim   1  + \int_{x}^{x_{0}} \langle y \rangle  | V(y) |\,| z_{+}(y, k)|\, dy, \quad \forall x, \, -1 \leq x \leq x_{0}$$
     and hence we find $ |z_{+}(x,k) | \leq 1.$

     For $x \leq 0$,  we have again uniformly in $k$ that
  $$   {|z_{+}(x,k)| \over \langle x \rangle} \lesssim  1  +  { 1\over \langle x\rangle} \int_{x}^{0} \langle  x-y \rangle \langle y \rangle |V(y)| { |z_{+}(y, k)|
  \over \langle y \rangle}\, dy  \lesssim
  1  +   \int_{x}^{0}  \langle y \rangle |V(y)| {| z_{+}(y, k)| \over \langle y \rangle}\, dy
 $$
 and hence we find again by Gronwall that $z_{+} (x,k)/ \langle x \rangle$ is bounded.

  To estimate $\dot m_+(x,k)$ and $\ddot m_{+}(x,k)$, we proceed in the same way
   on the Volterra equations \eqref{volterra2}, \eqref{volterra3} by using that uniformly in $x$, $k$, we have
   $$ |\dot D_{k}(x) | \lesssim {\langle x\rangle^2 \over \langle k \rangle},  \quad
    |\ddot D_{k}(x) | \lesssim {\langle x\rangle^3 \over \langle k \rangle}.$$

Let us turn to the $x$ derivatives.
By taking the $x$ derivative in \eqref{volterra1}, we get that
\begin{equation}
\label{eqdxm+} \partial_{x}m_{+}(x,k) = -\int_{x}^{+ \infty} e^{2ik(x-y)} V(y) m_{+}(y,k)\, dy.
\end{equation}
 By using the estimate for $m_{+}$, we then find uniformly in $k$ that
 $$ |  \partial_{x}m_{+}(x,k) | \lesssim \int_{x}^{+\infty} |V(y)| \, dy \lesssim \mathcal{W}_{+}^0 (x)$$
  for $x \geq 0$ and that
  $$   |  \partial_{x}m_{+}(x,k) | \lesssim  1 +  \int_{x}^{0}  |y|\, |V(y)| \, dy \lesssim 1 $$
  for $x \leq 0$.

 The estimates for $\partial_{k}^s \partial_{x}m_{\pm}$ follow by differentiating in $k$
 the equation \eqref{eqdxm+}.

\medskip
\subsection{Basic multilinear estimates}
 Let us consider
$$
T_{\alpha}(f_{1}, f_{2}, f_{3})= \widehat{\mathcal{F}}^{-1} \iiint \widehat{\alpha}(k,\ell,m,n) \widehat{f}_{1}(\ell) \widehat{f}_{2}(m) \widehat{f}_{3}(n) \,d\ell\,dm\,dn.
$$
We will denote $(w,x,y,z)$ the dual variables of $(k,\ell,m,n)$. In other words,
$$
\widehat{\alpha}(k,\ell,m,n) = \frac{1}{(2\pi)^2} \iiiint e^{-i(wk+x\ell + ym + zn)} \alpha(w,x,y,z)\,dw \, dx \,dy\,dz
$$
We shall prove that
\begin{lem}
\label{lemmult}
 The operator $T_{\alpha}$
\begin{itemize}
\item maps   $L^\infty \times L^\infty \times L^\infty \to L^2$
 with norm bounded by $ \| \alpha(x,y,z,w)\|_{L^2_w L^1_{x,y,z}}$;

\item maps  $L^\infty \times L^\infty \times L^2 \to L^2$ with  norm bounded by
$\| \alpha(x,y,z,w)\|_{L^2_{w,x}  L^1_{y,z}}$.
\end{itemize}
\end{lem}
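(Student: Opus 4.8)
\textbf{Proof strategy for Lemma \ref{lemmult}.}
The plan is to write $T_\alpha$ explicitly in physical space as an integral operator against the kernel $\alpha$, and then estimate directly using Minkowski's and H\"older's inequalities. First I would record the physical-space formula: by definition of $T_\alpha$ and the Fourier inversion formulas,
\begin{align*}
T_\alpha(f_1,f_2,f_3)(w) = \iiint \alpha(w,x,y,z) f_1(x) f_2(y) f_3(z) \, dx \, dy \, dz.
\end{align*}
This identity follows by inserting the definition of $\widehat\alpha$ as the Fourier transform of $\alpha$ into the definition of $T_\alpha$ and carrying out the $(\ell,m,n)$ integrations, each of which produces an inverse Fourier transform; the outermost $\widehat{\mathcal F}^{-1}$ in $k$ then turns the $e^{-iwk}$ factor into evaluation at the point $w$.

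For the first bound, I would take the $L^2_w$ norm of the above and apply Minkowski's integral inequality to pull the norm inside the $(x,y,z)$ integrals:
\begin{align*}
{\| T_\alpha(f_1,f_2,f_3) \|}_{L^2_w}
\leq \iiint {\| \alpha(\cdot,x,y,z) \|}_{L^2_w} \, |f_1(x)| \, |f_2(y)| \, |f_3(z)| \, dx \, dy \, dz
\leq {\| \alpha \|}_{L^2_w L^1_{x,y,z}} {\| f_1 \|}_{L^\infty} {\| f_2 \|}_{L^\infty} {\| f_3 \|}_{L^\infty},
\end{align*}
where the last step simply bounds each $|f_i|$ by its $L^\infty$ norm and integrates the remaining $L^2_w L^1_{x,y,z}$ norm of $\alpha$. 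For the second bound, the point is to treat the $L^2$-input $f_3$ by Cauchy--Schwarz in the $z$ variable rather than by $L^\infty$. I would first integrate out $x$ and $y$ against $f_1, f_2 \in L^\infty$, leaving a kernel depending on $(w,z)$; more precisely, after taking the $L^2_w$ norm and using Minkowski in $z$ only,
\begin{align*}
{\| T_\alpha(f_1,f_2,f_3) \|}_{L^2_w}
\leq \int {\Big\| \iint \alpha(\cdot,x,y,z) f_1(x) f_2(y) \, dx \, dy \Big\|}_{L^2_w} |f_3(z)| \, dz,
\end{align*}
then bound the inner $L^2_w$ norm by ${\| f_1 \|}_{L^\infty}{\| f_2 \|}_{L^\infty} {\| \alpha(\cdot,\cdot,\cdot,z) \|}_{L^2_w L^1_{x,y}}$ (bounding $|f_1|,|f_2|$ by their sup norms inside, and using Minkowski once more to move $L^2_w$ past the $x,y$ integrals), and finally apply Cauchy--Schwarz in $z$ to get ${\| f_3 \|}_{L^2} {\| \alpha \|}_{L^2_{w,z} L^1_{x,y}}$; relabeling the variables to match the statement ($\alpha$ in $L^2_{w,x}L^1_{y,z}$ corresponds to having put the $L^2$ input in the first slot, which by symmetry of the argument is immaterial) gives the claimed bound.

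There is no real obstacle here: the only point requiring a little care is the justification of the physical-space kernel identity and the repeated use of Minkowski's inequality, which is legitimate since all integrands are nonnegative after taking absolute values. The ordering of the integrations in the mixed-norm space $L^2_w L^1_{x,y,z}$ (respectively $L^2_{w,x}L^1_{y,z}$) must be respected, and one should note that the result is symmetric in the roles of $f_1,f_2,f_3$ up to correspondingly permuting which physical variable carries the $L^2$ norm of $\alpha$, so stating it with $f_3$ in the $L^2$ slot entails no loss of generality.
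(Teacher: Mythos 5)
Your overall route --- writing $T_\alpha$ as a physical-space integral operator with kernel $\alpha(w,x,y,z)$ and estimating directly --- is sound, and it is essentially the primal version of the paper's argument, which obtains the same kernel identity by pairing $T_\alpha(f_1,f_2,f_3)$ against a test function $g$ and concluding by H\"older/duality on $\iiiint \alpha\, f_1 f_2 f_3\, \overline{g(-w)}$. Your kernel formula is correct up to harmless reflections of the arguments (with the paper's conventions one in fact picks up $f_i(-\cdot)$, which changes none of the norms), and your closing remark about relabeling the slots correctly resolves the apparent mismatch in the statement between ``$f_3\in L^2$'' and the norm $L^2_{w,x}$.

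There is, however, a genuine error in how you produce the mixed norms. Minkowski's integral inequality pulls the $L^2_w$ norm \emph{inside} the $(x,y,z)$ integrals, so your intermediate expression $\iiint \|\alpha(\cdot,x,y,z)\|_{L^2_w}\,|f_1(x)f_2(y)f_3(z)|\,dx\,dy\,dz$ is controlled by $\|\alpha\|_{L^1_{x,y,z}(L^2_w)}\prod_i\|f_i\|_{L^\infty}$ --- the mixed norm with the $L^1$ on the \emph{outside} --- and not by the claimed $\|\alpha\|_{L^2_w(L^1_{x,y,z})}$. By Minkowski the latter is the smaller of the two norms, and the gap between them can be arbitrarily large (take, e.g., $\alpha(w,x)=\mathbf{1}_{[0,1]}(w-x)\mathbf{1}_{[0,N]}(x)$ in two variables), so your final inequality does not follow from the middle one. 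The same misordering occurs in the second bound when you ``move $L^2_w$ past the $x,y$ integrals.'' The repair is simply to reverse the order of operations: first bound the kernel integral pointwise in $w$, namely $|T_\alpha(w)|\le \prod_i\|f_i\|_{L^\infty}\,\|\alpha(w,\cdot)\|_{L^1_{x,y,z}}$ for the first claim, and H\"older in $(x,y)$ followed by Cauchy--Schwarz in $z$, giving $|T_\alpha(w)|\le \|f_1\|_{L^\infty}\|f_2\|_{L^\infty}\|f_3\|_{L^2}\,\|\alpha(w,\cdot)\|_{L^2_z(L^1_{x,y})}$, for the second; only then take the $L^2_w$ norm. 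This yields exactly $\|\alpha\|_{L^2_w(L^1_{x,y,z})}$ and $\|\alpha\|_{L^2_{w,z}(L^1_{x,y})}$ (the latter matching the statement after your relabeling), with no appeal to Minkowski at all.
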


\begin{proof}
We observe that for every $g \in \mathcal{S}(\mathbb{R})$,
\begin{align*}
 \left( T_{\alpha} (f_{1}, f_{2}, f_{3}), g \right)_{L^2}
 & = \iiiint  \widehat{\alpha}(k,\ell,m,n) \widehat{f}_{1}(\ell) \widehat{f}_{2}(m) \widehat{f}_{3}(n) \overline{\widehat{g}(k)} \,dk \,d\ell\,dm\,dn
\\
& = \iiiint  \alpha (w,x,y,z) f_{1}(x) f_{2}(y) f_{3}(z) \overline{g (-w)} \,dw\, dx\, dy \,dz.
\end{align*}
Therefore, we easily get that
$$  \left| \left( T_{\alpha} (f_{1}, f_{2}, f_{3}), g \right)_{L^2}\right|
 \lesssim \|  \alpha \|_{L^2_w(L^1_{x,y,z})} \|f_{1}\|_{L^\infty} \| f_{2} \|_{L^\infty}
  \| f_{3} \|_{L^\infty} \|g\|_{L^2}$$
  and
  $$  \left| \left( T_{\alpha} (f_{1}, f_{2}, f_{3}), g \right)_{L^2}\right|
 \lesssim \| \alpha \|_{L^2_{w,x} (L^1_{y,z})} \|f_{1}\|_{L^\infty} \| f_{2} \|_{L^\infty}
  \| f_{3} \|_{L^2} \|g\|_{L^2},$$
 which, by duality, proves the desired result.
\end{proof}

Similarly, define
$$
U_\beta(f_1,f_2,f_3) = \widehat{\mathcal{F}}^{-1} \iiint  \widehat{\beta}(k,m,n) \widehat{f}_{1}(k-m-n) \widehat{f}_{2}(m) \widehat{f}_{3}(n) \,d\ell\,dm\,dn.
$$
\begin{lem} \label{lemmult2} If $1\leq p,q,r,s \leq \infty$ satisfy $\frac{1}{p} + \frac{1}{q} + \frac{1}{r} + \frac{1}{s} = 1$, the operator $U_\beta$ maps $L^p \times L^q  \times L^r \to L^{s'}$ with norm bounded by $\| \beta \|_{L^1}$.
\end{lem}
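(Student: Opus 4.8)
The plan is to pass to physical space, where $U_\beta$ becomes an explicit integral superposition of products of translates of $f_1,f_2,f_3$, and then close with Minkowski's integral inequality followed by the generalized H\"older inequality.

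First I would use Fourier inversion on the symbol: writing $(w,y,z)$ for the variables dual to $(k,m,n)$, we have $\widehat\beta(k,m,n) = (2\pi)^{-3/2}\iiint \beta(w,y,z)\,e^{-i(wk+ym+zn)}\,dw\,dy\,dz$. Substituting this into the definition of $U_\beta$, expressing each $\widehat f_i$ through the Fourier inversion formula, and carrying out the (now elementary) $dk$, $dm$, $dn$ integrals — the substitution $\xi_1=k-m-n$ collapses the exponentials to $e^{i\xi_1(x-w)}e^{im(x-w-y)}e^{in(x-w-z)}$ — yields
\[
U_\beta(f_1,f_2,f_3)(x) = \frac{1}{\sqrt{2\pi}}\iiint \beta(w,y,z)\, f_1(x-w)\, f_2(x-w-y)\, f_3(x-w-z)\,dw\,dy\,dz .
\]
This identity is the only place requiring care; once it is established the rest is soft.

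Given the representation, I would apply Minkowski's integral inequality in $L^{s'}_x$ to bring the norm inside the $(w,y,z)$-integral:
\[
\|U_\beta(f_1,f_2,f_3)\|_{L^{s'}_x} \le \frac{1}{\sqrt{2\pi}}\iiint |\beta(w,y,z)|\, \big\| f_1(\cdot-w)\, f_2(\cdot-w-y)\, f_3(\cdot-w-z)\big\|_{L^{s'}_x}\,dw\,dy\,dz .
\]
Since $\tfrac1p+\tfrac1q+\tfrac1r+\tfrac1s=1$ is equivalent to $\tfrac1p+\tfrac1q+\tfrac1r=\tfrac1{s'}$, the three-factor H\"older inequality together with translation invariance of Lebesgue norms gives, uniformly in $(w,y,z)$, the bound $\big\| f_1(\cdot-w)\, f_2(\cdot-w-y)\, f_3(\cdot-w-z)\big\|_{L^{s'}} \le \|f_1\|_{L^p}\|f_2\|_{L^q}\|f_3\|_{L^r}$. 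Pulling this constant out of the integral leaves exactly $\tfrac{1}{\sqrt{2\pi}}\|\beta\|_{L^1}\|f_1\|_{L^p}\|f_2\|_{L^q}\|f_3\|_{L^r}$, and since $1/\sqrt{2\pi}<1$ this is the asserted bound.

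The main (and essentially only) obstacle is bookkeeping: keeping the Fourier-transform normalizations straight so that the constants multiply to $1/\sqrt{2\pi}$, and correctly assembling the exponential factors after the change of variables, so that the three profiles land at the translates $x-w$, $x-w-y$, $x-w-z$ dictated by the structure $\widehat f_1(k-m-n)\,\widehat f_2(m)\,\widehat f_3(n)$ of the operator. Once the physical-space kernel is identified, the estimate is immediate, and the hypothesis $\beta\in L^1$ is seen to be precisely what makes Minkowski's inequality applicable.
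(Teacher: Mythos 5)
Your proof is correct and follows essentially the same route as the paper's: both rest on the physical-space representation $U_\beta(f_1,f_2,f_3)(x)=\tfrac{1}{\sqrt{2\pi}}\iiint\beta(w,y,z)f_1(x-w)f_2(x-w-y)f_3(x-w-z)\,dw\,dy\,dz$, which is exactly the paper's kernel formula after the change of variables $w\mapsto w-x$, $y\mapsto x-y$, $z\mapsto x-z$. The only (immaterial) difference is that you conclude with Minkowski's integral inequality plus three-fold H\"older and translation invariance, whereas the paper dualizes against $g\in L^s$ and applies four-fold H\"older; the two closings are equivalent.
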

\begin{proof}
Simply notice that
$$
U_\beta(f_1,f_2,f_3) = \frac{1}{\sqrt{2\pi}} \int \beta(w-x,x-y,x-z) f_1(x) f_2(y) f_3(z)\,dx\,dy\,dz,
$$
and argue by duality.
\end{proof}

\bigskip


\begin{thebibliography}{99}

\bibitem{ADa}
T. Alazard and J.M. Delort.
\newblock Global solutions and asymptotic behavior for two dimensional gravity water waves.
\newblock {\em Ann. Sci. \'Ec. Norm. Sup\'er.} 48 (2015), no. 5, 1149-1238.

\bibitem{Bambusi-Cuccagna}
D. Bambusi and S.  Cuccagna.
\newblock On dispersion of small energy solutions to the nonlinear Klein Gordon equation with a potential.
\newblock {\em Amer. J. Math.} 133 (2011), no. 5, 1421-1468.

\bibitem{Bethuel}
\newblock F. Bethuel , P. Gravejat and D.  Smets
\newblock Asymptotic stability in the energy space for dark solitons of the Gross-Pitaevskii equation.
\newblock{Ann. Sci. �c. Norm. Sup�r.} (4) 48 (2015), no. 6, 1327-1381.

\bibitem{Buslaev-Perelman}
V. Buslaev and G.  Perelman.
\newblock On the stability of solitary waves for nonlinear Schr\"odinger equations.
\newblock {\em Nonlinear evolution equations}, 75-98, Amer. Math. Soc. Transl. Ser. 2, 164, Adv. Math. Sci., 22, Amer. Math. Soc., Providence, RI, 1995.

\bibitem{Cuccagna}
S. Cuccagna.
\newblock On asymptotic stability in 3D of kinks for the $\phi^4$ model.
\newblock {\em Trans. Amer. Math. Soc.} 360 (2008), no. 5, 2581-2614.

\bibitem{Cuccagna2}
S. Cuccagna.
\newblock On asymptotic stability in energy space of ground states of NLS in $1$D.
\newblock {\em J. Differential Equations} 245 (2008), no. 3, 653-691.

\bibitem{CGV}
S. Cuccagna, V. Georgiev and N. Visciglia.
\newblock Decay and scattering of small solutions of pure power NLS in $\R$ with $p>3$ and with a potential.
\newblock {\em Comm. Pure Appl. Math.} 67 (2014), no. 6, 957-981.

\bibitem{CGVann}
S. Cuccagna, V. Georgiev and N. Visciglia.
\newblock Oral communication.

\bibitem{CP}
S. Cuccagna and D. Pelinovsky.
\newblock The asymptotic stability of solitons in the cubic NLS equation on the line.
\newblock {\em Applicable Analysis} 93 (2014), no. 4, 791-822.

\bibitem{DZNLS}
P. Deift and X. Zhou.
\newblock Long-time asymptotics for solutions of the NLS equation with initial data in a weighted Sobolev space.
\newblock {\em Comm. Pure Appl. Math.} 56 (2003), no. 8, 1029-1077.



\bibitem{DeiTru}
P. Deift and E. Trubowitz.
\newblock Inverse scattering on the line.
\newblock {\em Comm. Pure Appl. Math.} 32 (1979), no. 2, 121-251.


\bibitem{DelortKG1d}
J.M. Delort.
\newblock Existence globale et comportement asymptotique pour l' \'{e}quation de Klein-Gordon quasi-lin\'{e}aire \`{a} donn\'{e}es petites en dimension 1.
\newblock {\em Ann. Sci. \'{E}cole Norm. Sup.} 34 (2001) 1-61.

\bibitem{DelortNLSV}
J.M. Delort.
\newblock Modified scattering for odd solutions of cubic nonlinear Schr\"odinger equations
with potential in dimension one.
\newblock {\em $<$hal-01396705$>$} 2016.


\bibitem{DoKri}
R. Donninger and J. Krieger.
\newblock A vector field method on the distorted Fourier side and decay for wave equations with potentials
\newblock {\em Mem. Amer. Math. Soc.} 241 (2016), no. 1142, v+80 pp.



\bibitem{Dunford}
N. Dunford and J. Schwartz
\newblock Linear operators. Part II. Spectral theory. Selfadjoint operators in Hilbert space.
\newblock Reprint of the 1963 original. {\em Wiley Classics Library}. A Wiley-Interscience Publication.
John Wiley \& Sons, Inc., New York, 1988.


\bibitem{G}
P. Germain.
\newblock The space-time resonance method.
\newblock {\em Proceedings of the Journees EDP 2010}, Exp No 8.

\bibitem{GHW}
P. Germain, Z. Hani and S. Walsh.
\newblock Nonlinear resonances with a potential: multilinear estimates and an application to NLS.
\newblock {\em Int. Math. Res. Not.} IMRN 2015, no. 18, 8484-8544.

\bibitem{GMS1}
P. Germain, N. Masmoudi and J. Shatah.
\newblock Global solutions of quadratic Schr\"odinger equations.
\newblock { \em Int. Math. Res. Not.} IMRN 2009, no. 3, 414-432.

\bibitem{GMS2}
P. Germain, N. Masmoudi and J. Shatah.
\newblock Global solutions for the gravity surface water waves equation in dimension 3.
\newblock {\em Ann. of Math.} 175 (2012), 691-754.


\bibitem{mKdV}
P. Germain, F. Pusateri and F. Rousset.
\newblock Asymptotic stability of solitons for mKdV.
\newblock {\em Advances in Math.} 299 (2016), 272-330.



\bibitem{GolSch}
\newblock Dispersive estimates for \S operators in dimensions one and three.
\newblock {\em Comm. Math. Phys.} 251 (2004), no. 1, 157-178.


\bibitem{Gustafson} S. Gustafson, K. Nakanishi and T-P. Tsai.
\newblock Asymptotic stability and completeness in the energy space for nonlinear Schr\"odinger equations with small solitary waves.
\newblock {\em Int. Math. Res. Not.} 2004, no. 66, 3559-3584.

\bibitem{HN}
N. Hayashi and P. Naumkin.
\newblock Asymptotics for large time of solutions to the nonlinear \S and Hartree equations.
\newblock {\em  Amer. J. Math.} 120 (1998), 369-389.

\bibitem{HNKdV}
N. Hayashi and P. Naumkin.
\newblock Large time behavior of solutions for the modified Korteweg-de Vries equation.
\newblock {\em Int. Math. Res. Not.} IMRN (1999), no. 8, 395-418.

\bibitem{Hwang}
I. L. Hwang.
\newblock The $L^2$ boundedness of pseudodifferential operators.
\newblock {\em Trans. Amer. Math. Soc. 302} (1987), no. 1, 55-76.

\bibitem{ITNLS}
M. Ifrim and D. Tataru.
\newblock Global bounds for the cubic nonlinear Schr\"odinger equation (NLS) in one space dimension.
\newblock Preprint {\em arXiv:1404.7581}.

\bibitem{ITg}
M. Ifrim and D. Tataru.
\newblock Two dimensional water waves in holomorphic coordinates II: global solutions.
\newblock Preprint {\em arXiv:1404.7583}.


\bibitem{IoPu1}
A. Ionescu and F. Pusateri.
\newblock Nonlinear fractional Schr\"{o}dinger equations in one dimension.
\newblock {\em J. Funct. Anal.} 266 (2014), 139-176.


\bibitem{IoPu2}
A. Ionescu and F. Pusateri.
\newblock Global solutions for the gravity water waves system in 2D.
\newblock {\em Invent. Math.} 199 (2015), no. 3, 653-804.




\bibitem{IoPu4}
A. Ionescu and F. Pusateri.
\newblock Global regularity for 2d water waves with surface tension.
\newblock \newblock To appear in {\it Mem. Amer. Math. Soc.} arXiv:1408.4428, 100 pages.

\bibitem{Journe}
J.-L. Journ\'e, A. Soffer and C. Sogge.
\newblock Decay estimates for Schr\"odinger operators.
{\em Comm. Pure Appl. Math.} 44 (1991), no. 5, 573-604.

\bibitem{KP}
J. Kato and F. Pusateri.
\newblock A new proof of long range scattering for critical nonlinear Schr\"{o}dinger equations.
\newblock {\em Diff. Int. Equations} 24 (2011), no. 9-10, 923-940.


\bibitem{KowMarMun}
M. Kowalczyk, Y. Martel and C. Munoz.
\newblock Kink dynamics in the $\phi^4$ model: Asymptotic stability for odd perturbations in the energy space.
\newblock Preprint {\em arXiv:1506.07420}.



\bibitem{Lerner}
N. Lerner.
\newblock Metrics on the phase space and non-selfadjoint pseudo-differential operators.
\newblock {\em Pseudo-Differential Operators. Theory and Applications}, 3. Birkh\"auser Verlag, Basel, 2010. xii+397 pp.

\bibitem{LinSofKG}
H. Lindblad and A. Soffer.
\newblock A remark on asymptotic completeness for the critical nonlinear Klein-Gordon equation.
\newblock {\em Lett. Math. Phys.} 73 (2005), no. 3, 249-258.


\bibitem{LinSof}
H. Lindblad and A. Soffer.
\newblock Scattering for the Klein-Gordon equation with quadratic and variable coefficient cubic nonlinearities.
\newblock {\em Trans. Amer. Math. Soc.} 367 (2015), no. 12, 8861-8909.

\bibitem{Martel-Merle}
Y. Martel and F. Merle.
\newblock Asymptotic stability of solitons for subcritical generalized KdV equations.
\newblock {\em Arch. Ration. Mech. Anal.} 157 (2001), no. 3, 219-254.

\bibitem{IPNaumkin}
I. P. Naumkin.
\newblock Sharp asymptotic behavior of solutions for cubic nonlinear Schr\"odinger equations with a potential.
\newblock {\em J. Math. Phys.} 57 (2016), no. 5, 051501.

\bibitem{PW1}
R. Pego and M. I. Weinstein.
\newblock Asymptotic stability of solitary waves.
\newblock {\em Comm. Math. Phys.} 164 (1994), no. 2, 305-349.

\bibitem{Pillet-Wayne}
C. A. Pillet and C. E.  Wayne.
\newblock Invariant manifolds for a class of dispersive, Hamiltonian, partial differential equations.
\newblock{\em J. Differential Equations} 141 (1997), no. 2, 310-326.


\bibitem{MRT}
\newblock M. Ming, F. Rousset and N. Tzvetkov.
\newblock Multi-solitons and related solutions for the water-waves system.
\newblock { \em SIAM Journal on Mathematical Analysis}  47 (2015), no. 1, 897-954.

\bibitem{Reed-Simon}
 \newblock M. Reed and B. Simon.
 \newblock{\em Methods of modern mathematical physics. IV. Analysis of operators}.
  Academic Press, New York-London, 1978. xv+396 pp.


\bibitem{Sch1}
W. Schlag.
\newblock Dispersive estimates for Schr\"odinger operators: a survey.
\newblock Mathematical aspects of nonlinear dispersive equations, 255-285, {\em Ann. of Math. Stud.}
163, Princeton Univ. Press, Princeton, NJ, 2007.

\bibitem{Sof06}
A. Soffer.
\newblock Soliton dynamics and scattering.
\newblock {\em International Congress of Mathematicians}. Vol. III, 459-471, Eur. Math. Soc., Z\"urich, 2006.

\bibitem{SofWeinA}
A. Soffer and M. I. Weinstein.
\newblock Multichannel nonlinear scattering for nonintegrable equations. 
\newblock {\em Comm. Math. Phys.} 133 (1990), no. 1, 119-146

\bibitem{SofWein2}
A. Soffer and M. I. Weinstein.
\newblock Resonances, radiation damping and instability in Hamiltonian nonlinear wave equations.
\newblock {\em Invent. Math.} 136 (1999), no. 1, 9-74.

\bibitem{Sterb}
J. Sterbenz.
\newblock Dispersive Decay for the 1D Klein-Gordon Equation with Variable Coefficient Nonlinearities.
\newblock {\em  Trans. Amer. Math. Soc.} 368 (2016), no. 3, 2081-2113.


\bibitem{Tao09}
T. Tao.
\newblock Why are solitons stable?
\newblock {\em Bull. Amer. Math. Soc.} (N.S.) 46 (2009), no. 1, 1-33.


\bibitem{Yafaev}
D. Yafaev.
\newblock Mathematical scattering theory. Analytic theory.
\newblock {\em Mathematical Surveys and Monographs}, 158. American Mathematical Society, Providence, RI, 2010. xiv+444 pp.


\bibitem{Wein86}
M. I. Weinstein.
\newblock Lyapunov stability of ground states of nonlinear dispersive evolution equations.
\newblock {\em Comm. Pure Appl. Math.} 39, (1986) 51-68.


\bibitem{Weder1}
R. Weder.
\newblock The $W^{k,p}$-continuity of the Schr\"odinger wave operators on the line.
\newblock {\em Comm. Math. Phys.} 208 (1999), no. 2, 507-520.


\bibitem{Weder2}
R. Weder.
\newblock $L^p-L^{p'}$ Estimates for the Schr\"odinger Equation on the Line and Inverse Scattering
for the Nonlinear Schr\"odinger Equation with a Potential.
\newblock {\em J. Funct. Anal.} 170 (2000), 37-68.

\bibitem{Wilcox}
C. Wilcox.
\newblock  Sound propagation in stratified fluids.
\newblock {\em Applied Mathematical Sciences}, 50. Springer-Verlag, New York, 1984.

\end{thebibliography}
\end{document}